\DeclareFontFamily{OML}{rsfs}{\skewchar\font'177}
\DeclareFontShape{OML}{rsfs}{m}{n}{ <5> <6> rsfs5 <7> <8> <9> rsfs7
  <10> <10.95> <12> <14.4> <17.28> <20.74> <24.88> rsfs10 }{}
\DeclareMathAlphabet{\mathfs}{OML}{rsfs}{m}{n}
\newtheorem{theorem}{Theorem}
\newtheorem{lemma}[theorem]{Lemma}
\newtheorem{proposition}[theorem]{Proposition}
\theoremstyle{definition}
\theoremstyle{remark}
\newtheorem{remark}[theorem]{\bf Remark}
\numberwithin{equation}{section}
\numberwithin{theorem}{section}
\newcommand{\intav}[1]{\mathchoice {\mathop{\vrule width 6pt height 3 pt depth  -2.5pt
\kern -8pt \intop}\nolimits_{\kern -6pt#1}} {\mathop{\vrule width
5pt height 3  pt depth -2.6pt \kern -6pt \intop}\nolimits_{#1}}
{\mathop{\vrule width 5pt height 3 pt depth -2.6pt \kern -6pt
\intop}\nolimits_{#1}} {\mathop{\vrule width 5pt height 3 pt depth
-2.6pt \kern -6pt \intop}\nolimits_{#1}}}
\newcommand{\intavl}[1]{\mathchoice {\mathop{\vrule width 6pt height 3 pt depth  -2.5pt
\kern -8pt \intop}\limits_{\kern -6pt#1}} {\mathop{\vrule width 5pt
height 3  pt depth -2.6pt \kern -6pt \intop}\nolimits_{#1}}
{\mathop{\vrule width 5pt height 3 pt depth -2.6pt \kern -6pt
\intop}\nolimits_{#1}} {\mathop{\vrule width 5pt height 3 pt depth
-2.6pt \kern -6pt \intop}\nolimits_{#1}}}
\newcommand{\plus}{{-}}
\newcommand{\minus}{{+}}
\newcommand{\SSigma}{{\Sigma}}
\newcommand{\ve}{\varepsilon}
\newcommand{\wt}{\widetilde}
\newcommand{\wh}{\widehat}
\newcommand{\R}{\mathbb{R}}
\newcommand{\N}{\mathbb{N}}
\newcommand{\Z}{\mathbb{Z}}
\newcommand\Tau{\mathrm{T}}
\newcommand\x{{\bf x}}
\begin{document}

\title[Polynomial decay of correlations in nonpositive curvature]{Polynomial decay of correlations for \\
nonpositively curved surfaces}

\author{Yuri Lima, Carlos Matheus, and Ian Melbourne}
\date{\today}
\keywords{Polynomial decay of correlations, geodesic flows, nonpositive curvature, Young towers}
\subjclass[2010]{37C10, 37C83, 37D40 (primary), 37D25 (secondary)}
\thanks{The research of YL and IM was supported in part by a Newton Advanced Fellowship, grant
NAF\textbackslash R1\textbackslash 180108, from The Royal Society. YL was also supported by CNPq and Instituto Serrapilheira,
grant ``Jangada Din\^{a}mica: Impulsionando Sistemas Din\^{a}micos na Regi\~{a}o Nordeste''.}

\address{Yuri Lima, Departamento de Matem\'atica, Centro de Ci\^encias, Campus do Pici,
Universidade Federal do Cear\'a (UFC), Fortaleza -- CE, CEP 60455-760, Brasil}
\email{yurilima@gmail.com}
\address{Carlos Matheus, CNRS \& \'Ecole Polytechnique, CNRS (UMR 7640), 91128, Palaiseau, France}
\email{carlos.matheus@math.cnrs.fr}
\address{Ian Melbourne, Mathematics Institute, University of Warwick, Coventry, CV4 7AL, UK}
\email{i.melbourne@warwick.ac.uk}

\begin{abstract}
We prove polynomial decay of correlations for geodesic flows on a class of nonpositively curved
surfaces where zero curvature only occurs along one closed geodesic.
We also prove that various statistical limit laws, including the central limit theorem, are satisfied by this class of geodesic flows.
\end{abstract}

\maketitle

\begin{flushright}
{\it Dedicated to the memory of Todd Fisher}
\end{flushright}

\tableofcontents

\section{Introduction}\label{sec:introduction}

The goal of this work is to provide examples of geodesic flows
on nonpositively curved manifolds with polynomial decay of correlations.

\begin{theorem}\label{thm:main} Let $r\in [4,\infty)$, and let $S$ be a closed Riemannian surface of nonpositive curvature obtained by isometrically gluing two negatively curved surfaces with boundaries to the boundaries of the surface of revolution with profile $1+|s|^r$, $|s|\leq 1$. Let $M=T^1S$ and $a=\tfrac{r+2}{r-2}\in(1,3]$. Then the geodesic flow
$g_t:M\to M$ has polynomial decay of correlations with respect to the normalized Riemannian
volume $\mu$: for all $\epsilon>0$ and all sufficiently smooth observables $\phi,\psi:M\to\R$, there is a constant
$C(\phi,\psi)$ such that 
$$
\left|\int_{M} \phi \cdot (\psi\circ g_t)\,d\mu-\int_{M}\phi\,d\mu\int_{M}\psi\,d\mu\right|
\leq C(\phi,\psi) \frac{1}{t^{a-\epsilon}}\, \ \text{for all $ t> 0$}.
$$
\end{theorem}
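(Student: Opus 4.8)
The plan is to code the geodesic flow as a suspension over a nonuniformly hyperbolic map with a polynomially decaying return-time tail, and then feed this structure into the available machinery (Young towers for flows with unbounded roof, operator renewal theory, Dolgopyat-type oscillatory estimates) for decay of correlations of such flows. \emph{Setting up the cross-section.} Write $\gamma_0\subset M$ for the unique closed zero-curvature geodesic, lying over the neck $\{s=0\}$ of the surface of revolution. Fix a small $\delta>0$ and let $\Sigma\subset M$ consist of the unit vectors based over $\{s=\pm\delta\}$ pointing into the neck, together with a finite collection of cross-sections deep inside the two negatively curved caps; with these choices every orbit outside a zero-measure set meets $\Sigma$ infinitely often, since the flow is ergodic ($S$ carries negative curvature and $\gamma_0$ is an isolated flat geodesic, hence there is no flat strip of positive width, and ergodicity follows from the Pesin/Burns--Gerber theory). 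Let $F\colon\Sigma\to\Sigma$ be the Poincar\'e return map, $\tau\colon\Sigma\to(0,\infty)$ the (unbounded) return time, and $\mu_\Sigma$ the induced measure, so that $(M,g_t,\mu)$ is measurably the suspension of $(\Sigma,F,\mu_\Sigma)$ with roof $\tau$. The dynamics is uniformly hyperbolic on the caps and on the bulk of the neck, and degenerates only along $\gamma_0$, where $K(s)=-f''(s)/f(s)\asymp-|s|^{r-2}$ vanishes.

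\emph{The return-time tail.} This is the one genuinely quantitative ingredient, and it is explicit thanks to integrability. On the neck the metric is $ds^2+f(s)^2\,d\phi^2$ with $f(s)=1+|s|^r$, so a unit-speed geodesic has a conserved Clairaut constant $c$ with $\dot s^{\,2}=1-c^2f(s)^{-2}$. For a geodesic entering the neck nearly tangent to $\gamma_0$, write $\eta=1-c^2\ge 0$; then $\dot s^{\,2}\asymp\eta+|s|^r$ near $s=0$, so the time spent in $\{|s|\le\delta\}$ is
$$
\asymp\int_0^\delta\frac{ds}{\sqrt{\eta+s^r}}\ \asymp\ \eta^{-\frac{r-2}{2r}},
$$
where convergence of the integral at $s=0$ uses $r>2$; the ``turn-around'' geodesics with $c=1+u$, $0<u\ll1$, behave identically with $u$ in place of $\eta$. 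On $\Sigma$ the parameter $\eta$ (resp. $u$) is a smooth function of the section coordinates with nonvanishing gradient on the relevant set, so the near-$\gamma_0$ excursions of flow-length $>T$ fill a set of $\mu_\Sigma$-measure $\asymp T^{-2r/(r-2)}$, while long sojourns in the (uniformly hyperbolic) caps and non-tangential passes through the neck contribute only an exponentially small tail. Hence
$$
\mu_\Sigma\bigl(\tau>T\bigr)\ \asymp\ T^{-\beta},\qquad \beta:=\tfrac{2r}{r-2}=a+1\ \in(2,4].
$$

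\emph{The tower and the abstract theorem.} Starting from a uniformly hyperbolic rectangle $Y\subset\Sigma$ sitting deep inside a cap, so that $Y$ carries a genuine hyperbolic product structure, one builds the return map of the flow to $Y$ as a Gibbs--Markov--Young map: this requires verifying uniform contraction/expansion along the stable/unstable directions, bounded distortion and regularity of the stable holonomies, a tail bound $\mu_Y(\text{return time}>T)=O(T^{-\beta})$ for the combined return time (governed, as in Step~2, by the single longest near-$\gamma_0$ pass, so with the same exponent $\beta$), Hölder regularity of the roof, and a uniform nonintegrability (UNI) condition transverse to the flow. Inserting this structure into the theory of decay of correlations for nonuniformly hyperbolic flows with polynomial tails (operator renewal theory converting $\beta$ into a mixing rate, and Dolgopyat-type cancellation via UNI at nonzero frequencies) yields, for every $\epsilon>0$ and all sufficiently smooth $\phi,\psi$, a bound $C(\phi,\psi)\,t^{-(\beta-1)+\epsilon}=C(\phi,\psi)\,t^{-(a-\epsilon)}$. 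Since $\beta>2$, i.e.\ $\tau\in L^2(\mu_\Sigma)$, the same framework delivers the central limit theorem and the other statistical limit laws asserted in the abstract.

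\emph{Main obstacle.} Steps~1 and~2 are respectively standard and explicit; the real work is the uniform hyperbolicity and distortion control of Step~3 \emph{across arbitrarily long near-parabolic excursions}. Along $\gamma_0$ the curvature vanishes to order $|s|^{r-2}$, so the stable/unstable cones flatten and the expansion rates degenerate; one must show, again by solving the Jacobi equation along such orbits with the Clairaut first integral, that the net expansion accumulated over an excursion of length $\ell\asymp\eta^{-(r-2)/(2r)}$ still grows with $\ell$, and that the distortion it produces is summable in the sense needed to construct the tower. Establishing UNI and the required roof regularity in the presence of the merely $C^{r}$ profile $1+|s|^r$ is the origin of the hypothesis $r\ge 4$ and of the $\epsilon$-loss in the exponent.
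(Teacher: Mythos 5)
Your overall plan — suspend the flow over a hyperbolic base with polynomial return-time tails, compute the tail exponent $\beta = \tfrac{2r}{r-2}=a+1$ via the Clairaut integral $\int_0^\delta(\eta+s^r)^{-1/2}\,ds\asymp\eta^{-(r-2)/(2r)}$, and then feed this into the Young-tower/operator-renewal machinery for flows — is exactly the strategy the paper follows, and your tail computation matches the paper's Lemma~5.7. That much is correct. However, you correctly flag Step~3 as the main obstacle and then wave it away; in fact there are three specific obstructions there, each of which requires a genuinely new idea, and none of which is addressed in your sketch.

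First, the Chernov/Young machinery that you plan to invoke requires invariant manifolds of the return map to have uniformly bounded \emph{curvature}, i.e.\ uniform $C^2$ regularity. For the surfaces under consideration this hypothesis \emph{fails}: Ballmann--Brin--Burns showed that already for $r=4$ the invariant manifolds of the degenerate closed geodesic are not $C^2$, so there is no curvature to bound. The paper must weaken the axiom to uniform $C^{1+\mathrm{Lip}}$ regularity, re-examine the proof in \cite{Chernov-1999} to check that this suffices, and then invoke (and, for non-integer $r$, re-prove) a theorem of Gerber--Wilkinson to establish the $C^{1+\mathrm{Lip}}$ bound and the H\"older continuity of $E^{s/u}$. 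Without this, the tower you intend to build cannot be constructed by citing the standard references.

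Second, the Chernov axioms include a complexity bound on the primary singularity set (no triple intersections of $\mathfs S_{\rm P}$ under finitely many pre-iterates). Ordinarily this is a routine transversality/perturbation argument, but here the return map has unbounded derivative near the asymptotic vectors, so the pre-images of compact singularity curves have \emph{infinite length} and accumulate on the homoclinic set of the degenerate geodesic. Making the perturbation argument work in that regime requires a careful decomposition of orbits by their long backward transitions through the neck, and a multi-parameter construction of the section (the paper's Sections~5.1--5.3). Your sketch of ``$\Sigma$ consists of vectors based over $\{s=\pm\delta\}$ plus a few discs in the caps'' does not give a section with this property, and with an arbitrary section the complexity bound may simply fail.

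Third, to pass from the map tower to the flow you propose to verify a Dolgopyat-type UNI condition directly. The paper instead applies the framework of \cite{Balint-et-al-2019}, where the role of UNI is played by ``absence of approximate eigenfunctions,'' and for \emph{contact} flows — which geodesic flows are — this is automatic (their Remark~8.11). This is a substantial simplification: verifying UNI from scratch for a nonuniformly hyperbolic geodesic flow would be a separate project. Relatedly, your attribution of the $\epsilon$-loss to ``UNI and roof regularity'' is off: the $\epsilon$ enters through the $(\log n)^{a+1}$ correction in the induced return-time tail, which comes from composing the exponential tower for $f$ with the polynomial excursions; the hypothesis $r\ge 4$ comes from the need for $C^4$ regularity of the metric (Jacobi/Riccati theory and the Gerber--Wilkinson theorem), not from UNI.
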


The precise meaning of ``sufficiently smooth'' is explained in Section~\ref{sec:conclusion}.
In addition, we state and prove statistical limit laws such as the central limit theorem (CLT) for H\"older observables $\phi:M\to\R$. 
%These are described in more detail in Section~\ref{sec:conclusion}.

Figure~\ref{fig:surface-intro} depicts the surfaces $S$ considered in Theorem \ref{thm:main}: 
the region between two curves $\alpha$ and $\beta$ is a surface of revolution 
with profile $1+|s|^r$, $|s|\leq 1$, thus the curve $\gamma$ with $s=0$ is a closed geodesic with zero
curvature; outside this region, $S$ has negative curvature.
We call $\gamma$ the {\em degenerate closed geodesic}.
\begin{figure}[hbt!]
\centering
\def\svgwidth{11cm}
%% Creator: Inkscape 1.1-dev (f9311a1, 2019-12-25), www.inkscape.org
%% PDF/EPS/PS + LaTeX output extension by Johan Engelen, 2010
%% Accompanies image file '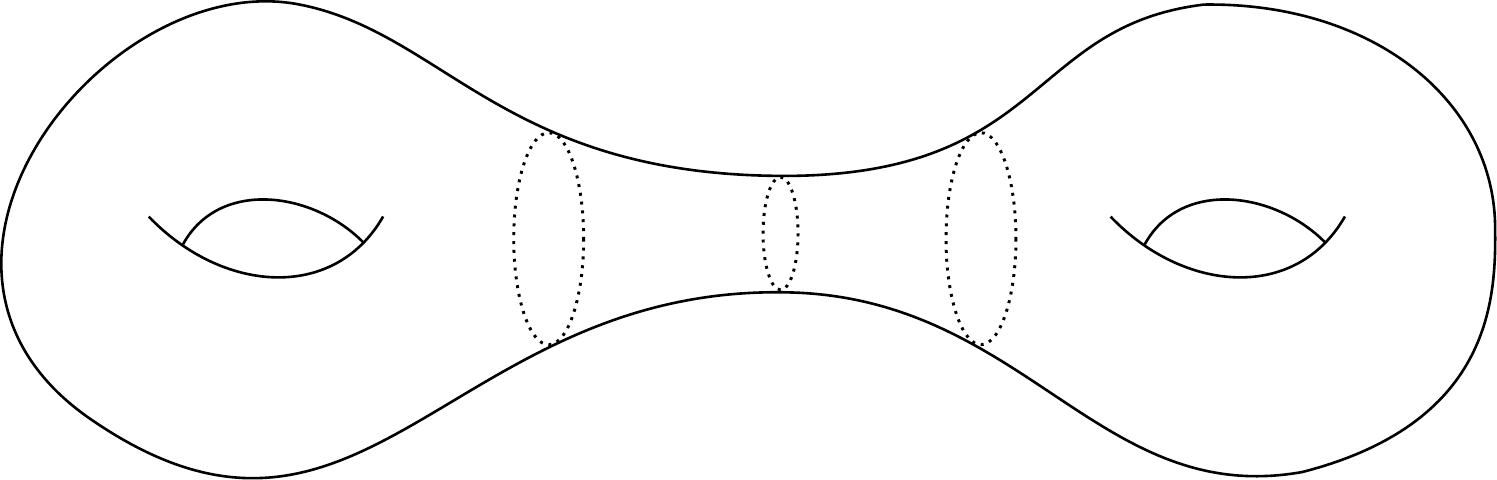' (pdf, eps, ps)
%%
%% To include the image in your LaTeX document, write
%%   \input{<filename>.pdf_tex}
%%  instead of
%%   \includegraphics{<filename>.pdf}
%% To scale the image, write
%%   \def\svgwidth{<desired width>}
%%   \input{<filename>.pdf_tex}
%%  instead of
%%   \includegraphics[width=<desired width>]{<filename>.pdf}
%%
%% Images with a different path to the parent latex file can
%% be accessed with the `import' package (which may need to be
%% installed) using
%%   \usepackage{import}
%% in the preamble, and then including the image with
%%   \import{<path to file>}{<filename>.pdf_tex}
%% Alternatively, one can specify
%%   \graphicspath{{<path to file>/}}
%% 
%% For more information, please see info/svg-inkscape on CTAN:
%%   http://tug.ctan.org/tex-archive/info/svg-inkscape
%%
\begingroup%
  \makeatletter%
  \providecommand\color[2][]{%
    \errmessage{(Inkscape) Color is used for the text in Inkscape, but the package 'color.sty' is not loaded}%
    \renewcommand\color[2][]{}%
  }%
  \providecommand\transparent[1]{%
    \errmessage{(Inkscape) Transparency is used (non-zero) for the text in Inkscape, but the package 'transparent.sty' is not loaded}%
    \renewcommand\transparent[1]{}%
  }%
  \providecommand\rotatebox[2]{#2}%
  \newcommand*\fsize{\dimexpr\f@size pt\relax}%
  \newcommand*\lineheight[1]{\fontsize{\fsize}{#1\fsize}\selectfont}%
  \ifx\svgwidth\undefined%
    \setlength{\unitlength}{430.99340532bp}%
    \ifx\svgscale\undefined%
      \relax%
    \else%
      \setlength{\unitlength}{\unitlength * \real{\svgscale}}%
    \fi%
  \else%
    \setlength{\unitlength}{\svgwidth}%
  \fi%
  \global\let\svgwidth\undefined%
  \global\let\svgscale\undefined%
  \makeatother%
  \begin{picture}(1,0.3204114)%
    \lineheight{1}%
    \setlength\tabcolsep{0pt}%
    \put(0,0){\includegraphics[width=\unitlength,page=1]{nonpositive_curvature-intro.pdf}}%
    \put(0.47485944,0.15742031){\color[rgb]{0,0,0}\makebox(0,0)[lt]{\lineheight{1.25}\smash{\begin{tabular}[t]{l}$\gamma$\end{tabular}}}}%
    \put(0.30522619,0.15473895){\color[rgb]{0,0,0}\makebox(0,0)[lt]{\lineheight{1.25}\smash{\begin{tabular}[t]{l}$\alpha$\end{tabular}}}}%
    \put(0.69022657,0.14860366){\color[rgb]{0,0,0}\makebox(0,0)[lt]{\lineheight{1.25}\smash{\begin{tabular}[t]{l}$\beta$\end{tabular}}}}%
    \put(0.02584672,0.27273526){\color[rgb]{0,0,0}\makebox(0,0)[lt]{\lineheight{1.25}\smash{\begin{tabular}[t]{l}$S$\end{tabular}}}}%
  \end{picture}%
\endgroup%

\caption{Surfaces 
with degenerate closed geodesic $\gamma$ considered in Theorem \ref{thm:main}.}
\label{fig:surface-intro}
\end{figure}

\begin{remark} \label{rmk:main}
We expect that the mixing rate in Theorem~\ref{thm:main} is almost sharp.
Indeed, as part of the proof of Theorem~\ref{thm:main}, we construct a piecewise smooth Poincar\'e map $g:\SSigma\to \SSigma$ with piecewise smooth first hit time that is bounded above and below (away from zero), and $g$ has the sharp polynomial mixing rate $n^{-a}$.

More precisely, let $\mu_{\SSigma}$ denote the corresponding Liouville probability measure on $\SSigma$.
In Section~\ref{sec:conclusion}, we apply \cite{Young-polynomial} to show that
for all H\"older observables $\phi,\psi:\SSigma\to\R$, there is a constant
$C(\phi,\psi)$ such that for all integers $n\ge2$,
$$
\left|\int_{\SSigma} \phi \cdot (\psi\circ g^n)\,d\mu_{\SSigma}-\int_{\SSigma}\phi\,d\mu_{\SSigma}\int_{\SSigma}\psi\,d\mu_{\SSigma}\right|
\leq C(\phi,\psi) \frac{(\log n)^{1+a}}{n^a}\cdot
$$
Moreover, by~\cite{BMTapp},
for all H\"older observables $\phi,\psi:\SSigma\to\R$  with nonzero mean and support bounded away from $T^1\gamma$, there is a constant $c(\phi,\psi)$ such that for all integers $n\ge2$,
\begin{equation} \label{eq:lower}
\left|\int_{\SSigma} \phi \cdot (\psi\circ g^n)\,d\mu_{\SSigma}-\int_{\SSigma}\phi\,d\mu_{\SSigma}\int_{\SSigma}\psi\,d\mu_{\SSigma}\right|
\geq c(\phi,\psi) \frac{1}{(\log n)n^a}\cdot
\end{equation}

Obtaining rates for the geodesic flow $g_t$ is more subtle due to the neutral flow direction. 
We obtain the upper bound in Theorem~\ref{thm:main} by applying
the recent work of \cite{Balint-et-al-2019}.
Since the first hit time is bounded
away from zero, there is no reason to expect the flow $g_t$ to decay faster than $g$;
a precise statement of the form~\eqref{eq:lower} for the flow $g_t$ is the subject of work in progress~\cite{BMTprep}.
(We note however
that the bounds obtained in Lemmas \ref{lem:t1-new} and \ref{lem:diffpsi} below
combined with the arguments of \cite{BMMW-1} show that $g_t$ is certainly not exponentially mixing.)
\end{remark}

\begin{remark}\label{rmk:main2}
We believe that the optimal mixing rate for $g_t$ is $t^{-a}$ (similarly $n^{-a}$ for $g$). Indeed, we
expect that the arguments in \cite{Chernov-Zhang-08} can be used to remove the multiplicative
factor $(\log n)^{1+a}$ in Remark~\ref{rmk:main} and the same argument would allow us to take $\epsilon=0$ in Theorem~\ref{thm:main}. However, to focus on the main ideas introduced in this paper,
we do not pursue such an improvement here. 
\end{remark}

In general, the dynamical and statistical properties of geodesic flows in closed Riemannian manifolds is
a fascinating topic whose origin goes back to Artin, Hadamard, Hedlund, Hopf, Klein, Poincar\'e, among others.
Indeed, geodesic flows on manifolds with negative or nonpositive sectional curvature
were the motivation of various breakthroughs in ergodic theory.
One of them was given by Hopf: the nowadays called \emph{Hopf argument} was
used to prove that geodesic flows on %$1/4$--pinched
negatively curved compact surfaces are ergodic with respect to their Liouville volume measure \cite{Hopf-1939}.
%In a nutshell, Hopf exploited the $C^1$ smoothness of the invariant (stable and unstable) foliations to prove that a randomly chosen geodesic is equidistributed.  Unfortunately, the invariant foliations of geodesic flows in higher dimension are usually {not $C^1$ but just H\"older continuous, hence Hopf's argument  can not be directly applied.
Anosov extended Hopf's argument to prove ergodicity of geodesic flows in negative curvature to arbitrary dimensions
\cite{Anosov-Geodesic-Flows}.
%Anosov contributed with another seminal idea: the invariant foliations are absolutely continuous, and this is enough to apply Hopf's argument \cite{Anosov-Geodesic-Flows}.
%Therefore studying the regularity of invariant foliations is an important step to understand geodesic flows.  Such regularity plays a key role in our proof of Theorem \ref{thm:main}, cf.\ Sections \ref{sec:our-surface} and \ref{ss:Chernov-axioms-statements}.

Moreover, geodesic flows in manifolds with negative sectional curvature are Bernoulli 
\cite{Ornstein-Weiss-Geodesic-Flows,Ratner-Flows-Bernoulli}, which is the ultimate chaotic
property from a measure-theoretic point of view. After this was established, efforts were made
to understand finer statistical properties such as decay of correlations. Among the developments,
we mention the work of Chernov \cite{Chernov-1998}, Dolgopyat \cite{Dolgopyat-1998} and Liverani
\cite{Liverani-2004} on the exponential decay of correlations for contact Anosov flows
(and in particular geodesic flows on compact manifolds with negative curvature),
and the work of Burns et al. \cite{BMMW-1,BMMW-2} on the rates of mixing of the 
Weil-Petersson geodesic flows on moduli spaces of Riemann surfaces.
We mention that, beside its intrinsic interest, exponential mixing for geodesic (and frame) flows
has applications to other fields such as the geometry of lattices \cite{Eskin-McMullen},
number theory \cite{Kleinbock-Margulis}, and the topology of $3$--manifolds \cite{Kahn-Markovic-Annals2012}.

While geodesic flows in negative curvature are the prototypical examples of uniformly hyperbolic
flows, geodesic flows in nonpositive curvature are the prototypical examples of nonuniformly
hyperbolic flows, and are much harder to study. For instance, the ergodicity of the Liouville
measure is still an open problem. Pesin developed a global theory
for nonuniformly hyperbolic systems, nowadays called {\em Pesin theory}, and used it to derive
the ergodicity of the Liouville measure in various contexts
\cite{Pesin-Characteristic-1977,Pesin-geodesic-flows}, 
see also the discussion in \cite[p.\ 5]{Ballmann}.
In particular, the geodesic flows considered in Theorem \ref{thm:main} are ergodic for the Liouville measure.
There has been recent progress on the measure-theoretic
properties of these flows, including the uniqueness of the measure of maximal entropy
\cite{Knieper-Rank-One-Entropy}, the uniqueness of other classes of equilibrium states \cite{Burns-et-al},
and the Bernoulli property \cite{Ledrappier-Lima-Sarig}, among others.
Previously, nothing was known about decay of correlations or the CLT for such flows, and this paper gives the first
contribution in this direction. 
(Although this paper gives the first ``classical'' CLT, an asymptotic version of the CLT for measures converging to the measure of maximal entropy was recently obtained by~\cite{ThompsonWang}.)
%Theorem \ref{thm:main} proves polynomial decay of correlations for geodesic flows in a class of surfaces of nonpositive curvature where zero curvature occurs only along a closed geodesic.  We believe that such decay is genuinely polynomial, in the sense that a polynomial lower bound also holds, and we were told that a work in progress by Bruin, Melbourne, and Terhesiu will provide such lower bound of order $O\left(\tfrac{1}{t^a\log t}\right)$.  Indeed, our proof of Theorem \ref{thm:main} represents $g_t$ as the Poincar\'e suspension flow of a map $h$ such that the roof function is bounded away from zero and infinity and $h$ has sharp polynomial decay of correlations in the sense of~\cite{BMTapp} (many observables lead to correlations decaying like $n^{-a}$ up to multiplicative factors of the form $(\log n)^B$). 

To prove Theorem \ref{thm:main} we use an axiomatic approach, nowadays called
{\em Chernov axioms}, first developed by Chernov to prove exponential decay of correlations for
dispersing billiard maps \cite{Chernov-1999}.
We actually follow a simplification of this work given by Chernov and Zhang \cite{Chernov-Zhang}.
We apply these works to a uniformly hyperbolic map with singularities $f$, equal to the return
map of a Poincar\'e section that does not intersect the degenerate closed geodesic. %This Poincar\'e section is global Liouville almost everywhere, since the only trajectories of the flow not captured by it are those inside the invariant manifolds of the degenerate closed geodesic, and these have zero Liouville measure.
Then the method of Markarian~\cite{Markarian-polynomial, Chernov-Zhang, BMTapp} enables us to establish polynomial decay of correlations for $h$ and $g_t$.

\begin{remark}
When establishing mixing rates for billiards, the main step is to verify complexity bounds, due to the fact that the remaining Chernov axioms had already been verified for many classes of examples in the previous twenty years starting with~\cite{BSC91}, as discussed in~\cite[Section~4]{Chernov-Zhang}. However, this is not the case for geodesic flows in nonpositive curvature, so the current paper aims to lay the groundwork for verifying all of the Chernov axioms for general classes of geodesic flows, in addition to treating the specific example in Theorem~\ref{thm:main}.
\end{remark}

One of the Chernov axioms is that invariant manifolds have uniformly bounded curvature. 
This is a delicate point for the surfaces we consider. For instance, Ballmann, Brin and Burns
showed that in a surface of revolution with profile $1+s^4$ (i.e. $r=4$ in 
Theorem \ref{thm:main}) the invariant manifolds of the degenerate closed geodesic are not $C^2$,
hence the curvature is not even defined \cite{BBB-1987}. To avoid this, we verify that 
\cite{Chernov-1999,Chernov-Zhang} works under the weaker assumption that the invariant manifolds
have uniformly bounded $C^{1+\rm{Lip}}$ norms, and we exploit the fact that this latter property is satisfied
in the class of surfaces we consider, by a result of Gerber and Wilkinson \cite{Gerber-Wilkinson},
see Theorem \ref{thm:Gerber-Wilkinson}.

Some of the Chernov axioms are related to hyperbolicity properties of the uniformly hyperbolic
map $f$ mentioned above. In negative curvature, these properties
are usually obtained by estimating solutions of the Riccati equation. Unfortunately, the presence of zero
curvature weakens such estimates, and we were not able to use them to establish the required axioms.
Instead, we follow a different approach and use a system of coordinates in the unit tangent bundle
of the surface of revolution, called {\em Clairaut coordinates}. In these coordinates, estimates
for~$f$ are almost sharp.

In addition, the Poincar\'e section has to satisfy some geometrical and dynamical properties.
One of them is the absence of triple intersections for a sufficiently large number of pre-iterates
under $f$ of a finite family of compact curves. This task would be a simple application
of perturbative methods if these pre-iterates remained compact. However, $f$ has unbounded
derivative, and the pre-images of some compact curves have infinite length. This phenomenon
is related to the homoclinic points of the degenerate closed geodesic, and a thorough analysis of 
the dynamics of $f$ is required to implement the perturbative methods successfully.

\begin{remark} \label{rmk:CZ05b}
Chernov and Zhang considered an analogous class of dispersing billiard maps~\cite{ChernovZhang05b}
for which the obstacles are convex with nonvanishing curvature
except at two flat points where the obstacles have profile $\pm(1+|s|^r)$, $r>2$.
They obtained the same upper bound $(\log n)^{a+1} n^{-a}$ as in Remark~\ref{rmk:main} and the lower bound $(\log n)^{-1} n^{-a}$ was proved in~\cite{BMTapp}.
The associated semiflow has the same polynomial
decay of correlations \cite{M07,Melbourne-2018}, but the analogue of Theorem
\ref{thm:main} for the billiard flow remains unproved (this is the final open question in~\cite[Section~9]{Melbourne-2018}).

Contrary to~\cite{ChernovZhang05b}, we require $r\geq 4$ rather than just $r > 2$ because this provides the usual $C^4$ regularity  required to apply several tools from the theory of geodesic flows described in Section~\ref{sec:surfaces} (from the properties of elementary ordinary differential equations
like the Jacobi and Riccati equations to more recent results such as Theorem~\ref{thm:Gerber-Wilkinson}).
\end{remark}

The paper is organized as follows. In Section \ref{sec:surfaces} we review known facts about the
geometry and dynamics of geodesic flows on surfaces, with special attention to the class of surfaces
considered in Theorem \ref{thm:main}. In particular, we state the main results of Gerber and Wilkinson
\cite{Gerber-Wilkinson} that we will use. Section \ref{sec:exponential.mixing} presents the axiomatic approach
of \cite{Chernov-1999,Chernov-Zhang}, and includes the justification that uniform bounds
on the $C^{1+{\rm Lip}}$ norms of invariant manifolds are enough, see Remark \ref{rmk:A5}.
In Section \ref{sec:neck-dynamics} we make a systematic study of the dynamics of the geodesic flow near 
the degenerate closed geodesic, which is related to explosion of the derivative of $f$. Here we make substantial
use of the Clairaut coordinates. In Section \ref{sec:construction-Poincare-section} we construct
the Poincar\'e section.
%which is made step by step using auxiliary Poincar\'e sections. 
We also prove
that the roof function of the constructed Poincar\'e section has polynomial tails (Lemma \ref{lem:tails}),
and prove some hyperbolicity estimates for $f$, see Section \ref{ss:hyperbolicity-f}.
Using these results, we prove in Section \ref{sec:CZ-scheme} that $f$ indeed satisfies 
the Chernov axioms. Finally, we prove Theorem \ref{thm:main} and various statistical limit laws in Section~\ref{sec:conclusion}.
%$For that, we use the recent work of B\'alint, Butterley, and Melbourne on polynomial mixing for certain suspension flows of exponentially mixing maps with Young towers and roof functions with polynomial tails \cite{Balint-et-al-2019}.

\section{Surfaces with nonpositive curvature}\label{sec:surfaces}

In this section, we recall some known facts on differential geometry, most specifically
on geodesic flows in nonpositively curved surfaces and surfaces of revolution.
%The main references are \cite{Ballmann,Eberlein}. 
We also give a precise description
of the class of surfaces we consider in this article, and describe the properties of these
surfaces that will be used in the sequel.

\subsection{Geodesic flows}\label{sec:geodesic-flows}

Let $S$ be a closed Riemannian surface. Let $M=T^1S$ be its unit tangent bundle,
which is a closed three dimensional Riemannian manifold. There is a natural metric on $M$,
called the {\em Sasaki metric}, which is the product of horizontal and vertical vectors,
see e.g.\ \cite[Chapter 3, Exercise 2]{Manfredo}. We write $\|\cdot\|_{\rm Sas}$ for the norm induced by 
the Sasaki metric. The volume form on $S$ induces a smooth probability measure $\mu$ on $M$.
%For each $v\in M$, there is a unique geodesic $\gamma_v(t)$ with $\gamma'_v(0)=v$.  

\medskip
\noindent
{\sc Geodesic flow $\{g_ t\}$:} The {\em geodesic flow} on $S$ is the flow $\{g_t\}_{t\in\R}:M\to M$ defined by
$g_t(x)=\gamma'_x(t)$, where $\gamma_x:\R\to S$ is the unique geodesic such that $\gamma'_x(0)=x$.
For simplicity, we denote the geodesic flow by $g_t$. 
The probability measure $\mu$ is invariant under $g_t$.

\medskip
For $p\in S$, let $K(p)$ be the Riemannian curvature at $p$.
We assume that $S$ has {\em nonpositive} Riemannian curvature: $K(p)\leq 0$ for all $p\in S$.
The dynamical properties of $g_t$ are intimately related to the curvature of $S$. 

\medskip
\noindent
{\sc Degenerate and Regular sets:} The {\em degenerate set} of $g_t$ 
is defined by
$$
{\rm Deg}=\{x\in M:K(\gamma_x(t))=0\text{ for all }t\in\R\},
$$
and the {\em regular set} of $g_t$ is defined by
$$
{\rm Reg}=M\setminus {\rm Deg}=\{x\in M:K(\gamma_x(t))<0\text{ for some }t\in\R\}.
$$

\medskip
Clearly, ${\rm Deg}$ and ${\rm Reg}$ form a partition of $M$, with ${\rm Deg}$
closed and ${\rm Reg}$ open.

\begin{remark} The classical literature uses the terminology
``singular set'' instead of ``degenerate set'', but here we reserve the term
``singular'' for the dynamical setting of the Chernov axioms.
\end{remark}

\begin{theorem}[Pesin \cite{Pesin-geodesic-flows}]\label{thm:ergodicity}
If $\mu[{\rm Deg}]=0$, then the flow $(g_t,\mu)$ is ergodic.
\end{theorem}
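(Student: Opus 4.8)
The plan is to run the Hopf argument adapted to the nonuniformly hyperbolic setting, which is exactly the content of Pesin theory for geodesic flows. First I would invoke the general structure of nonpositively curved geodesic flows on surfaces: along any regular orbit the Riccati equation has a well-defined stable and unstable solution, giving rise to stable and unstable horocycle foliations (equivalently, the stable/unstable Jacobi fields), and hence local stable and unstable manifolds $W^s(x)$, $W^u(x)$ through $\mu$-a.e.\ point. The hypothesis $\mu[{\rm Deg}]=0$ guarantees that $\mu$-a.e.\ point lies in ${\rm Reg}$, so that the Lyapunov exponents transverse to the flow direction are nonzero a.e.; this is where one uses that zero curvature, while present, is not seen by a.e.\ geodesic for all time. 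By the Pesin stable manifold theorem (or its specialization to geodesic flows), the stable and unstable manifolds exist a.e., vary measurably, and are absolutely continuous: the holonomy maps between transversals along $W^s$ and along $W^u$ carry the conditional Riemannian measures to equivalent measures.

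The core of the argument is then the classical Hopf chain. Let $\phi:M\to\R$ be continuous (it suffices to prove $g_t$-invariant $L^2$ functions are a.s.\ constant, and continuous functions are dense). By the Birkhoff ergodic theorem for the flow, the forward and backward time averages $\phi^+(x)=\lim_{T\to\infty}\tfrac1T\int_0^T\phi(g_tx)\,dt$ and $\phi^-(x)=\lim_{T\to\infty}\tfrac1T\int_0^T\phi(g_{-t}x)\,dt$ exist a.e., agree a.e., and are $g_t$-invariant. Using uniform continuity of $\phi$ and the fact that points on a common local stable manifold have forward orbits that stay close (distances contract, or at least remain bounded and the difference of Birkhoff sums is $o(T)$), $\phi^+$ is constant along stable manifolds; symmetrically $\phi^-$ is constant along unstable manifolds. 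Since $\phi^+=\phi^-$ a.e., this common invariant function $\bar\phi$ is (after discarding a null set) constant along both foliations and along the flow direction.

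Finally I would assemble local ergodicity from absolute continuity: on a Pesin block of positive measure, the three families of curves — local stable manifolds, local unstable manifolds, and flow lines — span a neighborhood, and by the absolute continuity of the stable and unstable holonomies (Fubini-type argument), a function constant along all three is constant a.e.\ on an open set of full measure in that block. This gives that $\bar\phi$ is locally constant a.e.; connectedness of $M$ (and the fact that $\mu$ has full support, being a smooth volume) then forces $\bar\phi$ to be globally a.e.\ constant, hence $\phi^+$ is a.e.\ constant, hence $\int\phi\,d\mu$ equals that constant, proving ergodicity. The main obstacle — and the place where the hypothesis and real work enter — is establishing absolute continuity of the invariant foliations in the presence of the degenerate set; this is precisely what Pesin's machinery (together with the regularity of the horocycle foliations for nonpositively curved surfaces) provides, and it is the step one cannot shortcut by soft arguments. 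I would cite Pesin's papers for this and present the Hopf chain above as the organizing skeleton.
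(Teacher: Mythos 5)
The paper does not prove this theorem; it is quoted as a black box from Pesin's work \cite{Pesin-geodesic-flows} (with a pointer to the survey by Ballmann). So there is no ``paper proof'' to compare against. That said, your outline is a faithful and essentially correct summary of the argument Pesin actually gave: existence of the invariant horocycle foliations (which, in nonpositive curvature, hold everywhere and not merely $\mu$-a.e.), the Hopf chain with Birkhoff averages constant along stable and unstable leaves, absolute continuity of the stable/unstable holonomies on Pesin blocks, and a Fubini/density-point argument to pass from local to global constancy via connectedness.

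The one place your sketch is a little too soft is where the hypothesis $\mu[{\rm Deg}]=0$ is used. Saying ``zero curvature is not seen by a.e.\ geodesic for all time'' does not by itself give nonzero Lyapunov exponents a.e.; one still has to rule out that a positive-measure set of orbits has vanishing \emph{time average} of the expansion rate while still occasionally entering ${\rm Reg}$. The clean way to close this is exactly the mechanism the paper sets up in Proposition~\ref{prop:u-continuous}: $u_\pm$ is continuous, $u_-\le 0\le u_+$, and $u_\pm(x)=0$ iff $x\in{\rm Deg}$. The Lyapunov exponent of $x$ in the unstable direction is $\chi(x)=\lim_{T\to\infty}\tfrac1T\int_0^T u_+(g_tx)\,dt$, and the set $A=\{\chi=0\}$ is flow-invariant, so Birkhoff applied to $A$ gives $\int_A u_+\,d\mu=\int_A\chi\,d\mu=0$; since $u_+\ge 0$, this forces $u_+=0$ a.e.\ on $A$, i.e.\ $A\subset{\rm Deg}$ up to a null set, hence $\mu[A]=0$. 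This is the lemma that makes Pesin's nonuniform machinery (stable manifold theorem, absolute continuity, countably many Pesin blocks exhausting a full-measure set) applicable, and it is worth stating explicitly rather than implying. Everything else in your proposal matches the standard route.
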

\noindent See also the discussion in \cite[p.\ 5]{Ballmann}.

\medskip
The dynamical properties of $g_t$ are usually studied via Jacobi fields.

\medskip
\noindent
{\sc Jacobi field:} A vector field $J:t\mapsto J(t)\in T_{\gamma(t)}S$
along a geodesic $\gamma$ is called a {\em Jacobi field}
if it satisfies the {\em Jacobi equation}
$$
J''(t)+K(\gamma(t))J(t)=0.
$$
If $J(t),J'(t)$ are perpendicular to $\gamma'(t)$ for
some (and hence all) $t$, then $J$ is called a {\em perpendicular Jacobi field}.

\medskip
%Observe that the Jacobi equation is a second order ordinary differential equation,
%hence it is uniquely defined by the values of initial conditions $J(0),J'(0)$.
For every $x\in M$ there is an isomorphism
$$
T_xM \leftrightarrow \{(J(0),J'(0)):J\text{ is a Jacobi field along }\gamma_x\text{ with }J'(0)\perp \gamma_x'(0)\}. 
$$
Under this identification, the Sasaki metric is equal to $\|(J,J')\|_{\rm Sas}^2=\|J\|^2+\|J'\|^2$.
Additionally, we have $dg_t(J(0),J'(0))=(J(t),J'(t))$, and this is one of the reasons
why Jacobi fields provide dynamical information of $g_t$.
Under our curvature assumptions, we can characterize stable and unstable subspaces.

\medskip
\noindent
{\sc Stable and unstable Jacobi fields:} A Jacobi field $J$ is called {\em stable} if $\|J(t)\|$
is uniformly bounded for all $t\geq 0$, and {\em unstable} if $\|J(t)\|$ is uniformly
bounded for all $t\leq 0$.

\medskip
\noindent
{\sc Stable and unstable subspaces:} The {\em stable subspace}
of $x\in M$ is
$$
\wh E^s_x=\{(J(0),J'(0)):J\text{ is a stable perpendicular Jacobi field along }\gamma_x\},
$$
and the {\em unstable subspace} of $x\in M$ is
$$
\wh E^u_x=\{(J(0),J'(0)):J\text{ is an unstable perpendicular Jacobi field along }\gamma_x\}.
$$

\begin{remark}
We reserve the notation $E^{s/u}_x$ for the stronger notion of stable/unstable
subspace in the sense of hyperbolic dynamics, as described in Section
\ref{sec:construction-Poincare-section}.
\end{remark}

\medskip
Let $Z_x$ denote the one-dimensional subspace of $T_xM$ tangent to the geodesic flow.
The following are known facts of these subspaces, see e.g. \cite{Eberlein}. 

\begin{lemma}\label{lem:subspaces}
The families $\{\wh E^s_x\},\{\wh E^u_x\}$ satisfy the following properties:
\begin{enumerate}[{\rm (1)}]
\item {\sc Invariance:} $\{\wh E^s_x\},\{\wh E^u_x\}$ are $dg_t$--invariant for all $t\in\R$.
\item {\sc Dimension:} $\wh E^s_x,\wh E^u_x$ have dimension one and are orthogonal to $Z_x$.
\item {\sc Continuity:} The maps $x\mapsto \wh E^s_x,\wh E^u_x$ are continuous.
\item {\sc Relation with ${\rm Deg,Reg}$:} $\wh E^s_x=\wh E^u_x$ if and only if $x\in {\rm Deg}$, hence
$\wh E^s_x\oplus Z_x \oplus \wh E^u_x=T_xM$ if and only if $x\in{\rm Reg}$.
\end{enumerate}
\end{lemma}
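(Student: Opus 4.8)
The plan is to follow the classical approach (see e.g.\ \cite{Eberlein}), reducing everything to the scalar Jacobi equation. On a surface, every perpendicular Jacobi field along $\gamma_x$ has the form $J(t)=j(t)\,e(t)$, where $e(t)$ is the parallel unit normal field along $\gamma_x$ and $j$ solves $j''+k(t)j=0$ with $k(t):=K(\gamma_x(t))\le0$; thus the space of such fields is two–dimensional, with coordinates $(j(0),j'(0))$. Two elementary consequences of $k\le0$ will be used throughout: (i) wherever $j>0$ one has $j''=-kj\ge0$, so a positive solution is convex, and a solution with $j(0),j'(0)>0$ satisfies $j(t)\ge j(0)+j'(0)t$ for $t\ge0$; (ii) $\tfrac{d^2}{dt^2}\|J(t)\|^2=2\|J'(t)\|^2-2k(t)\|J(t)\|^2\ge0$, i.e.\ $t\mapsto\|J(t)\|^2$ is convex.

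First I would construct the stable and unstable solutions. By (i) a solution vanishing at two distinct times is identically zero (no conjugate points), so for each $T>0$ the boundary value problem $u_T(0)=1$, $u_T(T)=0$ has a unique solution; short comparison arguments based again on (i) show that $u_T>0$ and $u_T'<0$ on $[0,T)$, that $u_T'(0)$ is nondecreasing in $T$ and $\le0$, and that $u_T\le1$ on $[0,T]$. Hence $j^s_x:=\lim_{T\to\infty}u_T$ exists locally uniformly, solves the Jacobi equation, and is positive and bounded on $[0,\infty)$; it spans a one–dimensional space of stable fields. The mirror construction gives $j^u_x$, bounded on $(-\infty,0]$. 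This already yields property (2): by (i) the solution with $j(0)=j'(0)=1$ is unbounded on $[0,\infty)$, so the stable space is at most one–dimensional and therefore equals the line through $(j^s_x(0)e(0),(j^s_x)'(0)e(0))$, and similarly for the unstable space; orthogonality to $Z_x$ is immediate since $Z_x$ is represented by the tangential Jacobi field $t\mapsto\gamma_x'(t)$, so $\langle(J(0),J'(0)),(\gamma_x'(0),0)\rangle_{\rm Sas}=\langle J(0),\gamma_x'(0)\rangle=0$ for perpendicular $J$.

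Property (1) is then automatic: since $dg_t(J(0),J'(0))=(J(t),J'(t))$, a time shift carries a perpendicular Jacobi field along $\gamma_x$ to one along $\gamma_{g_tx}$, and boundedness of $\|J(\cdot)\|$ on $[0,\infty)$ is unaffected by shifting the origin, so $dg_t\wh E^s_x=\wh E^s_{g_tx}$ and likewise for $\wh E^u$. For property (4): if $x\in{\rm Deg}$ then $k\equiv0$ and the solutions bounded on $[0,\infty)$ (resp.\ on $(-\infty,0]$) are exactly the constants, so $\wh E^s_x=\wh E^u_x$; conversely, if $\wh E^s_x=\wh E^u_x$ there is a nonzero perpendicular Jacobi field $J$ with $\|J(t)\|$ bounded on all of $\R$, whence by (ii) $\|J\|^2$ is convex and bounded on $\R$, hence constant and positive, and then $0=2\|J'\|^2-2k\|J\|^2$ forces $k\equiv0$, i.e.\ $x\in{\rm Deg}$. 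The displayed equivalence $\wh E^s_x\oplus Z_x\oplus\wh E^u_x=T_xM\Leftrightarrow x\in{\rm Reg}$ follows because, by (2), $\wh E^s_x$ and $\wh E^u_x$ are lines in the two–plane $Z_x^\perp$, distinct exactly on ${\rm Reg}$.

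The main work is property (3). The key is the characterization $F^{cs}_x:=\wh E^s_x\oplus Z_x=\{v\in T_xM:\sup_{t\ge0}\|dg_tv\|_{\rm Sas}<\infty\}$ together with a \emph{uniform} bound $\sup_{t\ge0}\|dg_tv\|_{\rm Sas}\le C\|v\|_{\rm Sas}$ for $v\in F^{cs}_x$ with $C$ independent of $x$. On $Z_x$ the flow acts by isometries (its Jacobi field $a\gamma_x'(t)$ has constant norm); on $\wh E^s_x$ one uses that the stable Riccati solution $u=(j^s_x)'/j^s_x$ satisfies $u\ge-\sqrt{\|K\|_\infty}$ on $[0,\infty)$ by a Riccati comparison (here $S$ compact gives $\|K\|_\infty<\infty$), so $\|J(t)\|$ is nonincreasing and $\|J'(t)\|\le\sqrt{\|K\|_\infty}\,\|J(t)\|$; as $\wh E^s_x\perp Z_x$ the two estimates combine. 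The uniform bound makes $x\mapsto F^{cs}_x$ have closed graph into the Grassmannian of two–planes of $TM$, and since $\dim F^{cs}_x\equiv2$ this gives continuity; symmetrically for $x\mapsto F^{cu}_x:=\wh E^u_x\oplus Z_x$. Finally $\wh E^s_x=F^{cs}_x\cap Z_x^\perp$ (a one–dimensional intersection, since $Z_x\subset F^{cs}_x$ while $Z_x\not\subset Z_x^\perp$) and $x\mapsto Z_x^\perp$ is continuous, so $x\mapsto\wh E^s_x$ is continuous, and likewise $x\mapsto\wh E^u_x$. I expect this closed–graph/uniform–bound step to be the only genuinely delicate part of the lemma; properties (1), (2) and (4) are direct consequences of $k\le0$ via (i) and (ii).
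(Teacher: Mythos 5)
Your argument is correct. The paper itself does not prove this lemma; it records these as known facts and refers to Eberlein \cite{Eberlein}, so what you have written is a self-contained proof of a statement the paper delegates to the literature, and it follows the classical Jacobi/Riccati route that reference uses. The core steps are all sound: constructing $j^s_x$ as the monotone limit of boundary-value solutions $u_T$ with $u_T(T)=0$ (valid since nonpositive curvature rules out conjugate points, and $u_T'(0)$ increases in $T$ and is bounded above by $0$); the rigidity argument for part (4) via convexity of $\|J\|^2$ (a bounded convex function on $\R$ is constant, which forces $J'\equiv0$ and $k\equiv0$ simultaneously); and the continuity of $x\mapsto F^{cs}_x:=\wh E^s_x\oplus Z_x$ via its characterization as the set of forward-bounded vectors, combined with a uniform bound and a closed-graph argument in the Grassmannian. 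The two subtleties in part (3), which you handled correctly, are that the characterization of $F^{cs}_x$ must persist on ${\rm Deg}$ (it does, since there the forward-bounded perpendicular Jacobi fields are exactly the constants, still one-dimensional), and that one must control $\|J'\|$ as well as $\|J\|$; the Riccati comparison $-\sqrt{\|K\|_\infty}\le (j^s_x)'/j^s_x\le 0$, available because $S$ is compact, delivers precisely that, after which the uniform bound passes to the limit for each fixed $t$ by continuity of $dg_t$ on $TM$ and equality of dimensions finishes the closed-graph step.
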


Next, we consider the invariant manifolds of $g_t$. We first
define the invariant manifolds for the geodesic flow $\wt g_t$ on the universal
cover $\wt S$ of $S$. For that, we consider Busemann functions and horospheres.
Our discussion follows \cite[Section~IV.A]{Eberlein}. For each $v\in T^1\wt S$, let
$\wt\gamma_v$ be the unique geodesic of $\wt S$ with $\wt \gamma'_v(0)=v$.
Given $t\in\R$, define the function $B_{v,t}:\wt S\to\R$ by $B_{v,t}(x)=d(x,\wt\gamma_v(t))-t$. 
(Here, $d$ is the unique metric on $\wt S$ making the covering map a local isometry.)
The {\em Busemann function} of $v$ is the limit function $B_v:\wt S\to\R$ defined
by $B_v=\lim\limits_{t\to+\infty}B_{v,t}$. Since $S$ has nonpositive curvature, it follows
from Eberlein that
each $B_v$ is $C^2$  \cite[Section~IV.A]{Eberlein}, see also \cite[Prop.\ 3.1]{Heintze-Im-Hof}.

\medskip
\noindent
%{\sc Horospheres at $v\in T^1\wt S$:} 
{\sc Horospheres:} 
The {\em stable horosphere} at $v\in T^1\wt S$
is the set $H^s(v)\subset\wt S$ defined as $H^s(v)=B_v^{-1}(0)$.
The {\em unstable horosphere} at $v\in T^1\wt S$ is the set $H^u(v)\subset\wt S$
defined by $H^u(v)=(B_{-v})^{-1}(0)$.

\medskip 
Each $H^{s/u}(v)$ is a $C^2$ curve of $\wt S$, and $v\mapsto H^{s/u}(v)$ is continuous, see
e.g. \cite[p.\ 25]{Ballmann}. These curves define invariant foliations for $\wt g_t$.

\medskip
\noindent
{\sc Invariant manifolds for $\wt g_t$:} The {\em stable manifold} for $\wt g_t$ at $v\in T^1\wt S$
is the graph over $H^s(v)$ defined by
$$
\wt W^s_v=\left\{w\in T^1\wt S:\begin{array}{l}w\text{ is perpendicular to and has basepoint at }\\
H^s(v),\text{ pointing in the same direction of }v\end{array}\right\}.
$$  
The {\em unstable manifold} for $\wt g_t$ at $v\in T^1\wt S$ is defined analogously.

\medskip
Since $H^{s/u}(v)$ is $C^2$, its normal subbundle is $C^1$, i.e.\ each leaf $\wt W^{s/u}_v$
is $C^1$. 
%In Section \ref{sec:our-surface}, this regularity is improved to uniformly $C^{1+{\rm Lip}}$ for the class of surfaces that we consider.

\medskip
\noindent
{\sc Invariant manifolds for $g_t$:} The stable/unstable manifolds of
$g_t$ at $x\in M$ are the projections to $M$ of the stable/unstable manifolds of $\wt g_t$ 
at some (every) $v\in T^1\wt S$ that projects to $x$. We denote them by $\wh W^{s/u}_x$. 
%Let $\mathfs W^{s/u}$ be the stable/unstable foliations consisting of leafs $\wh W^{s/u}_x$

\medskip
By the above discussion
the curves $\wh W^{s/u}_x$ are $C^1$ for all $x\in M$. 
Under additional conditions, Gerber \& Wilkinson proved a stronger regularity \cite{Gerber-Wilkinson}
and also a property about the tangent distributions $\wh E^{s/u}$, see 
Theorem \ref{thm:Gerber-Wilkinson} below.

Next, we discuss the link between Jacobi fields and horospheres.
Fix $x\in M$, and let $J_-(t)$ be a stable perpendicular Jacobi field along $\gamma_x$.
If $E(t)$ is a unitary parallel vector field orthogonal to $\gamma_x$, then 
$J_-(t)=j_-(t)E(t)$ where $j_-(t)=\|J_-(t)\|$ satisfies the {\em scalar Jacobi equation}
$$
j''(t)+K(\gamma_x(t))j(t)=0.
$$
The logarithmic derivative $u_{x,-}(t)=\tfrac{j'_-(t)}{j_-(t)}=[\log j_-(t)]'$ satisfies the {\em Riccati equation}
$$
u'(t)+u(t)^2+K(\gamma_x(t))=0.
$$
Define $u_-:M\to \R$ by $u_-(x)=u_{x,-}(0)$. It is known that $u_-(x)$ is the geodesic curvature of
the curve $H^s(v)$ at the basepoint of $v$ for some (every) $v\in T^1\wt S$ that projects to $x$.
Similarly, for each $x\in M$ define a function $u_{x,+}$; then $u_+:M\to \R$ defined by
$u_+(x)=u_{x,+}(0)$ is the geodesic curvature of $H^u(v)$ at the basepoint of $v$
for some (every) $v\in T^1\wt S$ that projects to $x$.
The following properties of $u_\pm$ will be essential to us.

\begin{proposition}\label{prop:u-continuous}
The functions $u_\pm$ are continuous, $u_-\leq 0\leq u_+$ and
$u_{\pm}(x)=0$ if and only if $x\in {\rm Deg}$.
\end{proposition}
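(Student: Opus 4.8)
The plan is to read off $u_\pm(x)$ from the invariant line fields $\wh E^{s/u}_x$ of Lemma~\ref{lem:subspaces}, to obtain continuity and the inequalities $u_-\le 0\le u_+$ together with the implication that $x\in{\rm Deg}$ forces $u_\pm(x)=0$ from elementary convexity properties of solutions of the scalar Jacobi equation, and to treat the converse with a short geometric argument that uses compactness of $S$.

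I would begin by fixing, along $\gamma_x$, a parallel unit normal field $E(t)$ and writing a stable perpendicular Jacobi field as $J_-(t)=j_-(t)E(t)$, so that $j_-$ solves $j_-''+K(\gamma_x(t))j_-=0$ and $\|J_-(t)\|^2=j_-^2(t)$. Since $K\le 0$ we have $(j_-^2)''=2(j_-')^2-2K(\gamma_x(t))\,j_-^2\ge 0$, so $j_-^2$ is convex on $\R$; it is also bounded on $[0,\infty)$ because $J_-$ is stable. A convex differentiable function bounded above on $[0,\infty)$ has nonpositive derivative there, and convexity then propagates this to all of $\R$, so $j_-^2$ is non-increasing on $\R$; consequently $j_-$ (hence $J_-$) is nowhere zero, for otherwise $j_-$ would vanish identically past its zero, contradicting $\dim\wh E^s_x=1$. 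After replacing $E$ by $-E$ if needed we may assume $j_->0$; then $j_-'(0)\le 0$, i.e.\ $u_-(x)=j_-'(0)/j_-(0)\le 0$. Symmetrically, unstable fields are nowhere zero and $u_+(x)\ge 0$. Finally, in the frame $E$ the line $\wh E^s_x$ is spanned by $(j_-(0),j_-'(0))$ and, by the above, is never the vertical axis, so $u_-(x)$ is its slope; since $x\mapsto\wh E^s_x$ is continuous (Lemma~\ref{lem:subspaces}(3)) and the slope depends continuously on a non-vertical line, $u_-$ is continuous, and likewise $u_+$.

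The implication $x\in{\rm Deg}\Rightarrow u_\pm(x)=0$ is then immediate: if $K\equiv 0$ along $\gamma_x$ the equation reads $j''=0$, so $j_\mp$ is affine, and an affine function with bounded absolute value on $[0,\infty)$ (resp.\ $(-\infty,0]$) is constant, whence $j_\mp'(0)=0$. For the converse, suppose $u_-(x)=0$, i.e.\ $j_-'(0)=0$. Convexity makes $j_-'$ non-decreasing, so $j_-'\ge 0$ on $[0,\infty)$, while $j_-'\le 0$ on $\R$ by the previous paragraph; hence $j_-'\equiv 0$ on $[0,\infty)$, so $j_-$ is constant there and $0=j_-''(t)=-K(\gamma_x(t))\,j_-(t)$ with $j_-(t)\ne 0$, giving $K(\gamma_x(t))=0$ for all $t\ge 0$. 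Thus the forward geodesic ray $\gamma_x|_{[0,\infty)}$ is contained in the flat locus $Z=\{p\in S:K(p)=0\}$.

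The main obstacle is upgrading this to $\gamma_x(\R)\subseteq Z$, and here compactness of $S$ is essential (the statement fails on $\R^2$). For the surfaces of Theorem~\ref{thm:main} it is immediate: $Z$ is the single degenerate closed geodesic $\gamma$, a one-dimensional set, so $\gamma_x(0)\in\gamma$ and $\gamma_x'(0)$ is tangent to $\gamma$, whence $\gamma_x$ coincides with $\gamma$ up to reparametrization and $K\circ\gamma_x\equiv 0$, i.e.\ $x\in{\rm Deg}$; the case $u_+(x)=0$ is identical, working with $t\le 0$ and the backward ray. For a general closed nonpositively curved surface this is where the real work lies: one must rule out a geodesic that is trapped in $Z$ in forward time while meeting ${\rm Reg}$ in backward time, which requires a structural description of the flat part (e.g.\ via Gauss--Bonnet, the fact that a complete flat subsurface with boundary is a cylinder or a M\"obius band, and the description of geodesics on flat cylinders). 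Every other assertion is a consequence of the convexity of solutions of the Jacobi equation in nonpositive curvature.
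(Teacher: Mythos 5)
Your proof is correct in substance for the case the paper actually uses, and it is considerably more explicit than the paper's own treatment, which disposes of the proposition in a single sentence (continuity is attributed to the $C^2$ regularity of Busemann functions, and the sign/vanishing characterization is left as a standard consequence of the theory of nonpositively curved surfaces as in Eberlein and Ballmann). Your route through the convexity of $t\mapsto j^2(t)$ is clean and gives all the easy parts at once: the nonvanishing of a stable/unstable Jacobi field, the signs $u_-\le 0\le u_+$, the implication ${\rm Deg}\Rightarrow u_\pm=0$, and, via the fact that $\wh E^s_x$ is then never vertical, the reduction of continuity of $u_\pm$ to the continuity of $x\mapsto\wh E^{s/u}_x$ (Lemma~\ref{lem:subspaces}(3)); note, however, that this last reduction is not really more elementary than the paper's remark, since both Lemma~\ref{lem:subspaces}(3) and the Busemann regularity are ultimately the same cited fact from Eberlein.

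On the converse, you correctly identify that the convexity argument only yields $K\equiv 0$ on the \emph{forward} ray, and you close the gap for the surfaces of Theorem~\ref{thm:main} because there the flat locus $Z$ is a single closed geodesic and a geodesic whose forward ray lies inside $\gamma$ must equal $\gamma$. That is exactly what the paper needs, since all later uses of the proposition (e.g.\ the estimate $\inf(u_+\restriction_Y)>0$ in Section~\ref{sec:CZ-scheme}) are for those surfaces. One remark that would tighten the writeup: for a \emph{general} closed nonpositively curved surface, the weaker statement ``$u_+(x)=u_-(x)=0$ if and only if $x\in{\rm Deg}$'' already follows purely formally from Lemma~\ref{lem:subspaces}(4) and the sign inequality, because $u_\pm(x)$ are the slopes of $\wh E^{s/u}_x$ and $u_-\le 0\le u_+$ forces $\wh E^s_x=\wh E^u_x$ iff $u_-(x)=u_+(x)=0$; it is the stronger ``individual'' implication $u_-(x)=0\Rightarrow x\in{\rm Deg}$ (i.e.\ that $u_-(x)=0$ already forces $u_+(x)=0$) that genuinely requires the global argument you outline and, as you say, needs information about the flat locus such as the flat strip theorem in the general case. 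Your acknowledgment of this subtlety is appropriate; it is a genuine gap for the proposition stated at full generality, but not for the paper's application.
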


The continuity of $u_\pm$ follows from the regularity of the Busemann
functions mentioned above. 
The functions $u_\pm$ also provide the
growth rate of the derivative of $g_t$, as follows.
Fix $x\in M$. For an unstable perpendicular Jacobi field $J_+(t)$ along $\gamma_x$, we have
\begin{align*}
j_+(t) & =j_+(0)\exp\left[\displaystyle\int_0^t u_+(g_sx)ds\right]\\
j'_+(t) & =j_+(0)\exp\left[\displaystyle\int_0^t u_+(g_sx)ds\right]u_+(g_tx)
\end{align*}
and so
\begin{align*}
\frac{\|dg_t(J_+(0),J_+'(0))\|_{\rm Sas}}{\|(J_+(0),J_+'(0))\|_{\rm Sas}}&=\frac{\|(J_+(t),J_+'(t))\|_{\rm Sas}}{\|(J_+(0),J_+'(0))\|_{\rm Sas}}\\
&=\sqrt{\dfrac{1+u_+(g_tx)^2}{1+u_+(x)^2}}\exp\left[\displaystyle\int_0^t u_+(g_sx)ds\right].
\end{align*}
By Proposition \ref{prop:u-continuous}, if the orbit segment $g_{[0,t]}x$ is far from ${\rm Deg}$,
then $\wh E^u_x$ is indeed an expanding direction. A similar calculation holds
for stable perpendicular Jacobi fields. We actually work with a variant of the Sasaki metric,
as in \cite[\S 17.6]{Katok-Hasselblatt-Book}.
Let $\delta>0$.

\medskip
\noindent
{\sc $\delta$--Sasaki metric:} The {\em $\delta$--Sasaki metric} is the metric $\|\cdot\|_{\delta-{\rm Sas}}$
satisfying the equality
$$
\|(J,J')\|_{\delta-{\rm Sas}}^2=\|J\|^2+\delta\|J'\|^2
$$
for all Jacobi field $J$.

\medskip
The Sasaki metric is the 1--Sasaki metric. In our calculations, we will fix a $\delta$--Sasaki
metric for $\delta$ small enough and denote it simply by $\|\cdot\|$. Hence 
\begin{equation*}
\frac{\|dg_t(J_+(0),J_+'(0))\|}{\|(J_+(0),J_+'(0))\|}=\sqrt{\dfrac{1+\delta u_+(g_tx)^2}{1+\delta u_+(x)^2}}\exp\left[\displaystyle\int_0^t u_+(g_sx)ds\right],
\end{equation*}
thus by the continuity of $u_+$ we get that 
\begin{equation}\label{eq:delta-Sasaki}
C_{\delta}^{-1} \exp\left[\displaystyle\int_0^t u_+(g_sx)ds\right]\leq \frac{\|dg_t(J_+(0),J_+'(0))\|}{\|(J_+(0),J_+'(0))\|}\leq C_{\delta} \exp\left[\displaystyle\int_0^t u_+(g_sx)ds\right]
\end{equation} 
where $\lim\limits_{\delta\to0} C_{\delta}=1$. Similar considerations apply to $J_-$.

\subsection{Surfaces of revolution}\label{sec:revolution}

Let $I\subset\R$ be a compact interval, and let $\xi:I\to\R$ be a positive $C^4$ function.
The {\em surface of revolution}
defined by $\xi$ around the $x$ axis is the surface $S$ with global chart
$\Xi:I\times[0,2\pi]\to \R^3$ given by
$\Xi(s,\theta)=(s,\xi(s)\cos\theta,\xi(s)\sin\theta)$.
We collect some known facts about these surfaces, see \cite{Manfredo-superficies}.

\medskip
\noindent
{\sc Curvature:} The curvature at $p=\Xi(s,\theta)$ is equal to
$$
K(p)=-\frac{\xi''(s)}{\xi(s)[1+(\xi'(s))^2]^2}\cdot
$$
See \cite[Example 4, p.\ 161]{Manfredo-superficies}.

\medskip
Geodesics on surfaces of revolution have a simple description.
They satisfy the so-called {\em Clairaut relation}, which allows to reduce the second order 
ordinary differential equation (ODE) defining the geodesic to a first order ODE.
Let $\gamma(t)=\Xi(s(t),\theta(t))$ be a geodesic, and let $\psi(t)\in\mathbb S^1$ be the angle that
the circle $s=s(t)$, more precisely its image under $\Xi$, makes with $\gamma$ at $\gamma(t)$, see Figure~\ref{fig:clairaut}.

\medskip
\noindent
{\sc Clairaut relation:} The value
\begin{equation} \label{eq:Clairaut}
c=\xi(s(t))\cos\psi(t)=\xi(s(t))^2 \theta'(t)
\end{equation}
is constant along $\gamma$.
We call $c$ the {\em Clairaut constant} of $\gamma$ and of all of its tangent vectors.

\begin{figure}[hbt!]
\centering
\def\svgwidth{10cm}
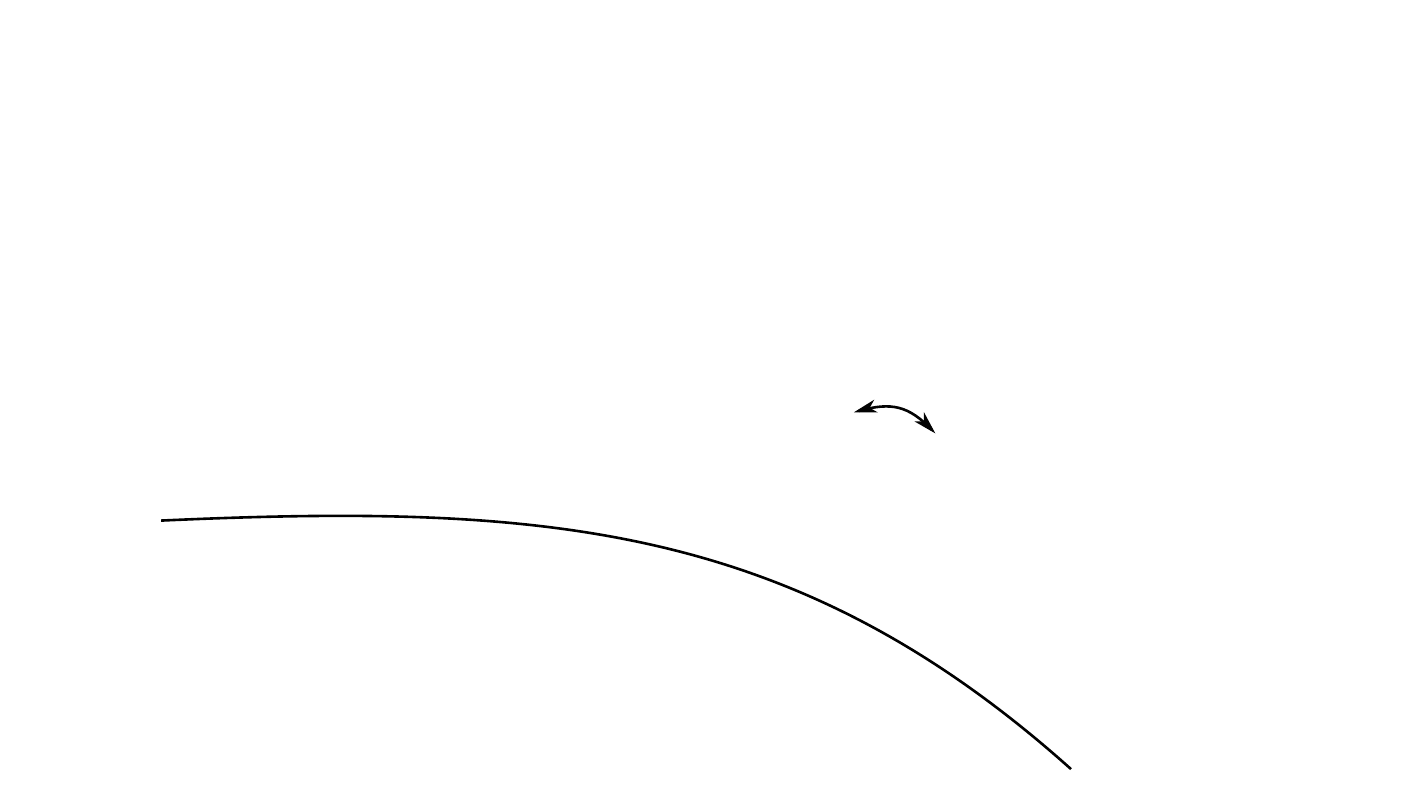
\caption{Clairaut relation: $c=\xi(s(t))\cos\psi(t)=\xi(s(t))^2 \theta'(t)$ is constant along $\gamma$.}
\label{fig:clairaut}
\end{figure}

\medskip
\noindent
{\sc Equation of geodesics:} If $\gamma(t)=\Xi(s(t),\theta(t))$ is a geodesic with Clairaut constant $c$, 
then $s(t)$ satisfies
\begin{equation}\label{eq:geodesics}
\left[1+\xi'(s)^2\right](s')^2+\frac{c^2}{\xi(s)^2}=1.
\end{equation}
See \cite[Example 5,  p. 255]{Manfredo-superficies} for the proof.

\medskip
For a fixed $s_0\in\R$, the curve $\Xi(s_0,\theta)$ is called a {\em meridian}.
The meridian $\Xi(s_0,\theta)$ is a geodesic if and only if $\xi'(s_0)=0$ 
(see \cite[p. 256]{Manfredo-superficies}).

Observe that $S$ is diffeomorphic to $I\times\mathbb S^1$ and $M$ is
diffeomorphic to $S\times \mathbb S^1\cong I\times\mathbb S^1\times\mathbb S^1$,
where $(p,\psi)\in S\times \mathbb S^1$ is identified to the unit tangent vector with basepoint $p$ that makes
an angle $\psi$ with the meridian passing though $p$. We can use this identification to define
another metric on $M$.

\medskip
\noindent
{\sc Clairaut coordinates and Clairaut metric:} The {\em Clairaut coordinates} on $M$ are 
$(s,\theta,\psi)\in I\times\mathbb S^1\times \mathbb S^1$, and the {\em Clairaut metric}
on $M$ is the Riemannian metric $\|\cdot\|_{\rm C}$ on $M$ given by the canonical product on
$I\times\mathbb S^1\times \mathbb S^1$.

\medskip
The Clairaut metric induces a distance, which we call the {\em Clairaut distance} and
denote by $d_{\rm C}$.
Above, the canonical metrics are the induced metrics of $I\subset\R$ and $\mathbb S^1\subset\R^2$.
Since $I$ is compact, the metrics $\|\cdot\|_{\delta-{\rm Sas}}$ and $\|\cdot\|_{\rm C}$ are equivalent.

The Clairaut relation (\ref{eq:Clairaut}) leads us to the following definition.
%We will
%use this equivalence for calculations that are not sensitive to multiplicative constants, writing that
%$$
%d((s,\theta,\psi),(\bar s,\bar \theta,\bar \psi))\approx |s-\bar s|+|\theta-\bar \theta|+|\psi-\bar \psi|.
%$$

\medskip
\noindent
{\sc Clairaut function:} The {\em Clairaut function} is the function $c:M\to\R$ defined by
$c(s,\theta,\psi)=\xi(s)\cos\psi$.

\subsection{Surfaces with degenerate closed geodesic}\label{sec:our-surface}

We now define a class of surfaces that exhibit two special features:
the only region of zero curvature is a closed geodesic,
and on a neighborhood of this geodesic the surface is a particular surface of revolution.
% More specifically, fix $r\geq 4$, and consider the following.

\medskip
\noindent
{\sc Surface with degenerate closed geodesic:} A surface of nonpositive curvature $S$
is a {\em surface with degenerate closed geodesic $\gamma$} if there are $r\in [4,\infty)$ and $\ve_0>0$ such that:
\begin{enumerate}[$\circ$]
\item $S$ is $C^r$ with everywhere negative curvature except at a closed geodesic
$\gamma$.
\item There are two closed curves $\alpha,\beta$ defining a set $\mathcal N\subset S$
that contains $\gamma$ such that $\mathcal N$ is a surface of revolution with $\xi(s)=1+|s|^r$ for $|s|\leq \ve_0$.
Moreover, $\alpha=\Xi(-\ve_0,\theta)$, $\gamma=\Xi(0,\theta)$, $\beta=\Xi(\ve_0,\theta)$ are meridians.
We call $\mathcal N$ the {\em neck}, see Figure~\ref{fig:surface}. 
%The curve $\gamma$ is called the {\em degenerate closed geodesic}.
\end{enumerate}
\begin{figure}[hbt!]
\centering
\def\svgwidth{8cm}
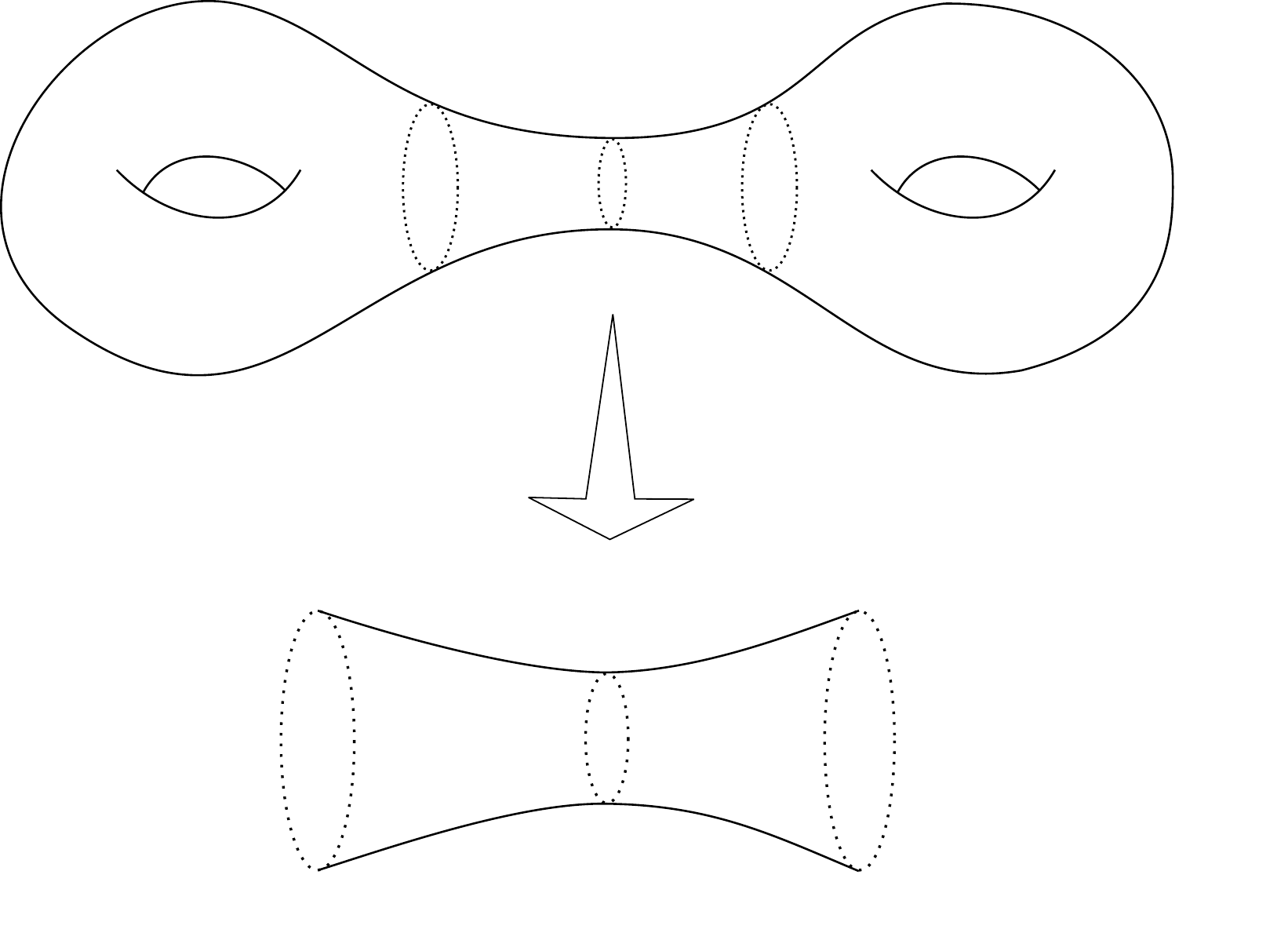
\caption{An example of a surface $S$ with degenerate closed geodesic.}\label{fig:surface}
\end{figure}

Such surfaces indeed exist, and can be obtained by interpolating the neck with 
a hyperbolic surface ($K\equiv -1$) with one cusp on each side. Since near the cusp a hyperbolic
surface is a surface of revolution, it is enough to interpolate its profile function with the function $\xi$,
in a way that the resulting function is strictly convex for $s\neq 0$. This can be made as in 
\cite[Appendix A.2]{Donnay}, using a partition of unity.

In the sequel, we fix a surface $S$ with degenerate closed geodesic $\gamma$.
Following the notation of Section \ref{sec:geodesic-flows},
let $M=T^1S$, $g_t:M\to M$ the geodesic flow on $S$, and $\mu$ the smooth
probability measure on $M$ induced by the Riemannian metric. 
Recall that the invariant manifolds $\wh W^{s/u}_x$
are $C^1$ for all $x\in M$. Gerber \& Wilkinson proved a stronger regularity \cite{Gerber-Wilkinson}
and also a property about the tangent distributions $\wh E^{s/u}$.

\begin{theorem}[Gerber \& Wilkinson \cite{Gerber-Wilkinson}]\label{thm:Gerber-Wilkinson}
Let $S$ be a surface with degenerate closed geodesic. 
Then the curves $\wh W^{s/u}_x$ are uniformly $C^{1+{\rm Lip}}$, and 
$x\mapsto \wh E^{s/u}_x$ is H\"older continuous. 
\end{theorem}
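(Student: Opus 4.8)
The plan is to reduce both statements to quantitative regularity of the geodesic‑curvature functions $u_\pm\colon M\to\R$ of the Riccati equation, and then to prove that regularity by pitting uniform hyperbolic estimates in the negatively curved region against sharp asymptotics for orbits inside the neck~$\mathcal N$, read off from the Clairaut relation. Under the identification $T_xM\leftrightarrow\{(J(0),J'(0))\}$ of Section~\ref{sec:geodesic-flows} one has $\wh E^s_x=\{(w,u_-(x)w):w\perp\gamma_x'(0)\}$ and $\wh E^u_x=\{(w,u_+(x)w):w\perp\gamma_x'(0)\}$, so $x\mapsto\wh E^{s/u}_x$ is H\"older on $M$ if and only if $u_\pm$ is H\"older on $M$; and the leaf $\wh W^s_x$, being the graph over the horosphere $H^s(v)$ of its unit normal field, is $C^{1+{\rm Lip}}$ if and only if its geodesic‑curvature function (which along the leaf is $u_-$) is Lipschitz, uniformly over $x$ (here the uniform Lipschitz continuity of the tangent to $H^s(v)$ comes for free from the boundedness of $u_-$), and symmetrically for $u_+$. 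So it suffices to prove that $u_\pm$ is H\"older on $M$ and uniformly Lipschitz along the leaves of $\wh W^{s/u}$.

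The tool is differentiation of the Riccati equation $u'+u^2+K=0$. Along the geodesic flow $u_\pm$ inherits the regularity of $K$, which is $C^2$ because $r\ge4$, so the flow direction is harmless; the content is in the leafwise and transverse directions. Representing such a direction as a vector field on $M$, applying it to the Riccati identity, and using $dg_t$–invariance of $\wh E^{s/u}$, one gets for the corresponding derivative $w$ of $u_\pm$ a scalar linear non‑autonomous equation along the orbit,
\[
w'+2u_\pm\,w=-\big(\text{a first-order derivative of }K\text{ along }\gamma_x\big),
\]
whose bounded solution is given — forward in time for the stable data and backward for the unstable data — by an absolutely convergent integral with kernel $\exp\!\big(2\!\int u_\pm\big)$. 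On an orbit segment in $\{K<-\kappa\}$ one has $|u_\pm|>c(\kappa)>0$ by Proposition~\ref{prop:u-continuous}, so the kernel decays exponentially and, $K$ and its derivatives being bounded there, one reads off uniform (indeed more than sufficient) bounds on $w$. The whole difficulty is thus confined to the portions of orbits that enter the neck, where $u_\pm\to0$ as the orbit approaches $\gamma$ and the exponential gain evaporates — precisely the behaviour associated with orbits homoclinic or heteroclinic to~$\gamma$.

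Inside $\mathcal N$ I would work in the Clairaut coordinates $(s,\theta,\psi)$. By~\eqref{eq:geodesics} an orbit with Clairaut constant $c$ near $\xi(0)=1$ reaches depth $s_{\max}$ with $1-c\asymp s_{\max}^{\,r}$ and dwells in $\mathcal N$ for time $\asymp|\log(1-c)|$, and along that segment $|K|$, $|u_\pm|$ and the first derivatives of $K$ entering the equation above are all comparable to explicit powers of the instantaneous depth $|s|$, itself controlled by~\eqref{eq:geodesics}. Substituting these asymptotics into the integral representation for $w$, and keeping track of how the kernel is discounted by the exponential contraction accumulated during the intervening excursions through $\{K<-\kappa\}$, one finds that the transverse derivative of $u_\pm$ is dominated by a positive power of the transverse distance — a power expressible through $r$, equivalently through $a=\tfrac{r+2}{r-2}$ — which gives the H\"older statement, while the leafwise derivative of $u_\pm$ lands exactly on the borderline of integrability, yielding Lipschitz regularity but nothing stronger, in accordance with the fact that $\wh W^{s/u}$ fails to be $C^2$ when $r=4$~\cite{BBB-1987}. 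The hard part will be precisely this borderline accounting in the neck: one must balance the $|s|^{\,r-2}$ vanishing of $K$ — hence of $u_\pm$, which is what destroys the kernel's decay — against the $|\log(1-c)|$ dwell time and against the stretching of the leaves of $\wh W^{s/u}$ inside $\mathcal N$, and verify that the outcome is exactly Lipschitz / H\"older rather than something weaker. The hypothesis $r\ge4$ enters here to guarantee $K\in C^2$, so that ``a first-order derivative of $K$'' is itself $C^1$ and the differentiation of the Riccati equation is legitimate.

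Finally I would assemble the pieces: a general orbit is an alternating concatenation of excursions through $\{K<-\kappa\}$ and excursions through $\mathcal N$, so the integral solution of the linear ODE is an absolutely summable series whose terms are the neck contributions bounded above, each discounted by the exponential contraction supplied by the intervening negatively curved excursions; its sum is then bounded uniformly in $x\in M$. By the first step this is the uniform $C^{1+{\rm Lip}}$ regularity of $\wh W^{s/u}_x$ together with the H\"older continuity of $x\mapsto\wh E^{s/u}_x$; this is the route carried out in~\cite{Gerber-Wilkinson}.
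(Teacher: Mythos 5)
Your reduction — express $\wh E^{s/u}_x$ via $u_\pm$, reduce uniform $C^{1+{\rm Lip}}$ regularity of the leaves to a leafwise Lipschitz estimate for $u_\pm$, reduce H\"older continuity of $\wh E^{s/u}$ to H\"older continuity of $u_\pm$ on $M$, and localize the difficulty in the neck where $u_\pm$ degenerates — is the right way to read the statement. But your central device, differentiating the Riccati equation in a transverse (or leafwise) direction to produce a linear ODE for $w=X(u_\pm)$ with an integral representation, is circular: it presupposes that $u_\pm$ is differentiable in that direction, which is exactly what is to be proved (and, transversally, is false — $u_\pm$ is only H\"older, so the derivative generally does not exist). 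Despite your closing claim, this is also not what Gerber--Wilkinson do: they work with an exact variation-of-constants identity for the \emph{difference} of two Riccati solutions along two nearby geodesics (their Lemma~3.1(ii), reproduced as Lemma~\ref{l.Lemma31} in the Appendix), which is the finite-difference analogue of your linear ODE and needs no a priori regularity of $u_\pm$. To repair your plan you would have to replace the differentiation by that comparison identity throughout, at which point you would be reproducing the Gerber--Wilkinson argument.

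You also miss what the paper's Appendix actually establishes, which is an adaptation rather than a re-derivation. Gerber--Wilkinson prove the theorem for $C^r$ metrics under the hypothesis that $K$ vanishes to finite order at most $r-3$ along closed zero-curvature geodesics; that hypothesis has no meaning for non-integer $r$. The Appendix observes that this hypothesis enters only through the two-sided polynomial estimate $-C_1\,{\rm dist}(p,\gamma)^{r-2}\le K(p)\le -C_2\,{\rm dist}(p,\gamma)^{r-2}$ near $\gamma$, that this estimate holds directly for surfaces with degenerate closed geodesic via the revolution-surface curvature formula with $\xi(s)=1+|s|^r$ (their condition (2)$'$), and that with this substitution Gerber--Wilkinson's Lemmas~3.3--3.7 — together with a lemma of Burns and a bound of Gerber--Nitica cited there — go through unchanged. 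The ``borderline accounting in the neck'' which you correctly identify as the hard part, but leave undone, is precisely the content of those lemmas; so your proposal identifies the crux without supplying it and does not touch the one genuinely new observation in the proof.
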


In other words, the $C^{1+{\rm Lip}}$ norms of all $\wh W^{s/u}_x$, $x\in M$,
are bounded by a uniform constant, and the tangent direction $\wh E^{s/u}_x$
is H\"older continuous as a function of $x\in M$. Actually, Gerber \& Wilkinson established
Theorem \ref{thm:Gerber-Wilkinson} in a context that does not cover surfaces with degenerate closed 
geodesic with non-integer $r$, but their proof can be easily adapted to prove the above theorem.
In the Appendix, we show how to make such changes.
%We exploit this extra regularity given by Theorem \ref{thm:Gerber-Wilkinson}
%when checking the Chernov axioms, as explained in the sequel. 

%We end this section making a comment on the number of closed geodesics.  Letting $P(T)$ denote the number of closed orbits of length $\leq T$, Knieper proved that $\lim\limits_{T\to\infty}\tfrac{1}{T}\log P(T)=h_{\rm top}(g_1)$ \cite{Knieper-thesis}.

In the Clairaut coordinates, let
$\gamma_0=\{0\}\times \mathbb S^1\times\{0\}$ and
$\gamma_\pi=\{0\}\times \mathbb S^1\times\{\pi\}$. We have ${\rm Deg}=\gamma_0\cup\gamma_\pi$
and so Theorem~\ref{thm:ergodicity} implies that the flow $g_t$ is ergodic.
Actually, $g_t$ is Bernoulli; see \cite{Pesin-Characteristic-1977} and
\cite[Thm.\ 12.2.13]{Barreira-Pesin-Non-Uniform-Hyperbolicity-Book} for the classical proofs,
and \cite{Ledrappier-Lima-Sarig} for a proof using symbolic dynamics.

Next, we
use the Clairaut function to distinguish some vectors of $M$ that will play a key role
in the next sections. The only meridian that is a geodesic is $\gamma=\Xi(0,\theta)$.
In $M$, this corresponds to the two geodesics $\gamma_0$ and
$\gamma_\pi$. The Clairaut constants are $c=1$ and $c=-1$ respectively. 
Let $x=(s,\theta,\psi)\in M$ with $s\neq 0$ such that $g_{[0,\ve]}(x)\subset [-|s|,|s|]\times \mathbb S^1\times\mathbb S^1$ for
some $\ve>0$, i.e. the geodesic starting at $x$ points towards $\gamma$.

\medskip
\noindent
{\sc Asymptotic, bouncing, crossing vectors and geodesics:} % \label{asymptotic-vector}
A vector $x\in M$ as above is called:
\begin{enumerate}[$\circ$]
\item {\em Asymptotic} if $c(x)=\pm 1$: the geodesic path $g_{[0,\infty)}(x)$ is
asymptotic to $\gamma$.
\item {\em Bouncing} if $|c(x)|>1$: there is $t>0$ such that $\psi(t)=0$ or $\pi$,
i.e.\ the geodesic path $g_{[0,t]}(x)$ spirals towards $\gamma$, $g_t(x)$
is tangent to a meridian and after that the geodesic path spirals away from $\gamma$.
In such cases, the geodesic does not reach $\gamma$. 
\item {\em Crossing} if $|c(x)|<1$: there is $t>0$ such that $s(t)=0$,
i.e.\ the geodesic path $g_{[0,t]}(x)$ spirals towards $\gamma$, $g_t(x)$ crosses $\gamma$ and
after that the geodesic path spirals away from $\gamma$.
\end{enumerate}
The corresponding geodesic with initial condition $x$ is called {\em asymptotic, bouncing, crossing} respectively.

\medskip
See Figure~\ref{fig:geodesics}.
The statements above are easily verified using the Clairaut relation~\eqref{eq:Clairaut}. For instance, if $c(x)>1$ then $s(t)$ never vanishes during the neck transition and $g_t(x)$ is bounded away from $\gamma$. It follows that $\psi(t_0)=0$ for some $t_0>0$, and $s(t_0)$ is uniquely determined by the equation $\xi(s(t_0))=c$. In particular, the value of $t_0$ is unique and we obtain bouncing as claimed.

\begin{figure}[hbt!]
\centering
\def\svgwidth{12.5cm}
%% Creator: Inkscape 1.1-dev (f9311a1, 2019-12-25), www.inkscape.org
%% PDF/EPS/PS + LaTeX output extension by Johan Engelen, 2010
%% Accompanies image file '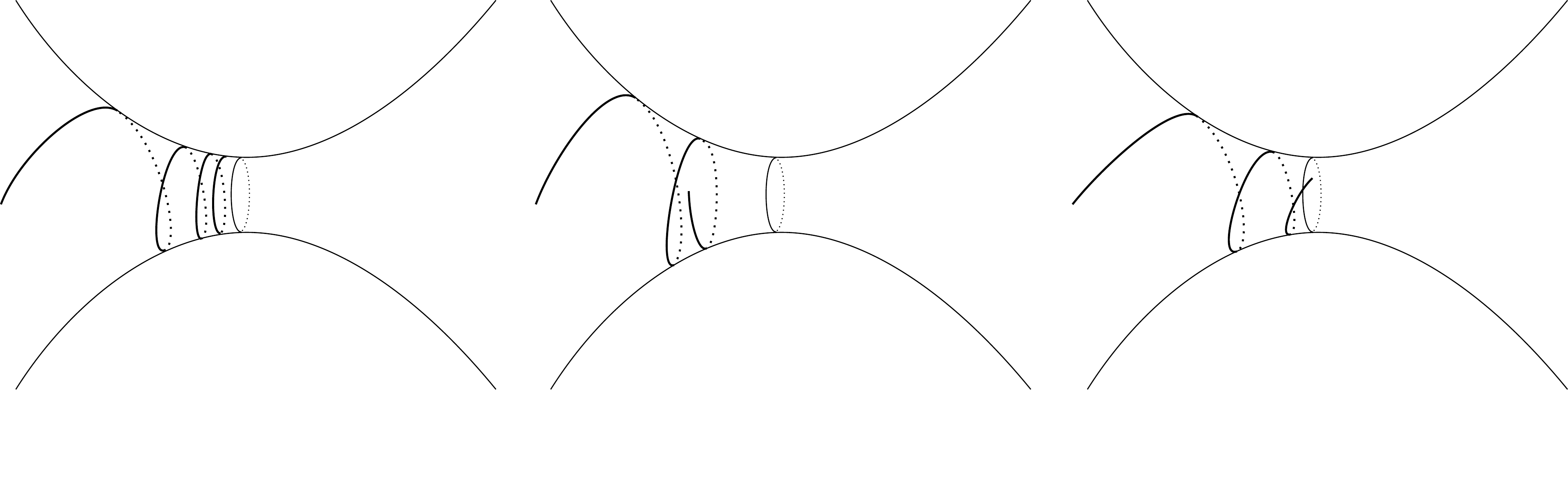' (pdf, eps, ps)
%%
%% To include the image in your LaTeX document, write
%%   \input{<filename>.pdf_tex}
%%  instead of
%%   \includegraphics{<filename>.pdf}
%% To scale the image, write
%%   \def\svgwidth{<desired width>}
%%   \input{<filename>.pdf_tex}
%%  instead of
%%   \includegraphics[width=<desired width>]{<filename>.pdf}
%%
%% Images with a different path to the parent latex file can
%% be accessed with the `import' package (which may need to be
%% installed) using
%%   \usepackage{import}
%% in the preamble, and then including the image with
%%   \import{<path to file>}{<filename>.pdf_tex}
%% Alternatively, one can specify
%%   \graphicspath{{<path to file>/}}
%% 
%% For more information, please see info/svg-inkscape on CTAN:
%%   http://tug.ctan.org/tex-archive/info/svg-inkscape
%%
\begingroup%
  \makeatletter%
  \providecommand\color[2][]{%
    \errmessage{(Inkscape) Color is used for the text in Inkscape, but the package 'color.sty' is not loaded}%
    \renewcommand\color[2][]{}%
  }%
  \providecommand\transparent[1]{%
    \errmessage{(Inkscape) Transparency is used (non-zero) for the text in Inkscape, but the package 'transparent.sty' is not loaded}%
    \renewcommand\transparent[1]{}%
  }%
  \providecommand\rotatebox[2]{#2}%
  \newcommand*\fsize{\dimexpr\f@size pt\relax}%
  \newcommand*\lineheight[1]{\fontsize{\fsize}{#1\fsize}\selectfont}%
  \ifx\svgwidth\undefined%
    \setlength{\unitlength}{1104.99729974bp}%
    \ifx\svgscale\undefined%
      \relax%
    \else%
      \setlength{\unitlength}{\unitlength * \real{\svgscale}}%
    \fi%
  \else%
    \setlength{\unitlength}{\svgwidth}%
  \fi%
  \global\let\svgwidth\undefined%
  \global\let\svgscale\undefined%
  \makeatother%
  \begin{picture}(1,0.3114174)%
    \lineheight{1}%
    \setlength\tabcolsep{0pt}%
    \put(0,0){\includegraphics[width=\unitlength,page=1]{geodesics.pdf}}%
    \put(0.12697338,0.00546168){\color[rgb]{0,0,0}\makebox(0,0)[lt]{\lineheight{1.25}\smash{\begin{tabular}[t]{l}(a)\end{tabular}}}}%
    \put(0.47445267,0.00546168){\color[rgb]{0,0,0}\makebox(0,0)[lt]{\lineheight{1.25}\smash{\begin{tabular}[t]{l}(b)\end{tabular}}}}%
    \put(0.81458876,0.00546168){\color[rgb]{0,0,0}\makebox(0,0)[lt]{\lineheight{1.25}\smash{\begin{tabular}[t]{l}(c)\end{tabular}}}}%
  \end{picture}%
\endgroup%

\caption{(a) Asymptotic vector. (b) Bouncing vector. (c) Crossing vector.}\label{fig:geodesics}
\end{figure}

We end this section making a comment on the number of closed geodesics.
Letting $P(T)$ denote the number of closed orbits of length $\leq T$,
Knieper \cite{Knieper-thesis} proved that 
\begin{equation} \label{eq:Knieper}
\lim\limits_{T\to\infty}\tfrac{1}{T}\log P(T)=h_{\rm top}(g_1).
\end{equation}

\section{Chernov axioms for exponential mixing}\label{sec:exponential.mixing}

Young introduced a powerful scheme \cite{Young-towers}, nowadays called {\em Young towers},
that implies exponential mixing for a vast class of dynamical systems, especially finite horizon dispersing billiards.
Based on some previous work \cite{Chernov-DCDC} and on Young's novel method,
Chernov introduced a set of axioms that implies exponential mixing, and applied it to numerous further
classes of planar dispersing billiards \cite{Chernov-1999}. These axioms are nowadays called
{\em Chernov axioms}. Using the ideas of Young and Chernov,
many authors proved exponential and polynomial decay of correlations for other classes of billiards
\cite{Young-polynomial,Markarian-polynomial,Chernov-Zhang,ChernovZhang05b,Balint-Toth}.

In this paper, we only require the existence of a Young tower together with its consequences; hence we omit the precise definition of Young tower and
instead refer the reader to \cite{Young-towers}.

% Young proved that the behavior of the {\em tail} of a Young tower $(Y,\sigma)$ implies strong statistical properties for $(f,\mu)$ as follows: \begin{enumerate}[$\circ$] \item If $f$ is modelled by a Young tower $(Y,\sigma)$ with {\em exponential tails}, i.e. if $\mu(\sigma>n)\xrightarrow[]{n\to\infty}0$ exponentially fast, then $f$ enjoys \emph{exponential decay of correlations}: for any $\eta>0$ there is a constant $0<\kappa<1$ such that for each pair $\phi,\psi$ of $\eta$--H\"older continuous functions on $X$ there is a constant $C(\phi,\psi)>0$ such that $$ \left|\int \phi \cdot (\psi\circ f^n) d\mu - \int \phi d\mu \int \psi d\mu\right|\leq C(\phi,\psi)\,\,\kappa^n $$ for all $n\in\N$. See \cite{Young-towers}.  \item If $f$ is modelled by a Young tower $(Y,\sigma)$ with \emph{polynomial tails with rate $\beta>1$}, i.e.\ $\mu(\sigma>n)=O(n^{-\beta})$, then $f$ enjoys \emph{ polynomial decay of correlations} with exponent $\beta-1$: for each pair $\phi,\psi$ of H\"older continuous functions on $X$ there is a constant $C(\phi,\psi)>0$ such that $$ \left|\int \phi \cdot (\psi\circ f^n) d\mu - \int \phi d\mu \int \psi d\mu\right|\leq C(\phi,\psi)\,\, n^{-(\beta-1)} $$ for all $n\in\N$. See \cite{Young-polynomial}.  \end{enumerate} 

We pay special attention to \cite{Chernov-Zhang}, where 
the presentation of Chernov axioms is more suitable to our context, as we now explain.
Firstly, they focus on two-dimensional
maps. Secondly, they give a simpler criterion on the axiom that is usually hardest to prove,
commonly called {\em growth of unstable manifolds}. The simpler criterion assumes, additionally to the
low dimension of the phase space, four facts:
\begin{enumerate}[$\circ$]
\item Alignment: $\mathfs S^+$ is tangent to stable cones and $\mathfs S^-$ is
tangent to unstable cones. Here, $\mathfs S^\pm$ are the singular sets, see
Section \ref{ss:Chernov-axioms-statements} below.
\item Structure of the singular set: control on the rate of accumulation of singularity curves.
\item Growth bound: control on the inverses of least expansions of smooth pieces of unstable
manifolds.
\item Complexity bound: control on the growth rate of self-intersections of {\em primary} singularities.
\end{enumerate}
These conditions are stated in axioms (A3) and (A8) below. Hence, in this work the Chernov axioms
consist of eight conditions for an abstract smooth hyperbolic map with singularities to have exponential
decay of correlations. Except for axioms (A3) and (A8), our presentation is based on
\cite[Appendix A]{Balint-Toth}, with one
crucial difference in axiom (A5): while the Chernov axioms require the invariant manifolds to be $C^2$ curves with
uniformly bounded curvature, we only require them to be $C^{1+{\rm Lip}}$ with uniformly bounded $C^{1+{\rm Lip}}$
norm. 
%This is necessary in our context because, as presented in Section \ref{sec:our-surface}, this is the best we know for the surfaces we consider. 

\subsection{Chernov axioms}\label{ss:Chernov-axioms-statements}

Here, we state the axioms (A1) to  (A8).

\medskip
\noindent
{\sc (A1) Dynamical system.} We consider $X_0$ an open subset of a $C^2$ Riemannian surface
$\widehat X$ such that its closure $X$ is compact. We let $\mathfs S^+,\mathfs S^-$
be closed subsets of $X$, and let 
$f:X_0\backslash \mathfs S^+\to X_0\backslash\mathfs S^-$ be a $C^2$ diffeomorphism.
We call $\mathfs S^+$ the {\em singular set} of $f$, and $\mathfs S^-$ the {\em singular set} of $f^{-1}$.
(Derivatives are allowed to blow up at the boundary of $X_0$ and at the singular sets.)

\medskip
For $n\ge 1$, define
\begin{align*}
\mathfs S_n&=\mathfs S^+\cup f^{-1}(\mathfs S^+)\cup\cdots\cup f^{-n+1}(\mathfs S^+)\\
\mathfs S_{-n}&=\mathfs S^-\cup f(\mathfs S^-)\cup\cdots\cup f^{n-1}(\mathfs S^-).
\end{align*}
Then $\mathfs S_n$ is the singular set of $f^n$, and $\mathfs S_{-n}$ is the singular set of $f^{-n}$.
Observe that $\mathfs S_1=\mathfs S^+$ and $\mathfs S_{-1}=\mathfs S^-$.

\medskip
\noindent
{\sc (A2) Uniform hyperbolicity.} There are two families of cones $\{C^u_x\},\{C^s_x\}$ in
the tangent planes $T_x\widehat X$, $x\in X$, called unstable and stable cones,
and a constant $\Lambda>1$ such that:
\begin{enumerate}[i \ \ \ \ ]
\item[(A2.1)] {\em Continuity}: $\{C^u_x\},\{C^s_x\}$ are continuous on $X$.
\item[(A2.2)] {\em Full hyperbolicity}: The axes of $\{C^u_x\},\{C^s_x\}$ are one-dimensional.
\item[(A2.3)] {\em Transversality:} $\min\limits_{x\in X}\angle (C^u_x,C^s_x)>0$. 
\item[(A2.4)] {\em Invariance}: $Df(C^u_x)\subset C^u_{fx}$ and $Df(C^s_x)\supset C^s_{fx}$
whenever $Df$ exists.
\item[(A2.5)] {\em Uniform hyperbolicity}: For all $v^u\in C^u_x$ and $v^s\in C^s_x$,
$$
\|Df(v^u)\|\geq\Lambda\|v^u\|
\quad \text{and} \quad
\|Df^{-1}(v^s)\|\geq\Lambda\|v^s\|.
$$
\end{enumerate}

\medskip
Let $x\in X_0$. For $x\not\in \bigcup_{n\geq 0}\mathfs S_{-n}$ let
$E^u_x=\bigcap_{n\geq 0}Df^n(C^u_{f^{-n}x})$, and for $x\not\in \bigcup_{n\geq 0}\mathfs S_n$
let $E^s_x=\bigcap_{n\geq 0}Df^{-n}(C^s_{f^nx})$. These are called the
{\em unstable} and {\em stable} subspaces, respectively. Axiom (A2) implies that
every $E^u_x,E^s_x$ is one-dimensional. Moreover,
if $x\not\in\bigcup_{n\in\Z}\mathfs S_n$ then $E^u_x\oplus E^s_x=T_x\widehat X$,
with $E^u_x$ being spanned by vectors with positive Lyapunov exponents and $E^s_x$ spanned
by vectors with negative Lyapunov exponents.

For the remaining axioms, we need to introduce some terminology.
Let $\rho,m$ be respectively the Riemannian metric and Lebesgue measure on $\widehat X$. Given 
a curve $W\subset \widehat X$, let $\rho_W,m_W$ be respectively the Riemannian metric and 
Lebesgue measure on $W$ induced by $\rho,m$. 
%We also let ${\rm diam}(W)$ denote the diameter of $W$ in the metric $\rho_W$.

\medskip
\noindent
{\sc Local unstable manifold (LUM):} A {\em local unstable manifold} (LUM) is a curve $W\subset X$
such that:
\begin{enumerate}[i,]
\item[(i)] $f^{-n}$ is well-defined and smooth on $W$ for all $n\geq 0$.
\item[(ii)] $\rho(f^{-n}x,f^{-n}y)\to 0$ exponentially quickly as $n\to\infty$ for all $x,y\in W$.
\end{enumerate}

\medskip
We usually write $W^u_x$ to represent a LUM containing $x$.
The tangent space of $W^u_x$ at $x$ is $E^u_x$.
Similarly, we define the notion of local stable manifold (LSM)
and write $W^s_x$ to represent a LSM containing $x$.

Now let $W_1,W_2$ be sufficiently
small and close enough LUM's such that small LSM's intersect each of $W_1,W_2$ at most once.
let $W'_1=\{x\in W_1:W^s_x\cap W_2\neq\emptyset\}$, and let $H:W'_1\to W_2$ be the {\em holonomy map} obtained by sliding
along local stable manifolds, i.e.\ $H(x)$ is the unique intersection between $W^s_x$ and $W_2$.
Also, let $\Lambda(x)=|\det (Df\restriction_{E^u_x})|$ be the Jacobian of $f$ in the direction of $E^u_x$,
which is the factor of expansion on $W^u_x$ at $x$.\footnote{For simplicity, the original work of Chernov
also required an assumption called {\em nonbranching of unstable manifolds}, see \cite[p.\ 516]{Chernov-1999}.
As already remarked in \cite{Chernov-1999}, this assumption can be dropped \cite{vandenBedem}.}
%In the sequel, we often abbreviate $W^u_x$ to $W_x$.  ACTUALLY WE DON'T!

%\medskip
%\noindent
%{\sc Unstable Curve:} An {\em unstable curve} is a $C^1$ curve $W\subset X$ such that 
%$T_xW\subset C^u_x$ for all $x\in W$.
%
%\medskip
%Observe that an unstable curve might intersect the singular set $\mathfs S$. Its use
%will only come in condition A7. For the other conditions, we need the following stronger notion
%of unstable curves.

%Every LUM is an unstable curve, but the reverse implication is not true, since an unstable
%curve might intersect $\mathfs S$.
%{\color{red} A word of caution: the notion of LUM {\em does not} coincide with the notion of 
%Pesin unstable manifolds, because LUM's can be (or become after iteration) exponentially small,
%while in Pesin theory the sizes of unstable manifolds are chosen in a way that their sizes change at
%most subexponentially fast along trajectories. This rather irregular change of sizes is usually caused
%by the singular set $\mathfs S$: proximity to $\mathfs S$ enforces restrictions on the sizes of LUM's.}

\medskip
\noindent
{\sc (A3) Alignment.} The angle between $\mathfs S^+$ and LUM's is bounded
away from zero; the angle between $\mathfs S^-$ and LSM's is bounded
away from zero.
%For all $n\geq 1$, the tangent vectors to $\mathfs S_n$ belong to stable
%cones and the tangent vectors to $\mathfs S_{-n}$ belong to unstable cones.

\medskip
\noindent
{\sc (A4) SRB measure.} The map $f$ preserves an ergodic volume measure $\mu$
such that a.e. $x\in X_0$ has a LUM $W^u_x$ and the conditional measure on $W^u_x$
induced by $\mu$ is absolutely continuous with respect to $m_{W^u_x}$. Furthermore,
$f^n$ is ergodic for all $n\geq 1$.

\medskip
\noindent
{\sc (A5) Uniformly bounded $C^{1+{\rm Lip}}$ norms.} The leaves $W^{u/s}_x$ are
uniformly $C^{1+{\rm Lip}}$.

\medskip
In other words, there exists a universal constant $K>0$ such that if $W^{u/s}_x$ is an LUM or LSM
then the graph representing $W^{u/s}_x$ locally at $x$ has $C^{1+{\rm Lip}}$ norm bounded
by $K$. Axiom (A5) is weaker than those required in the literature, and is discussed further 
at the end of this section, see Remark \ref{rmk:A5}.

\medskip
\noindent
{\sc (A6) Uniform distortion bounds.} There is a function $\psi:[0,\infty)\to[0,\infty)$ with $\lim_{x\to0}\psi(x)=0$
for which the following holds: if $W$ is a LUM, then for all $x,y$ belonging to the same connected component $V$
of $W\cap \mathfs S_{n-1}$,
$$
\log\left[\prod_{i=0}^{n-1}\dfrac{\Lambda(f^i x)}{\Lambda(f^i y)}\right]\leq \psi(\rho_{f^n(V)}(f^n x,f^n y)).
$$

\medskip
\noindent
{\sc (A7) Uniform absolute continuity.} There is a constant $C>0$ with the following property:
if $W_1,W_2$ are two sufficiently small and close enough LUM's,
then the holonomy map $H:W_1'\to W_2$ is absolutely continuous with respect to $m_{W_1},m_{W_2}$
and
$$
\frac{1}{C}\leq \frac{m_{W_2}(H[W_1'])}{m_{W_1}(W_1')} \leq C.
$$

\medskip
Now we proceed to the crucial axiom, which states that the expansion of the system
prevails over the fragmentation caused by the singularities.
Let us recall once more that condition (A8) below follows \cite{Chernov-Zhang}.
Indeed, we require the practical
scheme described in \cite[\S 6]{Chernov-Zhang}, for the following reasons:
\begin{enumerate}[$\circ$]
\item The singular set $\mathfs S_1=\mathfs S^+$ is usually decomposed into two components $\mathfs S_{\rm P}$ and
$\mathfs S_{\rm S}$. The set $\mathfs S_{\rm P}$ is made of intrinsic singularities, which
we call {\em primary}, and $\mathfs S_{\rm S}$ is made of artificial ones, which we call {\em secondary}.
The set $\mathfs S_{\rm S}$ is artificially added to guarantee bounded distortion,
near places where the derivative explodes.
\item Since the expansion factor $\Lambda$ might be close to 1, we often need to consider an iterate $f^n$
so that $\Lambda^n$ is large enough.
\end{enumerate}
Hence we assume that $\mathfs S_1=\mathfs S^+=\mathfs S_{\rm P}\cup \mathfs S_{\rm S}$,
and we let $\mathfs S_n=\mathfs S_{{\rm P},n}\cup \mathfs S_{{\rm S},n}$ be the corresponding 
decomposition of $\mathfs S_n$ for $n\geq 1$, where
\begin{align*}
\mathfs S_{{\rm P},n}&=\mathfs S_{\rm P}\cup f^{-1}(\mathfs S_{\rm P})\cup\cdots\cup f^{-n+1}(\mathfs S_{\rm P})\\
\mathfs S_{{\rm S},n}&=\mathfs S_{\rm S}\cup f^{-1}(\mathfs S_{\rm S})\cup\cdots\cup f^{-n+1}(\mathfs S_{\rm S}).
\end{align*}
 Since secondary singularities are accompanied
by strong hyperbolicity, the philosophy that ``expansion prevails over fragmentation" is guaranteed
if the expansion caused by $\mathfs S_{\rm S}$ prevails
over the fragmentation caused by $\mathfs S_{{\rm P},n}$. We only need 
to check this for some $n\geq 1$ for which  $\Lambda^n$ is large enough. The precise
conditions are the following.

\medskip
\noindent
{\sc (A8) Growth of unstable manifolds.}
\begin{enumerate}[i \ \ \ \ ]
\item[(A8.1)] {\em Structure of the singularity set}: There are constants $C,d>0$
such that if $W$ is a LUM, then $W\cap \mathfs S_1$ is at most countable. Furthermore,
$W\cap\mathfs S_1$ has at most one accumulation point $x_\infty$, and
if $\{x_n\}$ is the monotonic sequence in $W\cap\mathfs S_1$ coverging to $x_\infty$ then
$$
\rho(x_n,x_\infty)\leq \frac{C}{n^d}\, \quad\text{for all $n\geq 1$.}
$$
\item[(A8.2)] {\em Growth bound (assumption on secondary singularities)}:
$$
\theta_0:=\liminf_{\delta\to 0}\sup_{|W|<\delta}\sum_n \tfrac{1}{\Lambda_n}<1,
$$
where the supremum is taken over all LUM $W$ with $W\cap\mathfs S_{\rm P}=\emptyset$,
the connected components of $W\backslash \mathfs S_{\rm S}$ are $\{W_n\}$ and 
$\Lambda_n=\min\{\Lambda(x):x\in W_n\}$.
\item[(A8.3)] {\em Complexity bound (assumption on primary singularities)}: Let 
$$
K_{\rm P,n}:=\lim_{\delta\to 0}\sup_{|W|<\delta}K_{{\rm P},n}(W)
$$
where the supremum is taken over all LUM $W$ and $K_{{\rm P},n}(W)$ is the number
of connected components of $W\backslash \mathfs S_{{\rm P},n}$; then
$K_{{\rm P},n}<\min\{\theta_0^{-1},\Lambda\}^n$ for some $n\geq 1$.
\end{enumerate} 

We can now state the result of Chernov \& Zhang that will be important to us.

\begin{theorem}[Chernov \& Zhang \cite{Chernov-Zhang}]\label{thm:Chernov-Zhang}
If $f$ satisfies {\rm (A1)--(A8)}, then $f$ is modelled by a Young tower with exponential tails.
\end{theorem}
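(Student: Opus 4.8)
The plan is to \emph{not} reprove this theorem from scratch, since it is quoted verbatim from Chernov \& Zhang \cite{Chernov-Zhang}, but instead to explain how our axiom list (A1)--(A8) above, with the weakened version (A5), maps onto the hypotheses of their result, and why their proof goes through under (A5). First I would recall the strategy of \cite{Chernov-Zhang}: they construct a hyperbolic product structure (a ``rectangle'' $R$ of intersecting stable and unstable curves with an SRB conditional structure), and a return map $f^{R}\colon R\to R$ with an associated return time function $R\to\N$; the Young tower is then built over $R$ with this return time. Exponential tails of the tower means $\mu\{x\in R: \text{return time}>n\}$ decays exponentially, and this is exactly the content of the ``growth of unstable manifolds'' lemma driven by axiom (A8): because the one-step expansion $\Lambda$ eventually dominates the complexity growth $K_{{\rm P},n}$ (via (A8.2)--(A8.3)) and the singularity set has the polynomial accumulation structure (A8.1), an unstable curve of definite size, when iterated forward, has a uniformly positive fraction of its length lying in ``long'' pieces, and this yields the exponential return-time tail by a standard coupling/stopping-time argument. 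Axioms (A1)--(A2) give the hyperbolic skeleton, (A4) the reference measure, (A6) bounded distortion along unstable curves, (A7) absolute continuity of the stable holonomy (needed to transport the SRB densities between unstable leaves inside $R$), and (A3) guarantees that singularity curves cut unstable/stable curves transversally so that the pieces produced by cutting remain genuine LUM/LSM's.

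Next I would isolate the single place where the original Chernov--Zhang argument uses $C^{2}$ regularity with uniformly bounded curvature of the invariant manifolds, and check that uniformly bounded $C^{1+{\rm Lip}}$ norms suffice; this is the content of Remark~\ref{rmk:A5}, which I would reference. The point is that curvature of the leaves enters only through two estimates: (i) the distortion control of the stable holonomy and of the expansion Jacobian $\Lambda$ restricted to $E^{u}$, and (ii) the ``alignment is quantitatively preserved'' type estimates ensuring that a LUM cut by a singularity curve at a definite angle produces sub-LUMs whose tangent directions stay in the unstable cone with controlled Hölder modulus. For (i), the relevant quantity is the variation of the tangent line $E^{u}_{x}$ along the leaf, which under a $C^{1+{\rm Lip}}$ bound is Lipschitz in arclength --- more than enough to run the Denjoy-type distortion sums $\sum_{i}|\log\Lambda(f^{i}x)-\log\Lambda(f^{i}y)|$ that appear in the proof of bounded distortion and of absolute continuity. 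For (ii), again a Lipschitz (indeed merely Hölder) modulus of continuity of $x\mapsto E^{u}_x$ along leaves is all that is invoked. Thus every inequality in \cite{Chernov-Zhang} that was stated with a curvature bound $|\kappa|\le K$ can be restated with a $C^{1+{\rm Lip}}$ bound, with only cosmetic changes to constants.

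I would then assemble the proof: given (A1)--(A8), apply the construction of \cite[\S\S4--7]{Chernov-Zhang} (with (A5) replacing their stronger leaf-regularity hypothesis, justified as above) to obtain a hyperbolic rectangle $R$ with return map and return time; invoke (A8.1)--(A8.3) to get the exponential tail bound on the return time exactly as in their ``growth lemma''; and conclude that $(f,\mu)$ is modelled by a Young tower in the sense of \cite{Young-towers} with $\mu\{$return time $>n\}\le C\vartheta^{n}$ for some $\vartheta\in(0,1)$. The main obstacle --- indeed the only substantive one, since the rest is bookkeeping that translates our axiom names into theirs --- is the verification that the weakened axiom (A5) genuinely suffices throughout, i.e.\ that nowhere in the long chain of estimates in \cite{Chernov-1999, Chernov-Zhang} is the second derivative of a leaf used in an essential way (for instance in an expansion of the holonomy Jacobian to second order, or in a curvature-of-the-image estimate under $f$). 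Carefully auditing each such step and confirming that $C^{1+{\rm Lip}}$ control on leaves is stable under $f$ (which follows from (A2), (A6), and the chain rule for the second difference quotient) is the technical heart of the matter; this audit is carried out in the discussion surrounding Remark~\ref{rmk:A5}.
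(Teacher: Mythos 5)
Your proposal follows essentially the same route as the paper: Theorem~\ref{thm:Chernov-Zhang} is quoted from \cite{Chernov-Zhang}, and the only thing to check is that the weakened axiom (A5) (uniform $C^{1+{\rm Lip}}$ bounds rather than uniformly bounded curvature) suffices, deferring to Remark~\ref{rmk:A5}. However, your localization of where the curvature assumption enters the Chernov--Zhang argument does not match the paper's. You assert that curvature ``enters only through'' (i) distortion control of the holonomy and of $\Lambda\restriction_{E^u}$ and (ii) quantitative alignment preservation. The paper's Remark~\ref{rmk:A5} identifies a different, and single, place: the curvature bound is used ``solely'' to establish \cite[Estimate~(4.1)]{Chernov-1999}, namely $Z[W,W,0]\le C\,Z[H,H,0]$, comparing the ``size'' functional $Z$ of a $\delta$--LUM $W$ to that of its projection $H$ onto a tangent plane. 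The distortion bounds (A6) and absolute continuity (A7) are separate axioms that are assumed outright, not derived from leaf curvature, so attributing the use of curvature to Denjoy-type distortion sums or alignment modulus is off target. The correct justification --- which the paper gives --- is that $C^{1+{\rm Lip}}$ bounds on leaves yield uniformly Lipschitz Jacobians for the projection map $W\to H$ and its inverse, which is exactly what is needed to compare $Z[W,W,0]$ with $Z[H,H,0]$. Your claim that ``the main obstacle is the verification that nowhere in the long chain of estimates is the second derivative of a leaf used in an essential way,'' and that one must ``audit each such step,'' overstates the scope: the paper's point is precisely that the curvature hypothesis is isolated to a single projection estimate, so no global audit is required. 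With that correction the argument is sound and matches the paper's.
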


\noindent In particular, $f$ enjoys exponential decay of correlations by~\cite{Young-towers}.

\begin{remark}\label{rmk:A5}
 Previous work
requires invariant manifolds to have uniformly bounded sectional curvature in axiom (A5). This condition is solely
used to uniformly approximate pieces of invariant manifolds by hyperplanes.
More specifically, \cite[Estimate~(4.1)]{Chernov-1999}
states the existence of $C>0$ such that if $W$ is a $\delta$--LUM then $Z[W,W,0]\leq CZ[H,H,0]$, where
$H$ is the projection of $W$ to $T_xM$ for some (every) $x\in W$.
The function $Z[W,W,0]$ characterizes, in some sense, the ``size'' of $W$. The curvature
assumption is used to show that the ``size'' of $W$ is of the same order of its projection
to hyperplanes. The weaker assumption of uniformly bounded $C^{1+{\rm Lip}}$ norm ensures that
the jacobians of the projection map and of its inverse have uniformly bounded Lipschitz constants,
and this is enough to imply the above estimate.
In particular, Theorem~\ref{thm:Chernov-Zhang} remains valid under (A1)--(A8) with this slightly weakened version of (A5).
\end{remark}

\section{Dynamics of transitions in the neck $\mathcal N$}\label{sec:neck-dynamics}

We initiate the study of the geodesic flow on a surface with degenerate closed geodesic as defined in Section~\ref{sec:our-surface} with $r\ge4$ and $\ve_0>0$ fixed.
Recall that we are taking $\delta>0$ small enough and considering the $\delta$--Sasaki
metric $\|\cdot\|=\|\cdot\|_{\delta-{\rm Sas}}$.
By Proposition \ref{prop:u-continuous} and equation~(\ref{eq:delta-Sasaki}),
the loss of uniform hyperbolicity of $g_t$ occurs when a geodesic approaches ${\rm Deg}$, i.e.\ when it 
spends a large amount of time spiraling close to the degenerate closed 
geodesic~$\gamma$.
Hence in this section we focus on 
giving a detailed description of the dynamics of the transitions 
in the neck $\mathcal N$.

In Subsection~\ref{ss:Omega} we construct a two-dimensional section $\Omega$, the {\em transition section},
and an associated {\em transition map} $f_0$.
In Subsection~\ref{ss:explicit} we obtain an explicit formula for $f_0$.
It is this representation that allows us to avoid using the Riccati equation and its (not so precise)
estimates and, instead, to perform more accurate calculations.
In Subsection~\ref{ss:times} we estimate transition times and derivatives of $f_0$.

In the rest of the paper, we use the following notation:
\begin{enumerate}[$\circ$]
\item $a(u)\ll b(u)$ or $a(u)=O(b(u))$ if there is a constant $C$
%=C(r,\ve_0)>1$
such that $a(u)\le Cb(u)$ for all $u$ small enough.
\item $a(u)\approx b(u)$ if $a(u)\ll b(u)$ and $b(u)\ll a(u)$.
%i.e.\ there is a constant $C=C(r,\ve_0)>1$ such that $C^{-1}b(u)\leq a(u)\le Cb(u)$ for all $|u|$ small enough.
\item $a(u)\sim b(u)$ if $\lim\limits_{u\to 0}\tfrac{a(u)}{b(u)}=1$.
\end{enumerate}

The calculations in this section use the Clairaut coordinates 
$x=(s,\theta,\psi)$ on $T^1A\cong[-\ve_0,\ve_0]\times\mathbb S^1\times\mathbb S^1$.

\subsection{Transition section $\Omega$ and map $f_0$}\label{ss:Omega}

The transition section $\Omega$ we construct allows a very simple description of the transitions of  geodesics in the neck.
As detailed below, we define $\Omega=\Omega_+\cup\Omega_-\cup\Omega_0$ where:
\begin{enumerate}[$\circ$]
\item $\Omega_+\subset\{\pm\ve_0\}\times\mathbb S^1\times\mathbb S^1$ is a neighborhood of four families of geodesics entering the neck and asymptotic to $\gamma$;
\item $\Omega_- \subset\{\pm\ve_0\}\times\mathbb S^1\times\mathbb S^1$ 
is a neighborhood of four families of geodesics exiting the neck and asymptotic to $\gamma$ in backwards time;
\item Each of $\Omega_+$ and $\Omega_-$ is a disjoint union of four annular regions (diffeomorphic to $\mathbb S^1\times(-1,1)$);
\item $\Omega_0$ is defined in a neighborhood of $\gamma$ and
is a disjoint union of two open disks.
\end{enumerate}

To construct $\Omega_\pm$, we use the Clairaut function $c$ from Section~\ref{sec:revolution}.
Recall that $\xi(s)=1+|s|^r$.
Hence there is a unique $\psi_0\in(0,\frac{\pi}{2})$ such that $\xi(\ve_0)\cos\psi_0=\xi(-\ve_0)\cos\psi_0=1$.
Recalling the notation of asymptotic vector introduced in 
Section~\ref{sec:our-surface},
the vectors $(\pm\ve_0,\theta,\pm\psi_0)$, $\theta\in\mathbb S^1$, constitute four families of asymptotic
vectors with Clairaut constant $c=1$ and asymptotic to $\gamma$.
Similarly, $(\pm\ve_0,\theta,\pm(\pi-\psi_0))$, $\theta\in\mathbb S^1$, constitute four families of asymptotic
vectors with $c=-1$ and asymptotic to $\gamma$.
Of these, $(-\ve_0,\theta,\psi_0)$, $(\ve_0,\theta,-\psi_0)$,
$(-\ve_0,\theta,\pi-\psi_0)$, $(\ve,\theta,-(\pi-\psi_0))$ correspond to the four families of trajectories that enter
$\mathcal N$ and are asymptotic to $\gamma$ as $t\to\infty$.
The remaining four families of trajectories exit $\mathcal N$ and are asymptotic to $\gamma$ as $t\to-\infty$.

%If $\psi<\psi_0$ is close to $\psi_0$ then $(-\ve_0,\theta,\psi)$ is a bouncing vector, and if $\psi>\psi_0$ is close to $\psi_0$ then $(-\ve_0,\theta,\psi)$ is a crossing vector. 
Focusing momentarily on $x=(-\ve_0,\theta,\psi_0)$, we define
$$
\Omega_1=\{x=(-\ve_0,\theta,\psi)\in T^1A:|c(x)-1|<\chi\}.
$$
Shrinking $\chi=\chi(\ve_0,r)>0$, we can ensure that
$\Omega_1= \{-\ve_0\}\times \mathbb S^1\times I$
where $I$ is an open interval containing $\psi_0$ with $\overline{I}\subset(0,\frac{\pi}{2})$.
Treating the other three families of entering asymptotic trajectories similarly, we obtain $\Omega_+$ as the union of four sets isomorphic to $\Omega_1$.

Similarly, the set $\Omega_-$, isomorphic to $\Omega_+$, is obtained by considering the four 
families of exiting asymptotic trajectories.

Finally, let
$$
\Omega_0=(-\chi,\chi)\times\{0\}\times\left((-\chi,\chi)\cup (\pi-\chi,\pi+\chi)\right)\subset T^1A.
$$

\medskip
\noindent
{\sc Transition section $\Omega$:} Define $\Omega=\Omega_-\cup\Omega_0\cup\Omega_+$.

\medskip
We now prove that $\Omega$ is transverse to the flow direction.
\begin{enumerate}[$\circ$]
\item The tangent space at every point of $\Omega_0$ is $\R\times \{0\}\times\R$. Since the 
flow directions at $(0,0,0)$ and $(0,\pi,0)$ are spanned by $(0,1,0)$ and $(0,-1,0)$ respectively,
transversality holds at $(0,0,0)$ and $(0,\pi,0)$.
Since $\chi>0$ is small enough, transversality holds at every point of $\Omega_0$.
\item The tangent space at every $x=(\pm\ve_0,\theta,\psi)\in\Omega_+$ is $\{0\}\times\R\times\R$.
If $(s(t),\theta(t),\psi(t))$ is the geodesic defined by $x$, then the flow direction at $x$ is $(s'(0),\theta'(0),\psi'(0))$.
Since $\psi(0)\neq 0$, we have $\xi(s(0))>c(x)$ and so the equation of geodesics
(\ref{eq:geodesics}) implies that
$s'(0)\neq 0$. Again for $\chi>0$ small enough, $\Omega_+$ is transverse to the flow direction at $x$.
\item Analogously, $\Omega_-$ is transverse to the flow direction.
\end{enumerate}

The transition section $\Omega$ captures all trajectories that approach ${\rm Deg}$. To better
understand them, consider the partition of
$\Omega_+=\Omega_+^{=}\cup\Omega_+^{>}\cup
\Omega_+^{<}$ induced by $c$:
\begin{align*}
&\Omega_+^{=}=\{x\in\Omega_+:|c(x)|=1\},\;\text{(asymptotic)}, \\
&\Omega_+^{>}=\{x\in\Omega_+:|c(x)|>1\},\;\text{(bouncing)}, \\
&\Omega_+^{<}=\{x\in\Omega_+:|c(x)|<1\},\;\text{(crossing)}.
\end{align*}
Similarly, we define $\Omega_-=\Omega_-^{=}\cup\Omega_-^{>}\cup
\Omega_-^{<}$.

We have the following transitions of segments of trajectories that enter $\mathcal N$
and approach ${\rm Deg}$.
\begin{enumerate}[$\circ$]
\item Starting at asymptotic vectors: $\Omega_+^{=}\to \Omega_0\to\Omega_0\to\cdots$,
and these trajectories get trapped in $\Omega_0$.
\item Starting at bouncing vectors: $\Omega_+^{>}\to \Omega_0\to\cdots\to\Omega_0\to \Omega_-^{>}$.
\item Starting at crossing vectors: $\Omega_+^{<}\to \Omega_0\to\cdots\to\Omega_0\to \Omega_-^{<}$.
\end{enumerate}
We have thus understood the transition in the neck of every $x\in \Omega_+$.
Next, we introduce the map that performs the transitions from $\Omega_+$ to $\Omega_-$.

\medskip
\noindent
{\sc Transition map $f_0$:} Let $f_0:\Omega_+\setminus \Omega_+^{=}\to \Omega_-\setminus\Omega_-^{=}$
be the map induced by the flow, i.e.
$f_0(x)=g_t(x)$ where $t>0$ is least such that $g_t(x)\in\Omega_-$.

\subsection{Formula for the transition map $f_0$}\label{ss:explicit}
In this subsection, we obtain an explicit formula for $f_0$. 

Denote geodesics in the neck by $\x=\x(t)=(s(t),\theta(t),\psi(t))$. As we have seen, they are characterized by the
equations 
\begin{align}\label{eq:Clairaut-relations-neck1}
& c=\xi(s)\cos\psi=\xi(s)^2 \theta', %\quad \textrm{ and } \quad 
\\ & [1+\xi'(s)^2](s')^2+\frac{c^2}{\xi(s)^2}=1 ,
\label{eq:Clairaut-relations-neck2}
\end{align}
where $c=c(\x)$ is the Clairaut constant of $\x$. 
%For simplicity, geodesics in the neck associated to bouncing/crossing vectors will be called bouncing/crossing geodesics. 
We parametrize bouncing geodesics~$\x$ taking
$s'(0)=0$ and $\psi(0)=0$ or $\pi$, i.e.\ $\x$ bounces back exactly at time $t=0$. Similarly,
we parametrize crossing geodesics $\x$ taking
$s(0)=0$, i.e.\ $\x$ crosses $\gamma$ exactly at time $t=0$.
The next result describes the symmetry properties of bouncing/crossing geodesics.

\begin{lemma}\label{lem:symmetry} 
The following are true.
\begin{enumerate}[{\rm (1)}]
\item A bouncing geodesic $(s(t),\theta(t),\psi(t))$ with $s'(0)=0$ and $\psi(0)=0$
satisfies  
$$
(s(-t),\theta(-t),\psi(-t))=(s(t),-\theta(t)+2\theta(0),-\psi(t))
$$
for all $t$.
\item A crossing geodesic $(s(t),\theta(t),\psi(t))$ with $s(0)=0$ satisfies  
$$
(s(-t),\theta(-t),\psi(-t))=(-s(t),-\theta(t)+2\theta(0),\psi(t))
$$
for all $t$. 
\end{enumerate}
\end{lemma}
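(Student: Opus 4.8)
The plan is to exploit the time-reversal symmetry of the geodesic equations \eqref{eq:Clairaut-relations-neck1}--\eqref{eq:Clairaut-relations-neck2} together with the special choice of base point ($t=0$ being the bouncing time, resp. the crossing time). The key observation is that if $(s(t),\theta(t),\psi(t))$ is a geodesic with Clairaut constant $c$, then any candidate ``reflected'' curve is again a geodesic as soon as it satisfies the same first-order system \eqref{eq:Clairaut-relations-neck1}--\eqref{eq:Clairaut-relations-neck2} (for a possibly different value of $c$) and agrees with the original geodesic to first order at $t=0$; uniqueness of geodesics given an initial condition in $M=T^1S$ then forces the two curves to coincide for all $t$. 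So the proof reduces to verifying (i) that the proposed reflection solves the system, and (ii) that it has the correct $1$-jet at $t=0$.

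For part (1), I would set $\bar s(t)=s(-t)$, $\bar\theta(t)=-\theta(-t)+2\theta(0)$, $\bar\psi(t)=-\psi(-t)$ and check that $(\bar s,\bar\theta,\bar\psi)$ satisfies \eqref{eq:Clairaut-relations-neck1}--\eqref{eq:Clairaut-relations-neck2} with the same Clairaut constant $c$. For \eqref{eq:Clairaut-relations-neck2}: $\bar s{}'(t)=-s'(-t)$, so $(\bar s{}')^2=(s')^2$ and $\xi(\bar s)=\xi(s(-t))$, hence \eqref{eq:Clairaut-relations-neck2} holds verbatim. For \eqref{eq:Clairaut-relations-neck1}: $\xi(\bar s)\cos\bar\psi=\xi(s(-t))\cos\psi(-t)=c$ since $\cos$ is even; and $\xi(\bar s)^2\bar\theta{}'(t)=\xi(s(-t))^2\theta'(-t)=c$. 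Thus $(\bar s,\bar\theta,\bar\psi)$ is a geodesic with Clairaut constant $c$. At $t=0$ we have $\bar s(0)=s(0)$, $\bar\theta(0)=\theta(0)$, $\bar\psi(0)=-\psi(0)=0=\psi(0)$; and $\bar s{}'(0)=-s'(0)=0=s'(0)$, $\bar\theta{}'(0)=\theta'(0)$, $\bar\psi{}'(0)=\psi'(0)$ (the last from differentiating, using $\bar\psi{}'(0)=\psi'(0)$ directly). Here is where the hypotheses $s'(0)=0$ and $\psi(0)=0$ are used: without them the $\psi$-component and the $s'$-component of the $1$-jets would not match. So the reflected geodesic and the original one have the same initial vector in $M$, hence are equal for all $t$, which is the claim after relabeling $-t\mapsto t$.

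Part (2) is entirely parallel with $\bar s(t)=-s(-t)$, $\bar\theta(t)=-\theta(-t)+2\theta(0)$, $\bar\psi(t)=\psi(-t)$. Now $\xi$ is even in $s$ (recall $\xi(s)=1+|s|^r$), so $\xi(\bar s)=\xi(s(-t))$ and equations \eqref{eq:Clairaut-relations-neck1}--\eqref{eq:Clairaut-relations-neck2} are checked exactly as before, again with the same Clairaut constant $c$; note $\bar s{}'(t)=s'(-t)$ so $(\bar s{}')^2=(s')^2$. The $1$-jet matching at $t=0$ uses the hypothesis $s(0)=0$: it gives $\bar s(0)=-s(0)=0=s(0)$, while $\bar\psi(0)=\psi(0)$, $\bar\theta(0)=\theta(0)$ automatically, and for derivatives $\bar s{}'(0)=s'(0)$, $\bar\theta{}'(0)=\theta'(0)$, $\bar\psi{}'(0)=-\psi'(0)$ — wait, one must be slightly careful: $\bar\psi{}'(0)=-\psi'(0)$, so to conclude equality of initial vectors one also needs $\psi'(0)=0$ at a crossing point, which indeed holds because at $s=0$ the profile $\xi$ has a critical point ($\xi'(0)=0$), forcing $\psi'(0)=0$ along the geodesic. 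With the $1$-jets matched, uniqueness of geodesics again yields the identity.

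The only genuinely delicate point — and the step I expect to require the most care — is the $1$-jet verification at $t=0$ in the crossing case, since one must confirm that $\psi'(0)=0$ there using $\xi'(0)=0$ (e.g. by differentiating the Clairaut relation $c=\xi(s)\cos\psi$ and evaluating at $s=0$, $s'(0)\neq 0$); everything else is a mechanical symmetry check combined with the standard uniqueness theorem for geodesics. Alternatively, and perhaps more cleanly, one can package both statements by noting that the maps $(s,\theta,\psi)\mapsto(s,-\theta,-\psi)$ and $(s,\theta,\psi)\mapsto(-s,-\theta,\psi)$ are isometries of the surface of revolution $\mathcal N$ (reflection in a meridian plane, resp. reflection in the plane of $\gamma$ composed with the same meridian reflection), and isometries send geodesics to geodesics; the fixed-point / reversal structure of these isometries at a bouncing point (resp. a crossing point) then produces exactly the stated identities. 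I would likely present the isometry argument as it is shorter and more transparent, falling back on the direct ODE computation only if a referee wants the details spelled out.
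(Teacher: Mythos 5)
Your proof is correct and takes essentially the same route as the paper: define the reflected curve $\overline{\x}$, verify that it satisfies the Clairaut relations \eqref{eq:Clairaut-relations-neck1}--\eqref{eq:Clairaut-relations-neck2} with the same Clairaut constant, match the initial condition at $t=0$, and conclude by uniqueness. The only redundancy is the ``$1$-jet'' verification (including the concern about $\overline{\psi}'(0)=-\psi'(0)$ in part~(2)): since the geodesic flow is a first-order ODE on $T^1S$, once $\overline{\x}(0)=\x(0)$ as points of $T^1S$ is established there is nothing further to check, so the hypothesis $\xi'(0)=0$ is not needed for that step; your alternative isometry argument is a valid and clean way to see a priori that $\overline{\x}$ is a geodesic.
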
 

\begin{proof} (1) Define
$$
\overline\x=(\overline s(t),\overline\theta(t),\overline \psi(t))=
(s(-t),-\theta(-t)+2\theta(0),-\psi(-t)).
$$
We show that $\overline\x$ is a geodesic with the same initial conditions of $\x$.
Start by observing that $\overline\x(0)=(s(0),\theta(0),0)=\x(0)$.
Since $\overline s'(t)=-s'(-t)$, $\overline\theta'(t)=\theta'(-t)$
and $\overline\psi'(t)=\psi'(-t)$, 
\begin{align*}
& c(\overline\x)=\xi(\overline s(t))^2\overline\theta'(t)=\xi(s(-t))^2\theta'(-t)=c(\x), \\  
& [1+\xi'(\overline s(t))^2](\overline s'(t))^2+\tfrac{c(\overline \x)^2}{\xi(\overline s(t))^2}=
[1+\xi'(s(-t))^2](s'(-t))^2+\tfrac{c(\x)^2}{\xi(s(-t))^2}=1.
\end{align*}
Hence $\overline \x=\x$, which proves part (1).

\medskip
\noindent
(2) Similarly, define
$$
\overline\x=(\overline s(t),\overline\theta(t),\overline \psi(t))=
(-s(-t),-\theta(-t)+2\theta(0),\psi(-t)).
$$
Then $\overline\x(0)=(0,\theta(0),\psi(0))=\x(0)$ and,
since $\overline s'(t)=s'(-t)$, $\overline\theta'(t)=\theta'(-t)$
and $\overline\psi'(t)=-\psi'(-t)$, 
\begin{align*}
& c(\overline\x)=\xi(\overline s(t))^2\overline\theta'(t)=\xi(s(-t))^2\theta'(-t)=c(\x), \\
& [1+\xi'(\overline s(t))^2](\overline s'(t))^2+\tfrac{c(\overline \x)^2}{\xi(\overline s(t))^2}=
[1+\xi'(s(-t))^2](s'(-t))^2+\tfrac{c(\x)^2}{\xi(s(-t))^2}=1.
\end{align*}
We obtain again that $\overline \x=\x$, which proves part (2).
\end{proof}

Let $\x=(s(t),\theta(t),\psi(t))$ be a bouncing/crossing geodesic parametrized as above.
% Letting $x\in\Omega_+$ be the starting point of transition of $\x$, its basepoint is either at $\alpha$ or $\beta$.

\medskip
\noindent
{\sc Transition time $\Upsilon_0$:} 
Define $\Upsilon_0(\x)=\min\{t>0: |s(t)|=\ve_0\}$.

\medskip
Since $\xi$ is an even function, we can assume that $\x$ enters the neck $\mathcal N$ at $\{s=-\ve_0\}$.
Lemma~\ref{lem:symmetry} implies the identities 
\[
x=g_{-\Upsilon_0(\x)}(s(0),\theta(0),\psi(0))\in \Omega_+, \qquad
f_0(x)=g_{\Upsilon_0(\x)}(s(0),\theta(0),\psi(0))\in \Omega_-. 
\]
In particular, the transition time of $\x$
from $\Omega_+$ to $\Omega_-$ is actually equal to $2\Upsilon_0(\x)$ (but it is convenient to call $\Upsilon_0$ the transition time). Moreover:
\begin{enumerate}[$\circ$]
\item If $\x$ is bouncing then $s\restriction_{[-\Upsilon_0(\x),0]}$ is strictly increasing and
$s\restriction_{[0,\Upsilon_0(\x)]}$ is strictly decreasing. 
We have $s(-\Upsilon_0(\x))=-\ve_0=s(\Upsilon_0(\x))$.
\item If $\x$ is crossing then $s\restriction_{[-\Upsilon_0(\x),\Upsilon_0(\x)]}$ is strictly increasing. 
We have $s(-\Upsilon_0(\x))=-\ve_0$, $s(\Upsilon_0(\x))=\ve_0$.
%To see this, start noting that $\psi(t)\in (0,\tfrac{\pi}{2})$ and so $\xi(s)>c$.  By the second equation of \eqref{eq:Clairaut-relations-neck}, it follows that $s'\neq 0$, which proves that $s$ is monotone.
\end{enumerate}

The next proposition gives the remaining ingredient to obtain the explicit formula for $f_0$.

\begin{proposition}\label{prop:PW}
Let $\x$ be a bouncing/crossing geodesic. Then 
$$
|\theta(\Upsilon_0(\x))-\theta(-\Upsilon_0(\x))|=2\int_{|s(0)|}^{\ve_0} \frac{|c(\x)|}{\xi(s)}\left[\frac{1+\xi'(s)^2}{\xi(s)^2-c(\x)^2}\right]^{\frac{1}{2}}ds.
$$
\end{proposition}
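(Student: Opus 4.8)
The plan is to derive the formula by separating variables in the geodesic equations and integrating along the half-orbit. First I would restrict attention, by the symmetry in Lemma~\ref{lem:symmetry} and the evenness of $\xi$, to a bouncing/crossing geodesic $\x=(s(t),\theta(t),\psi(t))$ entering the neck at $\{s=-\ve_0\}$, so that on the interval $[-\Upsilon_0(\x),0]$ the function $s(t)$ is strictly monotone (increasing in both the bouncing and crossing cases, as noted just before the proposition). This monotonicity is what makes it legitimate to use $s$ as the integration variable.

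Next I would combine the two Clairaut relations~\eqref{eq:Clairaut-relations-neck1} and~\eqref{eq:Clairaut-relations-neck2}. From~\eqref{eq:Clairaut-relations-neck2} one solves
$$
(s')^2=\frac{1}{1+\xi'(s)^2}\cdot\frac{\xi(s)^2-c^2}{\xi(s)^2},
$$
so that $|s'|=\xi(s)^{-1}\bigl[(\xi(s)^2-c^2)/(1+\xi'(s)^2)\bigr]^{1/2}$ (the quantity under the root is nonnegative precisely when $\xi(s)\ge|c|$, which holds throughout the neck transition away from the turning point). From~\eqref{eq:Clairaut-relations-neck1}, $\theta'=c/\xi(s)^2$. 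Dividing, one gets
$$
\frac{d\theta}{ds}=\frac{\theta'}{s'}=\frac{c}{\xi(s)^2}\cdot\frac{\operatorname{sign}(s')\,\xi(s)}{\bigl[(\xi(s)^2-c^2)/(1+\xi'(s)^2)\bigr]^{1/2}}
=\frac{\pm c}{\xi(s)}\left[\frac{1+\xi'(s)^2}{\xi(s)^2-c^2}\right]^{1/2}.
$$
Integrating $ds$ from $s=-\ve_0$ (i.e.\ $t=-\Upsilon_0$) to $s=s(0)$ gives $\theta(0)-\theta(-\Upsilon_0)$ as $\int_{-\ve_0}^{s(0)}\frac{d\theta}{ds}\,ds$; since $s(0)=-|s(0)|$ in the relevant parametrization (bouncing: $s(0)<0$ with $s'(0)=0$; crossing: $s(0)=0$), a change of variables turns this into $\int_{|s(0)|}^{\ve_0}\frac{|c|}{\xi(s)}\bigl[\frac{1+\xi'(s)^2}{\xi(s)^2-c^2}\bigr]^{1/2}\,ds$ up to sign, using that $\xi$ is even. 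Then I would invoke Lemma~\ref{lem:symmetry} once more: it gives $\theta(\Upsilon_0)-\theta(0)=\theta(0)-\theta(-\Upsilon_0)$ (both parts yield $\theta(-t)-\theta(0)=-(\theta(t)-\theta(0))$), so $\theta(\Upsilon_0)-\theta(-\Upsilon_0)=2(\theta(0)-\theta(-\Upsilon_0))$, which produces the factor $2$ and the claimed identity after taking absolute values.

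The main obstacle I anticipate is handling the improper integral at the turning point in the bouncing case: when $|c|>1$, at $t=0$ one has $s'(0)=0$, i.e.\ $\xi(s(0))=|c|$, so the integrand $\bigl[\xi(s)^2-c^2\bigr]^{-1/2}$ blows up at the lower endpoint $s=|s(0)|$. I would need to check that this singularity is integrable — it is an inverse-square-root type singularity since $\xi(s)^2-c^2$ vanishes to first order in $s-s(0)$ whenever $\xi'(s(0))\neq0$ (which holds because the only meridian geodesic is at $s=0$), so $\xi(s)^2-c^2\approx \mathrm{const}\cdot(s-|s(0)|)$ near the endpoint and the integral converges. A secondary technical point is justifying the change of variables from $t$ to $s$ rigorously across the turning point: one splits $[-\Upsilon_0,0]$ into pieces where $s$ is a genuine diffeomorphism onto its image and passes to the limit, or simply notes that on $(-\Upsilon_0,0)$ the map $t\mapsto s(t)$ is a $C^1$ diffeomorphism onto $(-\ve_0,s(0))$ with nonvanishing derivative, so the substitution is valid on the open interval and the resulting integral converges at the endpoint by the estimate above. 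Everything else is routine bookkeeping of signs and the symmetry relations.
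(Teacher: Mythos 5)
Your proposal is correct and follows essentially the same route as the paper: separate variables in the Clairaut equations to get $d\theta/ds$, integrate over the monotone half of the trajectory, and use Lemma~\ref{lem:symmetry} to double the contribution. The only addition is your explicit verification that the inverse-square-root singularity at the turning point in the bouncing case is integrable, which the paper leaves implicit.
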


\begin{proof}
Equations~\eqref{eq:Clairaut-relations-neck1} 
and~\eqref{eq:Clairaut-relations-neck2} can be rewritten as
$$
\theta'=c(\x)\xi(s)^{-2}\ \ \text{ and }\ \
|s'|=\xi(s)^{-1}\left[\frac{\xi(s)^2-c(\x)^2}{1+\xi'(s)^2}\right]^{\frac{1}{2}}.
$$
Dividing one equation by the other yields
$$
\left|\frac{d\theta}{ds}\right|=
\frac{|c(\x)|}{\xi(s)}\left[\frac{1+\xi'(s)^2}{\xi(s)^2-c(\x)^2}\right]^{\frac{1}{2}}.
$$
Since $s\restriction_{[0,\Upsilon_0(\x)]}$ is monotone,
$$
|\theta(\Upsilon_0(\x))-\theta(0)|=\int_{|s(0)|}^{\ve_0} \frac{|c(\x)|}{\xi(s)}\left[\frac{1+\xi'(s)^2}{\xi(s)^2-c(\x)^2}\right]^{\frac{1}{2}}ds.
$$
By Lemma~\ref{lem:symmetry}, we have that
$\theta(\Upsilon_0(\x))-\theta(-\Upsilon_0(\x))=2[\theta(\Upsilon_0(\x))-\theta(0)]$ and the proof is complete.
\end{proof}

% By Proposition~\ref{prop:PW},
% if $x=(-\ve_0,\theta,\psi)\in\Omega_+$ is a bouncing vector then
% $$
% f_0(-\ve_0,\theta,\psi)=\left(-\ve_0,\theta\pm 2\int_{|s(0)|}^{\ve_0} \frac{c(\x)}{\xi(s)}\left[\frac{1+\xi'(s)^2}{\xi(s)^2-c(\x)^2}\right]^{\frac{1}{2}}ds, -\psi \right)
% $$
% and if $x$ is a crossing vector then
% $$
% f_0(-\ve_0,\theta,\psi)=\left(\ve_0,\theta\pm 2\int_0^{\ve_0} \frac{c(\x)}{\xi(s)}\left[\frac{1+\xi'(s)^2}{\xi(s)^2-c(\x)^2}\right]^{\frac{1}{2}}ds,\psi \right).
% $$
% Both signs in the second coordinate can occur, depending on whether the trajectory approaches
% $\gamma_0$ or $\gamma_\pi$. By symmetry, and because the sign in front of the integral in both
% expressions is irrelevant to us, we assume a positive sign for simplicity.
By the rotational symmetry, the integral in Proposition~\ref{prop:PW} only depends on $\psi$.
Hence we define
\begin{align*}
\zeta(\psi) & =2\int_{|s(0)|}^{\ve_0} \frac{|c(\x)|}{\xi(s)}\left[\frac{1+\xi'(s)^2}{\xi(s)^2-c(\x)^2}\right]^{\frac{1}{2}}ds. %, \\[.75ex]
%\zeta_{<}(\psi) & =2\int_0^{\ve_0} \frac{|c(\x)|}{\xi(s)}\left[\frac{1+\xi'(s)^2}{\xi(s)^2-c(\x)^2}\right]^{\frac{1}{2}}ds.
\end{align*}
With this notation, 
\[
f_0(-\ve_0,\theta,\psi)=\begin{cases} (-\ve_0,\theta\pm \zeta(\psi),-\psi)
& \text{for bouncing vectors,} \\
(\ve_0,\theta\pm \zeta(\psi),\psi)
& \text{for crossing vectors.} \end{cases}
\]
%Observe that $\zeta_>(\psi)\to \infty$ as $\psi\to \psi_0^-$ and $\zeta_<(\psi)\to \infty$ as $\psi\to \psi_0^+$. 

\subsection{Transition times and derivatives of $f_0$}
\label{ss:times}

Before proceeding further, we derive two elementary integral estimates.

\begin{proposition}\label{prop:int} {\color{white}a}
\begin{enumerate}[{\rm (1)}]
\item Let $r,\alpha,\ve>0$ with $\alpha r>1$.  Then
\[
\int_0^\ve (s^r+b)^{-\alpha}\,ds
\sim C_1b^{-\alpha+\frac{1}{r}} \quad\text{as $b\to0^+$,}
\]
where $C_1=\int_0^\infty (x^r+1)^{-\alpha}\,dx$.
\item
Let $q,r,\alpha,\ve>0$ and $\beta\ge0$ with $\alpha r-\beta q>1$.  Then
\[
\int_b^\ve (s^r-b^r)^{-\alpha}(s^q-b^q)^\beta\,ds
\sim C_2b^{\beta q-\alpha r+1}
\quad\text{as $b\to0^+$,}
\]
and
\[
\int_b^{\ve + b} (s^r-b^r)^{-\alpha}(s^q-b^q)^\beta\,ds
\sim C_2b^{\beta q-\alpha r+1}
\quad\text{as $b\to0^+$,}
\]
where $C_2=\int_1^\infty (x^r-1)^{-\alpha}(x^q-1)^\beta\,dx$.
\end{enumerate}
\end{proposition}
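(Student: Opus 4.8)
Both statements are standard substitution-plus-dominated-convergence arguments; the only real content is identifying the correct power of $b$ and the limiting constant. The plan is to rescale the variable so that $b$ factors out of the integrand, leaving an integral over a fixed (or slowly growing) domain to which we apply the dominated/monotone convergence theorem.

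For part (1), substitute $s = b^{1/r}x$, so $ds = b^{1/r}\,dx$ and $s^r + b = b(x^r+1)$. The integral becomes
\[
\int_0^{\ve b^{-1/r}} \bigl(b(x^r+1)\bigr)^{-\alpha} b^{1/r}\,dx
= b^{-\alpha + 1/r}\int_0^{\ve b^{-1/r}} (x^r+1)^{-\alpha}\,dx.
\]
As $b\to 0^+$ the upper limit tends to $+\infty$, and since $\alpha r > 1$ the integrand $(x^r+1)^{-\alpha}$ is integrable on $[0,\infty)$; hence the truncated integral converges to $C_1 = \int_0^\infty (x^r+1)^{-\alpha}\,dx$ by monotone convergence. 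This gives the claimed asymptotic $\sim C_1 b^{-\alpha + 1/r}$.

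For part (2), substitute $s = bx$, so $ds = b\,dx$, $s^r - b^r = b^r(x^r-1)$ and $s^q - b^q = b^q(x^q-1)$. The first integral becomes
\[
\int_1^{\ve/b} b^{-\alpha r}(x^r-1)^{-\alpha}\, b^{\beta q}(x^q-1)^\beta\, b\,dx
= b^{\beta q - \alpha r + 1}\int_1^{\ve/b}(x^r-1)^{-\alpha}(x^q-1)^\beta\,dx,
\]
and the second integral, with the same substitution, has upper limit $\ve/b + 1$, which also tends to $+\infty$. In both cases one must check that $\int_1^\infty (x^r-1)^{-\alpha}(x^q-1)^\beta\,dx = C_2$ converges: near $x=1$ write $x^r-1 \approx r(x-1)$ and $x^q-1\approx q(x-1)$, so the integrand behaves like $(\text{const})\,(x-1)^{\beta-\alpha}$, which is integrable precisely when $\beta - \alpha > -1$, i.e. $\alpha - \beta < 1$; this follows from $\alpha r - \beta q > 1$ together with $r\ge q$ when $\beta\ge 0$ — one should state the needed relation carefully, since for $q > r$ the condition near $x=1$ is governed by $\beta q - \alpha r$ anyway. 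Near $x=\infty$ the integrand is $\asymp x^{-\alpha r + \beta q}$, integrable exactly because $\alpha r - \beta q > 1$. Monotone convergence (the truncated integrals increase to the full one) then yields $\sim C_2\,b^{\beta q - \alpha r + 1}$ for both integrals.

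The main obstacle — such as it is — is the convergence of $C_2$ near the lower endpoint $x=1$: one must verify that the hypothesis $\alpha r - \beta q > 1$, possibly combined with an implicit ordering of $r$ and $q$, actually forces the local exponent there to exceed $-1$. I would handle this by a short explicit Taylor expansion of $x^r-1$ and $x^q-1$ at $x=1$ and a remark that in the paper's applications $q$ and $r$ satisfy the relevant inequality; everything else is routine rescaling and dominated convergence.
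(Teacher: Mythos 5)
Your proof is correct and follows exactly the same route as the paper's: rescale by a power of $b$ to factor the prefactor out of the integrand, then pass to the limit in the truncated integral by monotone convergence. The paper uses $x = b^{-1/r}s$ in part (1) and $x = s/b$ in part (2), identical to your substitutions.

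Your concern about the finiteness of $C_2$ near the lower endpoint is well placed, and in fact you are being more careful than the paper, which only remarks that $\alpha r>1$ makes $C_1$ finite and says nothing about $C_2$. The correct resolution is not an ordering of $r$ and $q$ but a separate inequality on $\alpha-\beta$: since $x^r-1\sim r(x-1)$ and $x^q-1\sim q(x-1)$ as $x\to1^+$, the integrand of $C_2$ behaves like a constant times $(x-1)^{\beta-\alpha}$ there, so one needs $\alpha-\beta<1$ for local integrability. This does \emph{not} follow from $\alpha r-\beta q>1$ alone (take $\alpha=2$, $\beta=0$, $r=q=1$), so the stated hypotheses of the proposition are formally incomplete and one should add $\alpha<1+\beta$ (or, equivalently, read the statement as carrying the implicit hypothesis $C_2<\infty$). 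In every application in the paper one has $(\alpha,\beta)\in\{(\tfrac12,0),(\tfrac32,1),(\tfrac52,2)\}$, so $\alpha-\beta=\tfrac12<1$ always and the applications are unaffected. Once that hypothesis is in hand your argument is complete.
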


\begin{proof}
Note that $\alpha r>1$ implies that $C_1$ is finite.
By direct computation and the change of variables $x=b^{-\frac{1}{r}}s$,
\[
\int_0^\ve (s^r+b)^{-\alpha}\,ds=
b^{-\alpha}\int_0^\ve (b^{-1}s^r+1)^{-\alpha}\,ds
=b^{{-\alpha}+\frac{1}{r}}\int_0^{\ve b^{-\frac{1}{r}}} (x^r+1)^{-\alpha}\,dx
\]
and part~(1) follows.

Proceeding similarly to the proof of part~(1),
\begin{align*}
\int_b^\ve (s^r-b^r)^{-\alpha}(s^q-b^q)^\beta ds & =
b^{\beta q-\alpha r}\int_b^\ve \left[\left(\tfrac{s}{b}\right)^r-1\right]^{-\alpha}\left[\left(\tfrac{s}{b}\right)^q-1\right]^\beta ds
\\ & =b^{\beta q-\alpha r+1}\int_1^{\ve b^{-1}} (x^r-1)^{-\alpha}(x^q-1)^\beta dx
\end{align*}
and the first estimate in part~(2) follows. The argument for the second estimate is identical.
\end{proof}

To better analyse $f_0$ near the set of asymptotic vectors,
we introduce a partition of $\Omega_+$, as follows.

\medskip
\noindent
{\sc Homogeneity bands on $\Omega_+$:} For each $n\geq 1$, the {\em homogeneity band} with 
index~$n$ is $\mathfs C_n=\mathfs{C}_n^{>}\cup\mathfs{C}_n^{<}$ where 
\begin{align*}
\mathfs{C}_{n}^{>}&=\left\{x\in \Omega_+: 1+\tfrac{1}{(n+1)^2}<|c(x)|< 1+\tfrac{1}{n^2}\right\}\\
\mathfs{C}_n^{<}&=\left\{x\in \Omega_+: 1-\tfrac{1}{n^2}< |c(x)|<1-\tfrac{1}{(n+1)^2}\right\}.
\end{align*} 

\medskip

The next result estimates $\Upsilon_0$ in the homogeneity bands.

\begin{lemma}\label{lem:t1-new}
Let $\x$ be a geodesic with entry vector in $\mathfs C_n$. Then
$\Upsilon_0(\x)\approx n^{\frac{r-2}{r}}$.  
\end{lemma}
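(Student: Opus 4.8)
We must estimate the transition time $\Upsilon_0(\x) = \min\{t>0 : |s(t)| = \ve_0\}$ for a geodesic $\x$ whose entry vector lies in the homogeneity band $\mathfs C_n$. Recall that for $x \in \mathfs C_n$ we have $|c(\x)| = 1 \pm O(n^{-2})$, and since $\xi(s) = 1 + |s|^r$ near $\gamma$, the ``closest approach'' parameter $s_*$ (either the bouncing point where $\xi(s_*) = |c(\x)|$, or $s_* = 0$ in the crossing case) satisfies $|s_*|^r \approx n^{-2}$, i.e. $|s_*| \approx n^{-2/r}$. The time can be read off from equation~\eqref{eq:Clairaut-relations-neck2}: since $|s'| = \xi(s)^{-1}\bigl[(\xi(s)^2 - c^2)/(1+\xi'(s)^2)\bigr]^{1/2}$ and $s\restriction_{[0,\Upsilon_0(\x)]}$ is monotone (as recorded just before Proposition~\ref{prop:PW}), we get
\[
\Upsilon_0(\x) = \int_{|s_*|}^{\ve_0} \frac{\xi(s)}{|s'|\,\xi(s)}\, ds
= \int_{|s_*|}^{\ve_0} \xi(s)\left[\frac{1+\xi'(s)^2}{\xi(s)^2 - c(\x)^2}\right]^{1/2} ds .
\]
(In the bouncing case one integrates from $|s_*|$ to $\ve_0$ using $s(0) = \pm s_*$; in the crossing case from $0$ to $\ve_0$; the symmetry Lemma~\ref{lem:symmetry} shows the two halves contribute equally, which only affects constants.)

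**Key steps.** First I would reduce to a model integral. On $|s| \le \ve_0$ the factors $\xi(s) = 1 + |s|^r$ and $1 + \xi'(s)^2 = 1 + r^2 s^{2r-2}$ are bounded above and below away from zero, so they contribute only a bounded multiplicative factor and can be absorbed into the $\approx$. Writing $\xi(s)^2 - c(\x)^2 = (\xi(s) - |c(\x)|)(\xi(s)+|c(\x)|)$ and noting $\xi(s) + |c(\x)| \approx 1$, the integral reduces up to bounded factors to
\[
\int_{|s_*|}^{\ve_0} \bigl(\xi(s) - |c(\x)|\bigr)^{-1/2}\, ds = \int_{|s_*|}^{\ve_0} \bigl(|s|^r - (|c(\x)|-1)\bigr)^{-1/2}\, ds
\]
in the bouncing case, and $\int_0^{\ve_0}(|s|^r + (1-|c(\x)|))^{-1/2}\, ds$ in the crossing case. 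Second, I would apply Proposition~\ref{prop:int}: with $b = \bigl||c(\x)| - 1\bigr| \approx n^{-2}$, $\alpha = 1/2$, and $\alpha r = r/2 > 1$ (valid since $r \ge 4$), part~(1) gives the crossing estimate $\approx b^{-1/2 + 1/r} \approx n^{-2(-1/2+1/r)} = n^{1 - 2/r} = n^{(r-2)/r}$; and part~(2) with $\alpha = 1/2$, $\beta = 0$ (after the substitution making the lower limit $b = (|c(\x)|-1)^{1/r} \approx n^{-2/r}$, rescaling $s \mapsto (|c|-1)^{1/r} s$ so that $|s|^r - (|c|-1) = (|c|-1)\bigl[(s/(|c|-1)^{1/r})^r - 1\bigr]$) gives the bouncing estimate $\approx \bigl((|c|-1)^{1/r}\bigr)^{-1/2 \cdot r + 1} = (|c|-1)^{-1/2+1/r} \approx n^{(r-2)/r}$ as well. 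Both cases land on the same order $n^{(r-2)/r}$, so $\Upsilon_0(\x) \approx n^{(r-2)/r}$ uniformly over $\mathfs C_n$.

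**Main obstacle.** The genuinely delicate point is matching the integral one obtains from the Clairaut equations to the precise hypotheses of Proposition~\ref{prop:int} — in particular, getting the lower limit of integration, the exponent of $b$, and the role of $\xi'(s)^2$ right in \emph{both} the bouncing and crossing regimes simultaneously, and checking that $|s_*| \approx n^{-2/r}$ rather than something subtly different (this uses $\xi(s_*) = |c(\x)|$ with $\xi(s) - 1 = |s|^r$, so $|s_*|^r = |c(\x)| - 1 \approx n^{-2}$). One must also confirm that $\ve_0 b^{-1/r} \to \infty$ as $n \to \infty$ so the truncated integrals in Proposition~\ref{prop:int} really do converge to the stated constants $C_1, C_2 \in (0,\infty)$, which requires $\alpha r = r/2 > 1$, i.e. exactly the constraint $r \ge 4$ (indeed $r > 2$ suffices here, consistent with Remark~\ref{rmk:CZ05b}). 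Everything else is bounded-factor bookkeeping that the $\approx$ notation is designed to absorb.
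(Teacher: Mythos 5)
Your proposal is correct and follows essentially the same route as the paper: use the Clairaut relation~\eqref{eq:Clairaut-relations-neck2} to express $(s')^2$, split into bouncing and crossing cases with $\xi(s)-c\approx |s|^r-|s_*|^r$ (bouncing, $|s_*|\approx n^{-2/r}$) or $\xi(s)-c\approx |s|^r+n^{-2}$ (crossing), and apply Proposition~\ref{prop:int}(2) and (1) respectively with $\alpha=\frac12$, $\beta=0$ to obtain $n^{(r-2)/r}$ in both cases. The only differences are cosmetic bookkeeping in how the time integral $\int ds/|s'|$ is reduced to the model integral.
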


\begin{proof}
We continue to assume without loss that $s(-\Upsilon_0(\x))=-\ve_0$.
Also, we suppose without loss that 
the Clairaut constant $c=c(\x)$ is positive.
By assumption, 
$ \tfrac{1}{(n+1)^2}<|c-1|< \tfrac{1}{n^2}$.
By~\eqref{eq:Clairaut-relations-neck2},
\begin{equation} \label{eq:s'approx}
(s')^2  = (1+(\xi')^2)^{-1}\xi^{-2}(\xi+c)(\xi-c)\approx \xi(s)-c.
\end{equation}
We have two cases:
\begin{enumerate}[$\circ$]
\item If $\x$ is bouncing, then $c=1+|s(0)|^r$ where $s(0)\sim -n^{-2/r}$. By~\eqref{eq:s'approx}, 
$(s')^2 \approx |s|^r- |s(0)|^r$ and so 
$(|s|^r-|s(0)|^r)^{-\frac{1}{2}}s'\approx 1$ on the interval $[-\Upsilon_0(\x),0]$. 
Applying Proposition~\ref{prop:int}(2) with $\alpha=\frac12$, $\beta=0$ and $b=|s(0)|$,
we conclude that 
\begin{align*}
&\, \Upsilon_0(\x)\approx-\int_{-\ve_0}^{s(0)}(|s|^r-|s(0)|^r)^{-\frac{1}{2}}ds
 =\int_{|s(0)|}^{\ve_0}(s^r-|s(0)|^r)^{-\frac{1}{2}}ds
\\ & \approx |s(0)|^{-\frac{r}{2}+1}\sim n^{\frac{r-2}{r}}.
\end{align*}
\item If $\x$ is crossing, then $c=\cos\psi(0)\sim 1-n^{-2}$. By~\eqref{eq:s'approx},
$(s')^2 \approx |s|^r+n^{-2}$ and so 
$(|s|^r+n^{-2})^{-\frac{1}{2}}s'\approx 1$ on the interval $[-\Upsilon_0(\x),\Upsilon_0(\x)]$. 
Applying Proposition~\ref{prop:int}(1) with $\alpha=\frac12$ and $b=n^{-2}$,
we conclude that 
$$\Upsilon_0(\x)\approx\int_0^{\ve_0}(s^r+n^{-2})^{-\frac{1}{2}}ds
\approx \left(n^{-2}\right)^{-\frac{1}{2}+\frac{1}{r}}=n^{\frac{r-2}{r}}.
$$
\end{enumerate}
This concludes the proof of the lemma.
\end{proof}

Now we estimate, in terms of $c$, how 
the $\psi$--coordinate varies under $f_0$.
Without loss, we restrict to positive values of $c$.

\begin{lemma}\label{lem:diffpsi}
Suppose that $\x,\overline{\x}$ are both bouncing or both crossing geodesics. Then
$$
|\psi(\Upsilon_0(\x))-\overline{\psi}(\Upsilon_0(\overline{\x}))|\approx |c(\x)-c(\overline{\x})|.
$$
In particular, if the entry vectors of $\x,\overline{\x}$ are both in the same connected component of
$\mathfs C_n^<$ or of $\mathfs C_n^>$ then
$$
|\psi(\Upsilon_0(\x))-\overline{\psi}(\Upsilon_0(\overline{\x}))|=2(\ve_0^{2r}+2\ve_0^r)^{-1/2} n^{-3}
+O(n^{-4}).
$$
\end{lemma}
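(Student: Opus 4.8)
The plan is to compute the quantity $|\psi(\Upsilon_0(\x))-\overline\psi(\Upsilon_0(\overline\x))|$ by relating it directly to the Clairaut constant, which is the natural parameter in the neck. The key structural fact is that $\psi(\Upsilon_0(\x))$ is the angle at which the geodesic exits the neck at $s=\pm\ve_0$, so by the Clairaut relation~\eqref{eq:Clairaut} we have $c(\x)=\xi(\ve_0)\cos\psi(\Upsilon_0(\x))=(1+\ve_0^r)\cos\psi(\Upsilon_0(\x))$ in the crossing case, and similarly $c(\x)=(1+\ve_0^r)\cos\psi(\Upsilon_0(\x))$ in the bouncing case since the exit point is again $|s|=\ve_0$. (One should be slightly careful about which of $\psi_0$, $\pi-\psi_0$ the exit angle is near, and about signs, but by restricting to one connected component of the relevant homogeneity band and to positive $c$ this is fixed.) In other words, on each piece the map $\psi\mapsto\psi(\Upsilon_0(\x))$ is, up to the fixed smooth change of variables $\psi'\mapsto(1+\ve_0^r)\cos\psi'$, just the identity on the Clairaut constant.

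First I would make this precise: write $c=c(\x)$, $\overline c=c(\overline\x)$, and $\psi^*=\psi(\Upsilon_0(\x))$, $\overline\psi^*=\overline\psi(\Upsilon_0(\overline\x))$, so that $\cos\psi^*=c/(1+\ve_0^r)$ and $\cos\overline\psi^*=\overline c/(1+\ve_0^r)$. Since both exit angles lie in a fixed compact subinterval of $(0,\tfrac\pi2)$ (namely a neighborhood of $\psi_0$), the function $\arccos$ has derivative bounded above and below there, so $|\psi^*-\overline\psi^*|\approx|\cos\psi^*-\cos\overline\psi^*|=(1+\ve_0^r)^{-1}|c-\overline c|\approx|c-\overline c|$. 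This proves the first assertion.

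For the second, more refined assertion I would specialize to entry vectors in the same connected component of $\mathfs C_n^<$ (resp.\ $\mathfs C_n^>$), pass to a first-order expansion of $\arccos$ around the common base angle, and then insert the explicit description of $c$ on that band. On $\mathfs C_n^<$ one has $c=\cos\psi(0)$ with $|c-1|\approx n^{-2}$; more precisely the two endpoints of the band are at $|c-1|=n^{-2}$ and $|c-1|=(n+1)^{-2}$, so the width of the band in $c$ is $n^{-2}-(n+1)^{-2}=2n^{-3}+O(n^{-4})$, and since $\psi^*$ runs monotonically across the band the corresponding spread in $\psi^*$ is $(1+\ve_0^r)^{-1}\cdot(\text{spread in }\cos\psi^*)$ times the reciprocal derivative factor evaluated at the endpoint $c=1$. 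At $c=1$ one has $\psi^*=\psi_0$ with $\cos\psi_0=(1+\ve_0^r)^{-1}$, hence $\sin\psi_0=(1-(1+\ve_0^r)^{-2})^{1/2}=(1+\ve_0^r)^{-1}(2\ve_0^r+\ve_0^{2r})^{1/2}$, so $\frac{d\psi^*}{dc}\big|_{c=1}=\frac1{(1+\ve_0^r)\sin\psi_0}=(2\ve_0^r+\ve_0^{2r})^{-1/2}$. Multiplying by the $c$-spread $2n^{-3}+O(n^{-4})$ gives the stated $2(\ve_0^{2r}+2\ve_0^r)^{-1/2}n^{-3}+O(n^{-4})$; the bouncing band $\mathfs C_n^>$ gives the same value since the exit point is again $|s|=\ve_0$ and the band width in $c$ is identical.

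The main obstacle I anticipate is purely bookkeeping rather than analytic: carefully tracking the signs and the choice of connected component so that the relation $c=(1+\ve_0^r)\cos\psi^*$ holds with the correct branch of $\arccos$ (the four families $\Omega_+$ come with $\psi$ near $\pm\psi_0$ and $\pm(\pi-\psi_0)$, and the exit $\psi$ near $\psi_0$, $-\psi_0$, $\pi-\psi_0$ or $-(\pi-\psi_0)$ accordingly), and making sure that the remainder term in the Taylor expansion of $\arccos$ across a band of $c$-width $O(n^{-3})$ based at $c=1-O(n^{-2})$ is genuinely $O(n^{-4})$ and not something larger. The latter is fine because the second derivative of $\arccos$ stays bounded on the relevant compact angle range, so the remainder is $O((n^{-2})\cdot(n^{-3}))=O(n^{-5})$, comfortably inside the claimed error; one also needs that the endpoint value of $\frac{d\psi^*}{dc}$ differs from its value at $c=1$ by $O(n^{-2})$, contributing an $O(n^{-3}\cdot n^{-2})$ correction, again absorbed. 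No use of Proposition~\ref{prop:int} or Lemma~\ref{lem:t1-new} is needed here; the statement is essentially a consequence of the Clairaut relation plus smoothness of $\arccos$.
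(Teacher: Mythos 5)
Your proposal is correct and follows essentially the same route as the paper: both rest on the Clairaut relation $\cos\psi(\Upsilon_0(\x))=(1+\ve_0^r)^{-1}c(\x)$ at the exit point $|s|=\ve_0$, and then convert $c$-differences to $\psi$-differences (the paper applies the mean value theorem to $\cos$, you Taylor-expand $\arccos$; these are equivalent), with the explicit constant coming from evaluating $d\psi^*/dc$ at $c=1$ and the $c$-width of the homogeneity band. Your error-tracking is careful and correct; no essential idea is missing.
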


\begin{proof}
Let $a=(1+\ve_0^r)^{-1}$. By~\eqref{eq:Clairaut-relations-neck1}, $(1+\ve_0^r)\cos\psi(\Upsilon_0(\x))=c(\x)$
and so $\cos\psi(\Upsilon_0(\x))=ac(\x)$. Similarly,
$\cos\overline\psi(\Upsilon_0(\overline{\x}))=ac(\overline{\x})$.
By the mean value theorem,
\begin{align*}
a|c(\x)-c(\overline{\x})|&=
|\cos\psi(\Upsilon_0(\x))-\cos\overline{\psi}(\Upsilon_0(\overline{\x}))|\\
&=|\sin\psi^*|\cdot |\psi(\Upsilon_0(\x))-\overline\psi(\Upsilon_0(\overline{\x}))|
\end{align*}
for some $\psi^*$ between $\psi(\Upsilon_0(\x))$ and $\overline{\psi}(\Upsilon_0(\overline{\x}))$.
Since $\cos\psi(\Upsilon_0(\x)),\cos\overline{\psi}(\Upsilon_0(\overline{\x}))\sim a$, we have
$|\sin\psi^*|\sim (1-a^2)^{1/2}$ and so
$$
|\psi(\Upsilon_0(\x))-\overline\psi(\Upsilon_0(\overline{\x}))|=a|\sin\psi^*|^{-1}|c(\x)-c(\overline{\x})|\sim a(1-a^2)^{-1/2}
|c(\x)-c(\overline{\x})|.
$$

Now suppose that the entry vectors of $\x,\overline{\x}$ are in the same connected component
of $\mathfs C_n^{>}$. Assuming without loss of generality that $c(\x),c(\overline{\x})>0$, then
$1+(n+1)^{-2}<c(\x),c(\overline{\x})\leq 1+n^{-2}$.
It follows that 
$|c(\x)-c(\overline{\x})|= 2n^{-3}+O(n^{-4})$. 
Also, $\sin\psi^*=(1-a^2)^{-1/2}+O(n^{-2})$.
Hence
$$|\psi(\Upsilon_0(\x))-\overline{\psi}(\Upsilon_0(\overline{\x}))|= 2a(1-a^2)^{-1/2} n^{-3}+O(n^{-4}).
$$
An analogous calculation
holds if the entry vectors are in $\mathfs C_n^<$.
\end{proof}

%\begin{remark}\label{rmk:diffpsi}
%It follows that 
%$$
%{\rm Leb}(\mathfs C_n^<)=16\pi(\ve_0^{2r}+2\ve_0^r)^{-1/2} n^{-3}+O(n^{-4}), 
%\quad
%{\rm Leb}(\mathfs C_n^>)=16\pi(\ve_0^{2r}+2\ve_0^r)^{-1/2} n^{-3}+O(n^{-4}), 
%$$
%where ${\rm Leb}=d\theta\,d\psi$ denotes Lebesgue measure on $\Sigma_+$ in the Clairaut coordinates.
%(There is a factor of $2\pi$ arising from $\theta\in \mathbb S^1$  and a factor of $4$ corresponding to the fact
%that $\mathfs C_n^<$ and $\mathfs C_n^>$ each have $4$ symmetrically related connected components.)
%\end{remark}

To conclude this section, we obtain estimates for the derivatives of $\zeta$.

\begin{lemma}\label{lem:pw}
The following are true.
\begin{enumerate}[{\rm (1)}]
\item If $(-\ve_0,\theta,\psi)\in \mathfs C_n^<$ then
$\zeta'(\psi)\approx -n^{3-\frac{2}{r}}$ and $\zeta''(\psi)\approx n^{5-\frac{2}{r}}$.    
\item If $(-\ve_0,\theta,\psi)\in \mathfs C_n^>$ then
$\zeta'(\psi)\approx n^{3-\frac{2}{r}}$ and $|\zeta''(\psi)| \ll n^{5-\frac{2}{r}}$.
\end{enumerate}
\end{lemma}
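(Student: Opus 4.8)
The plan is to differentiate the formula
$$
\zeta(\psi)=2\int_{|s(0)|}^{\ve_0}\frac{|c|}{\xi(s)}\left[\frac{1+\xi'(s)^2}{\xi(s)^2-c^2}\right]^{1/2}ds
$$
with respect to $\psi$, where $c=c(\x)=\xi(\ve_0)\cos\psi=(1+\ve_0^r)\cos\psi$ and (in the bouncing case) the lower limit is $|s(0)|$ with $\xi(|s(0)|)=|c|$. First I would record the chain of dependences: $\zeta$ depends on $\psi$ both through $c$ (via $\tfrac{dc}{d\psi}=-(1+\ve_0^r)\sin\psi\approx-1$ on $\Omega_+$, where $\sin\psi$ is bounded away from $0$) and, in the bouncing case, through the moving endpoint $|s(0)|$. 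So $\zeta'(\psi)$ splits as $\tfrac{dc}{d\psi}\,\partial_c\zeta$ plus an endpoint term; but at $s=|s(0)|$ the integrand's denominator $\xi(s)^2-c^2$ vanishes like $(s-|s(0)|)$, so the endpoint contribution is an improper-but-convergent boundary term that I would check is of the same order (or lower) than the bulk term. In the crossing case the lower limit is $|s(0)|=0$, fixed, so only the $\partial_c$ piece survives and life is easier.

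The heart of the matter is to estimate $\partial_c\zeta$ and $\partial_c^2\zeta$ by reducing them to integrals of the type handled in Proposition~\ref{prop:int}. Writing $b=|s(0)|$ in the bouncing case (so $c-1\approx b^r$, $n^{-2}\approx b^r$, i.e.\ $b\approx n^{-2/r}$) and using $\xi(s)=1+s^r$, one has $\xi(s)^2-c^2=(\xi(s)-c)(\xi(s)+c)\approx s^r-b^r$, so near the endpoint the integrand behaves like $(s^r-b^r)^{-1/2}$ and each $c$-derivative produces an extra factor $(s^r-b^r)^{-1}$ (from differentiating $(\xi(s)^2-c^2)^{-1/2}$), i.e.\ the integrand of $\partial_c\zeta$ behaves like $(s^r-b^r)^{-3/2}$ and that of $\partial_c^2\zeta$ like $(s^r-b^r)^{-5/2}$. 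Applying Proposition~\ref{prop:int}(2) with $\alpha=\tfrac32$, resp.\ $\alpha=\tfrac52$, $\beta=0$ (the exponent condition $\alpha r>1$ holds since $r\ge4$), these integrals are $\approx b^{-\tfrac{3r}{2}+1}\approx b^{1-\tfrac{3r}{2}}$ and $\approx b^{1-\tfrac{5r}{2}}$. Converting via $b\approx n^{-2/r}$ gives $\partial_c\zeta\approx n^{\tfrac{2}{r}(\tfrac{3r}{2}-1)}=n^{3-\tfrac2r}$ and $\partial_c^2\zeta\approx n^{5-\tfrac2r}$; since $\tfrac{dc}{d\psi}\approx-1$ and $\tfrac{d^2c}{d\psi^2}=O(1)$, the sign and size claims $\zeta'(\psi)\approx-n^{3-2/r}$, $\zeta''(\psi)\approx n^{5-2/r}$ in $\mathfs C_n^<$ follow. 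In the crossing case replace $b^r$ by $n^{-2}$ throughout and use Proposition~\ref{prop:int}(1) with $\alpha=\tfrac32,\tfrac52$, getting the same powers of $n$; the sign in $\mathfs C_n^>$ differs because now $c>1$, and the precise two-sided bound on $\zeta''$ degrades to the one-sided $|\zeta''|\ll n^{5-2/r}$ because of cancellation between the $\partial_c^2\zeta$ term and the endpoint-derivative term, which I would bound from above only.

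The main obstacle is the bouncing case: controlling the moving endpoint $|s(0)|$ (which itself depends on $\psi$) and showing that the boundary terms generated when differentiating under the integral sign do not dominate. One clean way around this is to substitute $s=bx$ first, as in the proof of Proposition~\ref{prop:int}, pulling all the $b$-dependence (hence $\psi$-dependence) into the prefactor $b^{1-\tfrac{3r}{2}}$ (resp.\ $b^{1-\tfrac{5r}{2}}$) times an integral over $x\in[1,\ve_0/b]$ with an integrand that converges at $x=1$ and at $\infty$ to an explicit constant; then one differentiates the clean scalar function $b\mapsto b^{\text{power}}$ and uses $\tfrac{db}{d\psi}\approx b^{1-r}\cdot\tfrac{dc}{d\psi}$ (from $\xi(b)=c$, i.e.\ $rb^{r-1}\,db=dc$), which is consistent with the orders claimed. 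I expect the upper-bound-only statement for $\zeta''$ in $\mathfs C_n^>$ to be a genuine subtlety rather than a cosmetic one, so I would not try to push it to a two-sided estimate. Once Lemma~\ref{lem:pw} is in hand, note that it also re-proves the $\zeta'$ part of Lemma~\ref{lem:diffpsi} up to integrating, and that the orders $n^{3-2/r}$, $n^{5-2/r}$ are exactly what will later feed the hyperbolicity constants and the tail exponent $a=\tfrac{r+2}{r-2}$.
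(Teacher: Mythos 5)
Your plan — differentiate $\zeta$, handle the moving endpoint in the bouncing case by substitution, estimate via Proposition~\ref{prop:int}, then convert variables — is the right skeleton and is essentially the paper's strategy. But there are concrete problems in the execution that, taken together, constitute a genuine gap.

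\textbf{The key step does not converge.} In the bouncing case you differentiate under the integral sign and claim the integrand of $\partial_c\zeta$ is $\approx (s^r-b^r)^{-3/2}$, then invoke Proposition~\ref{prop:int}(2) with $\alpha=\tfrac32$, $\beta=0$ (and $\alpha=\tfrac52$, $\beta=0$ for the second derivative). Those integrals are divergent near $s=b$: since $s^r-b^r\sim r b^{r-1}(s-b)$, the integrand behaves like $(s-b)^{-\alpha+\beta}$, which is integrable only when $\alpha-\beta<1$; for $\alpha=\tfrac32,\tfrac52$ and $\beta=0$ this fails. (Equivalently, the constant $C_2=\int_1^\infty(x^r-1)^{-\alpha}(x^q-1)^\beta dx$ in the proposition is infinite for these parameters, and the hypothesis you checked, $\alpha r>1$, only controls $x\to\infty$, not $x\to 1^+$.) This is precisely the symptom of the fact that the naive Leibniz rule fails here: the boundary term is not ``improper-but-convergent'' — it is literally infinite, because $\xi(b)=c$ makes $\xi(b)^2-c^2=0$ — and the formal bulk integral is also infinite. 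The paper avoids this by shifting, $s\mapsto s+y$, \emph{before} differentiating. The point of the shift is that the derivative in $y$ then produces factors $(s+y)^{r-1}-y^{r-1}$ and $(s+y)^{r-2}-y^{r-2}$ in the numerator, which vanish to first order at $s=0$ and make all the resulting integrals converge; one then applies Proposition~\ref{prop:int}(2) with $\beta=1$ or $\beta=2$ (and the convergence condition $\alpha-\beta<1$ is satisfied). The fact that your formula $b^{1-3r/2}$ nevertheless matches the final answer is an artifact of the exponent bookkeeping, not a valid derivation.

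\textbf{The scaling workaround is a plan, not a proof.} Your alternative substitution $s=bx$ does avoid the divergence, and it can be made to work, but after rescaling one gets $\zeta\approx b^{1-r/2}J(b)$ where $J(b)=\int_1^{\ve_0/b}B(x,b)(x^r-1)^{-1/2}dx$ is \emph{not} a constant: both its upper limit and its integrand depend on $b$. To justify ``differentiating the clean scalar function $b\mapsto b^{\text{power}}$'' one still has to estimate $J'(b)$ and $J''(b)$ (these are of order $b^{r/2-2}$ and $b^{r/2-3}$, which are indeed subleading — but this must be checked, and it is comparable in effort to what the paper does with the shift).

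\textbf{Labels and signs are crossed.} Your detailed bouncing-case analysis ($b=|s(0)|$, moving endpoint) concludes with ``the sign and size claims $\zeta'\approx-n^{3-2/r}$, $\zeta''\approx n^{5-2/r}$ in $\mathfs C_n^<$'' — but $\mathfs C_n^<$ ($|c|<1$) is the \emph{crossing} case, where $s(0)=0$ is fixed. The bouncing case is $\mathfs C_n^>$, where the lemma claims the opposite sign for $\zeta'$ and only a one-sided bound for $\zeta''$. The actual sign computation in the bouncing case is: $\partial_c\zeta<0$ (since $\zeta\to\infty$ as $c\to 1^+$) and $\tfrac{dc}{d\psi}<0$, so $\zeta'(\psi)>0$; your write-up gets a negative sign, which is the crossing-case sign, not the bouncing-case one. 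Finally, the source of the one-sided bound for $\zeta''$ in $\mathfs C_n^>$ in the paper is a possible cancellation between two \emph{bulk} integrals ($Q_5$ and $Q_6$, coming from the two terms in $\tfrac{d^2}{dy^2}u^{-1/2}=\tfrac34 u^{-5/2}(u')^2-\tfrac12 u^{-3/2}u''$), not between a bulk term and a boundary term as you suggest — in the shift picture there is no boundary term at all.
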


Note that the first three estimates in Lemma~\ref{lem:pw} give upper and lower bounds, while the fourth estimate gives only an upper bound. 

\begin{proof}
(1) Let $a=1+\ve_0^r$ (this notation is different from Lemma \ref{lem:diffpsi}).
We have $c(\x)=a\cos\psi$, hence $\cos\psi\sim a^{-1}\approx 1$ and
$\sin\psi\sim (1-a^{-2})^{\frac{1}{2}}\approx 1$. Also, $s(0)=0$, so
\begin{align*}
% &\,\zeta(\psi)=
% 2a\cos\psi\int_0^{\ve_0} \underbrace{\frac{\left[1+\xi'(s)^2\right]^{\frac{1}{2}}}{\xi(s)}}_{=A(s)}\left[\xi(s)^2-a^2\cos^2\psi\right]^{-\frac{1}{2}}ds\\
% &=2a \cos\psi\int_0^{\ve_0}  A(s)\left[\xi(s)^2-a^2\cos^2\psi\right]^{-\frac{1}{2}}ds
\zeta(\psi) & =
%2a\cos\psi\int_0^{\ve_0} \frac{\left[1+\xi'(s)^2\right]^{\frac{1}{2}}}{\xi(s)}\left[\xi(s)^2-a^2\cos^2\psi\right]^{-\frac{1}{2}}ds\\
2a \cos\psi\int_0^{\ve_0}  A(s)\left[\xi(s)^2-a^2\cos^2\psi\right]^{-\frac{1}{2}}ds
\end{align*}
where $A(s)=\frac{\left[1+\xi'(s)^2\right]^{\frac{1}{2}}}{\xi(s)}\approx 1$. By direct calculation, 
\begin{align*}
\zeta'(\psi)&=-2a\sin\psi\int_0^{\ve_0} A(s)\xi(s)^2\left[\xi(s)^2-a^2\cos^2\psi\right]^{-\frac{3}{2}}ds\\
&\approx  -\int_0^{\ve_0} \left[\xi(s)-c(\x)\right]^{-\frac{3}{2}}ds
\end{align*}
and 
\begin{align*}
\zeta''(\psi) & =2a\cos\psi\int_0^{\ve_0} A(s)\xi(s)^2\left[3a^2\sin^2\psi+a^2\cos^2\psi-\xi(s)^2\right]
\left[\xi(s)^2-a^2\cos^2\psi\right]^{-\frac{5}{2}}ds\\
 &\approx  \int_0^{\ve_0}\left[\xi(s)-c(\x)\right]^{-\frac{5}{2}}ds.
\end{align*}
Noting that $\xi(s)-c(\x)\approx s^r+n^{-2}$, Proposition~\ref{prop:int}(1) applied with:
\begin{enumerate}[$\circ$]
\item $\alpha=\frac32$ and $b=n^{-2}$ gives that $\zeta'(\psi)\approx -(n^{-2})^{-\frac32+\frac{1}{r}}=
-n^{3-\frac{2}{r}}$.
\item $\alpha=\frac52$ and $b=n^{-2}$ gives that $\zeta''(\psi)\approx (n^{-2})^{-\frac52+\frac{1}{r}}=n^{5-\frac{2}{r}}$.
\end{enumerate}
This proves part (1). 

\vspace{1ex}
\noindent
(2) This part is more difficult, for two reasons: the interval of integration in $\zeta$ is not fixed,
and the denominator $\xi(s)^2-c(\x)^2$ is harder to control since both $\xi(s),c(\x)>1$.
Since $s'(0)=0$, we have $c(\x)=a\cos\psi=1+|s(0)|^r$ with, as before, $\cos\psi\approx 1$ and $\sin\psi\approx 1$.
Introduce the variable $y=|s(0)|$. Then $c(\x)-1=y^r$ and so $y\sim n^{-\frac{2}{r}}$.
Write $\zeta(\psi)=2I(y)$, where
\begin{align*}
&\, I(y)=\int_y^{\ve_0} \frac{(1+y^r)}{\xi(s)}\left[\frac{1+\xi'(s)^2}{\xi(s)^2-(1+y^r)^2}\right]^{\frac12}ds\\
&=\int_0^{\ve_0-y} \frac{(1+y^r)}{\xi(s+y)}\left[\frac{1+\xi'(s+y)^2}{\xi(s+y)^2-(1+y^r)^2}\right]^{\frac12}ds\\
&=\int_0^{\ve_0-y}A(s,y)\left[(s+y)^r-y^r\right]^{-\frac12}ds.
\end{align*}
Here, $A(s,y)=\tfrac{(1+y^r) }{\xi(s+y)}\left[\tfrac{1+\xi'(s+y)^2}{\xi(s+y)+(1+y^r)}\right]^{\frac12}$
is $C^2$ with $A(s,y)\approx 1$.

\vspace{1ex}
Next, write $I(y)=I_1(y)+I_2(y)$ where
\begin{align*}
&I_1(y)=\int_0^{\ve_0}A(s,y)\left[(s+y)^r-y^r\right]^{-\frac12}ds,\\
&I_2(y)  =-\int_{\ve_0-y}^{\ve_0}A(s,y)\left[(s+y)^r-y^r\right]^{-\frac12}ds.
\end{align*}
Now, $I_2$ is $C^2$ for $y$ small
and in particular $I_2'$ and $I_2''$ are bounded. Therefore, it remains to estimate $I_1'$ and $I_1''$.
We have $I_1'=Q_1+Q_2$ where
\begin{align*}
Q_1(y)&=\int_0^{\ve_0} \partial_y A(s,y)\left[(s+y)^r-y^r\right]^{-\frac12}ds,\\ 
Q_2(y)&= -\tfrac{r}{2}\int_0^{\ve_0} A(s,y)\left[(s+y)^r-y^r\right]^{-\frac32}\left[(s+y)^{r-1}-y^{r-1}\right]ds.
\end{align*}
Using that $A$ is $C^2$ and applying Proposition~\ref{prop:int}(2)
with $\alpha=\frac12$, $\beta=0$, $b=y$ gives that
$$
|Q_1(y)|\ll \int_0^{\ve_0}\left[(s+y)^r-y^r\right]^{-\frac12}ds
 =\int_y^{\ve_0+y} \left[s^r-y^r\right]^{-\frac12}ds\approx y^{-\frac{r}{2}+1}.
$$
Now, since $A(s,y)\approx 1$, applying Proposition~\ref{prop:int}(2)
with $\alpha=\frac32$, $\beta=1$, $q=r-1$, $b=y$ implies that
\begin{align*}
Q_2(y) &\approx -\int_0^{\ve_0}\left[(s+y)^r-y^r\right]^{-\frac32}\left[(s+y)^{r-1}-y^{r-1}\right]ds\\
&=-\int_y^{\ve_0+y}\left[s^r-y^r\right]^{-\frac32}\left[s^{r-1}-y^{r-1}\right]ds
\approx -y^{-\frac{r}{2}}.
\end{align*}
Hence,
$I'(y)\approx I_1'(y)\approx -y^{-\frac{r}{2}}$. Next we transform back to
the variable $\psi$. Differentiating $1+y^r=a\cos\psi$ with respect to $\psi$, we get that 
$ry^{r-1}\frac{dy}{d\psi}=-a\sin\psi\approx -1$ and so $\frac{dy}{d\psi}\approx -y^{1-r}$,
which implies that 
$$
\zeta'(\psi)=2I'(y)\tfrac{dy}{d\psi}\approx (-y^{-\frac{r}{2}})(-y^{1-r})=y^{1-\frac{3r}{2}}
\sim (n^{-\frac{2}{r}})^{1-\frac{3r}{2}}\sim n^{3-\frac{2}{r}}.
$$
This is the desired estimate for $\zeta'$.

Similarly, we can write $I_1''=Q_3+Q_4+Q_5+Q_6$ with
\begin{align*}
|Q_3(y)|&\ll \int_y^{\ve_0+y}\left[s^r-y^r\right]^{-\frac12}ds\approx y^{-\frac{r}{2}+1},\\
|Q_4(y)|&\ll \int_y^{\ve_0+y}\left[s^r-y^r\right]^{-\frac32}\left[s^{r-1}-y^{r-1}\right]ds\approx y^{-\frac{r}{2}}, \\
|Q_5(y)|&\approx \int_y^{\ve_0+y}\left[s^r-y^r\right]^{-\frac32}\left[s^{r-2}-y^{r-2}\right] ds\approx y^{-\frac{r}{2}-1},\\ 
|Q_6(x)|&\approx \int_y^{\ve_0+y}\left[s^r-y^r\right]^{-\frac52}\left[s^{r-1}-y^{r-1}\right]^2ds \approx y^{-\frac{r}{2}-1}.
\end{align*}
Here, the estimates of $Q_3,Q_4$ are the same as those of $Q_1,Q_2$, the estimate of
$Q_5$ follows from Proposition~\ref{prop:int}(2) with $\alpha=\frac32$, $\beta=1$, $q=r-2$, $b=y$
and the estimate of $Q_6$ follows from Proposition~\ref{prop:int}(2) with $\alpha=\frac52$, $\beta=2$,
$q=r-1$, $b=y$. Hence $|I''(y)|\ll y^{-\frac{r}{2}-1}$.
Differentiating $ry^{r-1}\frac{dy}{d\psi}=-a\sin\psi$ with respect to $\psi$ and recalling that
$\frac{dy}{d\psi}\approx -y^{1-r}$, we get that
\[
-1\approx -a\cos\psi=
r(r-1)y^{r-2}\big(\tfrac{dy}{d\psi}\big)^2+ry^{r-1}\tfrac{d^2y}{d\psi^2}
\approx r(r-1)y^{-r}+ry^{r-1}\tfrac{d^2y}{d\psi^2}.
\]
Since $y^{-r}$ is large, both terms on the right-hand side have the same order,
hence $\tfrac{d^2y}{d\psi^2}\approx -y^{-2r+1}$. We thus conclude that
\begin{align*}
|\zeta''(\psi)|  =\big|2I''(y)\big(\tfrac{dy}{d\psi}\big)^2+2I'(y)\tfrac{d^2y}{d\psi^2}\big|
& \ll y^{-\frac{r}{2}-1}y^{2(1-r)}+y^{-\frac{r}{2}}y^{-2r+1}\\
&\approx y^{-\frac{5r}{2}+1}\sim (n^{-\frac{2}{r}})^{-\frac{5r}{2}+1}=n^{5-\frac{2}{r}}
\end{align*}
which is the required estimate for $\zeta''$.
\end{proof}

\section{Poincar\'e section and first return map $f$}\label{sec:construction-Poincare-section}

In this section, we construct a suitable
Poincar\'e first return map $f:\Sigma_0\to\Sigma_0$ with
unbounded Poincar\'e return time $\tau:\Sigma_0\to(0,\infty]$ such that $\inf\tau>0$.  
Keeping in mind the Chernov axioms (Section~\ref{sec:exponential.mixing}), we require that 
the two-dimensional cross-section $\Sigma_0$ satisfies:
\begin{enumerate}[$\circ$]
\item $\Sigma_0$ is the disjoint union of finitely many codimension one submanifolds of $M$
each of which is almost orthogonal to the flow direction and hence, by Lemma \ref{lem:subspaces}(2),
almost parallel to $\wh E^s\oplus\wh E^u$. 
In particular, the flow projections of the one-dimensional stable/unstable directions $\wh E^{s,u}$ of
$g_t$ define stable/unstable directions $E^{s,u}$ for $f$. 
%Hence $f$ is nonuniformly hyperbolic.
Moreover, 
the hyperbolicity of $f$ along $E^{s,u}$ is almost the same as that of $g_t$
along $\wh E^{s,u}$, as given by equation \eqref{eq:delta-Sasaki}. 
\item The boundary of $\Sigma_0$ is transverse to $E^s$ and $E^u$. This condition ensures (A3).
\item There are no triple intersections for a certain family of curves and iterates under~$f$.
This family of curves is the union of boundaries of $\Sigma_0$ and finitely
many curves of asymptotic vectors, and makes up the primary singular set
$\mathfs S_{\rm P}$. This condition is used in the verification of (A8.3).
%\item $\Sigma_0=\Sigma_{\rm g}\uplus \Sigma_{\rm b}$, where $h$ exhibits uniform hyperbolicity
%on $\Sigma_{\rm g}$ and loses uniform hyperbolicity on $\Sigma_{\rm b}$, see the next conditions. The subscript g %stands for ``good'' and b for ``bad''.
%\item $\Sigma_{\rm b}\subset T^1A$ captures the transitions in the neck $A$.
\end{enumerate}
The construction of $\Sigma_0$ is rather technical, and is done as follows.
Recall the transition section $\Omega$ constructed in Section \ref{ss:Omega}.
Using $\Omega$, we construct a ``security'' section $\wt\Sigma$
that is almost parallel to $\wh E^s\oplus\wh E^u$. Then we choose $\wh\Sigma\subset\wt\Sigma$
so that its boundary is transverse
to the invariant directions and the ``no triple intersections'' requirement stated above holds.
%, and let $\wh\Sigma=\wh\Sigma_{\rm b}\cup \wh\Sigma_{\rm g}$.
%As already mentioned, this is necessary to obtain the Chernov axioms (A3) and (A8.3).
Ideally, we would take $\Sigma_0=\wh\Sigma$, but
unfortunately this is not enough to guarantee axiom (A2), since the hyperbolicity rate of the
Poincar\'e return map (and its induced maps) depends on two effects that compete one against the other:
\begin{enumerate}[$\circ$]
\item The angle between the Poincar\'e section and $\wh E^{s,u}$: the smaller the angle is, the closer are 
the hyperbolicity rates of the flow and of the Poincar\'e return map.
\item The Poincar\'e return time of the section: the smaller the return time is,
the weaker is the hyperbolicity of the flow (and hence of the Poincar\'e return map).
\end{enumerate}
If the connected components of $\wh\Sigma$ are small to guarantee a small angle,
then the Poincar\'e return time is close to zero.
To bypass this difficulty, we divide each connected component of
$\wh\Sigma$ into small pieces, each of them still with boundary transverse to the invariant directions,
and displace each of them by a small amount in the flow direction so that the new angle with
$\wh E^{s,u}$ is as close to zero as we wish and no triple intersection appears.
Letting $\Sigma_0$ be the union of the displaced pieces,
its Poincar\'e return time is close to that of $\wh\Sigma$. In other words,
this division procedure decreases the angle whilst almost preserving the Poincar\'e return time.
This allows us to prove (A2) in Section \ref{sec:CZ-scheme}.
%The whole construction involves three parameters $\delta,\eta,n_0>0$, with $\delta,\eta$ small and $n_0\in \N$.

\subsection{Construction of $\wt \Sigma$}\label{ss:Sigma-tilda}

Since $\Omega_\pm$ might not be almost parallel to $\wh E^{s,u}$, we take instead a finite union of small discs
that are almost parallel to  $\wh E^{s,u}$ and whose union of flow boxes contains $\Omega_\pm$.
%This will define the section $\wt\Sigma_{\rm b}$, which will capture all trajectories that get close to ${\rm Deg}$.
Choosing the discs small enough, the transitions in the neck from $\wt\Sigma$
to itself coincide with the transitions from $\Omega_+$
to $\Omega_-$ up to small flow displacements at the beginning and end.
Then we complete $\wt\Sigma$ by adding a disjoint union of finitely many small discs
such that their flow boxes do not intersect the trajectories in the neck that are close to the asymptotic ones.
In other words, we complete the section so that the asymptotic trajectories and nearby ones
remain the same.

We begin introducing some notation.
For each $x\in M$, let $\exp_x:T_xM\to M$ denote the exponential map of $M$ at $x$.
For $U\subset M$ and $I\subset \R$, we let $g_IU=\bigcup_{t\in I}g_t(U)$.
Recall that $Z_x$ is the one-dimensional subspace of $T_xM$ tangent to the geodesic flow.

\medskip
\noindent
{\sc $su$--disc and flow box:} The {\em $su$--disc} at $x\in M$ with radius $\lambda>0$ is the surface
$$
D_\lambda(x)=\{\exp_x(v):v\perp Z_x\text{ and }\|v\|\leq \lambda\}.
$$
The {\em flow box} at $x\in M$ with radius $\lambda>0$ is $g_{[-\lambda,\lambda]}D_\lambda(x)$.

\medskip
%Observe that $su$--discs are only defined for $x\in{\rm Reg}$, where we have $\wh E^s_x\neq\wh E^u_x$.
Fix $\chi=\chi(r,\ve_0)>0$ small.
We take $\lambda<\chi$ so that $D_\lambda(x)$ is an immersed surface 
and $T_yD_\lambda(x)$ is almost orthogonal to the flow and hence almost
parallel to $\wh E^s_y\oplus \wh E^u_y$ for every $y\in D_\lambda(x)$. 
%This can be done because of Lemma \ref{lem:subspaces}(3).
Now we proceed to construct $\wt\Sigma$.

\medskip
\noindent
{\sc Step 1 (Construction of $\wt\Sigma$ near $\Omega_\pm$):} Choose points
$x_1,\ldots,x_m\in M$ and $0<a<b<\chi$ such that $\{D_b(x_i)\}_{1\leq i\leq m}$
are pairwise disjoint and
\begin{align}\label{eq:step1}
\Omega_+\cup\Omega_-\subset \bigcup_{1\leq i\leq m}g_{[-a,a]}D_a(x_i)\subset \bigcup_{1\leq i\leq m}g_{[-b,b]}D_b(x_i).
\end{align}
 %and define $\wt\Sigma_{\rm b}=\left[\bigcup\limits_{1\leq i\leq m}D_b(x_i)\right]\cup\Omega_0$.
In other words, Step 1 ``approximates'' $\Omega_{\pm}$ by finitely many $su$--discs. This can be done e.g.\
by taking a sufficiently fine net of points in $\Omega_-\cup\Omega_+$ and then displacing each of them a small amount
in the flow direction so that their $su$--discs are all disjoint;
see a similar argument in \cite[Lemma 2.7]{Lima-Sarig}.

\medskip
\noindent
{\sc Step 2 (Security neighborhood $\wt N$):} Choose a compact neighborhood 
$N_+\subset \Omega_+$ such that
$\Omega_+^{=}\subset {\rm int}(N_+)$.
For $x\in N_+$, let $t(x)=\inf\{t>0:g_t(x)\in \Omega_-\}\in (0,+\infty]$, and let
$\wt N$ be the closure of $\{g_t(x):x\in N_+\text{ and }0\leq t\leq t(x)\}$.

\medskip
Since $\Omega_+^{=}$ is
the disjoint union of four closed curves, $N_+$ is the disjoint union of four compact sets. 
The set $\wt N$ is a compact neighborhood of all asymptotic vectors with $|s|\leq \ve_0$.

\medskip
\noindent
{\sc Step 3 (Construction of $\wt \Sigma$ far from $\Omega_\pm$):} Choose $x_{m+1},\ldots,x_{m+n}\in M$
such that:
\begin{enumerate}[$\circ$]
\item $\{D_b(x_{i})\}_{m+1\leq i\leq m+n}$ are pairwise disjoint $su$--discs, each of them
disjoint from $\wt N$ and disjoint from $\{D_b(x_{i})\}_{1\leq i\leq m}$; 
\item the disjoint union
$\wt\Sigma(r)\uplus\Omega_0$ 
is a global Poincar\'e section (i.e.\ a cross-section with bounded first return time) for all $\lambda\in[a,b]$, where
$\wt\Sigma(\lambda)=\bigcup_{1\leq i\leq m+n}D_\lambda(x_i)$.
\end{enumerate}
%$\wt \Sigma_{\rm g}=\bigcup\limits_{1\leq j\leq n}D_b(x_{m+j})$.

\medskip
Again, Step 3 can be carried out similarly to \cite[Lemma 2.7]{Lima-Sarig}. Note that
all flow trajectories that start in $N_+$ make the transition in the neck without visiting any $D_b(x_{i})$, $m+1\le i\le m+n$. 

\medskip
\noindent
{\sc The security section $\wt\Sigma$:} Define $ \wt\Sigma=\wt\Sigma(b)$.

\medskip
Thus $\wt\Sigma\uplus\Omega_0$ is the largest global Poincar\'e section constructed in Steps 1--3. 
Before continuing, let us introduce some further notation. For $\lambda\in [a,b]$,
let $\wt h_{\lambda}:\wt\Sigma(\lambda)\uplus\Omega_0\to \wt\Sigma(\lambda)\uplus\Omega_0$
be the corresponding Poincar\'e return map. 
Since the return times of $\wt h_{a},\wt h_{b}$ are bounded away from zero and infinity, 
we have $\wt h_{a}=\wt h_{b}^N$ where $N:\wt\Sigma(a)\uplus\Omega_0\to\{1,2,\ldots,N_0\}$ is bounded.
The same holds for every $\lambda\in [a,b]$, namely $\wt h_{\lambda}=\wt h_{b}^{N_{\lambda}}$
where $N_{\lambda}:\wt\Sigma(\lambda)\uplus\Omega_0\to\{1,2,\ldots,N_0\}$ is bounded
(the bound $N_0$ is the same).

For $\lambda\in[a,b]$, define the following objects:
\begin{enumerate}[$\circ$]
\item $\tau_{\lambda}:\wt\Sigma(\lambda)\to\N\cup\{\infty\}$ such that $\wt h_{\lambda}^{\tau_{\lambda}(x)}(x)$
is the first return of $x$  to $\wt\Sigma(\lambda)$.
\item $\mathfs S^+(\lambda)=\partial\wt\Sigma(\lambda)\cup\{\tau_{\lambda}=\infty\}$.
\item $f_{\lambda}:\wt\Sigma(\lambda)\backslash\mathfs S^+(\lambda)\to \wt\Sigma(\lambda)$
the Poincar\'e return map.\footnote{In general, the maps $f_\lambda$ need not be related
to the map $f_0$ introduced in Section \ref{ss:Omega}.}
\end{enumerate}
Note that $\tau_{\lambda}(x)=\infty$ only for asymptotic vectors, and that the flow time function of $f_{\lambda}$
is unbounded exactly when approaching asymptotic vectors. Yet, $f_{a}$ and $f_{b}$
differ by a bounded number of iterates, as we now explain.
Write $\tau=\tau_{a}(x)$, and let $0\leq i,j<\tau$ such that
$\wt h_{b}(x),\ldots,\wt h_{b}^i(x)\in\wt\Sigma(b)\setminus\wt\Sigma(a)$,
$\wt h_{b}^{i+1}(x),\ldots,\wt h_{b}^{\tau-j-1}(x)\in\Omega_0$ and
$\wt h_{b}^{\tau-j}(x),\ldots,\wt h_{b}^{\tau-1}(x)\in\wt\Sigma(b)\setminus\wt\Sigma(a)$,
i.e. $i$ is the last iterate before entering $\Omega_0$ and $\tau-j-1$ is the last iterate 
before leaving $\Omega_0$. Clearly $f_{a}(x)=f_{b}^{i+j+1}(x)$. Observing that 
$\wt h_a(x)=\wt h_b^{i+1}(x)$, it follows that $i+1\leq N_a$. Similarly, $j+1\leq N_a$.
%Fix $x\in \wt\Sigma(a)\backslash\mathfs S^+(a)$. The flow orbit from $x$ to $f_a(x)$ 
%visits $\wt\sigma(b)$, then $\Omega_0$ then  $\wt\sigma(b)$ again. More specifically,  
%By the relation between $\wt h_{a}$ and $\wt h_{b}$ obtained in the previous paragraph,
%hence we can assume that the trajectory of $x$ under $\wt h_{a}$
%crosses $\Omega_0$. Hence, $x$ is in a neighborhood of $\mathfs S^+(a)$.
%Start observing that there is a uniform constant $T_0$ that
%bounds the first hitting time under the flow of $x$ to $\Omega_0$. Indeed,
%$\mathfs S^+(a)$ hits $\Omega_0$ in finite time under the flow, and so does the nearby point $x$.
%Then 
%$f_{a}(x)=f_{b}^{i+j+1}(x)$. If $t_0$ is the minimal Poincar\'e return time of $\wt h_{b}$,
%then we have $i t_0\leq T_0$ and $jt_0\leq T_0$, and so $i,j\leq \tfrac{T_0}{t_0}$.
Letting $\ell_0:=2N_0-1$, we conclude that $i+j+1\leq \ell_0$, hence
$f_{a}=f_{b}^\ell$ for some $\ell:\wt\Sigma(a)\backslash\mathfs S^+(a)\to\{1,\ldots,\ell_0\}$.

Finally, observe that for any
$\wt\Sigma(a)\subset X\subset \wt\Sigma(b)$
we can similarly define $\tau_X,\mathfs S^+(X),$ $f_X$, and that 
$f_X=f_{b}^{\ell_X}$ for some $\ell_X:X \backslash \mathfs S^+(X)\to\{1,\ldots,\ell_0\}$
(the bound $\ell_0$ is the same). Hence, controlling pre-iterates of $f_X$ up to order $n_0$ say
follows from controlling pre-iterates of $f_{b}$ up to order $\ell_0n_0$.
In summary, we just need to analyze a bounded number of iterates of a single map.
In the next subsection, we consider a multi-parameter family of such sections $X$ and show that
for some choice of parameters the section $X$ satisfies the required properties.

\begin{remark}\label{conjugacy-extended-f_0}
We can also apply Step 3 above to extend $\Omega_+\cup\Omega_-$ to a
Poincar\'e section $\wt\Omega$. The construction is simpler, since we only require
transversality with the flow direction (and not necessarily almost perpendicularity).
Hence the transition map $f_0$ can be extended to
$f_0:\wt\Omega\backslash \mathfs S^+(\wt\Omega)\to\wt\Omega$, where
$\mathfs S^+(\wt\Omega)$ is the union of the boundary of $\wt\Omega$ and the
points that never return to $\wt\Omega$ under the flow. The transition time function $\Upsilon_0$ can be extended
accordingly.
Similarly to the maps $f_\lambda$, the map $f_0$ is a Poincar\'e return map of $g$ that
captures all flow trajectories not asymptotic to $\gamma$. Hence $f_\lambda$ and
$f_0$ are conjugate, with transition time bounded from above.
\end{remark}

\subsection{Construction of $\wh \Sigma$}\label{ss:Sigma^}

Fix an integer $n_0$.
%Recall we have fixed a sufficiently large integer $n_0$, see Section \ref{ss:times}.
Let $A\subset \wt\Sigma$ be a connected curve. 
The next lemma is essential to the construction of $\wh \Sigma$, and shows how to perturb $A$ 
to avoid triple intersections of pre-iterates under $f_b$ up to order $\ell_0n_0$.
Since $\wt\Sigma$ is perpendicular to the flow, Lemma \ref{lem:subspaces}(2) implies that 
the stable/unstable subspaces $\wh E^{s/u}$ for the flow project to directions $E^{s/u}$ in $\wt\Sigma$. (For the moment, this is purely notational and no dynamical properties are claimed for $E^{s/u}$.)

\begin{lemma}\label{lem:perturbation}
Given a connected curve $A\subset \wt\Sigma$ transverse to 
$E^s,E^u$ and $\ve>0$,
there is a one-parameter family $\{A(t)\}_{|t|\leq 1}$ of disjoint curves, each of them $\ve$--close
to $A$ in the $C^1$--norm, such that 
\begin{align*}
A(t)\cap f_b^{-i}[A(t)]\cap f_b^{-j}[A(t)]=\emptyset & \\
A(t)\cap f_b^{-i}[A(t)]\cap f_b^{-j}[C]=\emptyset & \\
C\cap f_b^{-i}[A(t)]\cap f_b^{-j}[A(t)]=\emptyset & 
\end{align*}
for all $0<i<j\leq \ell_0n_0$ and $|t|\leq 1$. 
\end{lemma}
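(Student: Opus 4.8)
The plan is to realize the family $\{A(t)\}$ as a small normal deformation of $A$ inside $\wt\Sigma$ and to show that, for a generic choice within a sufficiently rich parameter space of such deformations, all the prescribed triple intersections disappear. First I would set up a finite-dimensional space of perturbations: since $A$ is a compact connected $C^1$ curve transverse to $E^s$ and $E^u$, I would pick a unit normal field along $A$ (with respect to the Clairaut/Sasaki metric on $\wt\Sigma$) and, for a bump function $\varphi$ supported near an interior point of $A$, define $A(t)$ by pushing $A$ a distance $t\,\varphi$ along that normal field. More robustly, I would use a multi-parameter family $A(t_1,\dots,t_N)$ with bumps at a net of points, so that the deformation can be localized; the one-parameter family in the statement is then extracted by restricting to a generic line in parameter space. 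The key point to record at this stage is that each $A(t)$ stays $\ve$--close to $A$ in $C^1$, that the curves $A(t)$ are pairwise disjoint (immediate, since distinct level sets of the signed normal displacement), and that transversality to $E^s,E^u$ persists for $\ve$ small by openness of transversality.

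Next I would fix $0<i<j\le \ell_0 n_0$ and analyze one triple intersection condition, say $A(t)\cap f_b^{-i}[A(t)]\cap f_b^{-j}[A(t)]=\emptyset$. The obstacle here is exactly the one flagged in the introduction to this section: $f_b^{-i}$ and $f_b^{-j}$ have unbounded derivative near the curves of asymptotic vectors, so the pre-images $f_b^{-i}[A(t)]$ and $f_b^{-j}[A(t)]$ need not be compact and may accumulate badly near $\mathfs S^-(b)$. To handle this I would split the argument. Away from a fixed neighborhood of the asymptotic set, $f_b^{-i}$ and $f_b^{-j}$ restrict to uniformly $C^1$ (indeed smooth) maps on compact pieces, and on that region the perturbation is a standard transversality/general-position argument: the map $(x,t)\mapsto$ (the defining functions of $A(t)$ evaluated at $x$, at $f_b^i x$, at $f_b^j x$) is submersive in the $t$-directions because the bumps can be chosen to move $A(t)$ independently near any prescribed triple, so by Sard/parametric transversality the bad parameter set is measure zero, and we discard it. On the remaining neighborhood of the asymptotic vectors, I would instead use that $A(t)$ is transverse to $E^s$ while the pre-iterated curves $f_b^{-i}[A(t)]$, $f_b^{-j}[A(t)]$ are $C^1$-close to being tangent to the stable direction $E^s$ (since backward iteration contracts toward $E^s$, cf.\ the hyperbolicity built into the construction and equation~\eqref{eq:delta-Sasaki}); hence near the asymptotic set a point of $A(t)$ can lie on at most one of these pre-images up to the uniform transversality angle, so no triple intersection can occur there at all, \emph{uniformly in $t$}. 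The mixed conditions involving the fixed curve $C=\partial\wt\Sigma$ (together with the asymptotic curves) are treated identically: $C$ plays the role of a frozen $A(t)$, and the same region-splitting applies, with the caveat that $C$ too must be (and is, by the choice of $\wt\Sigma$) transverse to $E^s,E^u$.

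Finally I would assemble the pieces. There are only finitely many pairs $(i,j)$ with $0<i<j\le\ell_0 n_0$ and finitely many triple-intersection conditions to kill, so taking the intersection of the co-null ``good'' parameter sets over all of them still leaves a full-measure set of admissible parameters; picking any line through such a parameter (and reparametrizing by $t\in[-1,1]$ after rescaling so the $C^1$-distance to $A$ stays below $\ve$) yields the desired family. The main obstacle, and the step that will require the most care, is the analysis near the asymptotic vectors: one must argue that the noncompactness of $f_b^{-i}[A(t)]$ is harmless because those long thin pieces are nearly parallel to $E^s$, so that the transversality of $A(t)$ to $E^s$ rules out the accumulation of a third sheet — this is where the $C^{1+\mathrm{Lip}}$ control from Theorem~\ref{thm:Gerber-Wilkinson} and the hyperbolicity estimates of Section~\ref{sec:neck-dynamics} enter, and it is the reason the lemma is nontrivial.
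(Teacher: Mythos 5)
The critical step in your proposal is the near-asymptotic analysis, and it contains a genuine gap. You claim that because $A(t)$ is transverse to $E^s$ while $f_b^{-i}[A(t)]$ and $f_b^{-j}[A(t)]$ are nearly tangent to $E^s$ near the asymptotic set, ``a point of $A(t)$ can lie on at most one of these pre-images \dots so no triple intersection can occur there at all, uniformly in $t$.'' This does not follow: two curves both close to the $E^s$-direction can certainly cross each other, and they can do so at a point of $A(t)$. Transversality of $A(t)$ to $E^s$ only makes each individual intersection $A(t)\cap f_b^{-i}[A(t)]$ transverse and hence discrete; it says nothing about whether the two pre-image sheets happen to meet at the same point of $A(t)$. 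In fact the paper's Figure~\ref{fig:homoclinic} shows that the pre-image pieces accumulate densely around the homoclinic points, so triple intersections absolutely can occur in this region before perturbation; the content of the lemma is that a perturbation removes them, not that they are excluded a priori.

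The paper's actual mechanism is quite different and you will need something like it. It introduces a combinatorial decomposition of $A$ by the time of the first long backward transition (LBT), producing families $\mathfs F$ (compact pieces, obtained by bounded flow displacement) and $\mathfs G$ (noncompact pieces arising after an LBT). The key structural fact is that the accumulation set of the pieces in $\mathfs G$, projected to $\Omega_-$, is a \emph{finite} set $H$ of homoclinic points associated to first LBTs of the fixed asymptotic curve $C=g_{[-\chi,\chi]}\Omega_+^=\cap\wt\Sigma$ (note $C$ is the asymptotic curve, not $\partial\wt\Sigma$ as you guessed). A $C^1$ perturbation of $A$ of size $\ve$ moves the finite-depth pieces by $O(\ve)$ but, because those pieces only reach the homoclinic region after backward iteration that contracts along $E^s$, it moves the deeper pre-images near their accumulation points by only $o(\ve)$. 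This $O(\ve)$ versus $o(\ve)$ comparison is exactly what lets one separate the three sheets near the asymptotic set. None of this scaffolding appears in your proposal, and without it the near-asymptotic region cannot be dispatched. Your Sard-type argument for the compact part of the picture is plausible and morally parallel to the paper's Type~1/Type~2 analysis (the paper uses Knieper's finiteness of short closed orbits for one subcase rather than parametric transversality, but either route can be made to work there); the missing ingredient is the treatment of the accumulating noncompact pieces.
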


To prove Lemma \ref{lem:perturbation}, we provide a combinatorial description of the trajectories of $A$,
according to the transitions in the neck $\mathcal N$ that spend a long time. 
This combinatorial description (decomposition) and some notation (long backward transition
and parameter $\eta>0$) will be only used in this section. 

\medskip
\noindent
{\sc Projection map to $\Omega_\pm$:} The {\em projection map} to $\Omega_\pm$ is the map
$\mathfrak p:g_{[-\chi,\chi]}(\Omega_+\cup\Omega_-)\to \Omega_+\cup\Omega_-$
defined by $\mathfrak p(g^t(x))=x$ for $(x,t)\in (\Omega_+\cup\Omega_-)\times[-\chi,\chi]$.

\medskip
This map allows to localize our analysis inside $\Omega_\pm$. Recall
the transition map $f_0$ studied in Section \ref{ss:explicit}. 
Observe that if $B\subset \Omega_-$ is a curve intersecting $\Omega_-^=$, then $f_0^{-1}(B)\subset \Omega_+$
is the union of two disjoint curves of infinite length, each of them accumulating at $\Omega_+^=$, see Figure~\ref{fig:explosion}.
\begin{figure}[hbt!]
\centering
\def\svgwidth{12.3cm}
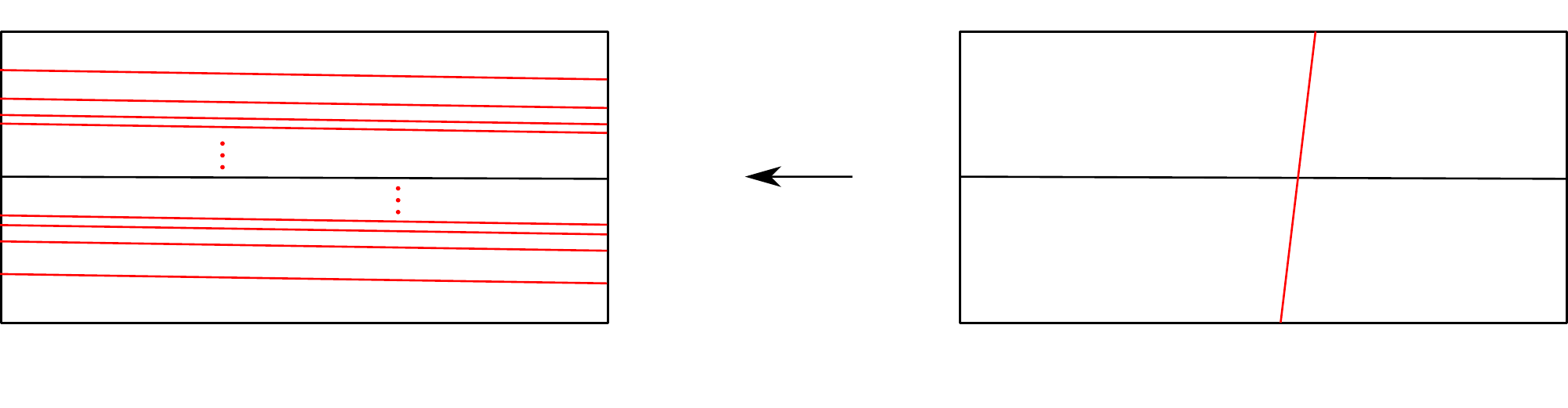
\caption{The pre-iterates of a curve intersecting $\Omega_-^=$ equals two curves of infinite length
accumulating at $\Omega_+^=$.}\label{fig:explosion}
\end{figure}
The proof of this fact is easy. By symmetry, it is enough to prove the analogous result for the forward iterate
of $f_0$, and we know for instance on $\Omega_1$ that $\zeta(\psi)\to\infty$ as 
$\psi\to \psi_0^-$ and as $\psi\to \psi_0^+$ (and similarly on the other three parts of $\Omega_+$ with $\psi_0$ replaced by $-\psi_0$ or $\pm(\pi-\psi_0)$ as appropriate).

The transition of Figure~\ref{fig:explosion} is the only source of unboundedness when considering pre-iterates of $f_{b}$, since
otherwise the return time of the flow is uniformly bounded. To better analyze this phenomenon, fix $\eta>0$ small and define
\[
U=\{x\in \Omega_-:d(x,\Omega_-^=)<\eta\}, \qquad
V=\{x\in \Omega_-:d(x,\Omega_-^=)<2\eta\}.
\]

\noindent
{\sc Long backward transition (LBT):} The point $x\in \wt\Sigma$ has a {\em long backward transition (LBT)}
at time $i$ if $f_{b}^{-i}x\in g_{[-\chi,\chi]}V$ and $f_{b}^{-(i+1)}x\in g_{[-\chi,\chi]}\Omega_+$.
When this happens, we say that the LBT occurs at the point $f_{b}^{-i}x$.

\medskip
\noindent
{\sc Decomposition of $A$:} Given a curve $A\subset \wt\Sigma$ with finite
length and finitely many connected components, write $A=A_0\uplus\cdots\uplus A_{\ell_0n_0}$ where
\begin{align*}
A_i&=\{x\in A:x\text{ makes the first LBT at time }i\}, \ 0\leq i<\ell_0n_0\\
A_{\ell_0n_0}&=A\setminus(A_0\uplus\cdots\uplus A_{\ell_0n_0-1}).
\end{align*}
Note that $A_{\ell_0 n_0}$ is the set of points with first LBT with time at least $\ell_0n_0$, and 
includes points with no LBT. Each $A_i$, $i<\ell_0n_0$, is the disjoint union of finitely
many open pieces of $A$, and $A_{\ell_0n_0}$ is the disjoint union of finitely many pieces of $A$.

Consider $C=g_{[-\chi,\chi]}\Omega_+^=\cap\wt\Sigma$, which is the disjoint union of 
finitely many pieces of curves asymptotic to $\gamma$ under  the map $\wt h_{b}$. 
Decomposing $C=C_0\uplus C_1\uplus\cdots\uplus C_{\ell_0n_0-1}$ as above, the first LBT's associated
to $C$ occur at the set $\wt C=C_0\uplus f_{b}^{-1}(C_1)\uplus \cdots\uplus f_{b}^{-\ell_0n_0+1}(C_{\ell_0n_0-1})$,
equal to the disjoint union of finitely many curves asymptotic to $\gamma$ under the map $\wt h_{b}$.
Then
$$
\mathfrak p[\wt C]=\mathfrak p[C_0]\uplus\mathfrak p[f_{b}^{-1}(C_1)]\uplus\cdots\uplus
\mathfrak p[f_{b}^{-\ell_0n_0+1}(C_{\ell_0n_0-1})]
$$
is the disjoint union of finitely many curves asymptotic to $\gamma$ under the flow.
Let $H=\mathfrak p[\wt C]\cap \Omega_-^=$,
which is a finite set equal to all homoclinic intersections associated to first LBT's.
We claim that all other LBT's accumulate in $H$. The proof is by induction
on the number of LBT's. Assume that $I\subset C_i$ has the second LBT at time $j>i$.
The set $\mathfrak p[f_{b}^{-(i+1)}(C_i)]$ accumulates at $\Omega_+^=$,
and $f_{b}^{-j}(C_i)$ is obtained from $f_{b}^{-(i+1)}(C_i)$ by a uniformly bounded flow time,
hence $\mathfrak p[f_{b}^{-j}(C_i)]$ accumulates on $H$. The claim follows.

For $\eta>0$ small enough, all first LBT's of $C$ are associated
to a point of $H$, i.e.\ the piece $\mathfrak p[f_{b}^{-i}(C_i)]$ not only intersects $V$
but indeed crosses $\Omega_-^=$.
See Figure~\ref{fig:homoclinic} to understand the dynamics of accumulations around $\Omega_-^=$.
The red/green intervals in the left figure are pieces of $\mathfrak p[C_i]/\mathfrak p[C_k]$,
and the vertical red/green curves in the right figure are $\mathfrak p[f_b^{-i}(C_i)]/\mathfrak p[f_b^{-k}(C_k)]$,
equal to the pre-iterates right before the first LBT's. In the figure, they define four homoclinic points.
We also depict an interval $\mathfrak p(I)$ that makes two LBT's. The first LBT generates
the two curves of infinite length in the left figure, both accumulating at $\Omega_-^=$.
Finally, the vertical blue curves in the right figure are $\mathfrak p[f_b^{-j}(I)]$, equal to the pre-iterates right before
the second LBT of $I$. Observe that they accumulate on each of the four homoclinic points.
\begin{figure}[hbt!]
\centering
\def\svgwidth{12.3cm}
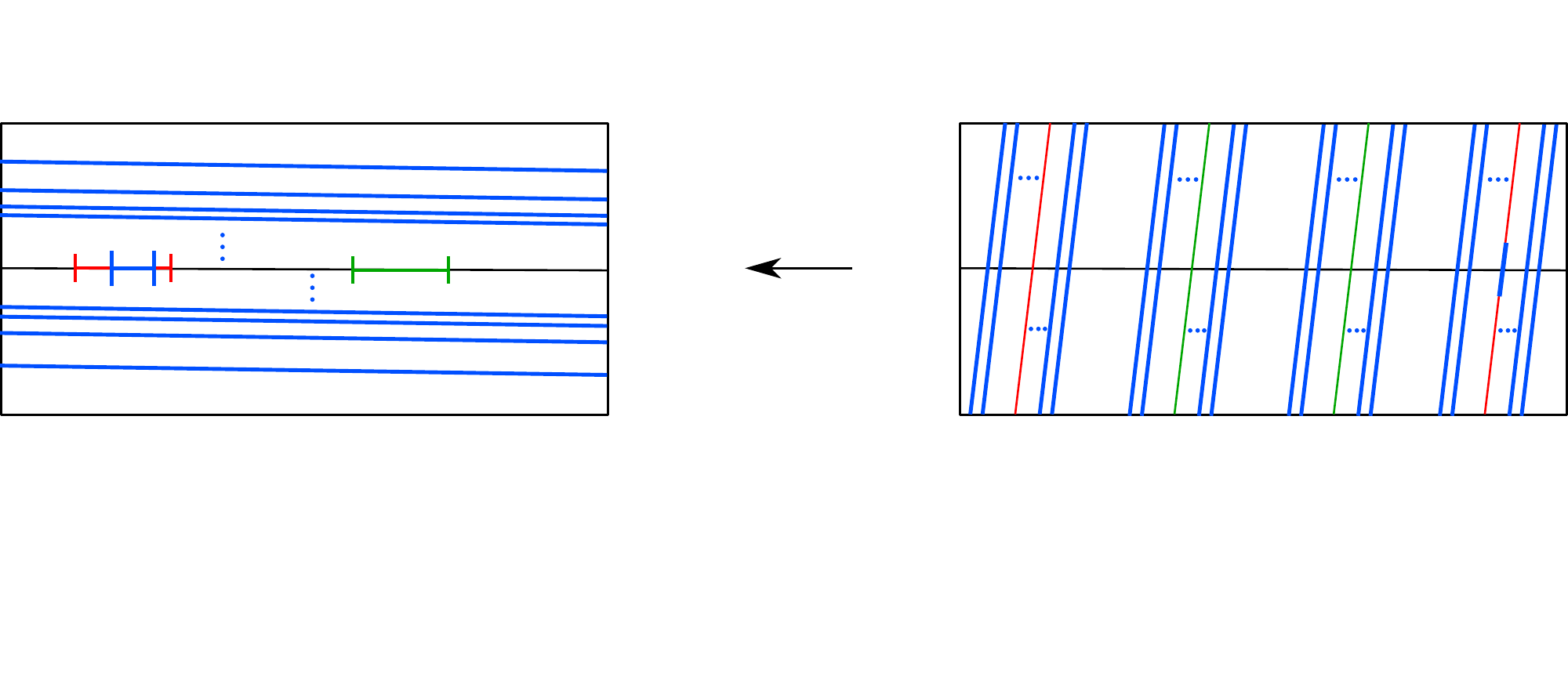
\caption{Dynamics near homoclinic points.}\label{fig:homoclinic}
\end{figure}

Now we are able to prove Lemma \ref{lem:perturbation}.

\begin{proof}[Proof of Lemma \ref{lem:perturbation}]
To obtain a one-parameter family, it is enough to perturb $A$ so that the intersection conditions 
of the statement hold robustly.  We have
\begin{align*}
\{f_b^{-k}(A):0\leq k\leq \ell_0n_0\}&=\underbrace{\left\{f_b^{-k}(A_i):0\leq k\leq i\right\}}_{\mathfs F_A}\cup
\underbrace{\left\{f_b^{-k}(A_i):i<k\leq \ell_0n_0\right\}}_{\mathfs G_A}\\
\{f_b^{-k}(C):0\leq k\leq \ell_0n_0\}&=\underbrace{\left\{f_b^{-k}(C_i):0\leq k\leq i\right\}}_{\mathfs F_C}\cup
\underbrace{\left\{f_b^{-k}(C_i):i<k\leq \ell_0n_0\right\}}_{\mathfs G_C}.
\end{align*}
Let $\mathfs F=\mathfs F_A\cup\mathfs F_C$ and $\mathfs G=\mathfs G_A\cup\mathfs G_C$.
Since $C$ is asymptotic to $\gamma$ under the flow,
$\mathfs F_C\cup\mathfs G_C$ is fixed (does not depend on $A$) and has no double intersections.
Observe that $\mathfs F$ is a finite family of bounded curves, obtained from pieces of $A$ and $C$ 
by uniformly bounded flow time displacements. Let $T_0$ be a bound on such time. 

We start by controlling all possible triple intersections of $A$ and pre-iterates of $A,C$.
There are three possible types of such intersections:
\begin{enumerate}[Type 1:]
\item $A\cap F_1\cap F_2$, where $F_1,F_2\in\mathfs F$.
\item $A\cap F\cap G$, where $(F,G)\in\mathfs F\times\mathfs G$.
\item $A\cap G_1\cap G_2$, where $G_1,G_2\in\mathfs G$.
\end{enumerate}
For each type, we perform finitely many $C^1$ perturbations on $A$ to prevent triple intersections robustly.

\medskip
\noindent
Type 1: We can assume that $F_1\in\mathfs F_A$. If $F_2\in\mathfs F_A$, then
the intersection $A\cap F_1\cap F_2$ is associated to flow displacements of time $\leq T_0$.
By~\eqref{eq:Knieper},
the flow has finitely many closed orbits of length $\leq T_0$. Hence
we can perform an arbitrarily small $C^1$ perturbation of $A$ to destroy these intersections robustly.
If $F_2\in\mathfs F_C$, the same applies to guarantee that $A\cap F_1$ does not intersect
$F_2$ robustly.

\medskip
\noindent
Type 2: The set $A\cap \mathfs F=\bigcup_{F\in\mathfs F}\{A\cap F\}$ is finite, and the set
$A\cap \mathfs G=\bigcup_{G\in\mathfs G}\{A\cap G\}$ is countable with a finite set of accumulation points,
coming from pre-iterates of $\Omega_+^=$. A $C^1$ perturbation of order $O(\ve)$ of $A$ changes
$A\cap \mathfs F$ by $O(\ve)$ inside $A$, and $A\cap\mathfs G$ around its
accumulation points by $o(\ve)$ inside $A$, hence we can destroy all such triple intersections robustly.

\medskip
\noindent
Type 3: We divide this type into three subtypes.
\begin{enumerate}[$\circ$]
\item Type 3.1: $A\cap f_b^{-k}(A_i)\cap f_b^{-m}(A_j)$, with $k>i$ and $m>j$.
\item Type 3.2: $A\cap f_b^{-k}(A_i)\cap f_b^{-m}(C_j)$, with $k>i$, $m>j$ and $k-i\leq m-j$.
\item Type 3.3: $A\cap f_b^{-k}(C_i)\cap f_b^{-m}(A_j)$, with $k>i$, $m>j$ and $k-i\leq m-j$. 
\end{enumerate}
The idea is to push this intersection to $V$. The hardest case is Type 3.1, where all three
sets are simultaneously perturbed. Let us start with it.
Assuming that $k-i\leq m-j$, iterate the intersection
$k-i$ times (the intersection
belongs to $f_b^{-m}(A_j)$ and hence can actually be iterated $m$ times), so that
$f_b^{-i}(A_i)\cap f_b^{-(m-k+i)}(A_j)\neq \emptyset$. We show that a small $C^1$ perturbation 
of $A_i$ makes this intersection empty inside $U$. This is enough for us, since
intersections outside $U$ are associated to uniformly bounded flow times, which can be 
treated as in Type~1. Actually, we show how to guarantee that
\mbox{$\mathfrak p[f_b^{-i}(A_i)]\cap \mathfrak p[f_b^{-(m-k+i)}(A_j)]\cap U= \emptyset$}. The argument is similar to Type 2.
Since the set $\Omega_-^=\cap \mathfrak p[f_b^{-i}(A_i)]$ is finite and all accumulation points of 
$\Omega_-^=\cap \mathfrak p[f_b^{-(m-k+i)}(A_j)]$ are contained in $H$, a $C^1$ perturbation 
of order $O(\ve)$ of the piece $B_i\subset A_i$ such that $\mathfrak p[f_b^{-i}(B_i)]=\mathfrak p[f_b^{-i}(A_i)]\cap U$
changes
$\mathfrak p[f_b^{-i}(A_i)]$ by $O(\ve)$, and $\Omega_-^=\cap \mathfrak p[f_b^{-(m-k+i)}(A_j)]$ around its
accumulation points by $o(\ve)$. Therefore a small perturbation guarantees that
$\mathfrak p[f_b^{-i}(A_i)]\cap \mathfrak p[f_b^{-(m-k+i)}(A_j)]\cap U=\emptyset$.

Types 3.2 and 3.3 are simpler, since $C$ is not perturbed. In Type 3.2, we perform the same argument described above
to have $\mathfrak p[f_b^{-i}(A_i)]\cap \mathfrak p[f_b^{-(m-k+i)}(C_j)]\cap U= \emptyset$.
Again, $\Omega_-^=\cap \mathfrak p[f_b^{-i}(A_i)]$ is finite and all accumulation points of 
$\Omega_-^=\cap \mathfrak p[f_b^{-(m-k+i)}(C_j)]$ are contained in $H$.  A $C^1$ perturbation 
of order $O(\ve)$ of the piece $B_i\subset A_i$ such that $\mathfrak p[f_b^{-i}(B_i)]=\mathfrak p[f_b^{-i}(A_i)]\cap U$
changes $\mathfrak p[f_b^{-i}(A_i)]$ by $O(\ve)$, while $\Omega_-^=\cap \mathfrak p[f_b^{-(m-k+i)}(C_j)]$
remains fixed, so we can make 
$\mathfrak p[f_b^{-i}(A_i)]\cap \mathfrak p[f_b^{-(m-k+i)}(C_j)]\cap U=\emptyset$.

In Type 3.3, once again $\Omega_-^=\cap \mathfrak p[f_b^{-i}(C_i)]$ is finite (and contained in $H$)
and all accumulation points of $\Omega_-^=\cap \mathfrak p[f_b^{-(m-k+i)}(A_j)]$ are contained in $H$.
A $C^1$ perturbation of order $O(\ve)$ of the piece $B_j\subset A_j$ such that
$\mathfrak p[f_b^{-(m-k+i)}(B_j)]=\mathfrak p[f_b^{-(m-k+i)}(A_j)]\cap U$
changes $\mathfrak p[f_b^{-(m-k+i)}(A_j)]$, while $\Omega_-^=\cap \mathfrak p[f_b^{-i}(C_i)]$
remains fixed, so we can make 
$\mathfrak p[f_b^{-i}(C_i)]\cap \mathfrak p[f_b^{-(m-k+i)}(A_j)]\cap U= \emptyset$.

\medskip
To finish the lemma, we deal with triple intersections of $C$ and pre-iterates of $A$. There are also three
types of such intersections:
\begin{enumerate}[Type 1':]
\item $C\cap F_1\cap F_2$, where $F_1,F_2\in\mathfs F_A$.
\item $C\cap F\cap G$, where $(F,G)\in\mathfs F_A\times\mathfs G_A$.
\item $C\cap G_1\cap G_2$, where $G_1,G_2\in\mathfs G_A$.
\end{enumerate}
The analysis of these types is simpler than the previous ones, since $C$ is fixed.

\medskip
\noindent
Type 1': Proceed as in Type 1.

\medskip
\noindent
Type 2': Proceed as in Type 2.

\medskip
\noindent
Type 3': Proceed as in Type 3.1, guaranteeing that, in its notation,
$\mathfrak p[f_b^{-i}(A_i)]\cap \mathfrak p[f_b^{-(m-k+i)}(A_j)]\cap U \neq \emptyset$.
%Therefore we can make a $C^1$ perturbation on 
%\begin{enumerate}
%\item See that $A_0,A_1,f^{-1}(A_1),\ldots,A_{\ell_0n_0-1},\ldots,f^{-(\ell_0n_0-1)}(A_{\ell_0n_0-1})$
%are related to finite time of flow, hence it is easy by avoiding periodic orbits. Indeed, the same happens
%to the extensions of these sets that intersect $V_\ve- U_\ve$. So we have to worry on how to
%destroy intersections of components that intersect $U_\ve$, call them $B_0,B_1,\ldots,B_{\ell_0n_0-1}$.
%\item Analyze $\{f^{-k}B_i:i<k\leq \ell_0n_0\}$, in which case we will show how to guarantee that this 
%family is pairwise disjoint (there are no double intersections, therefore no triple intersections with $A$).
%We have to see how to destroy intersections $f^{-k}(B_i)\cap f^{-m}(B_j)$. 
%\item We can assume $i\leq j$. Iterate $k-i$ time, so that $f^{-i}(B_i)\cap f^{-(m-k+i)}(B_j)$ (we can
%do this because the intersection point belongs to $f^{-m}(B_j)$, and so can be iterated at least $m$ times.
%Since $f^{-i}(B_i)\subset U_\ve$, $B_j$ has to make a long transition at time $m-k+i$, which is
%either the first one of the long transitions or a subsequent one. If it is the first one, the intersection can
%be destroyed in a persistent way by a small perturbation. If it is the second, then it accumulates on the
%finite set of homoclinic crossings, so that again a finite perturbation of order $O(\ve)$ makes it
%$O(\ve)$ far.
%\end{enumerate}
\end{proof}

Using Lemma \ref{lem:perturbation}, we now construct a parametric family of sections.

\medskip
\noindent
{\sc Step 4 (Construction of a parametric family $\wh\Sigma(\vec{t}\,)$ of sections):}
For each $1\leq i\leq m+n$, choose finitely many %one-parameter 
families $\{A_{i,j}(t)\}_{|t|\leq 1}$, $1\le j\le N_i$.
Given 
$\vec t=(t_{i,j})_{1\leq i\leq m+n\atop{1\leq j\leq N_i}}$, we require that
$B_i(\vec t_i)=B_i(t_{i,1},\ldots,t_{i,N_i})$ is a topological disc whose boundary is the polygon defined by
$\{A_{i,j}(t_{i,j})\}_{1\leq j\leq N_i}$ and such that
$D_a(x_i)\subset B_i(\vec t_i)\subset D_b(x_i)$
for $i=1,\ldots,m+n$.
See Figure~\ref{fig:sigma-hat}.
We require that 
$$
\wh\Sigma(\vec t\, )=\biguplus\limits_{1\leq i\leq m+n} B_i(\vec t_i )
$$
defines a cross-section to the flow satisfying
the following conditions:
\begin{enumerate}[i---]
\item[(H1)] $\{A_{i,j}(t)\}_{|t|\leq 1}$ satisfies Lemma \ref{lem:perturbation} for every $1\leq i\leq m+n$,
$1\leq j\leq N_i$.
\item[(H2)] $A_{i_1,j_1}(t)\pitchfork f_{b}^{-k}(A_{i_2,j_2}(t'))$ for all distinct pairs
$(i_1,j_1),(i_2,j_2)\in\{(i,j):1\leq i\leq m+n,1\leq j\leq N_i\}$, $|t|,|t'|\leq 1$ and $0\leq k\leq \ell_0n_0$.
In particular, $A_{i_1,j_1}(t)\cap f_{b}^{-k}(A_{i_2,j_2}(t'))$ 
is at most countable.
\end{enumerate}

\begin{figure}[hbt!]
\centering
\def\svgwidth{12cm}
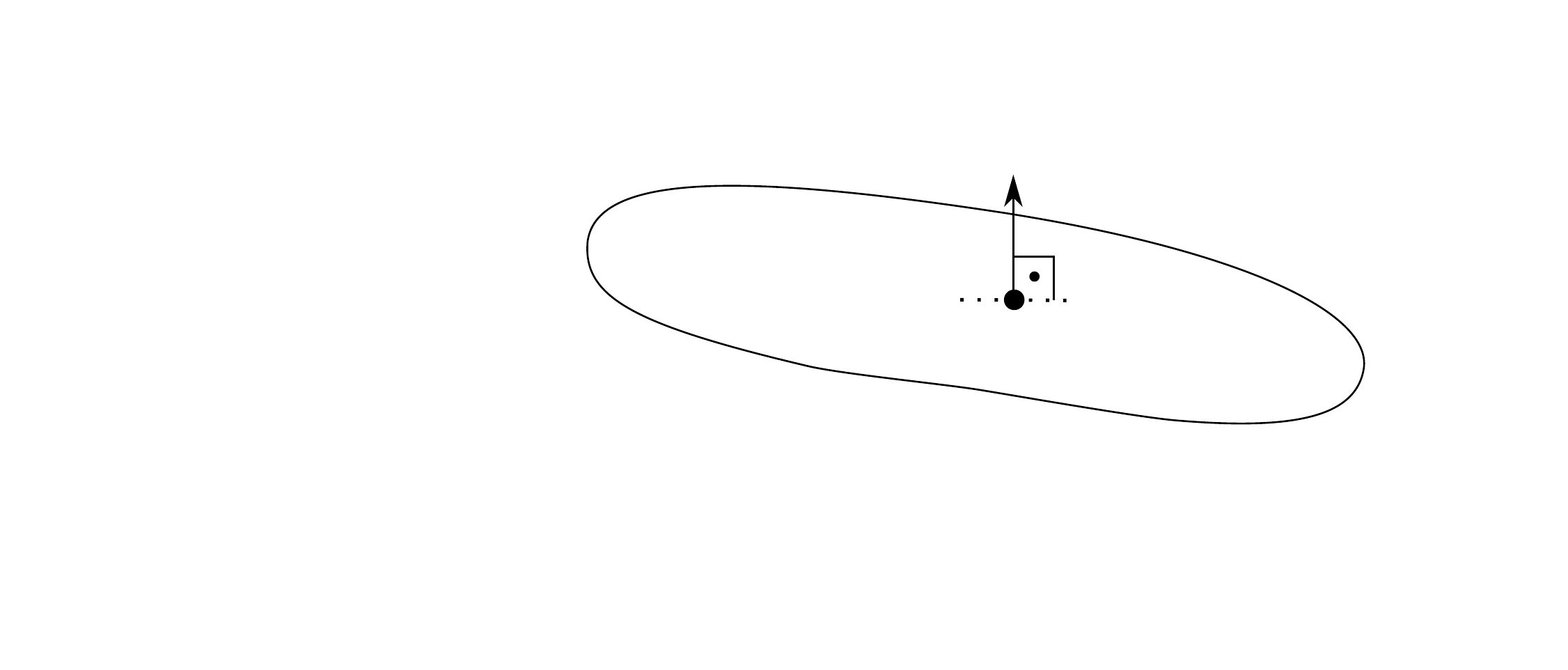
\caption{Construction of $B_i(\vec t_i)$.}\label{fig:sigma-hat}
\end{figure}

\medskip
Here is how we guarantee (H2). Since
$\{A_{i,j}(t)\}_{|t|\leq 1}$ satisfies Lemma \ref{lem:perturbation}, it is
transverse to $E^s,E^u$. The pre-iterates of pieces of curves that 
make no LBT are associated to bounded flow times, hence can be perturbed 
to satisfy (H2). Now, if $A$ makes its first LBT at time $i$, then
up to a compact component the infinite curves composing $f_{b}^{-(i+1)}(A)$ belong to the stable cone,
and so are transverse to every $A_{i,j}(t)$ after a small perturbation. The transversality implies that each 
compact component of $f_{b}^{-k}(A_{i_2,j_2}(t'))$ intersects $A_{i_1,j_1}(t)$ in finitely
many points, therefore the intersection $A_{i_1,j_1}(t)\cap f_{b}^{-k}(A_{i_2,j_2}(t'))$ 
is at most countable.

To finish the construction of $\wh\Sigma$, we show that the space of parameters $\vec t$
such that $\mathfs S^+(\wh\Sigma(\vec t\,))$ has a triple intersection up to the $\ell_0n_0$'th pre-iterate of 
$f_{b}$ has zero Lebesgue measure. For that,
we analyze all possible triple intersections.
Recall that $\mathfs S^+(\wh\Sigma(\vec t\,))=\partial\wh\Sigma(\vec t\,)\cup\{\tau_{\wh\Sigma(\vec t\,)}=\infty\}$.
Since $\{\tau_{\wh\Sigma(\vec t\,)}=\infty\}$ is asymptotic to $\gamma$,
there are not even double intersections associated to it. We have six remaining possibilities for
triple intersections:
\begin{enumerate}[$\circ$]
\item $A_{i,j}(t_{i,j})\cap f_{b}^{-k}[A_{i,j}(t_{i,j})]\cap f_{b}^{-p}[A_{i,j}(t_{i,j})]$:
this intersection is empty, by (H1).
\item $A_{i_1,j_1}(t_{i_1,j_1})\cap f_{b}^{-k}[A_{i_2,j_2}(t_{i_2,j_2})]\cap f_{b}^{-p}[A_{i_3,j_3}(t_{i_3,j_3})]$
with $(i_3,j_3)\neq (i_1,j_1)$ or $(i_3,j_3)\neq (i_2,j_2)$: by (H2), $A_{i_1,j_1}(t_{i_1,j_1})\cap f_{b}^{-k}[A_{i_2,j_2}(t_{i_2,j_2})]$
is at most countable. If we fix all parameters except $t_{i_3,j_3}$,
there are at most countably many choices for $t_{i_3,j_3}$ such that the triple intersection is non-empty.
\item $A_{i,j}(t_{i,j})\cap f_{b}^{-k}[A_{i,j}(t_{i,j})]\cap
f_{b}^{-p}[\{\tau_{\wh\Sigma(\vec t\,)}=\infty\}]$: this intersection is empty, by (H1).
\item $A_{i_1,j_1}(t_{i_1,j_1})\cap f_{b}^{-k}[A_{i_2,j_2}(t_{i_2,j_2})]\cap
f_{b}^{-p}[\{\tau_{\wh\Sigma(\vec t\,)}=\infty\}]$ with $(i_1,j_1)\neq (i_2,j_2)$: the intersection
$A_{i_1,j_1}(t_{i_1,j_1})\cap f_{b}^{-p}[\{\tau_{\wh\Sigma(\vec t\,)}=\infty\}]$ is at most countable,
since every compact component of $f_{b}^{-p}[\{\tau_{\wh\Sigma(\vec t\,)}=\infty\}]$
is transverse to $A_{i_1,j_1}(t_{i_1,j_1})$ and hence intersects it in finitely many points. 
Thus, if we fix all parameters except $t_{i_2,j_2}$,
there are at most countably many choices for $t_{i_2,j_2}$ such that the triple intersection is non-empty.
\item $\{\tau_{\wh\Sigma(\vec t\,)}=\infty\}\cap f_{b}^{-k}[A_{i,j}(t_{i,j})]\cap
f_{b}^{-p}[A_{i,j}(t_{i,j})]$: this intersection is empty, by (H1).
\item $\{\tau_{\wh\Sigma(\vec t\,)}=\infty\}\cap f_{b}^{-k}[A_{i_2,j_2}(t_{i_2,j_2})]\cap
f_{b}^{-p}[A_{i_3,j_3}(t_{i_3,j_3})]$ with $(i_2,j_2)\neq (i_3,j_3)$: this case is similar to the fourth one, since again
$\{\tau_{\wh\Sigma(\vec t\,)}=\infty\}\cap f_{b}^{-k}[A_{i_2,j_2}(t_{i_2,j_2})]$ is at most
countable.
\end{enumerate}

\medskip
\noindent
{\sc The section $\wh\Sigma$:} Define $\wh\Sigma=\wh\Sigma(\vec t\,)$, where $\vec t$ is any
parameter such that $\mathfs S^+(\wh\Sigma(\vec t\,))$ has no triple intersections
up to the $\ell_0n_0$'th pre-iterate under $f_{b}$.

\subsection{Construction of $\Sigma_0$}\label{ss:Sigma}

The final step in the construction, which leads to the section $\Sigma_0$,
is to make small flow displacements in $\wh\Sigma$ so that the Poincar\'e return
time of $\Sigma_0$ is at least $T_\chi>0$ and $\Sigma_0$ is almost perpendicular to the flow direction.
We measure the perpendicularity using a new parameter $\epsilon\ll\chi$.
For each $1\leq i\leq m+n$, let $\mathfrak p_i:g_{[-\chi,\chi]}D_b(x_i)\to D_b(x_i)$ be the flow projection.
Write $\wh\Sigma=\bigcup_{1\leq i\leq m+n}B_i$,
and let $t_{\rm min}$ be the minimal flow time defined by $f_b$.
%Recall that, by construction, $\wh\Sigma$ is indeed perpendicular
%to the flow at the points $x_1,\ldots,x_{m+n}$.

\medskip
\noindent
{\sc Step 5 (Refinement of $\wh\Sigma$):} For each $1\leq i\leq m+n$, construct a family
$\mathcal Q_i=\{Q\}$ such that:
\begin{enumerate}[$\circ$]
\item each $Q\in\mathcal Q_i$ is contained in a $su$--disc of radius $<\epsilon$
and
$$
B_i\subset \bigcup_{Q\in\mathcal Q_i}g_{\left[-\frac13 t_{\rm min},\frac13 t_{\rm min}\right]}Q;
$$
\item $Q\cap g_{\left[-\frac{1}{100}t_{\rm min},\frac{1}{100}t_{\rm min}\right]}Q'=\emptyset$
for all distinct $Q,Q'\in\mathcal Q_i$.
\item $\mathfrak p_i[\partial Q]$ is transverse to $E^s,E^u$ for all $Q\in\mathcal Q_i$;
\item there are no triple intersections between $\mathfs S^+(\wh\Sigma)$ and
$\bigcup_{1\leq i\leq m+n\atop{Q\in\mathcal Q_i}}\mathfrak p_i[\partial Q]$ up to the 
$\ell_0n_0$'th pre-iterate under $f_{b}$.
\end{enumerate}

\medskip
Observe that the smaller $\epsilon$ is, the more perpendicular is $Q$ to the flow. 
To create $\mathcal Q_i$, first consider a refinement $\wh{\mathfs R_i}=\{\wh R\}$ of $B_i$ by finitely
many compact curves transverse to $E^s,E^u$, see Figure~\ref{fig:sigma}.
Proceeding similarly to the proof of \cite[Lemma 2.7]{Lima-Sarig}, displace each $\wh R$ in the flow direction
to obtain $R$, so that $\mathfs R_i=\{R\}$ is a disjoint family and the displacements of neighbor $\wh R$'s 
differ at least $t_{\rm min}/50$. For each $R\in\mathfs R_i$, choose $y_R\in R$
and apply Step~4 to construct $\overline{R}\subset Q\subset D_{{\rm diam}(R)}(y_R)$ satisfying the above conditions, where $\overline R$ is the flow projection of $R$ to $D_{{\rm diam}(R)}(y_R)$.

\begin{figure}[hbt!]
\centering
\def\svgwidth{12cm}
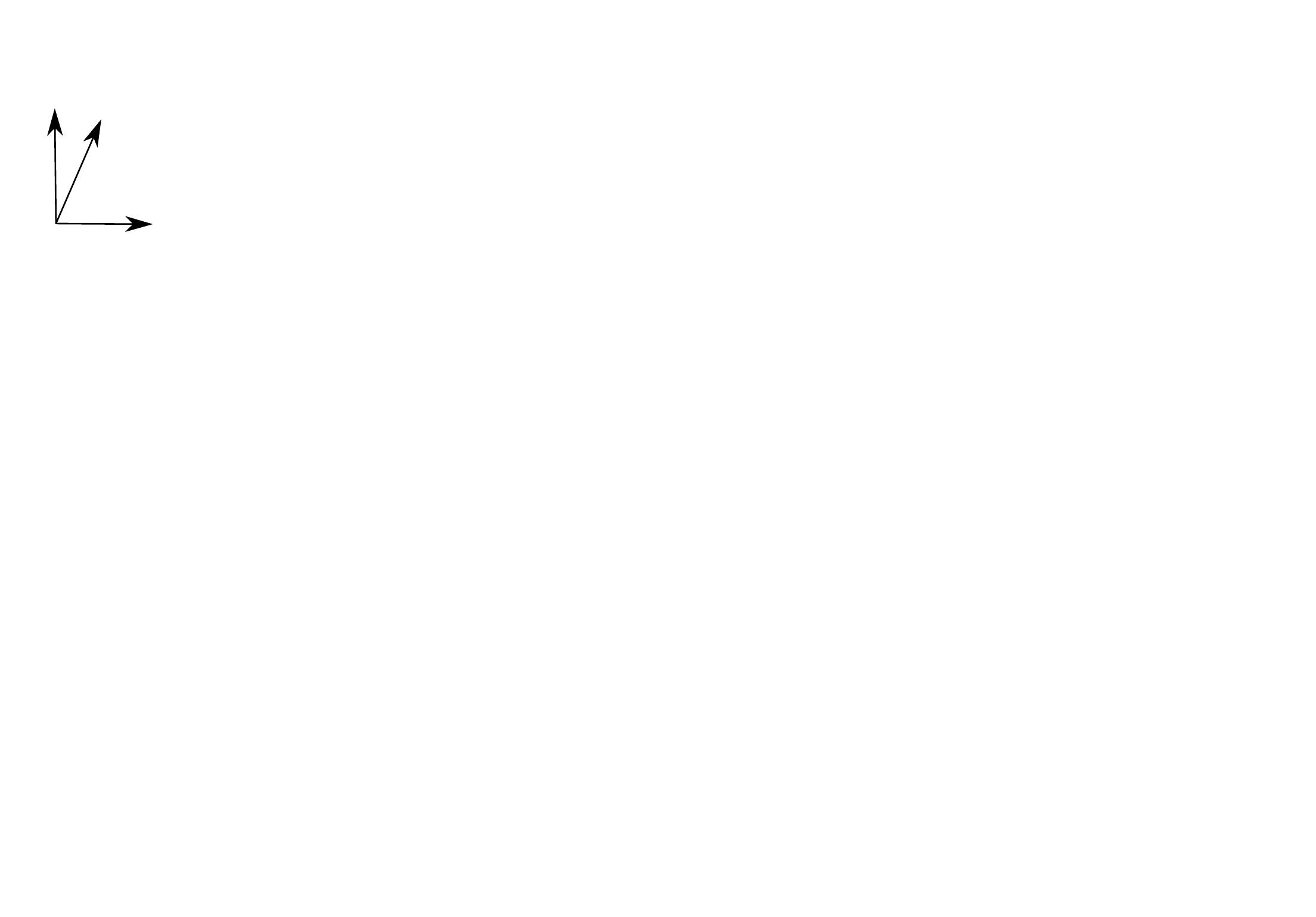
\caption{Construction of $\Sigma_0$: refine $\wh\Sigma$ displacing each component to improve perpendicularity and preserving no triple intersections.}\label{fig:sigma}
\end{figure}

\medskip
\noindent
{\sc The section $\Sigma_0$:} Define $\Sigma_0=\bigcup_{1\leq i\leq m+n\atop{Q\in\mathcal Q_i}} Q$.

\medskip
By the first two conditions in Step 5, the corresponding Poincar\'e return time of $\Sigma_0$
is bounded below by a constant $T_{\chi}>0$ which is independent of $\epsilon$.

\subsection{The first return map $f$}\label{ss:induced-def}

%Since some flow trajectories approach ${\rm Deg}$, the Poincar\'e return map
%$h:\Sigma_0\to\Sigma_0$ constructed above is not uniformly hyperbolic. Taking 
%$\Sigma_{\rm b}^0$ to be the union of the two $B_r(x_i)$ that intersect ${\rm Deg}$,
%we will consider the first return map to $\Sigma_0\setminus\Sigma_{\rm b}^0$, which is 
%an induced map of $h$. The good news is that this induced map, which we call $f$,
%displays uniform hyperbolic. The bad news is that, since asymptotic vectors get trapped in $\Omega_0$,
%$f$ has singularities. 

We now define a first return map $f$ which will eventually be shown to satisfy the Chernov axioms.
Recall that, by construction, all flow trajectories intersect $\Sigma_0$ infinitely often except those forward and backward asymptotic to $\gamma$.

\medskip
\noindent
{\sc Return time function:} The {\em return time function} of $\Sigma_0$ is $\tau=\tau_+:\Sigma_0\to (0,\infty]$
such that $\tau(x)=\inf\{t>0:g_t(x)\in\Sigma_0\}$. Define also $\tau_-:\Sigma_0\to [-\infty,0)$
by $\tau_-(x)=\sup\{t<0:g_t(x)\in\Sigma_0\}$.

\medskip
We have $\tau(x)=\infty$ if and only if $x$ is an asymptotic vector, hence $\{\tau=\infty\}$ is a finite
union of compact curves, each of them contained in $g_{[-\chi,\chi]}\Omega_+^=$. 

\medskip
\noindent
{\sc Primary singular sets $\mathfs S_{\rm P}^\pm$:} The {\em primary singular set}
$\mathfs S_{\rm P}=\mathfs S_{\rm P}^+$ is defined as
$$
\mathfs S_{\rm P}=\{x\in\Sigma_0:\tau(x)<\infty\text{ and }g_{\tau(x)}(x)\in\partial\Sigma_0\}\cup\{\tau = \infty\}.
$$
Similarly, the {\em primary singular set} $\mathfs S_{\rm P}^-$ is defined as
$$
\mathfs S_{\rm P}^-=\{x\in\Sigma_0:\tau_-(x)>-\infty\text{ and }g_{\tau_-(x)}(x)\in\partial\Sigma_0\}\cup\{\tau_- = -\infty\}.
$$

Note that $\mathfs S_{\rm P}^\pm$ are closed sets.
%
%Define the
%set $\mathfs S^-$ analogously. Note that $\mathfs S^+,\mathfs S^-$ are closed.
%Let $\tau:\Sigma_0\setminus\Sigma_{\rm b}^0\to\N\cup\{\infty\}$
%be the first return iterate to $\Sigma_0\setminus\Sigma_{\rm b}^0$, i.e.
%$\tau(x)=\min\{n>0:h^n(x)\in \Sigma_0\setminus\Sigma_{\rm b}^0\}$, and let $h^\tau$ be the respective
%induced map. The induced map $h^\tau$ has two different types of behavior:
%\begin{enumerate}[$\circ$]
%\item $x\in B_r(x_i)\cap g_{[-\delta,\delta]}\Omega_+$ and $h^\tau(x)\in B_r(x_k)\cap g_{[-\delta,\delta]}\Omega_-$,
%which are the points that last visit a component in the neighborhood of $\Omega_+$ and then 
%start the transition in the neck. Let $\Sigma_{\rm b}^+$ be the set of
%$x\in \Sigma_0$ with this property and let $\Sigma_{\rm b}^-=h^{\tau}[\Sigma_{\rm b}^+]$
%be the points with the same property for the inverse of $h^\tau$. Finally,
%let $\Sigma_{\rm b}=\Sigma_{\rm b}^+\cup\Sigma_{\rm b}^-$.
%\item All other trajectories, for which the first iterate and pre-iterate of $h^\tau$ are far from ${\rm Deg}$.
%Let $\Sigma_{\rm g}$ be the set of $x\in \Sigma_0$ with this property.
%\end{enumerate}
%Clearly, $\Sigma_0\setminus\Sigma_{\rm b}^0=\Sigma_{\rm g}\cup\Sigma_{\rm b}$.
%Points in $\Sigma_{\rm g}$ already exhibit good uniform hyperbolicity, while for points in $\Sigma_{\rm b}$
%we have to estimate the amount of hyperbolicity gained during the transition in the neck.
Proceeding similarly to Section \ref{ss:times}, we partition a neighborhood of $\{\tau=\infty\}$
into homogeneity bands. For that, fix a sufficiently large integer $n_0$
(how large $n_0$ is will depend on a finite number of conditions,
which include the validity of Lemma \ref{lem:BD} and the estimates in Section \ref{sec:CZ-scheme}).

\medskip
\noindent
{\sc Homogeneity bands on $\Sigma_0$:} For each $n\geq n_0$, the {\em homogeneity band} with 
index $n$ is $\mathfs D_n=\mathfs{D}_n^{>}\cup\mathfs{D}_n^{<}$ where 
\begin{align*}
\mathfs{D}_{n}^{>}&=\left\{x\in{\rm int}(\Sigma_0)\cap g_{[-\chi,\chi]}\Omega_+: 1+\tfrac{1}{(n+1)^2}<|c(x)|< 1+\tfrac{1}{n^2}\right\}\\
\mathfs{D}_n^{<}&=\left\{x\in{\rm int}(\Sigma_0)\cap g_{[-\chi,\chi]}\Omega_+: 1-\tfrac{1}{n^2}< |c(x)|<1-\tfrac{1}{(n+1)^2}\right\}.
\end{align*}

\medskip
\noindent
{\sc Secondary singular sets $\mathfs S_{\rm S}^\pm$:} The {\em secondary singular set}
$\mathfs S_{\rm S}=\mathfs S_{\rm S}^+$ is
$$
\mathfs S_{\rm S}=\bigcup_{n\geq n_0}\partial\mathfs D_n.
$$
The {\em secondary singular set} $\mathfs S_{\rm S}^-$ is
$$
\mathfs S_{\rm S}^-=\{g_{\tau(x)}(x):x\in\mathfs S_{\rm S}\}.
$$

\medskip
Let $X_0={\rm int}(\Sigma_0)$, $\mathfs S^+=\mathfs S_{\rm P}\cup\mathfs S_{\rm S}$, and
$\mathfs S^-=\mathfs S_{\rm P}^-\cup\mathfs S_{\rm S}^-$.

\medskip
\noindent
{\sc First return map $f$:} Define the map $f:X_0\backslash\mathfs S^+\to X_0\backslash\mathfs S^-$
to be the first return map of the flow to $\Sigma_0$, i.e.\ $f(x)=g_{\tau(x)}(x)$.

\medskip
The map $f$ has the same regularity of  $g_t$, hence it is $C^2$. It has uniformly bounded
derivatives away from $\{\tau=\infty\}$. Since $\Sigma_0$ is almost perpendicular to the flow direction,
the hyperbolicity properties of $f$ away from $\{\tau=\infty\}$ are almost the same as those of the flow.
We lose control as we approach $\{\tau=\infty\}$, and the homogeneity bands $\mathfs{D}_n$ 
enable us to recover this control.
Since $\Sigma_0$ is obtained from small flow displacements of $\wh\Sigma\subset\wt\Sigma$,
the first return map $f$ is a small perturbation (in the flow direction) of (a restriction of) $f_{b}$ and, for trajectories
near asymptotic vectors, $f_{b}$ is a small perturbation of $f_0$. Hence, we can understand $f$ inside
homogeneity bands by studying $f_0$ inside homogeneity bands. Let us be more specific in the relation
between $f$ and $f_0$. Since we are interested in the transitions in the neck, define
\begin{align*}
&\Sigma_+=\big\{x\in\Sigma_0\cap g_{[-\chi,\chi]}\Omega_+:
||c(x)|-1|<\tfrac{1}{n_ 0^2}\text{ and }fx\in g_{[-\chi,\chi]}\Omega_-\big\}\\
&\Sigma_-=f(\Sigma_+)=\big\{x\in\Sigma_0\cap g_{[-\chi,\chi]}\Omega_-:
||c(x)|-1|<\tfrac{1}{n_ 0^2}\text{ and }f^{-1}x\in g_{[-\chi,\chi]}\Omega_+\big\}
\end{align*}
It is clear that $f\restriction_{\Sigma_+}:\Sigma_+\to\Sigma_-$.

\medskip
\noindent
{\sc Coordinate maps $\mathfrak p_\pm$ and $\mathfrak t_\pm$:} The {\em coordinate maps}
$\mathfrak p_\minus:\Sigma_+\to\Omega_+$ and $\mathfrak t_\minus:\Sigma_+\to [-\chi,\chi]$ are defined by the 
equality $z=g_{\mathfrak t_\minus(z)}[\mathfrak p_\minus(z)]$ for $z\in \Sigma_+$.
The {\em coordinate maps} $\mathfrak p_\plus:\Sigma_-\to\Omega_-$ and
$\mathfrak t_\plus:\Sigma_-\to [-\chi,\chi]$ are defined analogously.

\medskip
The coordinate maps have the same regularity of $g_t$, hence they are $C^2$. 
Since $\Sigma_0$ and $\Omega$ are uniformly transversal to the flow direction, we have
that $\|d\mathfrak p_\pm^{\pm 1}\|\approx 1$.
By the first inclusion of (\ref{eq:step1}), $\mathfrak p_\minus$ is surjective. It is also injective,
because if $x\in \Sigma_+$ then $x,fx$ are uniquely characterized as being the starting/ending point 
of the transition in the neck. Therefore, $\mathfrak p_\minus$ is a bijection. By symmetry,
the same holds for $\mathfrak p_\plus$. Recall the definition of $\mathfs C_n$ in
Section \ref{ss:times}. We note that:
\begin{enumerate}[$\circ$]
\item $\mathfs C_n=\mathfrak p_\minus(\mathfs D_n)$ for all $n\geq n_0$.
\item $f=\mathfrak p_\plus^{-1}\circ f_0\circ\mathfrak p_\minus$ in $\Sigma_+$.
\end{enumerate}

Since $E^{s,u}$ are defined in $\Sigma_\pm,\Omega_\pm$ as the projections of
$\wh E^{s,u}$ onto the respective tangent spaces, the maps $\mathfrak p_\pm$ preserves
these subspaces. Using that $\|d\mathfrak p_{\pm}^{\pm 1}\|\approx 1$, we obtain that
$\|df\restriction_{E^{s,u}_x}\|\approx \|df_0\restriction_{E^{s,u}_{\mathfrak p_\minus(x)}}\|$
for $x\in\Sigma_+$. In the next two subsections, we will estimate $\|df_0\restriction_{E^{s,u}}\|$
inside homogeneity bands and some related bounds.

\subsection{Excursion times in the neck}\label{ss:excursions}

In the notation of Section \ref{ss:explicit},
let $\x=\x(t)$ be a bouncing/crossing geodesic undergoing an excursion in the neck.
%and let $t_\minus<0<t_\plus$ such that $\x(t_\pm)\in\Sigma_\pm$. 
%We are interested in analysing the hyperbolicity of $\x$ during a single excursion.
%interval $[t_\minus,t_\plus]$. Write $x=\x(t_\minus)$, so $fx=\x(t_\plus)$. 
Recall from Section~\ref{ss:explicit} that $\Upsilon_0$ (more precisely $2\Upsilon_0$) is the transition time from
$\Omega_+$ to $\Omega_-$.  Similarly, we define
$\Upsilon$ to be the time taken to pass from $\Sigma_+$ to $\Sigma_-$.
%=t_\plus-t_\minus$  be the excursion time of $\x$ in the neck. 
%Without loss, we suppose that $s(t_\minus)=-\ve_0$.
Since $f=\mathfrak p_\plus^{-1}\circ f_0\circ\mathfrak p_\minus$ on $\Sigma_+$, we have the relation 
%$\Upsilon$ and the function $\Upsilon_0$ defined in Section \ref{ss:explicit}:
\begin{align}\label{eq:times}
\Upsilon(\x)=-\mathfrak t_\minus(x)+2\Upsilon_0(\x)+\mathfrak t_\plus(f(x)),
\end{align}
where $x$ is the starting point of $\x$ in $\Sigma^+$.
Since $|\mathfrak t_\pm|\le\chi$, Lemma \ref{lem:t1-new} implies the following estimate.

\begin{lemma}\label{lem:t1}
If $\x$ is a geodesic with entry vector in $\mathfs D_n$ then
$\Upsilon(\x)\approx n^{\frac{r-2}{r}}$.  
\end{lemma}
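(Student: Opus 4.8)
The plan is to deduce Lemma~\ref{lem:t1} directly from Lemma~\ref{lem:t1-new}, together with the identity~\eqref{eq:times} and the uniform bound $|\mathfrak t_\pm|\le\chi$. First I would recall that, by the construction of $\Sigma_+$ and of the coordinate map $\mathfrak p_\minus$, the entry vector $x$ of $\x$ lies in the homogeneity band $\mathfs D_n$ of $\Sigma_0$ if and only if $\mathfrak p_\minus(x)$ lies in the band $\mathfs C_n$ of $\Omega_+$; this is exactly the relation $\mathfs C_n=\mathfrak p_\minus(\mathfs D_n)$ recorded in Section~\ref{ss:induced-def}. In particular, regarding $\x$ as a bouncing/crossing geodesic undergoing an excursion in the neck as in Section~\ref{ss:excursions}, its entry vector lies in $\mathfs C_n$ in the precise sense of Lemma~\ref{lem:t1-new}.

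Next I would apply Lemma~\ref{lem:t1-new} to get $\Upsilon_0(\x)\approx n^{\frac{r-2}{r}}$. Combining this with~\eqref{eq:times}, namely $\Upsilon(\x)=-\mathfrak t_\minus(x)+2\Upsilon_0(\x)+\mathfrak t_\plus(f(x))$, and with the uniform bound $|\mathfrak t_\pm|\le\chi$, one obtains
\[
2\Upsilon_0(\x)-2\chi\le\Upsilon(\x)\le 2\Upsilon_0(\x)+2\chi .
\]
Since $r\ge4$ we have $\tfrac{r-2}{r}>0$, so $n^{\frac{r-2}{r}}\to\infty$; as homogeneity bands only occur for $n\ge n_0$ with $n_0$ large, the additive constant $2\chi$ is negligible compared with $\Upsilon_0(\x)$, whence $\Upsilon(\x)\approx\Upsilon_0(\x)\approx n^{\frac{r-2}{r}}$, which is the claim. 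Here I use that $n^{\frac{r-2}{r}}\ge\max\{1,\,4\chi/c\}$ for $n\ge n_0$ (with $c$ the lower comparison constant from Lemma~\ref{lem:t1-new}), which may be arranged by the stipulation in Section~\ref{ss:induced-def} that $n_0$ be sufficiently large; this guarantees that the lower bound $2\Upsilon_0(\x)-2\chi$ is genuinely comparable to $n^{\frac{r-2}{r}}$ rather than possibly negative.

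There is essentially no hard step here: all the substance is contained in Lemma~\ref{lem:t1-new}, and the only thing to verify is that the flow-time corrections $\mathfrak t_\pm$ arising from passing between $\Sigma_\pm$ and $\Omega_\pm$ are uniformly bounded and hence dominated by the growing excursion time. The mildest subtlety, worth a sentence in the write-up, is precisely the threshold condition on $n_0$ needed to absorb the $\pm 2\chi$ into the two-sided comparison defining $\approx$; everything else is immediate.
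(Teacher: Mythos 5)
Your proof is correct and is essentially the argument the paper has in mind: Lemma~\ref{lem:t1} is stated right after the paper records the identity~\eqref{eq:times} and the bound $|\mathfrak t_\pm|\le\chi$, and is presented as an immediate consequence of Lemma~\ref{lem:t1-new}. Your extra remark about choosing $n_0$ large enough to absorb the additive $\pm 2\chi$ into the two-sided comparison is a sensible (and correct) point of care, though the paper leaves it implicit.
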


Next, we estimate the tail of $\Upsilon$.
Let ${\rm Leb}$ denote the Lebesgue measure on $\Sigma_+$ in Clairaut coordinates.

\begin{lemma}\label{lem:tails}
${\rm Leb}[\{x\in \Sigma_+: \Upsilon(\x)>n\}]\approx n^{-\frac{2r}{r-2}}$.
\end{lemma}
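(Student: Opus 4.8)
The plan is to reduce the tail estimate to a sum over the homogeneity bands $\mathfs D_m$, using Lemma~\ref{lem:t1} to convert the condition $\Upsilon(\x)>n$ into a condition on the band index $m$. First I would record that, up to a ${\rm Leb}$--null set, $\Sigma_+$ is the disjoint union $\bigsqcup_{m\ge n_0}\mathfs D_m$. Every $x\in\Sigma_+$ satisfies $0<||c(x)|-1|<n_0^{-2}$: the strict inequality $|c(x)|\ne 1$ holds because vectors with $|c(x)|=1$ are asymptotic and hence do not return to $\Sigma_-$. For each such value of $|c(x)|$ there is a unique $m\ge n_0$ with $x\in\mathfs D_m$, the only exceptions being the countably many codimension--one (hence ${\rm Leb}$--null) sets $\{||c(x)|-1|=m^{-2}\}$; conversely every $\mathfs D_m$ with $m\ge n_0$ lies in $\Sigma_+$, since its points are bouncing or crossing near--asymptotic vectors.

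Next I would compute ${\rm Leb}[\mathfs D_m]\approx m^{-3}$. Recall from Section~\ref{ss:induced-def} that $\mathfrak p_\minus\colon\Sigma_+\to\Omega_+$ is a bijection with $\|d\mathfrak p_\minus^{\pm 1}\|\approx 1$ and $\mathfs C_m=\mathfrak p_\minus(\mathfs D_m)$, so ${\rm Leb}[\mathfs D_m]\approx{\rm Leb}[\mathfs C_m]$, the latter measured in the Clairaut coordinates $(\theta,\psi)$ on $\Omega_+\subset\{\pm\ve_0\}\times\mathbb S^1\times I$. There $c=(1+\ve_0^r)\cos\psi$, hence $\tfrac{dc}{d\psi}=-(1+\ve_0^r)\sin\psi$; since $\cos\psi$ stays near $(1+\ve_0^r)^{-1}\in(0,1)$ on $\mathfs C_m$, the factor $|\sin\psi|$ is bounded above and away from zero. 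Therefore the set of $\psi$ with $|c|$ in either defining interval of $\mathfs C_m$ is an interval of length $\approx m^{-2}-(m+1)^{-2}\approx m^{-3}$, while $\theta$ ranges over a fixed arc, giving ${\rm Leb}[\mathfs C_m]\approx m^{-3}$.

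Finally I would combine these with Lemma~\ref{lem:t1}, which provides $C\ge 1$ with $C^{-1}m^{\frac{r-2}{r}}\le\Upsilon(\x)\le C m^{\frac{r-2}{r}}$ whenever the entry vector of $\x$ lies in $\mathfs D_m$. Hence, for all $n$ large,
\[
\bigsqcup_{m>(Cn)^{\frac{r}{r-2}}}\mathfs D_m\ \subseteq\ \{x\in\Sigma_+:\Upsilon(\x)>n\}\ \subseteq\ \bigsqcup_{m>(n/C)^{\frac{r}{r-2}}}\mathfs D_m
\]
up to a ${\rm Leb}$--null set. Summing the band estimate, $\sum_{m>N}m^{-3}\approx N^{-2}$, and substituting $N\approx n^{\frac{r}{r-2}}$ (the multiplicative constants $C^{\pm r/(r-2)}$ being absorbed into $\approx$), both the upper and lower bounds are $\approx n^{-\frac{2r}{r-2}}$, which is the assertion.

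There is no serious obstacle here: the substantive analytic work is already contained in Lemmas~\ref{lem:t1-new} and~\ref{lem:t1}. The only points requiring care are the ${\rm Leb}$--null bookkeeping in the first step (that asymptotic vectors and band boundaries are negligible) and, in the last step, checking that the implied constants in Lemma~\ref{lem:t1} merely rescale the cut--off index $N$ by a bounded factor, which is harmless for the $\approx$ conclusion.
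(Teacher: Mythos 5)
Your proof is correct and follows essentially the same route as the paper's: both decompose the tail set into homogeneity bands of measure $\approx m^{-3}$, convert $\Upsilon>n$ into a band-index threshold via the excursion-time estimate, and sum. The only cosmetic difference is that you work directly on $\Sigma_+$ with $\mathfs D_m$ and Lemma~\ref{lem:t1}, whereas the paper first pushes forward to $\Omega_+$ via $\mathfrak p_\minus$, uses~\eqref{eq:times}, and applies Lemma~\ref{lem:t1-new} to $2\Upsilon_0$ on $\mathfs C_n$.
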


\begin{proof} 
The push-forward of ${\rm Leb}$ under $\mathfrak p_\minus$ 
is equivalent to the Lebesgue measure of $\Omega_+$,
hence we just need to estimate the Lebesgue measure of 
$\mathfrak p_\minus\{x\in \Sigma_+: \Upsilon(\x)>n\}$. Since $|\mathfrak t_{\pm}|\leq \chi$,
equation \eqref{eq:times} gives that
$$
{\rm Leb}\left[\mathfrak p_\minus\{x\in \Sigma_+: \Upsilon(\x)>n\}\right]\approx
{\rm Leb}\left[\{x\in \Omega_+: 2\Upsilon_0(\x)>n\}\right].
$$
%We will estimate this latter measure. 

By Lemma~\ref{lem:diffpsi}, ${\rm Leb}[\mathfs C_n^{<}]\approx n^{-3}$ and
${\rm Leb}[\mathfs C_n^{>}]\approx n^{-3}$, therefore ${\rm Leb}[\mathfs C_n]\approx n^{-3}$.
Letting $\mathfs U_k=\bigcup\limits_{\ell>k} \mathfs C_\ell$,
it follows that
$$
{\rm Leb}[\mathfs U_k]\approx \sum_{\ell>k}\ell^{-3}\approx k^{-2}.
$$
By Lemma \ref{lem:t1-new}, there exists $C>1$ independent of $k$
such that $C^{-1}k^{\frac{r-2}{r}}\leq 2\Upsilon_0(\x)\leq Ck^{\frac{r-2}{r}}$ for every geodesic
$\x$ with entry vector in $\mathfs C_k$. For such $k$, we have the following:
\begin{enumerate}[$\circ$]
\item If $k>\left(Cn\right)^{\frac{r}{r-2}}$ then $2\Upsilon_0(\x)\geq C^{-1}k^{\frac{r-2}{r}}>n$.
\item If $k\leq \left(C^{-1}n\right)^{\frac{r}{r-2}}$ then $2\Upsilon_0(\x)\leq  Ck^{\frac{r-2}{r}}\leq n$.
\end{enumerate}
This implies the inclusions
$$
\mathfs U_{\left(Cn\right)^{\frac{r}{r-2}}}\subset\left\{x\in \Omega_+: 2\Upsilon_0(\x)>n\right\}\subset
\mathfs U_{\left(C^{-1}n\right)^{\frac{r}{r-2}}}$$
and, since 
${\rm Leb}\big[\mathfs U_{(C^{\pm 1}n)^{\frac{r}{r-2}}}\big]\approx \big(n^{\frac{r}{r-2}}\big)^{-2}=n^{-\frac{2r}{r-2}}$,
the proof is complete.
\end{proof}

\begin{remark}
From Remark \ref{conjugacy-extended-f_0}, $f$ is conjugate to the extended transition map $f_0$,
and $\Upsilon$ is cohomologous to the extended function $2\Upsilon_0$.
More specifically, if $h:\Sigma_0\to\wt\Omega$ is the conjugacy with $h\circ f=f_0\circ h$
then $\Upsilon-2\Upsilon_0\circ h$ is a coboundary for $f$.  

\end{remark}

\subsection{Hyperbolicity properties of $f$ on $\Sigma_+$}\label{ss:hyperbolicity-f}

We now establish some hyperbolicity properties of $f\restriction_{\Sigma_+}$.
Our reference metric is the $\delta$--Sasaki metric $\|\cdot\|=\|\cdot\|_{\delta-{\rm Sas}}$ 
for a small $\delta>0$. Recall that this metric is equivalent to the Sasaki metric $\|\cdot\|_{\rm Sas}$ 
and also to the Clairaut metric $\|\cdot\|_{\rm C}$.
Among the Chernov axioms, the only one that requires a precise multiplicative constant is (A2.5).
Recall that $\wh E^{s/u}$ are the stable/unstable subspaces for $g_t$, which project to 
%stable/unstable
directions $E^{s/u}$ on $\Sigma_0$ and $\Omega$. Let $P$ denote such projection. Since $\Sigma_0,\Omega$
are nowhere perpendicular to $\wh E^{s,u}$, we have $\|P^{\pm 1}\|\approx 1$.

\begin{lemma}\label{lem:existence-manifolds}
Lebesgue almost every $x\in\Sigma_0$ has an LSM/LUM $W^{s/u}_x$ for $f$.
\end{lemma}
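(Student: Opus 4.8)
The statement is the standard existence-of-local-stable/unstable-manifolds lemma for a nonuniformly hyperbolic map with singularities, and the plan is to deduce it from Pesin theory applied to the geodesic flow $g_t$, transported to the section $\Sigma_0$ via the flow projection $P$. The key structural facts we already have in hand are: (i) by Proposition~\ref{prop:u-continuous} and equation~\eqref{eq:delta-Sasaki}, the subbundles $\wh E^{s/u}$ are genuinely contracted/expanded along any orbit segment that stays away from ${\rm Deg}$; (ii) the flow $(g_t,\mu)$ is ergodic with $\mu[{\rm Deg}]=0$ (Theorem~\ref{thm:ergodicity}), so Lyapunov exponents are well defined and, since $\int u_+\,d\mu>0$ by Proposition~\ref{prop:u-continuous} together with ergodicity, the flow is nonuniformly hyperbolic in the Pesin sense with a two-dimensional nonzero-exponent distribution $\wh E^s\oplus\wh E^u$ transverse to the flow direction $Z$; (iii) the invariant manifolds $\wh W^{s/u}_x$ already exist and are $C^{1+{\rm Lip}}$ by the Gerber--Wilkinson Theorem~\ref{thm:Gerber-Wilkinson}. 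So the real content is bookkeeping: push these flow manifolds down to $\Sigma_0$ and check the two defining properties (smoothness of all backward, resp.\ forward, iterates and exponential approach).

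First I would fix a full-measure $g_t$-invariant set $\mathcal{R}\subset M$ of Lyapunov-regular points that never hit ${\rm Deg}$ and along whose orbits the Oseledets splitting coincides with $\wh E^s_x\oplus Z_x\oplus \wh E^u_x$; this exists by Theorem~\ref{thm:ergodicity}, Proposition~\ref{prop:u-continuous} and Oseledets' theorem. Since $\Sigma_0$ is a finite union of $su$-discs meeting every non-asymptotic orbit infinitely often (by construction in Section~\ref{ss:Sigma}) and $\mu$ disintegrates smoothly over $\Sigma_0$ against the return-time function (which is integrable because $\tau$ has polynomial tails by Lemma~\ref{lem:tails}, in particular is in $L^1$), the set $\Sigma_0\cap\mathcal{R}$ has full ${\rm Leb}$-measure in $\Sigma_0$; here I would invoke the standard Ambrose--Kakutani/Fubini argument relating $\mu$ to ${\rm Leb}\otimes dt$ in a flow box, together with the absolute continuity of the flow box coordinates, which holds because $\Sigma_0$ is almost perpendicular to the flow and $\|P^{\pm1}\|\approx1$. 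For $x$ in this set I define $W^{s}_x := P\big(\wh W^{s}_x \cap g_{[-\chi,\chi]}\Sigma_0\big)$ (a short arc through $x$), and symmetrically $W^u_x$, shrinking to a connected piece containing $x$ on which $P$ is a diffeomorphism.

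Next I would verify the two LSM/LUM axioms from Section~\ref{sec:exponential.mixing}. Smoothness of $f^{-n}$ on $W^u_x$ for all $n\ge0$: a point $y\in W^u_x$ lies on the unstable manifold of $x$ for the flow, so its entire backward $g_t$-orbit shadows that of $x$, stays in ${\rm Reg}$, and in particular its backward $f$-orbit avoids $\mathfs S^-$ — indeed $\mathfs S^- = \mathfs S^-_{\rm P}\cup\mathfs S^-_{\rm S}$ is closed and consists of preimages of $\partial\Sigma_0$, $\{\tau=\infty\}$ and the homogeneity-band boundaries, and one shows a short enough unstable arc through a regular point cannot meet any of these finitely-many-per-iterate curves transversally; this is where one uses that $\Sigma_0$ was constructed with boundary transverse to $E^{s,u}$ (condition after Lemma~\ref{lem:perturbation} and Step~5), so locally $W^u_x$ stays on one side. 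Then $f^{-n}|_{W^u_x}$ is just the restriction of the smooth map $g_{-t}$ followed by $P$, hence $C^2$. Exponential contraction under $f^{-n}$: from~\eqref{eq:delta-Sasaki}, $\|dg_{-t}|_{\wh E^u}\| = C_\delta^{\pm1}\exp\!\big[-\int_0^t u_+(g_{-s}x)\,ds\big]$, and by ergodicity the Birkhoff average of $u_+$ along the backward orbit converges to $\int u_+\,d\mu>0$, so lengths along $\wh W^u_x$ shrink exponentially in backward flow time; since the return time $\tau$ is bounded below by $T_\chi>0$, $n$ backward $f$-iterates correspond to at least $nT_\chi$ backward flow time, giving exponential decay in $n$, and $\|P^{\pm1}\|\approx1$ transfers this to $W^u_x$. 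The stable case is symmetric, using $u_-\le0$ with $\int u_-\,d\mu<0$.

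The main obstacle is the measure-theoretic step: showing that ${\rm Leb}$-a.e.\ $x\in\Sigma_0$ is a regular point whose flow-invariant manifold survives the passage to the section, i.e.\ that the exceptional set where $\wh W^{s/u}$ is too short, or where the backward $f$-orbit grazes a singularity curve, is ${\rm Leb}$-null. This requires (a) the absolute continuity of $\mu$ with respect to ${\rm Leb}\otimes dt$ on flow boxes adapted to $\Sigma_0$ — standard, but one must check it is uniform enough given that $\Sigma_0$ has small flow displacements and that the return time blows up near $\{\tau=\infty\}$ — and (b) a Borel--Cantelli / Pesin-block argument that a.e.\ point has a uniformly sized local unstable manifold for the flow that is not eventually swallowed by $\bigcup_{n\ge0} f^n(\mathfs S^-)$. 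For (b) I would use that $\mathfs S^-$ is a countable union of smooth curves transverse (off a null set) to $\wh E^u$ by Axiom (A3)-type alignment — which, although (A3) is only verified later, follows here directly from the construction of $\Sigma_0$ and from $\mathfs S^-_{\rm S}$ being homogeneity-band boundaries that are level sets of $c$, transverse to the unstable direction. Granting that, the preimages of a measure-zero set under a countable-to-one nonsingular map form a measure-zero set, completing the argument.
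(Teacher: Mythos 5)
Your plan correctly identifies the right circle of ideas (Pesin theory, Borel--Cantelli, sub-exponential approach to singularities), and the objects you introduce — the projections of $\wh W^{s/u}$ to $\Sigma_0$, the flow-box Fubini argument — are all sensible. But at the one step that actually carries the weight, your argument substitutes a strictly weaker claim and thereby leaves a real gap.

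You correctly isolate the issue as point~(b): a.e.\ point must have a \emph{uniformly sized} local manifold that is ``not eventually swallowed'' by iterates of the singular set. You then propose to resolve this by observing that ``the preimages of a measure-zero set under a countable-to-one nonsingular map form a measure-zero set.'' That observation only shows that a.e.\ $x$ never lands exactly on $\mathfs S^\pm$ under iteration; it does \emph{not} show that the surviving piece of a candidate stable/unstable arc through $x$ has positive length. Even when $x$ itself avoids every $f^n(\mathfs S^-)$, the arc through $x$ can be trimmed by $\mathfs S^\pm$ at each iterate, and if $d(f^n x,\mathfs S^+)$ decays exponentially (or faster) the trimmed pieces shrink to the single point $\{x\}$. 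Ruling this out is precisely the Katok--Strelcyn criterion, and it requires a quantitative estimate, namely $\sum_n {\rm Leb}\{x: d(f^n x,\mathfs S^+)<e^{-\alpha n}\}<\infty$ for all $\alpha>0$, which you do not supply.

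The paper's proof goes straight to this point and nothing else. It does not route through $\wh W^{s/u}$, Gerber--Wilkinson, Oseledets, or the flow-box disintegration; it simply notes that the invariant directions $E^{s/u}$ are already in hand, reduces to showing $\tfrac1n\log d(f^n x,\mathfs S^+)\to 0$ a.e., and bounds ${\rm Leb}(A_{\alpha,n})$ where $A_{\alpha,n}=\{x:d(f^n x,\mathfs S^+)<e^{-\alpha n}\}$. The essential point you would need, and which your outline skips, is the structural input about $\mathfs S^+=\mathfs S_{\rm P}\cup\mathfs S_{\rm S}$: $\mathfs S_{\rm P}$ is finitely many curves, so the $e^{-\alpha n}$-neighborhood of $\mathfs S_{\rm P}$ has Lebesgue measure $\ll e^{-\alpha n}$; $\mathfs S_{\rm S}=\bigcup_{k\ge n_0}\partial\mathfs D_k$ is countably many curves accumulating polynomially, so its $e^{-\alpha n}$-neighborhood is covered by $\lceil e^{\alpha n/2}\rceil$ pieces of size $e^{-\alpha n}$, giving measure $\ll e^{-\alpha n/2}$. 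Invariance of ${\rm Leb}$ under $f$ then pulls these bounds back to $A_{\alpha,n}$, and summability over $n$ finishes the Borel--Cantelli step. Without that count (in particular without treating $\mathfs S_{\rm S}$ separately, where the naive $e^{-\alpha n}$ bound fails because of the accumulating homogeneity bands), the proof does not close. I'd also note that your reliance on Theorem~\ref{thm:Gerber-Wilkinson} is unnecessary here — the $C^{1+{\rm Lip}}$ regularity is only used for axiom (A5), not for existence of LSM/LUM — so dropping that detour and working directly with $f$ and $\mathfs S^\pm$ is both simpler and what is actually needed.
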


\begin{proof}
We prove the statement for LSM (the argument for LUM is the same, by time reversion).
Since every $x\in\Sigma_0$ has invariant directions $E^{s/u}$, it is enough to show that there is no 
fast convergence of trajectories to $\mathfs S^+$, i.e.\ that $\tfrac{1}{n}\log d(f^n x,\mathfs S^+)\not\to 0$
for a.e.\ $x\in\Sigma_0$. To see this, let $\alpha>0$ and consider the set
$$
A_{\alpha,n}=\{x\in\Sigma_0: d(f^n x ,\mathfs S^+)<e^{-\alpha n}\}.
$$
By the Borel-Cantelli lemma, it is enough to show that $\sum\limits_{n\geq 1}{\rm Leb}[A_{\alpha,n}]<\infty$ for every $\alpha>0$,
in which case $\limsup A_{\alpha,n}$ has zero Lebesgue measure and so does the set
$\{x\in\Sigma_0:\tfrac{1}{n}\log d(f^n x,\mathfs S^+)\to 0\}$. We have
$$
A_{\alpha,n}=\{x\in\Sigma_0: d(f^n x,\mathfs S_{\rm P})<e^{-\alpha n}\}\cup
\{x\in\Sigma_0: d(f^n x,\mathfs S_{\rm S})<e^{-\alpha n}\}.
$$
If $B$ is the first set in the above union, then $f^n(B)$ is covered using finitely many sets of measure 
$\approx e^{-\alpha n}$, one for each curve of $\mathfs S_{\rm P}$. Hence
${\rm Leb}(f^n(B))\ll e^{-\alpha n}$ and so, by $f$--invariance,
we have ${\rm Leb}(B)\ll e^{-\alpha n}$.
If $C$ is the second set in the above union, then $f^n(C)$ is covered using
$\big\lceil e^{\alpha n/2}\big\rceil$ sets of measure 
$\approx e^{-\alpha n}$, namely one to cover all of $\bigcup_{k\geq e^{\alpha n/2}}\mathfs D_k$
and the others to cover $\mathfs D_{n_0},\ldots,\mathfs D_{\lceil e^{\alpha n/2}\rceil-1}$.
Again by $f$--invariance, we get that  ${\rm Leb}(C)\ll e^{\alpha n/2}\cdot e^{-\alpha n}=e^{-\alpha n/2}$.
Hence 
$$
{\rm Leb}(A_{\alpha,n})\ll e^{-\alpha n}+e^{-\alpha n/2}\ll e^{-\alpha n/2}.
$$
The proof is complete.
\end{proof}

%The union $\Sigma_0\cup\Omega_0$ is a global Poincar\'e section with return time bounded away
%from zero and infinity, thus its Poincar\'e return map $h_{\Sigma_0\cup\Omega_0}$ is well-defined.
%By the Borel-Cantelli lemma, $\tfrac{1}{n}\log d(h_{\Sigma_0\cup\Omega_0}^n(x),\partial(\Sigma_0\cup\Omega_0))\to 0$
%for ${\rm Leb}$--a.e. $x\in\Sigma_0\cup\Omega_0$, hence the invariant directions $E^{s/u}$ integrate to 
%local stable/unstable curves $W^{s/u}_x$ for $h_{\Sigma_0\cup\Omega_0}$,
%for ${\rm Leb}$--a.e. $x\in\Sigma_0\cup\Omega$. In particular, $W^{s/u}_x$ are stable/unstable curves
%for $f$.

By Lemma \ref{lem:subspaces}(3), there are continuous functions
$x\in\Omega_+\mapsto e^s_x,e^u_x$ such that $e^{s/u}_x\in T_x\Omega_+$ is a unit vector
spanning $E^{s/u}_x$. Since our analysis is local, we 
write $f_0(x)=f_0(\theta,\psi)=(\theta\pm \zeta(\psi),\pm\psi)$
omitting the entry $\pm\ve_0$.
We focus on the case
$f_0(x)=(\theta+ \zeta(\psi),\pm\psi)$ since the other sign is treated the same way.
In this notation, 
$$
(df_0)_x=(df_0)_{(\theta,\psi)}=\begin{bmatrix} 1 & \zeta'(\psi) \\ 0 & \pm 1\end{bmatrix}.
$$
If $\x$ is the geodesic defined by $x\in\Omega_+\setminus\Omega_+^=$, then $(df_0)_x\circ P=P\circ(dg_{2\Upsilon_0(\x)})_x$ and so
$\|(df_0)_x\|\approx \|(dg_{2\Upsilon_0(\x)})_x\|$. By equation (\ref{eq:delta-Sasaki}),
we have $\|df_0 e^s_x\|\ll 1$ for $x\in\Omega_+\setminus\Omega_+^=$.

\begin{lemma}\label{lem:Wu}
The following are true.
\begin{enumerate}[{\rm (1)}]
\item There is a H\"older continuous function $a:\Omega_+\to\R$ such that
$E^u_x$ is spanned by $\begin{bmatrix}a(x) \\ 1\end{bmatrix}$ for all $x\in\Omega_+$.
\item For all $x\in\Omega_+$, there a $C^{1+{\rm Lip}}$ function $\Theta$ such that
$W^u_x$ is locally the graph $\{(\Theta(\psi),\psi)\}$ of $\Theta$. 
\end{enumerate}
\end{lemma}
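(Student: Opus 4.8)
The plan is to deduce both parts from the Gerber--Wilkinson regularity (Theorem~\ref{thm:Gerber-Wilkinson}), the explicit form $(df_0)_x=\begin{bmatrix}1 & \zeta'(\psi)\\0 & \pm1\end{bmatrix}$, and the fact that $\Omega_+$ may be taken inside $\{x:||c(x)|-1|<\chi\}$ with $\chi$ arbitrarily small.

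\emph{Part (1).} By Theorem~\ref{thm:Gerber-Wilkinson} the map $x\mapsto\wh E^u_x$ is H\"older on $M$, and since $\Omega_+$ is uniformly transverse to the flow the projection $P:\wh E^u_x\mapsto E^u_x$ onto $T\Omega_+$ is smooth with $\|P^{\pm1}\|\approx1$; hence $x\mapsto E^u_x$ is H\"older on $\Omega_+$. It thus suffices to show that $E^u_x$ is \emph{uniformly} transverse to the $\theta$-axis $\R\begin{bmatrix}1\\0\end{bmatrix}$, for then $a(x)$ is the bounded, H\"older ratio of the two coordinates of a vector spanning $E^u_x$. Since every $x\in\Omega_+$ lies at $|s(x)|=\ve_0$, where $K<0$, we have $\Omega_+\subset\mathrm{Reg}$, so $E^s_x\ne E^u_x$ throughout $\Omega_+$ by Lemma~\ref{lem:subspaces}(4); the point is therefore to verify that the invariant line that can coincide with the $\theta$-axis is $E^s$, not $E^u$. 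The $\theta$-axis is the flow-projection of the perpendicular Jacobi field $\partial_\theta^\perp$ obtained by restricting the rotational Killing field $\partial_\theta$ of $\mathcal N$ along a geodesic: writing $\partial_\theta|_x=\partial_\theta^\perp+c(x)Z_x$, the term $c(x)Z_x$ is killed by $P$, so $P(\partial_\theta^\perp)=P(\partial_\theta|_x)=\begin{bmatrix}1\\0\end{bmatrix}$. At an asymptotic vector $x_0\in\Omega_+^=$ the forward orbit stays in $\mathcal N$ and $\|\partial_\theta^\perp(t)\|=\sqrt{\xi(s(t))^2-c(x_0)^2}=\sqrt{\xi(s(t))^2-1}\to0$ as $t\to+\infty$; hence $\partial_\theta^\perp$ is a nonzero stable Jacobi field and spans $\wh E^s_{x_0}$, so $E^s_{x_0}=\R\begin{bmatrix}1\\0\end{bmatrix}$ and, by $\Omega_+^=\subset\mathrm{Reg}$ and Lemma~\ref{lem:subspaces}(4), $E^u_{x_0}\ne\R\begin{bmatrix}1\\0\end{bmatrix}$. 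For $x\in\Omega_+\setminus\Omega_+^=$ one argues instead that $(df_0)_x$ acts isometrically on the $\theta$-axis, whereas $\|df_0\,e^u_x\|\approx\exp\!\big[\int_0^{2\Upsilon_0(\x)}u_+(g_t x)\,dt\big]$ by~\eqref{eq:delta-Sasaki}, and the latter is bounded away from $1$ on $\Omega_+\setminus\Omega_+^=$ as soon as $\chi$ is small (the orbit spends time $\approx n^{(r-2)/r}$ in $\mathrm{Reg}$, where $u_+>0$; cf.\ Lemma~\ref{lem:t1-new} and Section~\ref{ss:hyperbolicity-f}); hence $e^u_x$ lies off the $\theta$-axis there too. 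Combining the two cases with the continuity of $E^u$ and compactness of $\overline{\Omega_+}$ gives the required uniform transversality and proves~(1).

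\emph{Part (2).} The curve $W^u_x$ is, near $x$, the image under the smooth flow projection of a piece of the strong unstable leaf $\wh W^u_x$ of the geodesic flow, which by Theorem~\ref{thm:Gerber-Wilkinson} is uniformly $C^{1+{\rm Lip}}$; hence $W^u_x\subset\Omega_+$ is $C^{1+{\rm Lip}}$ with a uniform bound on its norm. Its tangent at each of its points $y$ equals $E^u_y=\R\begin{bmatrix}a(y)\\1\end{bmatrix}$ with $|a(y)|$ uniformly bounded by part~(1), so $W^u_x$ is everywhere transverse to the $\theta$-axis and is therefore, locally at $x$, the graph $\{(\Theta(\psi),\psi)\}$ of a function $\Theta$. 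Finally, writing a $C^{1+{\rm Lip}}$ curve whose tangent remains in a fixed cone about the $\psi$-direction as a graph over $\psi$ produces a $C^{1+{\rm Lip}}$ function (invert the $C^{1+{\rm Lip}}$ $\psi$-component, whose derivative is bounded away from $0$, and compose with the $C^{1+{\rm Lip}}$ $\theta$-component), so $\Theta\in C^{1+{\rm Lip}}$.

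The substantive step is the transversality of $E^u$ to the $\theta$-axis in part~(1). Identifying the $\theta$-axis with $\wh E^s$ at the asymptotic locus $\Omega_+^=$ via the Killing field is clean, but propagating transversality uniformly over the rest of $\Omega_+$ requires weighing the large shear $|\zeta'|\approx n^{3-2/r}$ of $(df_0)_x$ on $\mathfs C_n$ (Lemma~\ref{lem:pw}) against the genuine expansion of $\wh E^u$ over the excursion furnished by~\eqref{eq:delta-Sasaki} and Lemmas~\ref{lem:t1-new}--\ref{lem:diffpsi}; this is precisely where the freedom to shrink $\chi$ is used. Part~(2) is then routine once Gerber--Wilkinson is available.
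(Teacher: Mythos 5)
Your proof of part (1) is correct but follows a genuinely different route from the paper's. The paper works entirely with the explicit matrix $(df_0)_x=\begin{bmatrix}1&\zeta'(\psi)\\0&\pm1\end{bmatrix}$: writing $e^s_x=(e_1,e_2)$, the contraction $\|df_0\,e^s_x\|\ll1$ forces $|e_1+\zeta'e_2|\ll1$, and since $|\zeta'|\approx n^{3-2/r}$ on $\mathfs C_n$ (Lemma~\ref{lem:pw}), this gives $|e_2|\ll n^{-3+2/r}$; because every non-asymptotic point of $\Omega_+$ lies in some $\mathfs C_n$ with $n\geq n_0$ large, $e^s_x$ is uniformly close to $(1,0)$ on all of $\Omega_+$, so the uniform transversality of $E^s$ and $E^u$ (on the compact set $\overline{\Omega_+}\subset{\rm Reg}$) pushes $E^u$ off the horizontal, and the H\"older continuity of $a$ follows from Theorem~\ref{thm:Gerber-Wilkinson}. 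You reach the same conclusion by a more geometric path: at asymptotic vectors you exhibit an explicit stable perpendicular Jacobi field (the perpendicular part of the rotational Killing field, with $\|J^\perp(t)\|=\sqrt{\xi(s(t))^2-1}\to0$) whose projection spans $\R(1,0)$, thereby identifying $E^s_{x_0}=\R(1,0)$ on $\Omega_+^=$; you then patch the rest of $\Omega_+$ with an expansion estimate on $e^u_x$ and finish by compactness. The patching step carries the caveat you yourself flag---$\exp\big[\int_0^{2\Upsilon_0}u_+\,dt\big]$ must beat the metric-comparison constant, which holds after shrinking $\chi$---and makes your argument a bit less self-contained than the paper's one-shot calculation, but the Killing-field identification of the horizontal with $E^s$ at $\Omega_+^=$ is a clean geometric characterization that the paper only obtains implicitly, as a limit along homogeneity bands. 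Part (2) of your argument is essentially the paper's: uniform $C^{1+{\rm Lip}}$ regularity of $\wh W^u$ from Theorem~\ref{thm:Gerber-Wilkinson}, transversality from part (1), and the implicit function theorem.
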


Part (1) says that $E^u$ is not horizontal in the $(\theta,\psi)$ coordinates.

\begin{proof}
Recall that the definition of $\Omega_+$ depends on the small parameter 
$\chi$; 
hence the homogeneity bands $\mathfs C_n$ are only defined for large $n$.
Let $x\in\Omega_+\setminus\Omega_+^=$.
Writing $e^s_x=\begin{bmatrix}e_1(x) \\ e_2(x)\end{bmatrix}$, we have
$$
df_0 e^s_x = \begin{bmatrix} e_1(x)+\zeta'(x)e_2(x) \\ \pm e_2(x) \end{bmatrix}
$$
where $\zeta'\approx n^{3-\frac{2}{r}}$ in $\mathfs C_n$  by Lemma \ref{lem:pw}. 
 Since $\|df_0 e^s_x\|\ll 1$,
we get that $|e_2|\ll n^{-3+\frac{2}{r}}$ and in particular $e_2\to 0$ as $n\to\infty$.
In other words, $e^s_x\to(1,0)$ as $n\to\infty$,
see Figure~\ref{fig:graph}.
\begin{figure}[hbt!]
\centering
\def\svgwidth{13cm}
%% Creator: Inkscape 1.1-dev (f9311a1, 2019-12-25), www.inkscape.org
%% PDF/EPS/PS + LaTeX output extension by Johan Engelen, 2010
%% Accompanies image file '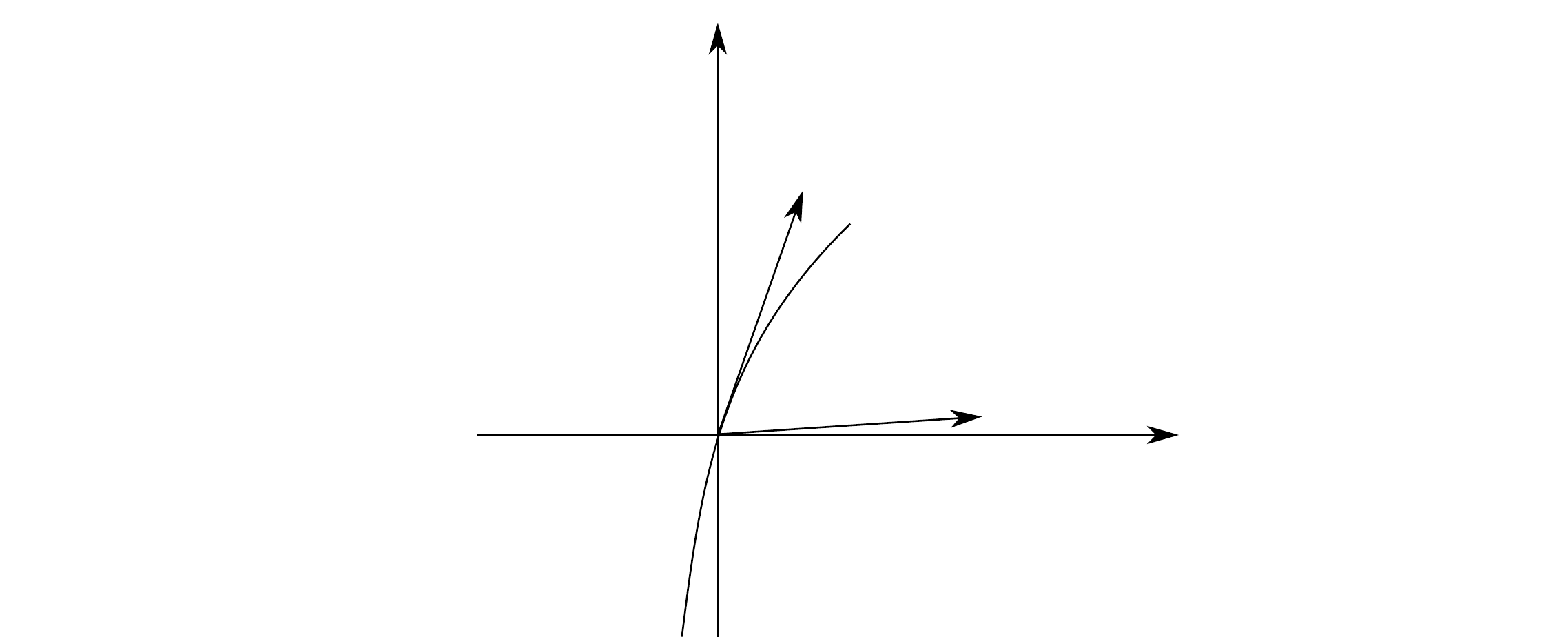' (pdf, eps, ps)
%%
%% To include the image in your LaTeX document, write
%%   \input{<filename>.pdf_tex}
%%  instead of
%%   \includegraphics{<filename>.pdf}
%% To scale the image, write
%%   \def\svgwidth{<desired width>}
%%   \input{<filename>.pdf_tex}
%%  instead of
%%   \includegraphics[width=<desired width>]{<filename>.pdf}
%%
%% Images with a different path to the parent latex file can
%% be accessed with the `import' package (which may need to be
%% installed) using
%%   \usepackage{import}
%% in the preamble, and then including the image with
%%   \import{<path to file>}{<filename>.pdf_tex}
%% Alternatively, one can specify
%%   \graphicspath{{<path to file>/}}
%% 
%% For more information, please see info/svg-inkscape on CTAN:
%%   http://tug.ctan.org/tex-archive/info/svg-inkscape
%%
\begingroup%
  \makeatletter%
  \providecommand\color[2][]{%
    \errmessage{(Inkscape) Color is used for the text in Inkscape, but the package 'color.sty' is not loaded}%
    \renewcommand\color[2][]{}%
  }%
  \providecommand\transparent[1]{%
    \errmessage{(Inkscape) Transparency is used (non-zero) for the text in Inkscape, but the package 'transparent.sty' is not loaded}%
    \renewcommand\transparent[1]{}%
  }%
  \providecommand\rotatebox[2]{#2}%
  \newcommand*\fsize{\dimexpr\f@size pt\relax}%
  \newcommand*\lineheight[1]{\fontsize{\fsize}{#1\fsize}\selectfont}%
  \ifx\svgwidth\undefined%
    \setlength{\unitlength}{655.43009837bp}%
    \ifx\svgscale\undefined%
      \relax%
    \else%
      \setlength{\unitlength}{\unitlength * \real{\svgscale}}%
    \fi%
  \else%
    \setlength{\unitlength}{\svgwidth}%
  \fi%
  \global\let\svgwidth\undefined%
  \global\let\svgscale\undefined%
  \makeatother%
  \begin{picture}(1,0.40589472)%
    \lineheight{1}%
    \setlength\tabcolsep{0pt}%
    \put(0,0){\includegraphics[width=\unitlength,page=1]{graph.pdf}}%
    \put(0.72937188,0.09301643){\color[rgb]{0,0,0}\makebox(0,0)[lt]{\lineheight{1.25}\smash{\begin{tabular}[t]{l}$\theta$\end{tabular}}}}%
    \put(0.42391668,0.37067225){\color[rgb]{0,0,0}\makebox(0,0)[lt]{\lineheight{1.25}\smash{\begin{tabular}[t]{l}$\psi$\end{tabular}}}}%
    \put(0.60254058,0.15709642){\color[rgb]{0,0,0}\makebox(0,0)[lt]{\lineheight{1.25}\smash{\begin{tabular}[t]{l}$e^s_x$\end{tabular}}}}%
    \put(0.47563531,0.2834849){\color[rgb]{0,0,0}\makebox(0,0)[lt]{\lineheight{1.25}\smash{\begin{tabular}[t]{l}$e^u_x$\end{tabular}}}}%
    \put(0.52244056,0.21556556){\color[rgb]{0,0,0}\makebox(0,0)[lt]{\lineheight{1.25}\smash{\begin{tabular}[t]{l}$(\Theta(\psi),\psi)$\end{tabular}}}}%
    \put(0,0){\includegraphics[width=\unitlength,page=2]{graph.pdf}}%
  \end{picture}%
\endgroup%

\caption{We have $e^s_x\to (1,0)$ as $n\to\infty$, hence $W^u_x$ is not horizontal.}\label{fig:graph}
\end{figure}
Since $x\mapsto E^u_x$ is H\"older continuous (Theorem \ref{thm:Gerber-Wilkinson})
and $E^u$ is transverse to $E^s$, part (1) follows. 

For part (2),
recall that by Theorem \ref{thm:Gerber-Wilkinson} the leaves $\wh W^u_x$
are uniformly $C^{1+{\rm Lip}}$. This implies that $W^u_x$ are uniformly $C^{1+{\rm Lip}}$
for $x\in\Omega_+$. Since $W^u_x$ is transverse to the horizontal direction $(1,0)$,
the implicit function theorem implies that we can locally write $W^u_x$ as the graph
$\{(\Theta(\psi),\psi)\}$ of a $C^{1+{\rm Lip}}$ function $\Theta$. 
This completes the proof.
\end{proof}

\begin{lemma}[Growth bounds] \label{lem:growth}
If $x\in\mathfs D_n$ then $\|df\restriction_{E^u_x}\|\approx n^{3-\frac{2}{r}}$.
\end{lemma}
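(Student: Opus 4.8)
The plan is to reduce everything to the transition map $f_0$, whose derivative has an explicit triangular form, and then to read off the expansion rate from Lemmas~\ref{lem:pw} and~\ref{lem:Wu}.

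First I would check that $\mathfs D_n\subset\Sigma_+$ for every $n\ge n_0$: if $x\in\mathfs D_n$ then $||c(x)|-1|<n^{-2}\le n_0^{-2}$, and, being a bouncing or crossing vector lying in $g_{[-\chi,\chi]}\Omega_+$ with Clairaut constant near $\pm1$, it completes a neck transition, so that $fx\in g_{[-\chi,\chi]}\Omega_-$. Consequently the relations recorded at the end of Section~\ref{ss:induced-def} apply on $\mathfs D_n$: one has $\|df\restriction_{E^u_x}\|\approx\|df_0\restriction_{E^u_{\mathfrak p_\minus(x)}}\|$ and $\mathfrak p_\minus(\mathfs D_n)=\mathfs C_n$, so writing $y=\mathfrak p_\minus(x)\in\mathfs C_n$ it suffices to prove $\|df_0\restriction_{E^u_y}\|\approx n^{3-\frac2r}$.

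Next I would work in the local Clairaut coordinates $(\theta,\psi)$, whose canonical metric is equivalent to the reference $\delta$--Sasaki metric, so all norms below are Euclidean ones up to bounded factors. By Lemma~\ref{lem:Wu}(1) a spanning vector of $E^u_y$ may be taken as $v=\begin{bmatrix}a(y)\\1\end{bmatrix}$, with $a$ continuous and hence uniformly bounded over all the bands $\mathfs C_n$ with $n\ge n_0$ (they lie in a fixed compact subset of $\Omega_+$); in particular $\|v\|\approx1$. Using $(df_0)_{(\theta,\psi)}=\begin{bmatrix}1&\zeta'(\psi)\\0&\pm1\end{bmatrix}$ gives $df_0\,v=\begin{bmatrix}a(y)+\zeta'(\psi)\\ \pm1\end{bmatrix}$, so that $\|df_0\,v\|=((a(y)+\zeta'(\psi))^2+1)^{1/2}$. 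By Lemma~\ref{lem:pw} we have $|\zeta'(\psi)|\approx n^{3-\frac2r}$ on both $\mathfs C_n^{<}$ and $\mathfs C_n^{>}$ (and on the three remaining families by the rotational symmetry of $\mathcal N$ together with Lemma~\ref{lem:symmetry}); since $|a(y)|$ is bounded while $|\zeta'(\psi)|\to\infty$, enlarging $n_0$ if necessary we obtain $|a(y)+\zeta'(\psi)|\approx|\zeta'(\psi)|\approx n^{3-\frac2r}$, whence $\|df_0\,v\|\approx n^{3-\frac2r}$. Dividing by $\|v\|\approx1$ and combining with the reduction of the previous paragraph yields $\|df\restriction_{E^u_x}\|\approx n^{3-\frac2r}$.

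The computation is light, so I do not expect a genuine obstacle; the two points worth a sentence of care are the inclusion $\mathfs D_n\subset\Sigma_+$ (needed so that the conjugacy formula is applicable) and the absence of cancellation in $a(y)+\zeta'(\psi)$ — the latter is precisely why $n_0$ must be taken large, and why it matters that Lemma~\ref{lem:pw} provides a genuine two-sided bound for $\zeta'$ on $\mathfs C_n$ rather than only an upper bound.
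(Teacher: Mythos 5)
Your proof is correct and follows essentially the same route as the paper: reduce via $\mathfrak p_\minus$ to $f_0$ on $\mathfs C_n$, span $E^u$ by $\begin{bmatrix}a&1\end{bmatrix}^{\!\top}$ from Lemma~\ref{lem:Wu}(1), apply the triangular form of $df_0$ together with Lemma~\ref{lem:pw}, and use boundedness of $a$. The brief remarks you add (the inclusion $\mathfs D_n\subset\Sigma_+$ and the non-cancellation of $a+\zeta'$, with attention to the sign of $\zeta'$ on $\mathfs C_n^<$ versus $\mathfs C_n^>$) are accurate clarifications of points the paper leaves implicit, not a different argument.
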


\begin{proof}
Since $\|df\restriction_{E^u_x}\|\approx \|df_0\restriction_{E^u_{\mathfrak p_\minus(x)}}\|$,
we need to prove that $\|df_0\restriction_{E^u_x}\|\approx n^{3-\frac{2}{r}}$
for $x\in \mathfs C_n$. We use the Clairaut metric.
By Lemma \ref{lem:Wu}(1), there exists $a:\Omega_+\to\R$ continuous such that
$E^u_x$ is spanned by $\begin{bmatrix}a(x) \\ 1\end{bmatrix}$ for $x\in\Omega_+$.
If $x=(\theta,\psi)\in\mathfs C_n$, Lemma~\ref{lem:pw} gives that
$df_0\begin{bmatrix}a(x) \\ 1\end{bmatrix}=\begin{bmatrix}a(x)+\zeta'(x) \\ \pm 1\end{bmatrix}$
where $\zeta'(x)\approx n^{3-\frac{2}{r}}$. Hence
\begin{equation} \label{eq:growth}
\|df_0\restriction_{E^u_x}\|_{\rm C}=
\left\|\begin{bmatrix}a(x)+\zeta'(x)\\ \pm1\end{bmatrix}\right\|_{\rm C} \Bigg/ \left\|\begin{bmatrix}a(x)\\ 1\end{bmatrix}\right\|_{\rm C}
=\dfrac{1+|a(x)+\zeta'(x)|}{1+|a(x)|}
\end{equation}
and, since $a$ is bounded, we obtain that
$\|df_0\restriction_{E^u_x}\|_{\rm C}\approx n^{3-\frac{2}{r}}$.
\end{proof}

\begin{lemma}[Distortion bounds]\label{lem:BD}
If $x,\overline{x}\in\mathfs{D}_n^>$ or $x,\overline{x}\in\mathfs{D}_n^<$
with $\overline{x}\in W^u_x$, then 
$$
\big|\log \|df\restriction_{E^u_x}\|-\log \|df\restriction_{E^u_{\overline x}}\|\big| \ll
d(fx, f \overline{x})^\frac13.
$$
\end{lemma}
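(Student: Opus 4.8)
The plan is to transfer the estimate from $f$ to the transition map $f_0$, for which the explicit formula~\eqref{eq:growth}, together with Lemma~\ref{lem:pw} and Lemma~\ref{lem:Wu}, makes everything computable. Since on the neck the Clairaut and $\delta$--Sasaki metrics are equivalent with smooth transition, and $E^u$ restricted to one unstable curve is Lipschitz (Lemma~\ref{lem:Wu}(2)), the discrepancy between the two associated quantities $\log\|df\restriction_{E^u_x}\|$ is itself Lipschitz along $W^u$, so it suffices to prove the statement with the Clairaut norm, which is the one appearing in~\eqref{eq:growth}. Using $f=\mathfrak p_\plus^{-1}\circ f_0\circ\mathfrak p_\minus$ on $\Sigma_+$ and the fact that $E^u$ is one--dimensional and preserved by $\mathfrak p_\pm$, I would decompose
$$
\log\|df\restriction_{E^u_x}\|=\phi_\minus(x)+\log\|df_0\restriction_{E^u_{\mathfrak p_\minus x}}\|+\phi_\plus(fx),
$$
with $\phi_\minus(x)=\log\|d\mathfrak p_\minus\restriction_{E^u_x}\|$ and $\phi_\plus(y)=-\log\|d\mathfrak p_\plus\restriction_{E^u_y}\|$, and treat the three terms separately.

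The correction terms $\phi_\pm$ are the easy part. The maps $\mathfrak p_\pm$ are bounded--time flow projections, hence uniformly $C^2$ with $\|d\mathfrak p_\pm^{\pm1}\|\approx1$, and every unstable curve (for $f$ or for $f_0$) is uniformly $C^{1+{\rm Lip}}$ (Theorem~\ref{thm:Gerber-Wilkinson}, cf.\ Lemma~\ref{lem:Wu}(2)), so $E^u$ is uniformly Lipschitz along unstable curves; therefore $\phi_\pm$ are uniformly Lipschitz along unstable curves, which yields
$$
|\phi_\minus(x)-\phi_\minus(\overline x)|\ll d_{W^u}(x,\overline x),\qquad
|\phi_\plus(fx)-\phi_\plus(f\overline x)|\ll d_{W^u}(fx,f\overline x)\approx d(fx,f\overline x).
$$
For the middle term, put $z=\mathfrak p_\minus x$ and $\overline z=\mathfrak p_\minus\overline x$, which lie in the same $\mathfs C_n^{>}$ or $\mathfs C_n^{<}$ and on a common unstable curve of $f_0$. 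By~\eqref{eq:growth},
$$
\log\|df_0\restriction_{E^u_z}\|=\log\bigl(1+|a(z)+\zeta'(z)|\bigr)-\log\bigl(1+|a(z)|\bigr).
$$
The second summand is Lipschitz along the unstable curve, since there $a$ agrees with the derivative of the defining $C^{1+{\rm Lip}}$ graph (Lemma~\ref{lem:Wu}(1)--(2)), so it contributes $\ll d_{W^u}(z,\overline z)$. For the first summand, Lemma~\ref{lem:pw} gives $1+|a(z)+\zeta'(z)|\approx|\zeta'(z)|\approx n^{3-\frac2r}$ on $\mathfs C_n$, so a first--order estimate of the logarithm contributes
$$
\ll n^{-(3-\frac2r)}\bigl(|a(z)-a(\overline z)|+|\zeta'(z)-\zeta'(\overline z)|\bigr),
$$
where $|a(z)-a(\overline z)|\ll d_{W^u}(z,\overline z)$ and, by the mean value theorem and Lemma~\ref{lem:pw}, $|\zeta'(z)-\zeta'(\overline z)|\ll n^{5-\frac2r}|\psi(z)-\psi(\overline z)|\ll n^{5-\frac2r}d_{W^u}(z,\overline z)$. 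Thus the middle term is $\ll n^{2}d_{W^u}(z,\overline z)\approx n^{2}d_{W^u}(x,\overline x)$.

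Adding the three contributions, the left--hand side of the lemma is $\ll n^{2}d_{W^u}(x,\overline x)+d(fx,f\overline x)$. Because $x,\overline x$ lie in the same band with $\overline x\in W^u_x$, the sub--arc of $W^u_x$ joining them stays in $\mathfs D_n$ (the Clairaut constant is monotone along the graph parametrisation, hence between $c(x)$ and $c(\overline x)$), so Lemma~\ref{lem:growth} and the mean value theorem give $d(fx,f\overline x)\approx d_{W^u}(fx,f\overline x)\approx n^{3-\frac2r}d_{W^u}(x,\overline x)$. Substituting $d_{W^u}(x,\overline x)\approx n^{-(3-\frac2r)}d(fx,f\overline x)$ turns $n^{2}d_{W^u}(x,\overline x)$ into $n^{-1+\frac2r}d(fx,f\overline x)\ll d(fx,f\overline x)$, since $r\ge4$ forces $-1+\frac2r<0$ and $n\ge n_0$. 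Hence the left--hand side is $\ll d(fx,f\overline x)$, which, as $d(fx,f\overline x)\le{\rm diam}\,\Sigma_0$, is in particular $\ll d(fx,f\overline x)^{1/3}$. The main obstacle --- and the reason for the decomposition --- is that $f$ and $f_0$ agree only up to multiplicative constants at the level of $\|df\restriction_{E^u}\|$, which is far too weak for a distortion estimate; one must instead realise $\log\|df\restriction_{E^u}\|-\log\|df_0\restriction_{E^u}\|$ as a function that is genuinely Lipschitz along unstable curves, and this rests squarely on the $C^{1+{\rm Lip}}$ regularity of invariant manifolds from Gerber--Wilkinson (Theorem~\ref{thm:Gerber-Wilkinson}, via Lemma~\ref{lem:Wu}) together with the uniform $C^2$--control of the flow--box projections $\mathfrak p_\pm$.
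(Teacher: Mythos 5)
Your proof is correct, and in fact proves something strictly stronger than the lemma claims. Let me spell out where it diverges from the paper's argument. After reducing to the transition map $f_0$ in the Clairaut metric and using the explicit formula~\eqref{eq:growth} with Lemma~\ref{lem:pw}, both arguments arrive at a bound of the form $\ll n^2\,|\psi-\overline\psi| + (\text{Lipschitz terms})$. The paper then invokes Lemma~\ref{lem:diffpsi} to say $|\psi-\overline\psi|\ll n^{-3}$, splits $n^2|\psi-\overline\psi|$ as $\bigl(n^2|\psi-\overline\psi|^{2/3}\bigr)\cdot|\psi-\overline\psi|^{1/3}$, observes that the bracketed factor is $O(1)$, and converts $|\psi-\overline\psi|^{1/3}\ll d_{\rm C}(x,\overline x)^{1/3}\ll d_{\rm C}(f_0x,f_0\overline x)^{1/3}$ using only that $\|df_0\restriction_{E^u}\|\gg1$; this is where the $1/3$ exponent enters. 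You instead exploit the actual expansion rate $\|df\restriction_{E^u}\|\approx n^{3-2/r}$ on $\mathfs D_n$ (Lemma~\ref{lem:growth}), together with the observation that the unstable sub-arc joining $x$ to $\overline x$ stays in the band (since the Clairaut function is monotone along the $\psi$-graph), to write $d_{W^u}(x,\overline x)\approx n^{-(3-\frac2r)}d(fx,f\overline x)$; this turns $n^2 d_{W^u}(x,\overline x)$ into $n^{-1+\frac2r}d(fx,f\overline x)\ll d(fx,f\overline x)$ because $r\ge4$. The payoff is a genuine Lipschitz bound $\big|\log\|df\restriction_{E^u_x}\|-\log\|df\restriction_{E^u_{\overline x}}\|\big|\ll d(fx,f\overline x)$, stronger than the $1/3$-H\"older estimate in the statement. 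A secondary improvement is that your decomposition $\log\|df\restriction_{E^u_x}\|=\phi_\minus(x)+\log\|df_0\restriction_{E^u_{\mathfrak p_\minus x}}\|+\phi_\plus(fx)$, with $\phi_\pm$ shown to be Lipschitz along unstable leaves via the $C^{1+{\rm Lip}}$ regularity from Theorem~\ref{thm:Gerber-Wilkinson}, makes explicit the reduction from $f$ to $f_0$; the paper asserts this reduction (``Again, it is enough to prove the estimate for the map $f_0$'') but does not justify it beyond the pointwise comparison $\|df\restriction_{E^u}\|\approx\|df_0\restriction_{E^u}\|$, which on its own is not sufficient for a distortion bound.
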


\begin{proof}
Again, it is enough to prove the estimate for the map $f_0$
in the Clairaut metric. Performing a calculation analogous to the one before
Lemma \ref{lem:Wu} and using the equivalence between metrics, we have
$\|df_0\restriction_{E^u_x}\|_{\rm C}\gg 1$ for $x\in\Omega_+\setminus \Omega_+^=$ and so 
$d_{\rm C}(x,\overline{x})\ll d_{\rm C}(f_0x,f_0\overline{x})$ whenever $\overline x\in W^u_x$.

Write $x=(\theta,\psi)$ and $\overline{x}=(\overline{\theta},\overline{\psi})$.
By equation \eqref{eq:growth},
\begin{align*}
\left|\log \|df_0\restriction_{E^u_x}\|_{\rm C}-\log \|df_0\restriction_{E^u_{\overline x}}\|_{\rm C}\right| & =
\left|\log\left(\dfrac{1+|a(x)+\zeta'(x)|}{1+|a(x)|}\right)-
\log\left(\dfrac{1+|a(\overline{x})+\zeta'(\overline{x})|}{1+|a(\overline{x})|}\right)\right|\\
&=\left|\log\left(\dfrac{1+|a(x)+\zeta'(x)|}{1+|a(\overline{x})+\zeta'(\overline{x})|}\right)-
\log\left(\dfrac{1+|a(x)|}{1+|a(\overline{x})|}\right)\right|\cdot
\end{align*}
Using that $\log\left(\tfrac{|a|}{|b|}\right)=\log\left(1+\tfrac{|a|-|b|}{|b|}\right)\le \tfrac{|a-b|}{|b|}$, we obtain
\begin{align*}
&\left|\log \|df_0\restriction_{E^u_x}\|_{\rm C}-\log \|df_0\restriction_{E^u_{\overline x}}\|_{\rm C}\right| 
 \leq \dfrac{\left||a(x)+\zeta'(x)|-|a(\overline{x})+\zeta'(\overline{x})|\right|}{1+\left|a(\overline{x})+\zeta'(\overline{x})\right|}+\dfrac{\left||a(x)|-|a(\overline{x})|\right|}{1+|a(\overline{x})|}\\
&\le \dfrac{\left|\zeta'(x)-\zeta'(\overline{x})\right|}{1+\left|a(\overline{x})+\zeta'(\overline{x})\right|}+
\dfrac{\left|a(x)-a(\overline{x})\right|}{1+\left|a(\overline{x})+\zeta'(\overline{x})\right|}+
\dfrac{\left|a(x)-a(\overline{x})\right|}{1+\left|a(\overline{x})\right|}\cdot
\end{align*}
Since $a$ is bounded and $n_0$ is large, Lemma \ref{lem:pw} implies that
$|a(\overline{x})+\zeta'(\overline{x})|\geq \tfrac{1}{2}|\zeta'(\overline{x})|$
and so 
\begin{align}\label{eq:distortion}
&\left|\log \|df_0\restriction_{E^u_x}\|_{\rm C}-\log \|df_0\restriction_{E^u_{\overline x}}\|_{\rm C}\right|
\leq 2\dfrac{\left|\zeta'(x)-\zeta'(\overline{x})\right|}{\left|\zeta'(\overline{x})\right|}+2|a(x)-a(\overline{x})|.
\end{align}
This estimate holds for all $x,\overline{x}\in\mathfs{C}_n^{<}$ or $x,\overline{x}\in\mathfs{C}_n^{>}$.
Assuming that $\overline{x}\in W^u_x$, then by Lemma~\ref{lem:Wu}(2) we can write
the latter expression as a function of $\psi$ and apply the mean value theorem to get that
\begin{align*}
\left|\log \|df_0\restriction_{E^u_x}\|_{\rm C}-\log \|df_0\restriction_{E^u_{\overline x}}\|_{\rm C}\right|
& \leq 2\dfrac{\left|\zeta'(\psi)-\zeta'(\overline{\psi})\right|}{\left|\zeta'(\overline{\psi})\right|}+2|a(\psi)-a(\overline{\psi})|\\
&\leq 2\tfrac{\left\|\zeta''\right\|_{\infty}}{\left|\zeta'(\overline{\psi})\right|}|\psi-\overline{\psi}|+2|a(\psi)-a(\overline{\psi})|.
\end{align*}
Also by Lemma~\ref{lem:Wu}(2), the restriction of $a$ to $W^u_x$ is $C^{\rm Lip}$, i.e.\
the map $\psi\mapsto a(\psi)$ is $C^{\rm Lip}$, hence 
$|a(\psi)-a(\overline{\psi})| \ll |\psi-\overline{\psi}|\leq d_{\rm C}(x,\overline{x})\ll d_{\rm C}(f_0x,f_0\overline{x})$.
To estimate the other term, apply Lemma \ref{lem:pw} to get that 
$\tfrac{\left\|\zeta''\right\|_{\infty}}{\left|\zeta'(\overline{\psi})\right|}\ll n^2$ and so
by Lemma~\ref{lem:diffpsi}
$$
\tfrac{\left\|\zeta''\right\|_{\infty}}{\left|\zeta'(\overline{\psi})\right|}|\psi-\overline{\psi}|\ll 
\left[n^2|\psi-\overline{\psi}|^{\frac23}\right]|\psi-\overline{\psi}|^{\frac13}
\ll |\psi-\overline{\psi}|^{\frac13}\leq d_{\rm C}(x,\overline{x})^{\frac13}\ll d_{\rm C}(f_0x,f_0\overline{x})^{\frac13}.
$$
Combining these estimates, we conclude the proof of the lemma.
\end{proof}

%A simple modification of the later proof gives uniform bounds on the jacobians of holonomy maps.  This is the content of the next result.

\begin{lemma}[Uniform bounds on jacobian of holonomies]\label{lem:ac}
There is a constant $\kappa>0$ such that if $x,\overline{x}\in\mathfs D_n^>$ or $x,\overline{x}\in\mathfs{D}_n^<$
with $\overline{x}\in W^s_x$ then
%and $H:W^u_x\to W^u_{\overline x}$ is the holonomy map, then 
$$
\big|\log \|df\restriction_{E^u_x}\|-\log \|df\restriction_{E^u_{\overline{x}}}\|\big|\ll d(x,\overline{x})^{\kappa}.
$$
\end{lemma}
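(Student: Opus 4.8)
The plan is to mimic the proof of Lemma~\ref{lem:BD} very closely. The point is that now $\overline x$ lies on a local stable manifold rather than an unstable one, and the desired bound is phrased in terms of $d(x,\overline x)$ itself rather than $d(fx,f\overline x)$; this makes the argument in fact slightly easier and, as we shall see, the hypothesis $\overline x\in W^s_x$ is barely used. As in the proof of Lemma~\ref{lem:BD}, it suffices to prove the estimate for the transition map $f_0$ in the Clairaut metric. This transfer uses that $\|df\restriction_{E^u_x}\|\approx\|df_0\restriction_{E^u_{\mathfrak p_\minus x}}\|$, that $\mathfrak p_\minus$ is bi-Lipschitz with $\|d\mathfrak p_\minus^{\pm1}\|\approx1$, that it sends $\mathfs D_n^{>}$ (resp.\ $\mathfs D_n^{<}$) to $\mathfs C_n^{>}$ (resp.\ $\mathfs C_n^{<}$) and local stable manifolds of $f$ to local stable curves of $f_0$, and that the $\delta$--Sasaki and Clairaut metrics are equivalent. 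The correction terms coming from $\log\|d\mathfrak p_{\pm}^{\pm1}\restriction_{E^u}\|$ are H\"older in $x$ — being compositions of a $C^2$ map with the H\"older section $x\mapsto E^u_x$ of Theorem~\ref{thm:Gerber-Wilkinson} — with the relevant base points staying comparably close because $f_0$ contracts along stable curves, so they do not affect the conclusion. We are thus reduced to showing that, if $x,\overline x$ both lie in $\mathfs C_n^{>}$ (or both in $\mathfs C_n^{<}$), then $\big|\log\|df_0\restriction_{E^u_x}\|_{\rm C}-\log\|df_0\restriction_{E^u_{\overline x}}\|_{\rm C}\big|\ll d_{\rm C}(x,\overline x)^{\kappa}$ for a suitable $\kappa>0$.

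For this I recycle inequality~\eqref{eq:distortion}, which was established precisely for such a pair:
\[
\big|\log \|df_0\restriction_{E^u_x}\|_{\rm C}-\log \|df_0\restriction_{E^u_{\overline x}}\|_{\rm C}\big|
\leq 2\frac{|\zeta'(x)-\zeta'(\overline{x})|}{|\zeta'(\overline{x})|}+2|a(x)-a(\overline{x})|.
\]
For the last term, $a$ is H\"older on $\Omega_+$ by Lemma~\ref{lem:Wu}(1), so $|a(x)-a(\overline x)|\ll d_{\rm C}(x,\overline x)^{\alpha}$, with $\alpha$ its H\"older exponent. For the first term, set $\psi=\psi(x)$, $\overline\psi=\psi(\overline x)$ and recall that $\zeta$ depends on $\psi$ alone. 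Since $c=(1+\ve_0^r)\cos\psi$ is strictly monotone near the relevant value of $\psi$ and $\mathfs C_n^{>}$ (resp.\ $\mathfs C_n^{<}$) corresponds to an interval of values of $c$, the whole $\psi$--interval between $\psi$ and $\overline\psi$ corresponds to points of $\mathfs C_n$, so Lemma~\ref{lem:pw} gives $|\zeta''|\ll n^{5-\frac2r}$ there while $|\zeta'(\overline\psi)|\approx n^{3-\frac2r}$; the mean value theorem then yields $\tfrac{|\zeta'(x)-\zeta'(\overline{x})|}{|\zeta'(\overline{x})|}\ll n^{2}|\psi-\overline\psi|$. Moreover the $c$--width of $\mathfs C_n$ is $\approx n^{-3}$ and $|\sin\psi|\approx1$, hence $|\psi-\overline\psi|\ll n^{-3}$, so $n^{2}|\psi-\overline\psi|^{2/3}\ll1$; combining this with the trivial bound $|\psi-\overline\psi|\leq d_{\rm C}(x,\overline x)$ (the Clairaut metric on $\Omega_+$ being a Riemannian product in which $\psi$ is one of the coordinates), we obtain
\[
\frac{|\zeta'(x)-\zeta'(\overline{x})|}{|\zeta'(\overline{x})|}\ll n^{2}|\psi-\overline\psi|
=\big(n^{2}|\psi-\overline\psi|^{2/3}\big)\,|\psi-\overline\psi|^{1/3}\ll d_{\rm C}(x,\overline x)^{1/3}.
\]
Adding the two contributions proves the reduced claim with $\kappa=\min\{\tfrac13,\alpha\}$, and transferring back through $\mathfrak p_\minus$ and the equivalence of metrics completes the proof.

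I do not expect a serious obstacle: the computation is essentially a repackaging of the one in Lemma~\ref{lem:BD}, simplified by the remark that $\psi$ is a coordinate of the Clairaut metric. The only step deserving care is the reduction to $f_0$ — one must verify that the coordinate changes $\mathfrak p_\pm$ introduce only H\"older-continuous corrections to $\log\|df\restriction_{E^u}\|$, and that along a local stable manifold $f_0$ brings the relevant base points closer together, so that these corrections remain dominated by $d(x,\overline x)^{\kappa}$ after possibly shrinking $\kappa$.
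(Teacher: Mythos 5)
Your proposal is correct and follows essentially the same route as the paper's proof: reduce to $f_0$ in Clairaut coordinates, invoke inequality~\eqref{eq:distortion}, bound $|a(x)-a(\overline x)|$ by the global H\"older continuity of $a$ from Lemma~\ref{lem:Wu}(1), and bound the $\zeta'$ ratio via the mean value theorem together with the estimates $|\zeta''|\ll n^{5-2/r}$, $|\zeta'|\approx n^{3-2/r}$ from Lemma~\ref{lem:pw} and the $\psi$--width $\approx n^{-3}$ of the homogeneity band from Lemma~\ref{lem:diffpsi}, arriving at $\kappa=\min\{\tfrac13,\kappa_0\}$. Your version is a little more explicit about the transfer through $\mathfrak p_\pm$ (which the paper dispatches with ``as usual''), and you correctly observe that the hypothesis $\overline x\in W^s_x$ is barely needed since $\zeta$ depends on $\psi$ alone and $a$ is globally H\"older, matching the paper's treatment.
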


\begin{proof}
As usual, we prove the corresponding estimate for $f_0$, taking
the Clairaut metric and assuming that $f_0(\theta,\psi)=(\theta+\zeta,\pm\psi)$.
Let $x=(\theta,\psi)$ and  $\overline{x}=(\overline{\theta},\overline{\psi})$.
Fix $\kappa_0>0$ such that $x\mapsto a(x)$ is $\kappa_0$--H\"older continuous (see Lemma~\ref{lem:Wu}(2)),
i.e. $|a(x)-a(\overline x)|\ll d_{\rm C}(x,\overline x)^{\kappa_0}$. By~\eqref{eq:distortion},
\begin{align*}
\left|\log \|df_0\restriction_{E^u_x}\|_{\rm C}-\log \|df_0\restriction_{E^u_{\overline x}}\|_{\rm C}\right|
& \leq 2\frac{\left|\zeta'(\psi)-\zeta'(\overline{\psi})\right|}{\left|\zeta'(\overline{\psi})\right|}+2|a(x)-a(\overline{x})|\\
&\leq 2\tfrac{\left\|\zeta''\right\|_{\infty}}{\left|\zeta'(\overline{\psi})\right|}|\psi-\overline{\psi}|+2|a(x)-a(\overline{x})|.
\end{align*}
Again,
$\tfrac{\left\|\zeta''\right\|_{\infty}}{\left|\zeta'(\overline{\psi})\right|}|\psi-\overline{\psi}|\ll 
\left[n^2|\psi-\overline{\psi}|^{\frac23}\right]|\psi-\overline{\psi}|^{\frac13}\ll |\psi-\overline{\psi}|^{\frac13}
\leq d_{\rm C}(x,\overline{x})^{\frac13}$. The lemma follows with $\kappa=\min\left\{\frac{1}{3},\kappa_0\right\}$.
\end{proof}

We end this section with an estimate of the variation of $\Upsilon$ on
stable/unstable manifolds.

\begin{lemma} \label{lem:t1var}
If $\x, \overline{\x}$ are both bouncing/crossing geodesics with entry vectors $x,\overline{x}\in\Sigma^+$
such that $\overline{x}\in W^{s/u}_x$ then 
$$
|\Upsilon(\x)-\Upsilon(\overline{\x})|\ll d(x,\overline{x})+d(fx,f\overline{x}).
$$
\end{lemma}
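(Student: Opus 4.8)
The plan is to reduce the statement, via the decomposition~\eqref{eq:times}, to an estimate on the transition time $\Upsilon_0$, and then to play the large derivative of $\Upsilon_0$ against the expansion and the near-horizontality of the invariant directions provided by Lemmas~\ref{lem:growth} and~\ref{lem:Wu}.

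\emph{Step 1: reduction to $\Upsilon_0$.} By~\eqref{eq:times}, $\Upsilon(\x)=-\mathfrak t_\minus(x)+2\Upsilon_0(\x)+\mathfrak t_\plus(fx)$, and since $\mathfrak t_\pm$ are $C^2$ with uniformly bounded first derivatives, $|\mathfrak t_\minus(x)-\mathfrak t_\minus(\overline x)|\ll d(x,\overline x)$ and $|\mathfrak t_\plus(fx)-\mathfrak t_\plus(f\overline x)|\ll d(fx,f\overline x)$, so it is enough to bound $|\Upsilon_0(\x)-\Upsilon_0(\overline\x)|$. As throughout Section~\ref{ss:hyperbolicity-f}, we transport the problem to $\Omega_+$ via $\mathfrak p_\minus$, which preserves $E^{s/u}$ and satisfies $\|d\mathfrak p_\minus^{\pm1}\|\approx1$, writing $x=(\theta,\psi)$, $\overline x=(\overline\theta,\overline\psi)$ in Clairaut coordinates on $\Omega_+$; we first assume $x,\overline x$ lie in a common homogeneity band $\mathfs C_n$. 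Integrating $dt=\xi(s)\bigl[(1+\xi'(s)^2)/(\xi(s)^2-c^2)\bigr]^{1/2}\,ds$ along the excursion shows that $\Upsilon_0(\x)$ depends only on the Clairaut constant $c=(1+\ve_0^r)\cos\psi$, hence only on $\psi$, and that the resulting integral has the same form as the one defining $\zeta$ in Section~\ref{ss:explicit} up to a $C^2$ prefactor $\approx1$; so the computations in the proof of Lemma~\ref{lem:pw} give $|\Upsilon_0'(\psi)|\approx n^{3-\frac2r}$ on $\mathfs C_n$. By the mean value theorem,
$$
|\Upsilon_0(\x)-\Upsilon_0(\overline\x)|\ll n^{3-\frac2r}\,|\psi-\overline\psi|.
$$

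\emph{Step 2: the unstable case.} Suppose $\overline x\in W^u_x$. By Lemma~\ref{lem:Wu}(2), $W^u_x$ is locally a graph $(\Theta(\psi),\psi)$ with $\Theta$ of uniformly bounded $C^1$ norm, so the length of the $W^u$--arc joining $x$ and $\overline x$ is $\gg|\psi-\overline\psi|$; applying the growth bound of Lemma~\ref{lem:growth} along this arc and using $\|d\mathfrak p_\pm^{\pm1}\|\approx1$ gives $d(fx,f\overline x)\approx d(f_0x,f_0\overline x)\gg n^{3-\frac2r}|\psi-\overline\psi|$. Together with the display in Step~1 this yields $|\Upsilon_0(\x)-\Upsilon_0(\overline\x)|\ll d(fx,f\overline x)$, which proves the lemma in this case.

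\emph{Step 3: the stable case and several bands.} Suppose $\overline x\in W^s_x$. Here we use the near-horizontality of $E^s$ deep in the neck: by the proof of Lemma~\ref{lem:Wu}(1), the components $(e_1,e_2)$ of the unit vector $e^s$ satisfy $e_1\approx1$ and $|e_2|\ll n^{-3+\frac2r}$ on $\mathfs C_n$, so $W^s_x$, being tangent to $E^s$, is locally a near-horizontal graph $(\theta,\Psi(\theta))$ with $|\Psi'|\ll n^{-3+\frac2r}$. Hence $|\psi-\overline\psi|\ll n^{-3+\frac2r}|\theta-\overline\theta|\ll n^{-3+\frac2r}d(x,\overline x)$, and substituting into the display in Step~1 makes the powers of $n$ cancel, giving $|\Upsilon_0(\x)-\Upsilon_0(\overline\x)|\ll d(x,\overline x)$. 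When $x$ and $\overline x$ are not contained in a single band, one chains the estimate over the bands traversed by $W^{s/u}_x$: across band~$k$ the increment of $\Upsilon_0$ is comparable to $k^{-2/r}$, which matches both the expanded length contributed by that band (unstable case) and the $\theta$--width contributed by it (stable case), so the band-by-band bounds sum to the desired one.

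\emph{Main obstacle.} The crux is the competition visible in the first display: $\Upsilon_0$ varies with the large rate $n^{3-2/r}$ exactly where neck excursions are long, so the estimate becomes useful only after extracting a matching amount of expansion (unstable case, Lemma~\ref{lem:growth}) or a matching amount of near-horizontality of the invariant direction (stable case, the sharp bound $|e_2|\ll n^{-3+2/r}$ from Lemma~\ref{lem:Wu}); making these two sharp estimates cancel cleanly, and keeping the cancellation intact when several homogeneity bands are crossed, is the main point of the argument.
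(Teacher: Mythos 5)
Your proof is correct in outline (though the multi-band chaining is only sketched), but it takes a genuinely different and substantially more elaborate route than the paper's, and it is worth comparing the two. The paper's proof is a short soft argument exploiting flow invariance: writing $\overline x=\mathfrak p_{\Sigma_0}(y)$ for a point $y\in\wh W^{s/u}_x$ and setting $z:=g_{\Upsilon(\x)}(y)\in\wh W^{s/u}_{fx}$ (invariance of the flow foliation), one has $f(\overline x)=g_{-\mathfrak t_{\Sigma_0}(z)}(z)$, whence
$$
\Upsilon(\overline\x)-\Upsilon(\x)=\mathfrak t_{\Sigma_0}(y)-\mathfrak t_{\Sigma_0}(z),
$$
and both terms are bounded by $d(x,\overline x)$ and $d(fx,f\overline x)$ respectively using only that the flow-box coordinates are Lipschitz and that $\wh W^{s/u}$ is uniformly transverse to the flow direction. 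That argument uses none of the neck estimates (Lemmas~\ref{lem:pw}, \ref{lem:Wu}, \ref{lem:growth}), is a few lines, and would work verbatim for any global section uniformly transverse to a flow with such a foliation. Your argument instead plays the precise rate $|\Upsilon_0'(\psi)|\approx n^{3-2/r}$ against the matching expansion rate of Lemma~\ref{lem:growth} (unstable case) and against the near-horizontality bound $|e_2|\ll n^{-3+2/r}$ extracted from the proof of Lemma~\ref{lem:Wu} (stable case). This does buy something: it makes transparent that the stable direction naturally pairs with the $d(x,\overline x)$ term and the unstable with the $d(fx,f\overline x)$ term, and it exhibits the exact cancellation of powers of $n$ underlying the estimate. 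But the cost is that you must (a) verify that $\Upsilon_0'$ has the same asymptotics as $\zeta'$ (plausible but not free, especially in the bouncing case where the lower limit of integration depends on $\psi$), (b) argue near-horizontality of $W^s$ as an integrated bound along the leaf rather than a pointwise bound on $e^s$, and (c) carry out the band-by-band chaining carefully, checking that the $k^{-2/r}$ increments really do match on both sides including for partially traversed bands. These are all doable — I checked the chaining and it works — but your write-up leaves them as assertions; the paper's proof avoids all of them.
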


\begin{proof}
For $\chi>0$ small enough, let $\mathfrak p_{\Sigma_0}:g_{[-\chi,\chi]}\Sigma_0\to\Sigma_0$ and 
$\mathfrak t_{\Sigma_0}:g_{[-\chi,\chi]}\Sigma_0\to [-\chi,\chi]$ be the coordinate maps
of the flow box $g_{[-\chi,\chi]}\Sigma_0$.
These maps have the same regularity as the flow $g_t$, hence in particular
are Lipschitz. Since the leaves $\wh W^{s/u}$ are uniformly transverse to the flow direction,
$d(x,y)\approx d(x,\mathfrak p_{\Sigma_0}(y))$ whenever $x\in\Sigma_0$ and
$y\in \wh W^{s/u}_x$ is close to $x$.

Fix $x\in \Sigma^+$ and $\overline{x}\in W^{s/u}_x$. By definition,
$\overline{x}=\mathfrak p_{\Sigma_0}(y)$ for some $y\in \wh W^{s/u}_x$, i.e.\
$y=g_{\mathfrak t_{\Sigma_0}(y)}(\overline{x})$. We also have $fx=g_{\Upsilon(\x)}(x)$ and
$z:=g_{\Upsilon(\x)}(y)\in \wh W^{s/u}_{fx}$. Hence 
$f(\overline{x})=\mathfrak p_{\Sigma_0}(z)=g_{-\mathfrak t_{\Sigma_0}(z)}(z)=
g_{\mathfrak t_{\Sigma_0}(y)+\Upsilon(\x)-\mathfrak t_{\Sigma_0}(z)}(\overline{x})$, so 
$$
\Upsilon(\overline{\x})=\Upsilon(\x)+\mathfrak t_{\Sigma_0}(y)-\mathfrak t_{\Sigma_0}(z).
$$
Since
\begin{enumerate}[$\circ$]
\item $|\mathfrak t_{\Sigma_0}(y)|=|\mathfrak t_{\Sigma_0}(y)-\mathfrak t_{\Sigma_0}(x)|\ll d(x,y)\approx d(x,\overline{x})$ and
\item $|\mathfrak t_{\Sigma_0}(z)|=|\mathfrak t_{\Sigma_0}(z)-\mathfrak t_{\Sigma_0}(fx)|\ll d(z,fx)\approx d(f(\overline{x}),fx)$,
\end{enumerate}
we conclude that
$|\Upsilon(\overline{\x})-\Upsilon(\x)|\leq |\mathfrak t_{\Sigma_0}(y)|+|\mathfrak t_{\Sigma_0}(z)|
\ll d(x,\overline{x})+d(fx,f\overline{x})$.
\end{proof}

\section{The first return map $f$ satisfies the Chernov axioms}\label{sec:CZ-scheme}

In Section \ref{ss:induced-def}, we defined the first return map
$f:X_0\backslash\mathfs S^+\to X_0\backslash\mathfs S^-$, and in Sections \ref{ss:excursions}
and \ref{ss:hyperbolicity-f} we obtained precise estimates for trajectories
that approach the degenerate closed geodesic $\gamma$. Even though the rate of hyperbolicity
of $g_t$ in a neighborhood of $\gamma$ is weak, geodesics spend a long
time during the transition (Lemma \ref{lem:t1}) and so the accumulated hyperbolicity
is large (Lemma \ref{lem:growth}). We now prove that $f$ satisfies 
the Chernov axioms (A1)--(A8) stated in Section \ref{ss:Chernov-axioms-statements}. 

\begin{theorem}\label{thm:f_Chernov}
The first return map $f$ satisfies the Chernov axioms {\rm (A1)--(A8)}. 
\end{theorem}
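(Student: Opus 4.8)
The plan is to verify the eight axioms (A1)--(A8) one at a time, drawing on the constructions and estimates established in Sections~\ref{sec:construction-Poincare-section} and~\ref{ss:hyperbolicity-f}. Axioms (A1), (A3), and (A4) are essentially structural and follow from the way $\Sigma_0$ and the singular sets $\mathfs S^\pm=\mathfs S_{\rm P}^\pm\cup\mathfs S_{\rm S}^\pm$ were defined: $X_0=\mathrm{int}(\Sigma_0)$ is an open subset of the $C^2$ surface carrying the Clairaut/$\delta$-Sasaki metric, $f$ is $C^2$ away from $\mathfs S^+$ since the flow is, (A3) is immediate because the primary singular curves were constructed transverse to $E^{s/u}$ (Lemma~\ref{lem:perturbation} and the boundary-transversality conditions in Steps~4--5) and the secondary singular curves $\partial\mathfs D_n$ lie in level sets of $c$, hence are transverse to $E^{s/u}$ for $n$ large, and (A4) holds because $\mu_\Sigma$ is the flow-invariant Liouville measure whose conditional measures on unstable curves are absolutely continuous, with ergodicity of all iterates inherited from the Bernoulli property of $g_t$ recalled in Section~\ref{sec:our-surface}. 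Axiom (A5) is exactly Theorem~\ref{thm:Gerber-Wilkinson} combined with Lemma~\ref{lem:Wu}(2): the invariant curves $\wh W^{s/u}_x$ are uniformly $C^{1+\rm Lip}$ and project to uniformly $C^{1+\rm Lip}$ curves on $\Sigma_0$, and Remark~\ref{rmk:A5} explains why this weakened version suffices.

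Next I would handle the hyperbolicity axiom (A2). Conditions (A2.1)--(A2.3) follow from Lemma~\ref{lem:subspaces} (continuity, one-dimensionality, and transversality of $\wh E^{s/u}$) together with $\|P^{\pm1}\|\approx 1$; one takes narrow cones around the projected directions $E^{s/u}$. For (A2.4) and the crucial (A2.5) with the sharp constant $\Lambda>1$, away from $\{\tau=\infty\}$ the return time $\tau$ is bounded below by $T_\chi>0$ (end of Section~\ref{ss:Sigma}) and $\Sigma_0$ is $\epsilon$-close to perpendicular to the flow, so by~\eqref{eq:delta-Sasaki} the expansion of $f$ along $E^u$ differs from $\exp\int_0^\tau u_+(g_sx)\,ds$ by a factor $C_\delta^{\pm1}$ with $C_\delta\to1$; choosing $\delta,\epsilon$ small makes $\|df|_{E^u_x}\|\ge\Lambda$ for some $\Lambda>1$ uniformly, and inside the homogeneity bands the much stronger estimate $\|df|_{E^u_x}\|\approx n^{3-2/r}$ from Lemma~\ref{lem:growth} gives expansion to spare. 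The stable estimate is symmetric by time reversal, and the cone invariance (A2.4) is inherited from the flow-invariance of $\wh E^{s/u}$ plus $\|P^{\pm1}\|\approx1$.

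For (A6) and (A7): uniform distortion (A6) follows from Lemma~\ref{lem:BD} outside the bands (where $f$ has uniformly bounded derivatives, so distortion along a connected component of $W\cap\mathfs S_{n-1}$ is controlled by the telescoping bound over at most $\ell_0 n_0$ elementary steps plus a geometric sum of the Hölder bounds $d(f^ix,f^iy)^{1/3}$), and inside the bands it is exactly the content of Lemma~\ref{lem:BD}, with the sum over successive neck excursions converging because the images separate exponentially. Uniform absolute continuity (A7) follows from the standard holonomy argument using (A5) and the exponential contraction along stable manifolds, with the Lipschitz control on Jacobians of holonomies provided by Lemma~\ref{lem:ac}. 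This leaves (A8), which I expect to be the main obstacle. For (A8.1), a LUM $W$ meets $\mathfs S_1=\mathfs S_{\rm P}\cup\mathfs S_{\rm S}$ in a countable set; the primary part contributes finitely many points (the curves of $\mathfs S_{\rm P}$ are transverse to $W$ by (A3)) and the secondary part $\bigcup_{n\ge n_0}\partial\mathfs D_n$ accumulates only at $W\cap\{\tau=\infty\}$, with spacing governed by ${\rm Leb}[\mathfs C_n]\approx n^{-3}$ from Lemma~\ref{lem:diffpsi}, giving $\rho(x_n,x_\infty)\ll n^{-2}$, so $d=2$ works. For (A8.2), the growth bound, on a short LUM $W$ avoiding $\mathfs S_{\rm P}$ the components of $W\setminus\mathfs S_{\rm S}$ are the pieces in successive bands $\mathfs D_n$, each with $\Lambda_n\approx n^{3-2/r}$ (Lemma~\ref{lem:growth}), and $\sum_n n^{-(3-2/r)}<1$ provided $\chi$ is small and $n_0$ large (so only bands with index $\ge n_0$ occur and their reciprocal expansions sum to less than one); for the bounded part one uses that on short curves away from the neck each component expands by at least $\Lambda>1$. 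Finally (A8.3), the complexity bound, is where the "no triple intersections up to order $\ell_0 n_0$" property built into $\Sigma_0$ (Steps~4--5 and Lemma~\ref{lem:perturbation}) is essential: it forces $K_{{\rm P},n}(W)\le 1+2n$ — a short LUM can cross the primary singularity curves and their first $n$ pre-iterates at most linearly many times because no three of these curves meet — so $K_{{\rm P},n}$ grows polynomially while $\min\{\theta_0^{-1},\Lambda\}^n$ grows exponentially, and the inequality $K_{{\rm P},n}<\min\{\theta_0^{-1},\Lambda\}^n$ holds for all large $n$. The delicate point throughout (A8) is that pre-iterates of singularity curves have infinite length (Figure~\ref{fig:explosion}), so counting connected components requires the projection-to-$\Omega_\pm$ analysis and the LBT decomposition from Section~\ref{ss:Sigma^}; verifying that this analysis indeed yields the linear complexity bound uniformly is the heart of the argument.
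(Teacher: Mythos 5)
Your overall plan and the assignment of lemmas to axioms match the paper's proof closely: (A1) and (A5) structural, (A2) via the $\delta$-Sasaki estimate~\eqref{eq:delta-Sasaki} with $u_+$ bounded below away from $\gamma$ plus Lemma~\ref{lem:growth} on $\Sigma_+$, (A3) from the boundary-transversality built in, (A4) via Liouville ergodicity, (A6)/(A7) from Lemmas~\ref{lem:BD}/\ref{lem:ac} and geometric-sum telescoping, and (A8.1)/(A8.2) from the spacing of homogeneity bands and Lemma~\ref{lem:growth}. Minor differences: for (A4) you cite the Bernoulli property of $g_t$, whereas the paper argues more directly that $(f^n,\Tau_n)$ is isomorphic as a suspension flow to the ergodic flow $g_t$, yielding ergodicity of $f^n$ for every $n$; both are fine.

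The one place you should be more careful is (A8.3). You claim $K_{{\rm P},n}(W)\le 1+2n$ \emph{for all large $n$}, reasoning that the no-triple-intersection property gives at most linearly many crossings. This is not what the construction delivers, and the claimed bound does not follow from it. The no-triple-intersection property was arranged in Section~\ref{ss:Sigma^} and Step~5 only up to the $\ell_0 n_0$'th pre-iterate of $f_b$, i.e.\ up to a \emph{fixed finite} order determined before the construction; it gives no control on $K_{{\rm P},n}$ for $n$ beyond that range. Moreover, within that range the correct consequence of ``no three curves of $\mathfs S_{{\rm P},n}$ meet at a point'' is not a linear bound but a constant one: in the $\delta\to0$ limit only curves through a single point matter, and at most two can pass through, so a short LUM is cut into at most $3$ pieces. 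This is exactly how the paper closes (A8.3): it takes the single iterate $n=n_0$ (for which the construction was designed), records $K_{{\rm P},n_0}=3$, and compares with $\min\{\theta_0^{-1},\Lambda\}^{n_0}$, which is large once $n_0$ is large. Since axiom (A8.3) only requires the inequality for \emph{some} $n$, the ``for all large $n$'' formulation is both unnecessary and, as stated, unsupported. Your conclusion is salvageable by replacing the linear bound with the constant $3$ at $n=n_0$, but as written the step does not follow from the stated construction.
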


\begin{proof}
Recall that the parameters $r\geq 4$, $\ve_0>0$ are fixed and we are choosing 
$\chi,\,\delta,\,\eta>0$ small enough, and $n_0\in\N$ large. We consider 
the $\delta$-Sasaki metric, cf.\ Section \ref{sec:geodesic-flows},
which is equivalent to the Clairaut metric on $T^1A$, cf.\ Section \ref{sec:revolution}. 
Let $\Tau:X_0\setminus\mathfs S^+\to(0,\infty)$ be the Poincar\'e return time defined by $f$.
We have $\inf(\Tau)\geq T_{\chi}>0$, cf.\ Section \ref{ss:Sigma}.

\medskip
\noindent
{\sc Verification of (A1):} Recall that $X_0={\rm int}(\Sigma_0)$, cf.\ Section \ref{ss:induced-def}.
Take $\wh X=X=\Sigma_0$, which is a compact Riemannian surface with the metric induced by the
$\delta$--Sasaki metric. We have that $\mathfs S^+,\mathfs S^-$
are closed subsets of $X$, cf.\ Section \ref{ss:induced-def}. The regularity of $f$ is the same of $g_t$,
hence it is a $C^2$ diffeomorphism. This proves axiom (A1).

\medskip
\noindent
{\sc Verification of (A2):} Recall that the directions $E^{s/u}_x$ are the flow projections 
of $\wh E^{s/u}_x$, defined for all $x\in X$. By Lemma \ref{lem:subspaces}(1),
$\{E^{s/u}_x\}$ are $df$--invariant. 
%We start obtaining hyperbolicity estimates for $f$ along $E^{s/u}$. 
On $\Sigma_+$, $f$ has high rate of hyperbolicity by Lemma \ref{lem:growth}, hence we just need to 
estimate the hyperbolicity of $f$ on $X_0\setminus \Sigma_+$. 
Since each $su$--disc is tangent to $\wh E^{s/u}$ at its center and the $su$--discs used in the construction
of $\Sigma_0$ have radii smaller than $\eta$ (see Step 5 in Section \ref{ss:Sigma}),
there is $C=C(\eta)>1$ with $\lim\limits_{\eta\to 0} C(\eta)=1$ such that
$C^{-1}\|dg_{\Tau(x)}\restriction_{\wh E^{s/u}_x}\|\leq \|df\restriction_{E^{s/u}_x}\|\leq C\|dg_{\Tau(x)}\restriction_{\wh E^{s/u}_x}\|$
for all $x\in X_0\setminus \mathfs S^+$. We estimate the hyperbolicity along $E^u_x$.
The set
$$
Y=\left\{g_t(x):x\in X_0\setminus \Sigma_+\text{ and }0\leq t\leq \Tau(x)\right\}
$$
is at distance $>\ve_0/2$ from $\gamma$. Letting $m_1=\inf(u_+\restriction_Y)$, Proposition \ref{prop:u-continuous} implies that
$0<m_1<\infty$ and so 
equation \eqref{eq:delta-Sasaki} for $E^u$ gives that
\begin{equation*}
\|dg_{\Tau(x)}\restriction_{\wh E^u_x}\|\geq C_{\delta}^{-1}\exp\left[\displaystyle\int_0^{\Tau(x)} u_+(g_tx)dt\right] 
\geq C_{\delta}^{-1}\exp\left[\inf(\Tau)m_1\right]=:\Lambda_u.
\end{equation*}
If $\delta>0$ is small enough then $\Lambda_u>1$. Hence, for $\delta,\eta$ small enough we have
$\|df\restriction_{E^{u}}\|\geq C^{-1}\|dg_{\Tau(x)}\restriction_{\wh E^{u}}\|\geq C^{-1}\Lambda_u>1$.
Arguing similarly with $E^s_x$, we find $\Lambda_s<1$ such that
$\|df\restriction_{E^{s}}\|\leq C\Lambda_s<1$. Choosing 
$1<\Lambda<C^{-1}\min\{\Lambda_s^{-1},\Lambda_u\}$, we have that 
$\|df\restriction_{E^u}\|>\Lambda$ and $\|df^{-1}\restriction_{E^s}\|>\Lambda$.
Finally, choose $\alpha=\alpha(\Lambda)>0$ small enough, and for $x\in X$ define the cones
\begin{align*}
C^s_x&=\{v^s+v^u:v^{s,u}\in E^{s,u}_x\text{ and }\|v^u\|<\alpha \|v^s\|\}\\
C^u_x&=\{v^s+v^u:v^{s,u}\in E^{s,u}_x\text{ and }\|v^s\|<\alpha \|v^u\|\}.
\end{align*}
Hence:
\begin{enumerate}[$\circ$]
\item Condition (A2.1) follows from Lemma \ref{lem:subspaces}(3).
\item Condition (A2.2) follows from Lemma \ref{lem:subspaces}(2).
\item Condition (A2.3) follows from Lemma \ref{lem:subspaces}(4), since $X\cap {\rm Deg}=\emptyset$.
\item Condition (A2.4) follows from Lemma \ref{lem:subspaces}(1).
\item Condition (A2.5) follows because $\|df\restriction_{E^u}\|,\|df^{-1}\restriction_{E^s}\|>\Lambda$ and $\alpha>0$ is small.
\end{enumerate}

\medskip
\noindent
{\sc Verification of (A3).} Recall that $\mathfs S^+=\mathfs S_{\rm P}\cup\mathfs S_{\rm S}$, where
\begin{align*}
\mathfs S_{\rm P}&=\{x\in\Sigma_0:\tau(x)<\infty\text{ and }g_{\tau(x)}(x)\in\partial\Sigma_0\}\cup\{\tau = \infty\},\\
\mathfs S_{\rm S}&=\bigcup_{n\geq n_0}\partial \mathfs D_n.
\end{align*}
By construction, $\partial\Sigma_0$ is transverse to $E^u$, hence by $df$--invariance the set 
$\{x\in\Sigma_0:\tau(x)<\infty\text{ and }g_{\tau(x)}(x)\in\partial\Sigma_0\}$ is as well. We also know that
$\{\tau=\infty\}$ is contained in the stable manifold of ${\rm Deg}$, hence it is transverse to $E^u$.
This shows that $\mathfs S_{\rm P}$ is transverse to $E^u$, but also $\mathfs S_{\rm S}$ since 
the curves $\partial\mathfs D_n$ converge in the $C^1$ norm to $\{\tau=\infty\}$.
Another way to see this later property is observing that, in Clairaut coordinates, $\{\tau=\infty\}$
is contained in $\{\psi=\psi_0\}$ on $\Omega_1$ (with suitable modifications on the remainder of $\Omega_+$) while the two curves composing $\partial\mathfs D_n$ are of the form
$\{\psi=\psi_n\}$ with $\lim\limits_{n\to\infty}\psi_n=\psi_0$.

\medskip
\noindent
{\sc Verification of (A4).} The Liouville measure $\mu$ on $M$ induces a Liouville measure $\mu_{\Sigma_0}$
on $\Sigma_0$,
invariant under $f$. We consider the ergodicity of $f^n$ with respect to $\mu_{\Sigma_0}$.
Since the invariant manifolds of ${\rm Deg}$ have zero Liouville measure, the suspension
flow $(f,\Tau)$ induced by $f$ is isomorphic to the flow $g_t$, hence ergodic.
The suspension flows $(f,\Tau)$ and $(f^n,\Tau_n)$ are isomorphic ($\Tau_n$ denotes the $n$--th Birkhoff sum),
hence $(f^n,\Tau_n)$ is ergodic and so $f^n$ is ergodic.

\medskip
\noindent
{\sc Verification of (A5).} The leaves $W^{s/u}_x$ are flow projections of the leaves
$\wh W^{s/u}_x$ which, by Theorem \ref{thm:Gerber-Wilkinson}, are uniformly $C^{1+\rm{Lip}}$.
Hence the same holds for the leaves $W^{s/u}_x$.

\medskip
\noindent
{\sc Verification of (A6).} We claim that $\psi(s)\approx s^{\frac{1}{3}}$ satisfies (A6).
To see that, observe that if $x,y$ belong to the same connected component of 
$\Sigma_0\setminus\Sigma_+$ then
$$
\left|\log\|Df\restriction_{E^u_x}\|-\log\|Df\restriction_{E^u_y}\|\right|\ll d(fx,fy)^{\frac{1}{3}},
$$
since outside $\Sigma_+$ the map $f$ has the same regularity as $g_t$. Inside 
$\Sigma_+$, Lemma \ref{lem:BD} gives the same estimate if both $x,y\in {\mathfs D}_n^>$
or $x,y\in {\mathfs D}_n^<$. If $W$ is a LUM and $x,y$ belong to a same connected 
component of $W\cap \mathfs S_{n-1}$,
then for all $0\leq k<n$ either $f^kx,f^ky$ belong to the same connected component of 
$\Sigma_0\setminus\Sigma_+$ or $f^kx,f^ky\in {\mathfs D}_{n_k}^>$ or $f^kx,f^ky\in {\mathfs D}_{n_k}^<$
for some $n_k\geq n_0$. Since we also have $d(f^kx,f^ky)\leq \Lambda^{k-n}d(f^n x,f^n y)$ for
$0\leq k<n$, we conclude that
\begin{align*}
&\,\big|\log\|Df^n\restriction_{E^u_x}\|-\log\|Df^n\restriction_{E^u_y}\|\big|
\le \sum_{k=0}^{n-1}\big|\log\|Df\restriction_{E^u_{f^kx}}\|-\log\|Df\restriction_{E^u_{f^ky}}\|\\
& \ll \sum_{k=1}^n d(f^kx,f^ky)^\frac13 \ll \sum_{k=1}^n \Lambda^{\frac13 (k-n)}d(f^n x,f^n y)^\frac13
\ll d(f^n x,f^n y)^{\frac13}.
\end{align*}

\medskip
\noindent
{\sc Verification of (A7).} As in the verification of (A6),
if $x,\overline x$ belong to the same connected component of $\Sigma_0\setminus \Sigma_+$ or
both $x,\overline x\in\mathfs D_n^>$ or both $x,\overline x\in\mathfs D_n^<$ then 
$$
\big|\log\|Df\restriction_{E^u_x}\|-\log\|Df\restriction_{E^u_{\overline x}}\|\big|\leq d(x,y)^\kappa.
$$
Indeed, the first case holds because the restriction of $f$ to $\Sigma_0\setminus\Sigma_+$ has
bounded $C^2$ norm, and the latter cases follow from Lemma \ref{lem:ac}.
By symmetry, it is enough to verify (A7) for unstable holonomies, so 
let $W_1,W_2$ be sufficiently small and close enough
LSM's, and let $H:W_1\to W_2$ be the (unstable) holonomy map. By classical
Pesin theory (see e.g.~\cite[Theorem 8.6.13]{Barreira-Pesin-Non-Uniform-Hyperbolicity-Book}),
the Jacobian $JH$ of $H$ is given by the equation 
$$
\log JH(x)=\sum_{i=0}^\infty 
\Big(\log \big\|Df\restriction_{E^u_{f^{-i}x}}\big\|  -\log \big\|Df\restriction_{E^u_{f^{-i}H(x)}}\big\|\Big).
$$
By the uniform hyperbolicity of $f$, we have $d(f^{-i}x,f^{-i}H(x))\le \Lambda^{-i} d(x,H(x))$ for all $i\geq 0$
and, since $f^{-i}x,f^{-i}H(x)$ belong to the same connected component of $\Sigma_0\setminus \Sigma_+$ or
are both in the same $\mathfs D_n^>$ or $\mathfs D_n^<$, we conclude that 
\begin{align*}
&\, |\log JH(x)| \le \sum_{i=0}^\infty \Big|\log \big\|Df\restriction_{E^u_{f^{-i}x}}\big\|  -\log \big\|Df\restriction_{E^u_{f^{-i}H(x)}}\big\|\Big|\\
&\ll \sum_{i=0}^\infty d(f^{-i}x,f^{-i}H(x))^\kappa \ll \sum_{i=0}^\infty \Lambda^{-\kappa i}d(x,H(x))^\kappa\ll 1.
\end{align*}

\medskip
\noindent
{\sc Verification of (A8).} Let $W$ be a LUM. By (A3), $E^u$ is transverse to the boundaries of
homogeneity bands and so $W\cap \mathfs S_1$ is at most countable.
When it is countable, we can write $W\cap \mathfs S_1=\{x_n,x_{n+1},\ldots\}$ 
where $|c(x_n)|=1\pm\tfrac{1}{n^2}$ and $x_n\to x_\infty\in \Sigma_0\cap g_{[-\delta,\delta]}\Omega_+^=$.
Writing $x_n=(\ve_n,\theta_n,\psi_n)$ in Clairaut coordinates, we obtain that 
$\rho(x_n,x_\infty)\approx |\psi_n-\psi_\infty|\approx |c(x_n)-c(x_\infty)|\approx n^{-2}$, which proves (A8.1).  
For (A8.2), note that by Lemma \ref{lem:growth}
$$
\theta_0:=\liminf_{\delta\to 0}\sup_{|W|<\delta}\sum_{n\geq n_0} \tfrac{1}{\Lambda_n} 
\approx \sum_{n\geq n_0}\tfrac{1}{n^{3-\frac{2}{r}}}\approx  \tfrac{1}{n_0^{2-\frac{2}{r}}}<1,
$$
for $n_0$ large enough, so (A8.2) follows.
Finally, (A8.3) follows from our construction of $\Sigma_0$, which was made to prevent triple intersections
of $\mathfs S_{\rm P}$ up to the $n_0$'th pre-iterate under $f$. In the terminology of Section \ref{ss:Chernov-axioms-statements}, this gives that $K_{P,n_0}=3$, which is obviously smaller than $\min\{\theta_0^{-1},\Lambda\}^{n_0}$
for $n_0$ large enough.
\end{proof}

\section{Proof of Theorem \ref{thm:main} and statistical limit laws}\label{sec:conclusion}

In this section, we prove the results mentioned in the introduction, Theorem~\ref{thm:main} and Remark~\ref{rmk:main} as well as various statistical limit laws.

In the previous sections, we constructed a first return map
$f:\Sigma_0\to\Sigma_0$
%$f:X_0\backslash\mathfs S^+\to X_0\backslash\mathfs S^-$
that satisfies the Chernov axioms. The return time function
$\tau:\Sigma_0\to(0,\infty)$ 
defined in Section \ref{ss:induced-def} is bounded below but not above.
We have $f=g_\tau$ (i.e.\ $f(x)=g_{\tau(x)}(x)$).

By Theorem \ref{thm:Chernov-Zhang}, $f$ is modelled by a Young tower with exponential tails:
there is a subset $Y\subset \Sigma_0$ with ${\rm Leb}(Y)>0$ and a function $\sigma:Y\to\N$ such that $F=f^{\sigma}:Y\to Y$
is ``nice'' (uniformly hyperbolic with product structure and bounded distortion) and ${\rm Leb}(\sigma>n)\to 0$ exponentially quickly as $n\to\infty$. 
Note that
$$
F=f^\sigma = (g_\tau)^\sigma = g_\varphi
\quad\text{where}
\quad
\varphi=\sum_{\ell=0}^{\sigma-1}\tau\circ f^\ell.
$$
Hence, we have shown that the geodesic flow $g_t$ is modelled by a suspension flow over $F:Y\to Y$ with roof function $\varphi$.

Next we estimate the tails of $\varphi$.
Recalling the definition of $\Sigma_+$ in Section \ref{ss:induced-def},
$\tau=\Upsilon$ is unbounded on $\Sigma_+$, while
$\tau$ is bounded on $\Sigma_0\setminus\Sigma_+$. By Lemma~\ref{lem:tails},
$\mu_{\Sigma_0}(\tau>n)\approx {\rm Leb}(\Upsilon>n)\approx n^{-(a+1)}$ where
 $a=\tfrac{r+2}{r-2}$.
Since $\sigma$ has exponential tails, a standard argument
(see for example~\cite{Markarian-polynomial,Chernov-Zhang}) shows that 
$\mu_{\Sigma_0}(\varphi>n)\ll (\log n)^{a+1}n^{-(a+1)}$.
In particular, $\mu_{\Sigma_0}(\varphi>n)\ll n^{-(a+1-\epsilon)}$ for any $\epsilon>0$.

To apply the recent work of \cite{Balint-et-al-2019},
we also require the following ``bounded H\"older constants'' property for $\varphi$.
\begin{lemma} \label{lem:phi}
We have $|\varphi(x)-\varphi(\overline x)|\ll d(x,\overline x)$ for
$x,\overline x\in Y$ with $\overline x\in W^s_x$, and
$|\varphi(x)-\varphi(\overline x)|\ll d(F x,F \overline x)$ for
$x,\overline x\in Y$ with $\overline x\in W^u_x$.
\end{lemma}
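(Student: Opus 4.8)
The plan is to use the decomposition $\varphi=\sum_{\ell=0}^{\sigma-1}\tau\circ f^\ell$, together with the variation bound for the return time on stable/unstable leaves coming from Lemma~\ref{lem:t1var} (and the fact that $\tau$ has bounded $C^2$ norm on each component of $\Sigma_0\setminus\Sigma_+$), plus the exponential contraction/expansion of $f$ along such leaves. For the stable estimate, I would first invoke the hyperbolic product structure of the Young tower, which gives that $\sigma$ is constant on stable leaves, so $\sigma(x)=\sigma(\overline x)=:N$ when $\overline x\in W^s_x$. Since an LSM avoids $\mathfs S^+$ under all forward iterates, $f^\ell x$ and $f^\ell\overline x$ lie in a common connected component of $\Sigma_0\setminus\mathfs S^+$ with $f^\ell\overline x\in W^s_{f^\ell x}$, for each $0\le\ell\le N-1$. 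On a component inside $\Sigma_0\setminus\Sigma_+$ one has $|\tau(f^\ell x)-\tau(f^\ell\overline x)|\ll d(f^\ell x,f^\ell\overline x)$ (bounded $C^2$ norm of $\tau$), while on a component inside $\Sigma_+$ one has $\tau=\Upsilon$ and Lemma~\ref{lem:t1var} gives $|\tau(f^\ell x)-\tau(f^\ell\overline x)|\ll d(f^\ell x,f^\ell\overline x)+d(f^{\ell+1}x,f^{\ell+1}\overline x)$. Using $d(f^\ell x,f^\ell\overline x)\ll\Lambda^{-\ell}d(x,\overline x)$ and summing a geometric series yields $|\varphi(x)-\varphi(\overline x)|\ll d(x,\overline x)$.

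For the unstable estimate I would argue by time reversal. The time-reversed flow $g_{-t}$ has first return map $f^{-1}$ to $\Sigma_0$ with return time $|\tau_-|$, and is modelled by the Young tower $(Y,F^{-1},\sigma\circ F^{-1})$, whose roof function $\widehat\varphi$ satisfies $\widehat\varphi\circ F=\varphi$ (a direct unwinding of the definitions, using $|\tau_-|(f^m x)=\tau(f^{m-1}x)$). Writing $p=Fx$ and $q=F\overline x$, the Markov structure of the tower places $q\in W^u_p$ for $f$, i.e.\ $q\in W^s_p$ for the time-reversed map, and $\varphi(x)=\widehat\varphi(p)$, $\varphi(\overline x)=\widehat\varphi(q)$; so $|\varphi(x)-\varphi(\overline x)|=|\widehat\varphi(p)-\widehat\varphi(q)|\ll d(p,q)=d(Fx,F\overline x)$ is precisely the stable estimate applied to the time-reversed system. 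The hypotheses transfer because Lemma~\ref{lem:t1var} is already symmetric in $s\leftrightarrow u$, because $|\tau_-|$ coincides with $\Upsilon$ on $\Sigma_-$ and has bounded $C^2$ norm off $\Sigma_-$, and because the construction of $\Sigma_0$, of $\mathfs S^{\pm}$ and the verification of the Chernov axioms are time-reversal symmetric.

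The step I expect to be the main obstacle is a purely organizational one: checking that the time-reversed dynamics may indeed be presented as the Young tower $(Y,F^{-1},\sigma\circ F^{-1})$ with all the ``nice'' properties and exponential tails, so that the identity $\widehat\varphi\circ F=\varphi$ is exact and the stable-case argument applies verbatim. An alternative, more hands-on route for the unstable case is a flow-box argument: lift $\overline x\in W^u_x$ to a point of $\wh W^u_x$ at flow-distance $\ll d(x,\overline x)$ from $x$, push it forward by time $\varphi(x)$ onto $\wh W^u_{Fx}$, and use that $\wh W^u_{Fx}$ is transverse to the flow direction (Lemma~\ref{lem:subspaces}(2),(4)) together with $F\overline x\in W^u_{Fx}$ to identify the forward image with the lift of $F\overline x$; comparing flow parameters gives $|\varphi(x)-\varphi(\overline x)|\ll d(x,\overline x)+d(Fx,F\overline x)$, and $d(x,\overline x)\ll d(Fx,F\overline x)$ by expansion of $F$ along unstable leaves. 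In that approach the delicate point is justifying the transversal identification uniformly, despite $\varphi$ being unbounded, which is essentially why I prefer to route the unstable case through the (already proved) stable case.
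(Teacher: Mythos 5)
Your stable-case argument is essentially the paper's: decompose $\varphi=\sum_{\ell=0}^{\sigma-1}\tau\circ f^\ell$, control each increment of $\tau$ along the leaf by Lemma~\ref{lem:t1var} (or the bounded $C^2$ norm of $\tau$ off $\Sigma_+$), and sum the resulting geometric series. That part is fine.

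For the unstable case you take a detour that the paper does not need. The paper's proof treats the unstable direction \emph{first} and directly, by the same chaining: on an unstable leaf Lemma~\ref{lem:t1var} gives
$|\tau(f^\ell x)-\tau(f^\ell\overline x)|\ll d(f^\ell x,f^\ell\overline x)+d(f^{\ell+1}x,f^{\ell+1}\overline x)\ll d(f^{\ell+1}x,f^{\ell+1}\overline x)$
(absorbing the first term using expansion along $W^u$), and then
$\sum_{\ell=0}^{\sigma-1}d(f^{\ell+1}x,f^{\ell+1}\overline x)\ll\sum_{j=1}^{\sigma}\Lambda^{j-\sigma}d(f^\sigma x,f^\sigma\overline x)\ll d(Fx,F\overline x)$.
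Nothing about $\varphi$ being unbounded intrudes here, because each summand uses Lemma~\ref{lem:t1var} only for a single return of $f$. Your preference for routing through time reversal was driven by a concern about ``justifying the transversal identification uniformly, despite $\varphi$ being unbounded''; that concern is unfounded. In your own flow-box alternative the identification uses transversality of $\wh W^u$ to the flow only at the two endpoints $x$ and $Fx$, where the flow box has bounded size, so the intermediate flow time plays no role. And the chaining argument bypasses the issue altogether.

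The time-reversal route is not wrong, but it buys you extra bookkeeping: you must verify that $f^{-1}$ is modelled by a Young tower over the \emph{same} base $Y$ with roof $\sigma\circ F^{-1}$, check that the hyperbolic product structure is reflected as you expect (in particular that $\sigma$ constant on stable leaves becomes $\sigma\circ F^{-1}$ constant on unstable leaves), and confirm the cohomology identity $\widehat\varphi\circ F=\varphi$ — all of which is plausible but is genuinely extra work, precisely the ``organizational obstacle'' you flag. The paper's symmetric, one-step application of Lemma~\ref{lem:t1var} to both $W^s$ and $W^u$ avoids all of this and is the argument you should use. Also note that both your argument and the paper's implicitly assume $\sigma(x)=\sigma(\overline x)$ when $\overline x\in W^u_x$; this is harmless because the condition ultimately fed into \cite{Balint-et-al-2019} is phrased via the separation time, which restricts to points in the same branch anyway (cf.\ the footnote after Lemma~\ref{lem:phi}).
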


\begin{proof}
This is essentially Lemma \ref{lem:t1var}.
Assume first that $\overline x\in W^u_x$. If $x,\overline x\in\Sigma_+$
then $\tau=\Upsilon$, and so by Lemma~\ref{lem:t1var} we have that 
$|\tau(x)-\tau(\overline x)|\ll d(fx,f\overline x)$.  
Since the same estimate is trivially true for $x,\overline x\in \Sigma_0\setminus\Sigma_+$,
it follows that $|\tau(x)-\tau(\overline x)|\ll d(fx,f\overline x)$ for 
all $x,\overline x\in \Sigma_0$ with $\overline x\in W^u_x$.
By the hyperbolicity of $f$, this estimate implies that $|\varphi(x)-\varphi(\overline x)|\ll d(F x,F \overline x)$
for all $x,\overline x\in Y$ with $\overline x\in W^u_x$.
The argument along stable manifolds is similar.
\end{proof}

In the terminology of~\cite[Section~6]{Balint-et-al-2019}, we have shown that $g_t$ is a ``Gibbs-Markov flow''. The condition (H) in~\cite[Section~6]{Balint-et-al-2019} follows from Lemma~\ref{lem:phi} by~\cite[Lemma~8.3]{Balint-et-al-2019}.\footnote{Condition (8.2) in~\cite[Lemma~8.3]{Balint-et-al-2019} is stated more generally in terms of a separation time $s$. It is standard in the Young tower set up that the estimate in terms of the metric $d$ in Lemma~\ref{lem:phi} is stronger, e.g.\ apply condition~(7.3) from~\cite{Balint-et-al-2019} with $n=1$.}
The main remaining hypothesis in~\cite{Balint-et-al-2019} is 
``absence of approximate eigenfunctions''.  This can be verified using ideas from~\cite[Section~8.4]{Balint-et-al-2019}.  The general setup there applies by Lemma~\ref{lem:phi}.
Since $g_t$ is a geodesic flow and hence has a contact structure, it follows 
from~\cite[Remark~8.11]{Balint-et-al-2019} that absence of approximate eigenfunctions is automatic.

By~\cite[Theorem~6.4]{Balint-et-al-2019}, we can now deduce decay of correlations for the geodesic flow $g_t$ with rate $t^{-(a-\epsilon)}$ as claimed in Theorem~\ref{thm:main}
 for a certain class of observables. However, these observables
belong to a regularity class defined in terms of the abstract suspension flow over $F$ with unbounded roof function $\varphi$.
In order to work with sufficiently smooth observables on the underlying phase space $M$, it is necessary to introduce a new Poincar\'e map $g$ with bounded roof function.

\begin{remark} In the corresponding decay questions for billiards, we would often take $g$ to be the billiard collision map. However, for the geodesic flow, there is no such natural candidate for $g$.
\end{remark}

To construct $g$,
we adjoin the cross-section $\Omega_0$ to $\Sigma_0$.
 Recall that $\Sigma_0$ is constructed
by small flow displacements of $\wh\Sigma$. Since $\wh\Sigma\cup\Omega_0$ is a global
Poincar\'e section for the geodesic flow $g_t$, the same holds for $\Sigma=\Sigma_0\cup\Omega_0$.
Let $g:\Sigma\to\Sigma$ be the corresponding Poincar\'e return map.
Also, let $h:\Sigma_0\to(0,\infty)$ be
the first return time to $\Sigma$ under $g_t$.
Then $g=g_h$ and $h$ is bounded above and below.

Let $\partial_t\phi=\frac{d}{dt}(\phi\circ g_t)|_{t=0}$ denote the derivative in the flow direction. An observable $\phi:M\to\R$ is ``sufficiently smooth'' in Theorem~\ref{thm:main} if $\partial_t^j\phi$ is H\"older for $j=0,\dots,k$, for some $k$ sufficiently large independent of $\epsilon$ and $\phi$.  (Actually, it suffices that $\phi$ and $\psi$ are both H\"older and that one of them is sufficiently smooth.)

In particular, when $r\ge 4$ is an even integer, an observable $\phi$ is sufficiently smooth if it is $C^k$ for $k$ sufficiently large. Otherwise, the flow is not smooth (nor is $M$) and we require in addition that the observable is sufficiently flat at the degenerate geodesic~$\gamma$.

\begin{proof}[Proof of Theorem~\ref{thm:main}]
We are now in the situation of~\cite[Sections 7.1 and~7.2]{Balint-et-al-2019} (where $g_t$ and $g$ are called $T_t$ and $f$, and there is no counterpart of our $f$).
Most of the assumptions therein follow from the existence of the Young tower, and the other assumptions~(7.2), (7.4), (7.5) are immediate (see~\cite[Remark~7.2]{Balint-et-al-2019} for extra information).
The remaining assumptions of~\cite[Corollary~8.1]{Balint-et-al-2019} 
(i.e.\ condition~(H) and absence of approximate eigenfunctions) have been dealt with above. Hence we conclude from~\cite[Corollary~8.1]{Balint-et-al-2019}
 the desired polynomial decay for H\"older observables that are sufficiently smooth. 
\end{proof}

Turning to decay of correlations for the global Poincar\'e map $g$, we can write
$F=g^{\varphi^*}$ where $\varphi^*:Y\to\N$ is the return time to $Y$ under $g$. Hence 
\[
g_\varphi=F=g^{\varphi^*}=(g_h)^{\varphi^*}
\]
so it follows that $\varphi=\sum_{\ell=0}^{\varphi^*-1}h\circ g^\ell$.
Since $h$ is bounded above and below, $\varphi^*$ has the same tails as $\varphi$, hence
% Let $\tau^*:\Sigma_0\to\N\cup\{\infty\}$ be the first return time to $\Sigma$ under $g$, $$ \tau^*(x)=\min\{n\in\N\mathbb\cup\{\infty\}:g^nx\in\Sigma\}.  $$ Then $f=g^{\tau^*}$.  Then $F=g^{\varphi^*}$, where $\varphi^*:Y\to\N$ is defined by
% $$
% \varphi^*=\sum_{\ell=0}^{\sigma-1}\tau^*\circ f^\ell.
% $$
% Recalling the definition of $\Sigma^+$ in Section \ref{ss:induced-def}, $\tau^*$ is bounded on $\Sigma_0\setminus\Sigma^+$
%and unbounded on $\Sigma^+$. More specifically, $\tau^*\approx \Upsilon$ on $\Sigma^+$: if $x\in \Sigma^+$ then the time at which its flow trajectory first hits $\Omega_0$ is bounded, and the time between successive hits to $\Omega_0$ is comparable to the length of the degenerate closed geodesic $\gamma$. By Lemma \ref{lem:tails}, we obtain that $\mu(\tau^*>n)\approx n^{-(a+1)}$ where $a=\tfrac{r+2}{r-2}$.
%Using that $\mu(\tau^*>n)\approx n^{-(a+1)}$, 
\cite[Proposition~5.1]{BMTapp} gives that
$$
\frac{1}{n^{a+1}\log n}\ll \mu_{\Sigma_0}(\varphi^*>n)\ll \frac{(\log n)^{a+1}}{n^{a+1}}\cdot
$$ 
This says that $g:\Sigma\to\Sigma$ is modelled by a Young tower with polynomial tails with tail rate essentially of order $n^{-(a+1)}$. 
The upper and lower bounds in Remark~\ref{rmk:main} now follow from~\cite{Young-polynomial} and~\cite[Theorem~7.4(a)]{BMTapp} respectively.

\subsection{Statistical limit laws}

Since $g:\Sigma\to\Sigma$ is modelled by a Young tower with polynomials tails,
it is possible to read off numerous statistical limit laws for H\"older observables on $\Sigma$. Many of these pass over to the flow.
The results in this subsection do not rely on Theorem~\ref{thm:main} and hence are not restricted to sufficiently smooth observables.

Since $a=\frac{r+2}{r-2}>1$, the return time function $\varphi^*:Y\to\N$ lies in $L^2$. Hence
it follows from~\cite[Corollary~2.1]{MV16} 
that the central limit theorem (CLT) holds for the map $g$. Namely,
let $\phi:\Sigma\to\R$ be H\"older with $\int_\Sigma \phi\,d\mu_\Sigma=0$.
Then $n^{-1/2}\sum_{j=0}^{n-1}\phi\circ g^j$ converges in distribution (with respect to $\mu_\Sigma$) to a (typically nondegenerate) normal distribution.
% with mean zero and variance $\sigma^2\ge0$.  Moreover, $\sigma^2=0$ if and only if $\phi$ is a measurable coboundary.
(At the level of the one-sided Young tower obtained by quotienting stable leaves, the CLT
was proved by~\cite[Theorem~4]{Young-polynomial}.)
By~\cite{Z07}, the convergence in distribution can equally be taken with respect to Lebesgue measure on $\Sigma$.

Statistical limit laws for the flow $g_t$ follow by inducing 
from those for maps, see e.g.\ \cite{MT04,MZ15,KellyM16}. 
Here it is convenient to apply~\cite[Theorem~5.5]{BM18}. (The roof function $\varphi$ and return times $\tau$ and $\sigma$ are denoted by $H$, $h$ and $\tau$ respectively in~\cite{BM18}.) Again $\varphi\in L^2$. The underlying assumptions on $Y$ and conditions~(5.3)--(5.4) at the beginning of~\cite[Section~5]{BM18} are automatic consequences of the fact that $Y$ is the base of a Young tower. The assumptions on $g_t$ in conditions~(5.1)--(5.2) of~\cite{BM18} follow from Proposition~\ref{prop:u-continuous} and~\eqref{eq:delta-Sasaki}.
Finally, the assumptions on $\varphi$ in conditions~(5.1)--(5.2) of~\cite{BM18} were verified in Lemma~\ref{lem:phi}.
By~\cite[Theorem~5.5]{BM18} the CLT holds for the flow. 
Namely, let $\phi:M\to\R$ be H\"older with $\int_M\phi\,d\mu=0$.
%Define $\bar h=\int_\Sigma h\,d\mu_\Sigma$ to be the mean of the return time function on $\Sigma$.
Then $t^{-1/2}\int_0^t \phi\circ g_s\,ds$ converges in distribution (with respect to $\mu$ or Lebesgue measure) to a (typically nondegenerate) normal distribution.
% with mean zero and variance $\sigma^2/\bar h$.

A refinement of the CLT is the functional CLT or weak invariance principle (WIP).
Given $\phi:M\to\R$ H\"older with $\int_M\phi\,d\mu=0$, we define 
$W_n(t)=n^{-1/2}\int_0^{nt} \phi\circ g_s\,ds$. Then $W_n$ converges weakly (with respect to $\mu$ or Lebesgue measure) in $C([0,1])$ to Brownian motion $W$
by~\cite[Theorem~5.5]{BM18}.
% with mean zero and variance $\sigma^2/\bar h$.  
%Again this result is proved first at the level of $g$ (see~\cite[Corollary~2.2]{MV16}) and then lifted to the flow~\cite{MZ15}.

Finally, we briefly mention applications to homogenization of deterministic fast-slow systems where the aim is to prove convergence,
as the time separation goes to infinity,
 to a stochastic differential equation driven by the Brownian motion $W$.
See~\cite{CFKMZ} for a recent survey.
Using rough path theory, it is sufficient~\cite{KellyM16,KellyM17} to check that the fast dynamics satisfies certain statistical limit laws. As we now explain, our geodesic flow examples $g_t$ satisfy all of these requirements for all $r\ge4$.

First, we require a multidimensional version of the WIP (for observables $v:M\to\R^d$). Again this holds for $g$ by~\cite{MV16} and for the flow 
by~\cite{MZ15}. However, the WIP does not suffice to specify stochastic integrals, and for this one requires the so-called iterated WIP. Once more, this holds for $g$ by~\cite[Corollary~2.3]{MV16} and for the flow by~\cite[Theorem~5.5]{BM18}. The remaining ingredients needed for homogenization are moment bounds and iterated moment bounds. 
Since $h$ is bounded above and below, such bounds for the flow follow by~\cite[Proposition~7.5]{KellyM16} from the corresponding bounds for the map $g$. For two-sided Young towers, such as we have here, optimal bounds for moments and iterated moments were very recently obtained by~\cite{FV}; in particular they hold in the full range 
$r\in [4,\infty)$.

\appendix

\section{On two theorems of Gerber \& Wilkinson}\label{sec:appendix}

In this appendix we show how to adapt the results of Gerber \& Wilkinson
\cite{Gerber-Wilkinson} to prove Theorem \ref{thm:Gerber-Wilkinson}. More specifically,
we show how to obtain \cite[Theorems I and II]{Gerber-Wilkinson} for surfaces with degenerate
closed geodesic,  see Section \ref{sec:our-surface} for the definition.

In \cite{Gerber-Wilkinson}, Theorems I and II are proved for $C^r$ metrics of nonpositive curvature,
where $r\geq 4$ is an integer, under two assumptions on the surface:
\begin{enumerate}[(1)]
\item If $\gamma$ is a geodesic that is not closed, then there is no infinite time interval
$I$ for which $K(\gamma(t))=0$, for all $t\in I$.
\item If $\gamma$ is a closed geodesic, then there is a $t$ such that $K$ vanishes
to order at most $r-3$.
\end{enumerate}
See the statements of the theorems and the remark in \cite[p.~43]{Gerber-Wilkinson}.
It is clear that surfaces with degenerate closed geodesic satisfy assumption (1), regardless of
the value of $r\in [4,\infty)$ (integer or not). But assumption (2), for non-integer $r$, makes no sense.
Our goal is to check that, even though surfaces with degenerate closed geodesic do not satisfy
(2), all estimates of Gerber \& Wilkinson remain true, and so does Theorem \ref{thm:Gerber-Wilkinson}.
The reason is that (2) is used to obtain a control on how the curvature approaches zero:
if $\gamma$ is a closed geodesic of zero curvature, then there are constants $C_1,C_2>0$ such that
\begin{equation}\label{eq:practical-GW}
-C_1 \textrm{dist}(p,\gamma)^{r-2}\leq K(p)\leq-C_2\textrm{dist}(p,\gamma)^{r-2}
\end{equation} 
in a neighborhood of $\gamma$. This estimate does hold for surfaces with degenerate closed geodesic,
as we now explain. Let $S$ be such a surface.
The region containing the closed geodesic $\gamma$ with zero curvature is the surface of revolution $\mathcal N$,
which we call the {\em neck}. The profile function is $\xi(s)=1+|s|^r$, hence by the curvature
formula given in Section \ref{sec:revolution} we have:
\begin{enumerate}[(1)']
\item[(2)'] There are constants $C_1, C_2>0$ such that
$$
-C_1 |s|^{r-2}\leq K(s,\theta)\leq -C_2 |s|^{r-2},
$$
where $(s,\theta)$ are the Clairaut coordinates on the neck $\mathcal N$.
\end{enumerate}

It is clear that (2)' is (\ref{eq:practical-GW}) in our context. In the sequel, we check that
\cite[Theorems I and II]{Gerber-Wilkinson} hold under assumptions (1) and (2)'.  
We warn the reader that, while \cite{Gerber-Wilkinson} uses Fermi coordinates
$(s,a)$, we will maintain our use of the Clairaut coordinates $(s,\theta,\psi)$. 
Let $\mathcal{H}^-$ and $\mathcal{H}^+$ be
the stable and unstable horocycle foliations of $S$. 

\begin{theorem}\label{t.II}
Let $S$ be a surface with degenerate closed geodesic. Then:
\begin{enumerate}[i.]
\item[\rm (i)] The leaves of $\mathcal{H}^-$ and $\mathcal{H}^+$ are uniformly $C^{1+{\rm Lip}}$.
\item[\rm (ii)] The tangent distributions $T\mathcal{H}^-$ and $T\mathcal{H}^+$ are H\"older continuous. 
\end{enumerate}
\end{theorem}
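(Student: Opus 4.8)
The plan is to reproduce the proof of \cite[Theorem~II]{Gerber-Wilkinson} essentially verbatim, checking at each step that the only property of the curvature used near a degenerate closed geodesic is the two-sided bound $(2)'$ (equivalently, \eqref{eq:practical-GW}), which we have already verified holds for surfaces with degenerate closed geodesic. First I would recall the framework: the regularity of the leaves of $\mathcal H^\pm$ and of the line field $T\mathcal H^\pm$ is governed by the geodesic-curvature function $u_\pm$, which solves the Riccati equation $u'+u^2+K=0$ along geodesics. Concretely, assertion (i) amounts to a bound, uniform over $v\in T^1S$, on the Lipschitz constant of the restriction of $u_\pm$ to the leaf through $v$, while assertion (ii) amounts to H\"older continuity of $v\mapsto u_\pm(v)$ on $T^1S$; Gerber \& Wilkinson establish both by a detailed analysis of how the stable/unstable solution of the Riccati equation varies when the base vector is perturbed, carried out through the linearized geodesic flow along orbit segments.

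Next I would follow their splitting of orbit segments into those that avoid a fixed neighborhood of $\gamma$ and those that make a long excursion close to $\gamma$. On the former the curvature is bounded away from zero, the geodesic flow is uniformly hyperbolic, and every estimate uses only the $C^r$ bound on the metric; since $r\ge4$, and for non-integer $r$ the metric of a surface with degenerate closed geodesic is of class $C^{\lfloor r\rfloor,\,r-\lfloor r\rfloor}\supset C^4$, this matches the regularity demands of \cite{Gerber-Wilkinson}. On the latter the analysis takes place inside the neck $\mathcal N$, where I would work in the Clairaut coordinates $(s,\theta,\psi)$ and use that for $|s|\le\ve_0$ the quantities $K(s,\theta)$, $\partial_s K(s,\theta)$ and $\partial_s^2 K(s,\theta)$ are of size $|s|^{r-2}$, $|s|^{r-3}$ and $|s|^{r-4}$ respectively. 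The bound $K(s,\theta)\approx -|s|^{r-2}$ is precisely $(2)'$, and all three follow by a direct computation from the curvature formula $K(s,\theta)=-\xi''(s)\,\xi(s)^{-1}[1+\xi'(s)^2]^{-2}$ with $\xi(s)=1+|s|^r$. Since hypothesis (1) is immediate for our surfaces, and since these power-law bounds are the only curvature input to \cite{Gerber-Wilkinson} near a degenerate closed geodesic, each of their lemmas --- in particular the lower bound on the Jacobi-field expansion accumulated during a long excursion, which forces the stable and unstable directions to separate at a definite rate once the geodesic leaves a fixed neighborhood of $\gamma$, and the estimates for the transversal variation of $u_\pm$ --- goes through unchanged, yielding (i) and (ii).

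I expect the main work, and the only real obstacle, to be the bookkeeping in the previous paragraph: confirming that \cite{Gerber-Wilkinson} nowhere uses the integer $r$ in its role as the \emph{order of vanishing} of $K$ along $\gamma$, so that the failure of hypothesis (2) for our surfaces is harmless, and that they never differentiate $K$ transversally more than twice within the neck, so that the above bounds suffice. Both are checked by passing through their estimates one at a time, which I would carry out in the remainder of the appendix after first recording the elementary curvature computation that yields $(2)'$ and the associated derivative bounds.
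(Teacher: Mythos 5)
Your plan is correct and matches the paper's strategy in the appendix essentially point for point: identify that assumption (2) of Gerber--Wilkinson enters their proof only through the two-sided power-law bound on $K$ near $\gamma$ (the condition $(2)'$, i.e.\ \eqref{eq:practical-GW}), verify it directly from $\xi(s)=1+|s|^r$ in Clairaut coordinates, note that $r\ge 4$ supplies the $C^4$ regularity their Riccati/Jacobi-field estimates require, and then rerun their Lemmas 3.3--3.7 and the two-part argument (Lipschitz variation of $k_\pm$ along horocycles for (i), H\"older variation across horocycles for (ii)) without change. The only work remaining is the lemma-by-lemma verification you defer, which is exactly what the paper carries out.
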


%\begin{theorem}\label{t.I}
%Let $S$ be a surface with degenerate closed geodesic.
%Then the tangent distributions $T\mathcal{H}^-$
%and $T\mathcal{H}^+$ are H\"older continuous. 
%\end{theorem}

The proof of Theorem \ref{t.II} requires two general lemmas \cite[Lemmas 3.1 and 3.2]{Gerber-Wilkinson},
which we reproduce below (only the items that we explicitly refer to are listed).

\begin{lemma}[Lemma 3.1 of \cite{Gerber-Wilkinson}]\label{l.Lemma31}
Let $K, K_0, K_1:[A,B]\to\mathbb{R}$ be continuous functions and $u, u_0, u_1:[A,B]\to\mathbb{R}$
be solutions of the Riccati equations $u'+u^2+K=0$, $u_i'+u_i^2+K_i=0$, $i=0,1$. Let $y=u_1-u_0$
and $\widehat{j}_i(t)=\exp\left[-\int_t^B u_i(\tau)\,d\tau\right]$, $i=0,1$, and let $j_0,j_1$
be solutions of the Jacobi equations $j_i''+K_i j_i=0$, $i=0,1$. Then the following hold:
\begin{enumerate}[{\rm (i)}]
%\item $y' = -(u_0+u_1)y+K_0-K_1$.
\item[{\rm (ii)}] $y(B) = y(A) \widehat{j}_0(A) \widehat{j}_1(A) + \int_A^B[K_0(t)-K_1(t)]\widehat{j}_0(t) \widehat{j}_1(t) dt$.
\item[{\rm (iii)}] $\widehat{j}_i(B)=1$ and $\widehat{j}_i$ satisfies the Jacobi equation $\widehat{j}_i''+K_i\widehat{j}_i=0$ for $i=0,1$; moreover, if $u_1\geq 0$ on $[A,B]$, then $0\leq \widehat{j}_1\leq 1$ on $[A,B]$ 
and $\widehat{j}_1'(A)\leq 1/(B-A)$.
%\item If $u_0(A)\leq u_1(A)$ and $K_1(t)\leq K_0(t)$ for all $t\in[A,B]$, then $u_0(B)\leq u_1(B)$. 
\item[{\rm (v)}] If $K$ is nonpositive and $u(A)\geq 0$, then 
$$
u(B)\geq \frac{u(A)}{(B-A)u(A)+1}
$$
and this estimate is an equality whenever $K$ is identically zero.
\item[{\rm (vi)}] If $0\leq j_0(A)\leq j_1(A)$, $0\leq j_0'(A)\leq j_1'(A)$ and $K_1(t)\leq K_0(t)\leq 0$ for all $t\in[A,B]$, then $j_0(B)\leq j_1(B)$. 
\end{enumerate}
\end{lemma}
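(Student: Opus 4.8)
The plan is to deduce (ii), (iii), (v) and (vi) directly from the Riccati and Jacobi equations; no property of the surface $S$ enters, so these items coincide verbatim with \cite[Lemma~3.1]{Gerber-Wilkinson} and one only has to organize a few elementary computations. Throughout we use, as in \cite{Gerber-Wilkinson}, that the curvatures in question are nonpositive, $K,K_0,K_1\le0$.

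For (iii), I would differentiate the definition $\widehat j_i(t)=\exp\bigl[-\int_t^B u_i\bigr]$ to get $\widehat j_i'=u_i\widehat j_i$, and then, using $u_i'+u_i^2+K_i=0$, $\widehat j_i''=(u_i'+u_i^2)\widehat j_i=-K_i\widehat j_i$; together with $\widehat j_i(B)=1$ this is the Jacobi equation and the normalization. If $u_1\ge0$ then $\widehat j_1'=u_1\widehat j_1\ge0$, so $\widehat j_1$ is nondecreasing and $0<\widehat j_1\le\widehat j_1(B)=1$ on $[A,B]$; moreover $\widehat j_1''=-K_1\widehat j_1\ge0$, so $\widehat j_1'$ is nondecreasing and $1-\widehat j_1(A)=\int_A^B\widehat j_1'\ge(B-A)\widehat j_1'(A)$, which gives $\widehat j_1'(A)\le1/(B-A)$. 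For (ii), the Riccati equations give $y'=-(u_1^2-u_0^2)+(K_0-K_1)=-(u_0+u_1)y+(K_0-K_1)$, and by (iii) one has $(\widehat j_0\widehat j_1)'=(u_0+u_1)\widehat j_0\widehat j_1$, so that $\widehat j_0\widehat j_1$ is an integrating factor: $(\widehat j_0\widehat j_1\,y)'=\widehat j_0\widehat j_1(K_0-K_1)$. Integrating from $A$ to $B$ and using $\widehat j_i(B)=1$ yields the stated identity.

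For (v), when $K\equiv0$ the equation $u'=-u^2$ integrates, on separating variables (with the trivial case $u(A)=0$), to $u(B)=u(A)/\bigl(1+(B-A)u(A)\bigr)$. For general $K\le0$, I would compare $u$ with $v(t)=u(A)/\bigl(1+(t-A)u(A)\bigr)$, which solves $v'=-v^2$, is $\ge0$ on $[A,B]$, and has $v(A)=u(A)$; then $w=u-v$ satisfies $w(A)=0$ and $w'+(u+v)w=-K\ge0$, so $\bigl(e^{\int_A^t(u+v)}w\bigr)'\ge0$ and hence $w\ge0$ on $[A,B]$, i.e.\ $u(B)\ge v(B)$, with equality when $K\equiv0$. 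For (vi), I would first note that $K_i\le0$ together with $j_i(A),j_i'(A)\ge0$ forces each $j_i$ to be nonnegative and nondecreasing on $[A,B]$: as long as $j_i\ge0$ one has $j_i''=-K_ij_i\ge0$, hence $j_i'\ge j_i'(A)\ge0$ and $j_i$ cannot fall below $j_i(A)$. Then $\phi:=j_1-j_0$ satisfies $\phi''+K_1\phi=(K_0-K_1)j_0\ge0$ with $\phi(A),\phi'(A)\ge0$, and a standard comparison argument gives $\phi\ge0$ on $[A,B]$, i.e.\ $j_0(B)\le j_1(B)$; one clean way is to perturb the initial data of $j_1$ upward so that $\phi(A),\phi'(A)>0$, observe that on the initial (hence, by continuity, the full) interval where $\phi\ge0$ one has $\phi''\ge-K_1\phi\ge0$, so $\phi'$ and $\phi$ remain positive, and then let the perturbation tend to zero.

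There is no serious obstacle here: the content is classical Sturm--Riccati--Jacobi comparison, and the only point that calls for attention is the last comparison step in (vi) — excluding that $\phi$ first reaches a negative value at a point where it vanishes with negative derivative — which the perturbation trick above handles, the same remark applying to the auxiliary assertion that each $j_i$ stays nonnegative. Since none of the four items uses any hypothesis on $S$, they hold as stated for surfaces with degenerate closed geodesic, and we will invoke them in this form in the proof of Theorem~\ref{t.II}.
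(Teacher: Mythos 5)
The paper does not prove this lemma; it is reproduced verbatim from \cite{Gerber-Wilkinson} and invoked as a black box, so there is no in-paper argument to compare against. Your self-contained proof is correct: the identification $\widehat j_i'=u_i\widehat j_i$ and hence $\widehat j_i''=(u_i'+u_i^2)\widehat j_i=-K_i\widehat j_i$ gives (iii); the observation that $\widehat j_0\widehat j_1$ is an integrating factor for $y'+(u_0+u_1)y=K_0-K_1$ gives (ii); comparison with the explicit zero-curvature Riccati solution $v(t)=u(A)/\bigl(1+(t-A)u(A)\bigr)$ via the nonnegative quantity $w=u-v$ gives (v); and the perturbed Sturm comparison for $\phi=j_1-j_0$ gives (vi), with the $\varepsilon$-perturbation correctly handling the degenerate case $\phi(A)=\phi'(A)=0$.

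One observation worth recording. The final inequality $\widehat j_1'(A)\le1/(B-A)$ in (iii) genuinely requires $K_1\le0$: your convexity step uses $\widehat j_1''=-K_1\widehat j_1\ge0$, and one can construct Riccati solutions with $u_1\ge0$ but $K_1>0$ for which $u_1(A)\exp\bigl[-\int_A^B u_1\bigr]$ exceeds $1/(B-A)$. The lemma as reproduced in the paper lists $K_1\le0$ only in parts (v)--(vi), but you correctly supply it for (iii) from the ambient nonpositive-curvature context; the identity in (ii) and the first two assertions of (iii) need no sign condition.
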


\begin{lemma}[Lemma 3.2 of \cite{Gerber-Wilkinson}]\label{l.Lemma32} 
Let $f:S\to\mathbb{R}$ be a nonpositive $C^2$ function on a $C^2$ compact surface $S$. 
Define $L:=\sup\left\{\left|\frac{d^2}{dt^2}f(\sigma(t))\right|:\sigma\text{ geodesic and }t\in\mathbb R\right\}$.
Then 
$$|f(p)-f(q)|\leq \sqrt{2L}\sqrt{-f(p)}d(p,q) + \frac{L}{2}d(p,q)^2$$ 
for all $p, q\in S$. 
\end{lemma}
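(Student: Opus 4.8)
The plan is to reduce to a one-dimensional statement along geodesics and apply Taylor's theorem. First I would fix $p,q\in S$ and choose a unit-speed minimizing geodesic $\sigma:\mathbb R\to S$ with $\sigma(0)=p$ and $\sigma(\ell)=q$, where $\ell=d(p,q)$; since $S$ is compact it is geodesically complete, so $\sigma$ is defined on all of $\mathbb R$. Set $\phi(t)=f(\sigma(t))$. Then $\phi$ is $C^2$ on $\mathbb R$, satisfies $\phi\le 0$ everywhere because $f\le 0$, and $|\phi''(t)|\le L$ for all $t$ by the very definition of $L$ (and $L<\infty$, since $\phi''(t)$ equals the second covariant derivative of $f$ in the direction $\sigma'(t)$, a continuous function on the compact unit tangent bundle).

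The crux is the pointwise estimate $|\phi'(0)|\le\sqrt{2L}\,\sqrt{-\phi(0)}$. I would prove it as follows. Write $c=\phi'(0)$; by replacing $t$ with $-t$ if necessary we may assume $c\ge 0$, and if $c=0$ there is nothing to prove, so suppose $c>0$. Taylor's theorem with Lagrange remainder gives, for every $t>0$, $\phi(t)\ge\phi(0)+ct-\tfrac{L}{2}t^2$, the remainder being bounded below using $\phi''\ge -L$. The quadratic on the right attains its maximum at $t^\ast=c/L$, with value $\phi(0)+\tfrac{c^2}{2L}$. Evaluating the inequality $\phi\le 0$ at $t=t^\ast$ yields $\phi(0)+\tfrac{c^2}{2L}\le 0$, that is $c^2\le 2L(-\phi(0))$, as claimed. (If $L=0$ the argument degenerates, but then $\phi$ is affine and bounded, hence constant, so $\phi'(0)=0$ and the estimate is trivial.)

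To finish, apply Taylor's theorem once more, this time at $t=\ell$: $f(q)-f(p)=\phi(\ell)-\phi(0)=\phi'(0)\ell+\tfrac12\phi''(\xi)\ell^2$ for some $\xi\in(0,\ell)$. Taking absolute values and using $|\phi''(\xi)|\le L$ together with the crux estimate $|\phi'(0)|\le\sqrt{2L}\sqrt{-\phi(0)}=\sqrt{2L}\sqrt{-f(p)}$ gives
\[
|f(p)-f(q)|\le|\phi'(0)|\,\ell+\tfrac{L}{2}\ell^2\le\sqrt{2L}\,\sqrt{-f(p)}\;d(p,q)+\tfrac{L}{2}d(p,q)^2,
\]
which is exactly the asserted inequality. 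I do not anticipate a genuine obstacle here: the lemma is elementary calculus, and the only points meriting care are that the minimizing geodesic extends to a complete geodesic (so that $\phi$ is globally defined and $\phi\le 0$ on all of $\mathbb R$, legitimizing the evaluation at $t^\ast=c/L$ even when this is large) and that $L$ is finite — both immediate from compactness of $S$.
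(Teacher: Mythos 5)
Your proof is correct: the reduction to $\phi(t)=f(\sigma(t))$ along a complete unit-speed minimizing geodesic, the pointwise bound $|\phi'(0)|\le\sqrt{2L}\sqrt{-\phi(0)}$ obtained by evaluating the Taylor lower bound at $t^\ast=c/L$ and using $\phi\le 0$, and the final Taylor expansion at $t=\ell$ together give exactly the stated inequality, and you correctly dispose of the side issues (finiteness of $L$ via compactness of $T^1S$, completeness so that $t^\ast$ is admissible, the degenerate case $L=0$). Note that the paper itself gives no proof of this statement — it is quoted verbatim as Lemma 3.2 of Gerber--Wilkinson — so there is nothing internal to compare against; your argument is the standard elementary one behind that cited lemma and would serve as a complete self-contained proof.
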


Given $v\in T^1S$, let $k_-(v)$ and $k_+(v)$ be the curvature at $v$ of the stable and unstable horocycles.
Recall from Section \ref{sec:geodesic-flows} that $k_\pm=u_\pm$, and that $k_\pm=0$ only at
$T^1\gamma$. The next result is a lower bound on the 
curvatures of horocycles, which is \cite[Lemma 3.3]{Gerber-Wilkinson} in our context.

\begin{lemma}\label{l.Lemma33lower}
Let $S$ be a surface with degenerate closed geodesic. There is a constant $C_3>0$
with the following property: if $v=(s,\theta,\psi)\in T^1\mathcal N$ in Clairaut coordinates,
then
$$
k_{\pm}(v)\geq C_3\max\{|s|^{(r-2)/2}, |\psi|^{(r-2)/r}\}.
$$
\end{lemma}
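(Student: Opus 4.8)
The plan is to re-derive \cite[Lemma~3.3]{Gerber-Wilkinson} directly in Clairaut coordinates, feeding in the curvature estimate (2)' in place of the curvature hypothesis of \cite{Gerber-Wilkinson} and using the Riccati comparison estimates of Lemma~\ref{l.Lemma31}. Write $v=(s_0,\theta_0,\psi_0)$, let $\gamma=\gamma_v$ be the geodesic it generates, with Clairaut constant $c=(1+|s_0|^r)\cos\psi_0$, and set $\rho:=\max\{|s_0|,\ \delta^{2/r}\}$ where $\delta=\mathrm{dist}(\psi_0,\{0,\pi\})$; then $\rho^{(r-2)/2}=\max\{|s_0|^{(r-2)/2},\delta^{(r-2)/r}\}$, so it suffices to prove $u_+(v)\gg\rho^{(r-2)/2}$. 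The corresponding bound for $k_-$ then follows by applying this to $-v$, using that time reversal of the geodesic flow exchanges $\mathcal H^+$ and $\mathcal H^-$ and preserves both $|s_0|$ and $\delta$. If $\rho$ exceeds a fixed small constant, then $v$ lies at definite distance from ${\rm Deg}$ and $u_+(v)$ is bounded below by a positive constant by Proposition~\ref{prop:u-continuous} and compactness; so I may assume $\rho$, hence both $|s_0|$ and $\delta$, is small, in which case $|c|$ is close to $1$ and in particular bounded away from $\xi(\pm\ve_0)=1+\ve_0^r$.

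Run $\gamma$ backward from $v$. Since $|c|\ne 1+\ve_0^r$, either the backward orbit reaches the level $\{|s|=\ve_0\}$ at some finite time $-T_0$, or it is backward-asymptotic to $\gamma_0$ or $\gamma_\pi$ — in the latter case one runs $\gamma$ forward instead and bounds $k_+(v)$ via $k_-(-v)$, so assume the former. The key quantitative step is the bound $T_0\ll\rho^{-(r-2)/2}$. From the Clairaut relations \eqref{eq:Clairaut}--\eqref{eq:geodesics}, along the excursion one has $|s'|=\xi(s)^{-1}\left((\xi(s)^2-c^2)/(1+\xi'(s)^2)\right)^{1/2}\approx(|s|^r-(|c|-1))^{1/2}$, with $|c|-1\approx\delta^2$ for crossing or asymptotic orbits and $|c|-1=|s_*|^r\approx|s_0|^r$ for bouncing orbits ($s_*$ the turning radius, for which $|s_0|\ge|s_*|$ and $\delta\lesssim|s_0|^{r/2}$). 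Hence
\[
T_0\approx\int_{|s_0|}^{\ve_0}\left(|s|^r-(|c|-1)\right)^{-1/2}ds\ll\rho^{-(r-2)/2}
\]
by Proposition~\ref{prop:int}; this is the computation behind Lemma~\ref{lem:t1-new}, rephrased in terms of $\rho$ rather than the homogeneity-band index.

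With $T_0$ controlled, I finish with two applications of Lemma~\ref{l.Lemma31}. We may assume $T_0\ge\tfrac12\ve_0$, since otherwise $|s_0|>\tfrac12\ve_0$ (as $|s'|\le1$) and $v$ is in the easy regime above. On the interval $[-T_0,-T_0+\tfrac12\ve_0]$, which lies inside the neck, one has $|s|\ge\tfrac12\ve_0$, so by (2)' the curvature is $\le -C_2(\tfrac12\ve_0)^{r-2}=:-\kappa_1^2$; since $u_+\ge0$ and $u_+'=|K|-u_+^2\ge\kappa_1^2-u_+^2$ there, comparison with the constant-curvature Riccati solution gives $u_+(\gamma(-T_0+\tfrac12\ve_0))\ge\kappa_1\tanh(\tfrac12\kappa_1\ve_0)=:c_1>0$. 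On the remaining interval $[-T_0+\tfrac12\ve_0,0]$ the curvature is nonpositive, so Lemma~\ref{l.Lemma31}(v) propagates the bound to $v$: $u_+(v)\ge c_1/(T_0c_1+1)\gg T_0^{-1}\gg\rho^{(r-2)/2}$. Combining the cases yields the lemma with a uniform $C_3$.

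The step I expect to be the main obstacle is the bound $T_0\ll\rho^{-(r-2)/2}$: one must verify that the single quantity $\rho$ controls the excursion time uniformly over all interior positions $s_0$ in the neck and over the asymptotic, bouncing, and crossing cases simultaneously — in particular that the relations $|c|-1\approx\delta^2$ (crossing/asymptotic) and $|s_*|\approx|s_0|$ (bouncing, valid precisely because $\delta\lesssim|s_0|^{r/2}$ there) hold with the exponents needed for Proposition~\ref{prop:int} to produce $\rho^{-(r-2)/2}$. The remaining points — disposing of the regime where $\rho$ is not small, the regime $T_0<\tfrac12\ve_0$, and the choice of the finite time direction — are routine.
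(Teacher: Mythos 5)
Your reduction to the neck, the easy regime via compactness, the time-reversal for $k_-$, and the initial build-up of $u_+$ to a constant $c_1$ near the boundary $\{|s|=\tfrac12\ve_0\}$ are all sound, and the overall strategy is the natural one. The gap is the claimed bound $T_0\ll\rho^{-(r-2)/2}$, which you flag yourself and which does fail on a thick set of \emph{grazing} bouncing and crossing vectors. For a bouncing vector, the turning radius satisfies $|s_*|^r=|c|-1=(1+|s_0|^r)\cos\psi_0-1\approx|s_0|^r-\tfrac12\psi_0^2$; as $\psi_0$ approaches $\sqrt2\,|s_0|^{r/2}$ this becomes arbitrarily small relative to $|s_0|^r$. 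The relation $\delta\lesssim|s_0|^{r/2}$ only gives $|s_*|\le|s_0|$, not $|s_*|\approx|s_0|$ as you assert. If $v$ lies on the outgoing branch of the excursion, the backward orbit passes through the turning point before exiting, so $T_0\approx|s_*|^{-(r-2)/2}\gg|s_0|^{-(r-2)/2}=\rho^{-(r-2)/2}$. (The analogous failure occurs for grazing crossing vectors, where $1-|c|\ll\delta^2$.) In this regime your final propagation $u_+(v)\ge c_1/(T_0c_1+1)\gg T_0^{-1}$ yields only $u_+(v)\gg|s_*|^{(r-2)/2}$, strictly weaker than the claim.

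The underlying issue is that Lemma~\ref{l.Lemma31}(v) compares against $K\equiv 0$ on the whole interval $[-T_0+\tfrac12\ve_0,0]$ and hence cannot see the re-building of $u_+$ on the outgoing leg, where $u_+'=|K|-u_+^2$ drives $u_+$ back toward $\sqrt{|K|}$. The sharp bound is recovered by applying the constant-curvature comparison (your $\tanh$ step) a second time, now on the final stretch where $|s|\in[\tfrac12|s_0|,|s_0|]$: there (2)' gives $-K\gtrsim|s_0|^{r-2}$, the stretch has duration $\gtrsim|s_0|^{1-r/2}$ since $|s'|\lesssim|s_0|^{r/2}$, so $\kappa t\gtrsim 1$ and $u_+(v)\gg|s_0|^{(r-2)/2}$ independently of $T_0$. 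This is exactly what the Riccati comparison due to Burns (Lemma~4.2 of Gerber--Wilkinson), which the appendix cites as the second ingredient, is packaging: the curvature lower bound (2)' must be exploited along the final portion of the excursion, not only at its start. Your proof as written needs to be strengthened at the propagation step; the rest can stand.
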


\begin{proof}
In \cite{Gerber-Wilkinson}, this lemma is proved in Section 4. It considers a geodesic
that visits the flat region of $S$ in the time interval $[-T,0]$, and decomposes $[-T,0]$ 
according to whether an estimate as in assumption (2)' holds or not.
In our case, the estimate always holds, hence we do not decompose $[-T,0]$.
The other ingredient is \cite[Lemma 4.2]{Gerber-Wilkinson},
a lemma due to K. Burns, which holds in our case in Clairaut coordinates, also due to
assumption (2)'. 
\end{proof}

Using the above lemma, we can control the curvature of the horocycles in terms of the Gaussian curvature
at the basepoint. This estimate is \cite[Lemma 3.4]{Gerber-Wilkinson} in our context.
 
\begin{lemma}\label{l.Lemma34} Let $S$ be a surface with degenerate closed geodesic.
There is a constant $C_4>0$ such that for any $v\in T^1 S$ with basepoint $p\in S$ it holds
$$
k_{\pm}(v)\geq C_4\sqrt{-K(p)}.
$$ 
\end{lemma}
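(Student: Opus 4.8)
The plan is to exhaust $S$ by the closed neck $\mathcal N=\{|s|\le\ve_0\}$ together with the complementary compact region, and to treat each part using exactly one of the two inputs already available: the pointwise lower bound for the horocycle curvatures in Clairaut coordinates (Lemma~\ref{l.Lemma33lower}) and the two-sided curvature estimate (2)$'$ (equivalently \eqref{eq:practical-GW}).

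First I would handle a vector $v$ with basepoint $p=\Xi(s,\theta)\in\mathcal N$. The upper half of (2)$'$ gives $-K(p)\le C_1|s|^{r-2}$, hence $|s|^{(r-2)/2}\ge C_1^{-1/2}\sqrt{-K(p)}$. Discarding the $|\psi|$-term in Lemma~\ref{l.Lemma33lower} leaves $k_\pm(v)\ge C_3|s|^{(r-2)/2}$, and multiplying the two inequalities yields $k_\pm(v)\ge C_3 C_1^{-1/2}\sqrt{-K(p)}$ for every $v\in T^1\mathcal N$. This is the asserted bound, with the explicit constant $C_3 C_1^{-1/2}$, on the neck; note the exponents match precisely because $\sqrt{-K}\asymp|s|^{(r-2)/2}$ near $\gamma$.

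Next I would handle a vector $v$ whose basepoint lies in $S_0:=S\setminus\{(s,\theta)\in\mathcal N:|s|<\ve_0/2\}$, by a compactness argument. The set $S_0$ is closed, hence compact, and it is disjoint from the closed geodesic $\gamma$ since it omits the neighborhood $\{|s|<\ve_0/2\}$ of $\gamma$; because ${\rm Deg}=\gamma_0\cup\gamma_\pi$ has all of its basepoints on $\gamma=\{s=0\}$, the compact set $T^1S_0$ does not meet ${\rm Deg}$. By Proposition~\ref{prop:u-continuous}, $k_\pm$ is therefore continuous and nonvanishing on $T^1S_0$, so $k_\pm\ge c_1$ there for some $c_1>0$. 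Since $-K(p)\le K_{\max}:=\max_S|K|$ for all $p\in S$, this gives $k_\pm(v)\ge c_1\ge c_1 K_{\max}^{-1/2}\sqrt{-K(p)}$ for every such $v$. As $\mathcal N\cup S_0=S$, the lemma follows with $C_4=\min\{C_3 C_1^{-1/2},\,c_1 K_{\max}^{-1/2}\}$.

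As for the main obstacle: at this level there is essentially none — all of the genuine work has been pushed upstream into Lemma~\ref{l.Lemma33lower} (and, behind it, the Burns-type estimate \cite[Lemma~4.2]{Gerber-Wilkinson}, which in our setting holds thanks to (2)$'$, together with the analysis of geodesics traversing the flat region). The only points that require a little care are the bookkeeping that the two cases cover all of $T^1S$ and the observation that $T^1S_0$ avoids ${\rm Deg}$, which is precisely where the hypothesis that the curvature is negative away from $\gamma$ (so that ${\rm Deg}$ lies over $\gamma$) is used.
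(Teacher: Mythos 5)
Your proof is correct and follows essentially the same approach as the paper: use (2)$'$ and Lemma~\ref{l.Lemma33lower} on the neck, then compactness and positivity of $k_\pm$ away from ${\rm Deg}$ elsewhere. Your use of the slightly enlarged compact set $S_0$ (overlapping the neck) is a minor cosmetic variant that just makes the covering-by-compacts step a touch more explicit than the paper's ``outside $T^1\mathcal N$'' phrasing.
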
 

\begin{proof}
We first assume that $v=(p,\psi)=(s,\theta,\psi)\in T^1\mathcal N$. By assumption (2)',
$-K(p)\leq C_1|s|^{r-2}$. By Lemma \ref{l.Lemma33lower}, we have
$k_{\pm}(v)\geq C_3C_1^{-1/2}\sqrt{-K(p)}$ for $v\in T^1\mathcal N$.
Now, since $k_{\pm}(v)$ is continuous and positive outside $T^1\mathcal N$,
there is $C>0$ such that 
$$
k_{\pm}(v)\geq C\sqrt{-K(p)}
$$
for $v\not\in T^1\mathcal N$. Taking $C_4=\min\{C_3C_1^{-1/2},C\}$, the proof is complete. 
\end{proof}  

As done in \cite[pp. 51 and 52]{Gerber-Wilkinson}, Lemmas \ref{l.Lemma31}, \ref{l.Lemma32}
and \ref{l.Lemma34} imply the next result.

\begin{lemma}\label{l.Lemma35}
Let $S$ be a surface with degenerate closed geodesic.
There are constants $C_5,C_6>0$ with the following property.
Let $\gamma_0,\gamma_1$ be geodesics, let $K_i(t)=K(\gamma_i(t))$ and $u_i:[A,B]\to\mathbb{R}$
be a solution of the Riccati equation $u_i'+u_i^2+K_i=0$, $i=0,1$. 
If $u_0(t)\geq k_+(\gamma_0(t))$ for all $A\leq t\leq B$ and $u_1(A)\geq 0$, then 
$$|u_1(B)-u_0(B)|\leq C_5\varepsilon + C_6(B-A)\varepsilon^2+|u_1(A)-u_0(A)|\widehat{j}_0(A)\widehat{j}_1(A),$$ 
where $\varepsilon:=\max\{d(\gamma_0(t),\gamma_1(t)): t\in[A,B]\}$ and 
$\widehat{j}_i(t):=\exp\left(-\int_t^B u_i(\tau)\,d\tau\right)$ as defined in Lemma~\ref{l.Lemma31}, $i=0,1$. 
\end{lemma}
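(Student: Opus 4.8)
The plan is to follow the scheme of \cite[pp.~51--52]{Gerber-Wilkinson} essentially verbatim, since the only place their argument used assumption~(2) was to derive the curvature-decay estimate~\eqref{eq:practical-GW}, and we have replaced this by~(2)' together with Lemmas~\ref{l.Lemma33lower} and~\ref{l.Lemma34}. Concretely, I would start from the Riccati difference formula, Lemma~\ref{l.Lemma31}(ii), applied with $K_i(t)=K(\gamma_i(t))$ and $u_i$ as in the statement, which gives
\[
u_1(B)-u_0(B)=\big(u_1(A)-u_0(A)\big)\widehat j_0(A)\widehat j_1(A)+\int_A^B\big(K_0(t)-K_1(t)\big)\widehat j_0(t)\widehat j_1(t)\,dt.
\]
The boundary term is already in the desired shape, so everything reduces to bounding the integral by $C_5\varepsilon+C_6(B-A)\varepsilon^2$.

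For the integral, the first step is to control $|K_0(t)-K_1(t)|$. Since $K\le 0$ is $C^2$ on the compact surface $S$ (here using $r\ge4$), Lemma~\ref{l.Lemma32} applies with $f=K$ and $p=\gamma_0(t)$, $q=\gamma_1(t)$, yielding
\[
|K_0(t)-K_1(t)|\le \sqrt{2L}\,\sqrt{-K_0(t)}\;d(\gamma_0(t),\gamma_1(t))+\tfrac{L}{2}\,d(\gamma_0(t),\gamma_1(t))^2\le \sqrt{2L}\,\sqrt{-K_0(t)}\,\varepsilon+\tfrac{L}{2}\varepsilon^2,
\]
where $L$ is the constant from Lemma~\ref{l.Lemma32}. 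The second step is to absorb the factor $\sqrt{-K_0(t)}$: by Lemma~\ref{l.Lemma34}, $\sqrt{-K_0(t)}\le C_4^{-1}k_+(\gamma_0(t))\le C_4^{-1}u_0(t)$, using the hypothesis $u_0(t)\ge k_+(\gamma_0(t))$ for all $t\in[A,B]$. The third step is the crucial integrability input: since $\widehat j_0(t)=\exp(-\int_t^B u_0(\tau)\,d\tau)$, we have $\widehat j_0'(t)=u_0(t)\widehat j_0(t)$, and since $u_1(A)\ge0$ and $K\le0$ forces (via Lemma~\ref{l.Lemma31}(iii)--(v), or directly) $u_1\ge0$ on $[A,B]$, we get $0\le\widehat j_1\le1$. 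Hence
\[
\int_A^B \sqrt{-K_0(t)}\,\widehat j_0(t)\widehat j_1(t)\,dt\le C_4^{-1}\int_A^B u_0(t)\widehat j_0(t)\,dt=C_4^{-1}\int_A^B \widehat j_0'(t)\,dt=C_4^{-1}\big(\widehat j_0(B)-\widehat j_0(A)\big)\le C_4^{-1},
\]
using $\widehat j_0(B)=1$ and $\widehat j_0(A)\ge0$. Likewise $\int_A^B\widehat j_0\widehat j_1\,dt\le (B-A)$ trivially. Combining, the integral is bounded by $\sqrt{2L}\,C_4^{-1}\varepsilon+\tfrac{L}{2}(B-A)\varepsilon^2$, and setting $C_5=\sqrt{2L}\,C_4^{-1}$ and $C_6=L/2$ completes the argument.

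The only subtlety — and the step I would be most careful about — is justifying $u_1\ge0$ on all of $[A,B]$ from $u_1(A)\ge0$ and $K\le0$, since this is what makes $\widehat j_1\in[0,1]$ and hence makes the $u_0\widehat j_0$ telescoping trick legitimate after inserting the $\widehat j_1$ factor. This follows from the comparison in Lemma~\ref{l.Lemma31}(v): with the zero curvature function one has the explicit solution $u(t)=u(A)/((t-A)u(A)+1)\ge0$, and $K\le0$ only increases $u$, so $u_1(t)\ge u_1(A)/((t-A)u_1(A)+1)\ge0$; alternatively one invokes that $u_1$ being a Riccati solution along a geodesic with $u_1(A)\ge0$ stays nonnegative in forward time by nonpositivity of the curvature. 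Everything else is bookkeeping identical to \cite{Gerber-Wilkinson}; no new geometric input beyond Lemmas~\ref{l.Lemma33lower}--\ref{l.Lemma34} (which encapsulate assumption~(2)') is needed, which is precisely why the non-integer exponent $r$ causes no difficulty.
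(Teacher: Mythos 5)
Your proof is correct and follows exactly the argument the paper cites from Gerber--Wilkinson (pp.~51--52): start from the Riccati difference formula of Lemma~\ref{l.Lemma31}(ii), bound $|K_0-K_1|$ via Lemma~\ref{l.Lemma32}, absorb $\sqrt{-K_0}$ using Lemma~\ref{l.Lemma34} together with the hypothesis $u_0\ge k_+\circ\gamma_0$, and telescope $\int u_0\widehat j_0=\widehat j_0(B)-\widehat j_0(A)$ after using $0\le\widehat j_1\le1$. Your explicit justification that $u_1\ge0$ on $[A,B]$ via comparison with the zero-curvature Riccati solution (Lemma~\ref{l.Lemma31}(v)) is exactly the point the paper leaves implicit, and you have it right.
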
 

\begin{proof}[Proof of part {\rm (i)} of Theorem \ref{t.II}]
Let $S$ be a surface with degenerate closed geodesic, and let
$\gamma_0,\gamma_1$ be geodesics on the same unstable horocycle $\mathcal{W}\subset S$.
We estimate $|k_+(\gamma_0'(0))-k_+(\gamma_1'(0))|$ in terms of
$\varepsilon:=d(\gamma_0(0),\gamma_1(0))$. Since
$t\mapsto d(\gamma_0(0),\gamma_1(t))$ is convex ($K\leq 0$)
and $d(\gamma_0(t),\gamma_1(t))\to 0$ as $t\to+\infty$, we have 
$d(\gamma_0(t),\gamma_1(t))\leq\varepsilon$ for all $t\leq 0$.
Let $u_i$ be the unstable Riccati solutions along $\gamma_i$, $i=0, 1$.
Applying Lemma \ref{l.Lemma35} with $A=-1/\varepsilon$ and $B=0$, we have 
$$
|k_+(\gamma_1'(0))-k_+(\gamma_0'(0))|=|u_1(0)-u_0(0)|\leq C_5\varepsilon + C_6\varepsilon+|u_1(A)-u_0(A)|\widehat{j}_0(A)\widehat{j}_1(A). 
$$ 
Since $u_i\geq 0$, we have $0\leq \widehat{j}_i\leq 1$ and, by Lemma \ref{l.Lemma31}(iii),
$\widehat{j}_i'(A) \leq \frac{1}{B-A}=\varepsilon$, hence 
$$
|k_+(\gamma_1'(0))-k_+(\gamma_0'(0))|\leq C_5\varepsilon + C_6\varepsilon+ \max_{i=0,1} u_i(A) \widehat{j}_i(A) \leq (C_5+C_6+1)\varepsilon. 
$$ 
\end{proof}

Next, we prove part (ii) of Theorem \ref{t.II}. We will need two auxiliary lemma, the first being
\cite[Lemma 3.6]{Gerber-Wilkinson} in our context. Recall that 
$k_+,k_-$ are the curvatures of the stable and unstable horocycles.

\begin{lemma}\label{l.Lemma36}
Let $S$ be a surface with degenerate closed geodesic.
There is a constant $C_7>0$ such that for all $v\in T^1 S$ it holds
$$
\frac{1}{C_7} k_+(v)\leq k_-(v)\leq C_7 k_+(v).
$$ 
\end{lemma}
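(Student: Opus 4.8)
The plan is as follows. By time reversal $k_-(v)=k_+(-v)$ for every $v\in T^1S$, so applying the one-sided bound $k_-(w)\le C_7\,k_+(w)$ to both $w=v$ and $w=-v$ yields the two halves $\tfrac1{C_7}k_+(v)\le k_-(v)\le C_7\,k_+(v)$; hence it suffices to prove $k_-(v)\le C_7\,k_+(v)$. Away from $T^1\gamma=\gamma_0\cup\gamma_\pi$ the functions $k_\pm$ are continuous and strictly positive (Proposition~\ref{prop:u-continuous}), so the ratio $k_-/k_+$ is bounded on every compact subset of $T^1S\setminus T^1\gamma$; thus one is reduced to $v\in T^1\mathcal N$ with basepoint close to $\gamma$. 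In that regime Lemma~\ref{l.Lemma33lower} already gives the lower bound $k_+(v)\ge C_3\,\beta(v)$, where $\beta(v):=\max\{|s|^{(r-2)/2},|\psi|^{(r-2)/r}\}$ in the Clairaut coordinates $v=(s,\theta,\psi)$. Consequently the entire problem reduces to the matching \emph{upper} bound
\[
k_-(v)\le C\,\beta(v)\qquad\text{for }v\in T^1\mathcal N\text{ near }T^1\gamma .
\]

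To prove this upper bound I would track the stable Riccati solution $w(t)=k_-(g_tv)$ along the geodesic $\gamma_v$. It satisfies $w'=w^2+K$ with $w\ge 0$, $K\le 0$, and by Lemma~\ref{l.Lemma34} one has $w(t)^2\ge C_4^2\,|K(\gamma_v(t))|$; therefore $\tfrac{d}{dt}\bigl(\tfrac1w\bigr)=-1+\tfrac{|K|}{w^2}\in[-1,\,C_4^{-2}-1]$, so $t\mapsto 1/w(t)$ is Lipschitz with a uniform constant along every geodesic (the analogous statement holds for $1/u_+$ with $u_+(t)=k_+(g_tv)$). The behaviour of $\gamma_v$ in the neck is governed by the trichotomy of Section~\ref{sec:our-surface} — $v$ is asymptotic, bouncing, or crossing — and in each case the curvature control $-C_1|s|^{r-2}\le K\le -C_2|s|^{r-2}$ of assumption (2)$'$ lets one estimate $w$ over the whole excursion in terms of the elementary integrals of Proposition~\ref{prop:int}, exactly as in the earlier lemmas of this appendix, comparing against the flat Riccati solution through Lemma~\ref{l.Lemma31}(v)--(vi). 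For a geodesic asymptotic to $\gamma$ one finds $w(t)$ comparable to $|s(t)|^{(r-2)/2}\approx\beta(g_tv)$ (the constant being fixed by the second-order vanishing of $K$), and the bouncing and crossing cases follow once the excursion is centred at its symmetry point ($\psi(0)=0$, resp.\ $s(0)=0$) using Lemma~\ref{lem:symmetry}. Since $k_+$ obeys the same lower bound, this gives $k_-(v)\le C\,\beta(v)\le (C/C_3)\,k_+(v)$, as required.

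The main obstacle is precisely this upper bound. Unlike $k_+$, the stable curvature $k_-(v)$ is determined by the \emph{forward} orbit of $v$, and the time the geodesic spends near $\gamma$ grows without bound (of order $n^{(r-2)/r}$ on the $n$-th homogeneity band, by Lemma~\ref{lem:t1-new}) as $v\to T^1\gamma$, so one cannot simply use uniform hyperbolicity to push a comparison out to the region of definite negative curvature over a uniformly bounded time; the Riccati estimate really has to be carried through the entire nearly-degenerate excursion. This is exactly the step in which the precise decay rate $|s|^{r-2}$ of the curvature — our assumption (2)$'$, in place of Gerber \& Wilkinson's assumption (2) — is used, and it supplies precisely the input needed so that the argument for \cite[Lemma~3.6]{Gerber-Wilkinson} goes through verbatim, just as for the preceding lemmas of this appendix. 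Once Lemma~\ref{l.Lemma36} is available, part (ii) of Theorem~\ref{t.II}, and hence Theorem~\ref{thm:Gerber-Wilkinson}, follows as in \cite{Gerber-Wilkinson}.
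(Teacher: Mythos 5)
Your structural reduction is correct and, except for the time-reversal observation $k_-(v)=k_+(-v)$ (which is a genuine simplification), it matches the paper: one handles $v\notin T^1\mathcal N$ by compactness and continuity, and for $v\in T^1\mathcal N$ near $T^1\gamma$ the lower bound $k_\pm(v)\geq C_3\max\{|s|^{(r-2)/2},|\psi|^{(r-2)/r}\}$ from Lemma~\ref{l.Lemma33lower} reduces everything to the matching \emph{upper} bound. The real divergence is in how that upper bound is obtained. The paper simply cites Gerber \& Nitica~\cite[Theorem~3.1]{Gerber-Nitica}, whose hypotheses are supplied by assumptions (1) and (2)$'$; you attempt to re-derive the same estimate from scratch by tracking the stable Riccati solution $w(t)=k_-(g_tv)$ through the neck excursion.

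This is where there is a genuine gap. The only concrete estimate you offer is the Lipschitz bound $\tfrac{d}{dt}\bigl(1/w\bigr)\in\bigl[-1,\,C_4^{-2}-1\bigr]$ (which is correct, and which you derive from Lemma~\ref{l.Lemma34}). But that bound is vacuous for the purpose at hand: integrating it from a time where $w$ is of order $1$ (entry into the neck) over a duration $T$ only gives $1/w(0)\geq 1/w(\pm T)-CT$, and as $v\to T^1\gamma$ the excursion time $T$ diverges (as you yourself note, of order $n^{(r-2)/r}$), so the resulting lower bound on $1/w(0)$ is negative and hence useless. The remainder of your argument --- ``estimate $w$ over the whole excursion in terms of the elementary integrals of Proposition~\ref{prop:int}, exactly as in the earlier lemmas of this appendix, comparing against the flat Riccati solution through Lemma~\ref{l.Lemma31}(v)--(vi)'' --- is a misattribution: Proposition~\ref{prop:int} is not used anywhere in the appendix, and no lemma of the appendix performs a Riccati estimate through the neck excursion in the way you describe. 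Your heuristic for the asymptotic case (that $w(t)\approx|s(t)|^{(r-2)/2}$, read off from the ansatz $u_-(t)\approx -\alpha/t$ with $|K(t)|\approx t^{-2}$) is sound as far as it goes, but making this and the bouncing/crossing cases rigorous amounts to reproving the Gerber--Nitica theorem, which is precisely what the paper avoids. In short: your reduction is fine, but the key upper bound is left as a sketch, and the one quantitative tool you supply does not close the gap.
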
 

\begin{proof}
As in Lemma \ref{l.Lemma34}, we divide the proof into two cases. Start assuming $v\in T^1\mathcal N$.
Assumptions (1) and (2)' allow to apply a result of Gerber \& Nitica \cite[Theorem 3.1]{Gerber-Nitica},
and obtain the upper bound
$$
k_{\pm}(v)\leq C\max\{|s|^{(r-2)/2}, |\psi|^{(r-2)/r}\}
$$ 
for $v=(s,\theta,\psi)\in T^1\mathcal N$ in Clairaut coordinates. 
This and Lemma \ref{l.Lemma33lower} imply that 
$$
\frac{1}{C'} k_+(v)\leq k_-(v)\leq C' k_+(v).
$$ 
for some $C'>0$.
The case $v\not\in T^1\mathcal N$ follows as in the proof of Lemma \ref{l.Lemma34}.
\end{proof} 

The second auxiliary lemma is a simple estimate on solutions of the Riccati equation.

\begin{lemma}[Lemma 3.7 of \cite{Gerber-Wilkinson}]\label{l.Lemma37}
Let $K:[A,B]\to(-\infty,0]$ continuous. If $u_0,u_1$ are solutions of the Riccati equation
$u'+u^2+K=0$ and $u_1(t)\geq u_0(t)>0$ for $t\in [A,B]$, then 
$$
\frac{\exp\left[\int_A^B u_1(t)\, dt\right]}{\exp\left[\int_A^B u_0(t)\, dt\right]}\leq\frac{u_1(A)}{u_0(A)}\cdot
$$
\end{lemma}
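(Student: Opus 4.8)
The statement to prove is Lemma \ref{l.Lemma37} (Lemma 3.7 of Gerber--Wilkinson): for $K \le 0$ on $[A,B]$ and Riccati solutions $u_0, u_1$ with $u_1 \ge u_0 > 0$ on $[A,B]$,
$$\frac{\exp\left[\int_A^B u_1(t)\,dt\right]}{\exp\left[\int_A^B u_0(t)\,dt\right]} \le \frac{u_1(A)}{u_0(A)}.$$

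The plan is to compare $u_1 - u_0$ with the logarithmic derivative of $u_1/u_0$ and integrate. First I would observe that since $u_0, u_1$ both satisfy $u' + u^2 + K = 0$ with the \emph{same} $K$, subtracting gives $(u_1 - u_0)' = -(u_1^2 - u_0^2) = -(u_1+u_0)(u_1-u_0)$, so $y = u_1 - u_0 \ge 0$ satisfies $y' = -(u_0+u_1)y$. Meanwhile, set $\rho = u_1/u_0$; then $(\log \rho)' = u_1'/u_1 - u_0'/u_0$. Using the Riccati equation, $u_i'/u_i = -u_i - K/u_i$, hence $(\log\rho)' = (u_0 - u_1) + K(1/u_1 - 1/u_0) = -y + K\,\frac{u_0 - u_1}{u_0 u_1} = -y + K\,\frac{-y}{u_0 u_1} = -y\left(1 + \frac{-K}{u_0 u_1}\right)$.

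Now comes the key sign observation: $K \le 0$ means $-K \ge 0$, and $u_0 u_1 > 0$, and $y \ge 0$, so $(\log\rho)' \le -y = -(u_1 - u_0) = u_0 - u_1$. Integrating from $A$ to $B$ yields $\log\rho(B) - \log\rho(A) \le \int_A^B (u_0 - u_1)\,dt$, i.e.
$$\log\frac{u_1(B)}{u_0(B)} - \log\frac{u_1(A)}{u_0(A)} \le \int_A^B u_0(t)\,dt - \int_A^B u_1(t)\,dt.$$
Rearranging and exponentiating gives $\frac{\exp[\int_A^B u_1]}{\exp[\int_A^B u_0]} \le \frac{u_1(A)}{u_0(A)} \cdot \frac{u_1(B)}{u_0(B)}^{-1} \cdot \frac{u_1(B)}{u_0(B)}$... wait, I should be careful: after rearranging, $\int_A^B u_1 - \int_A^B u_0 \le \log\frac{u_1(A)}{u_0(A)} - \log\frac{u_1(B)}{u_0(B)} = \log\frac{u_1(A)}{u_0(A)} + \log\frac{u_0(B)}{u_1(B)} \le \log\frac{u_1(A)}{u_0(A)}$, where the last inequality uses $u_1(B) \ge u_0(B) > 0$ so $\log\frac{u_0(B)}{u_1(B)} \le 0$. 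Exponentiating gives exactly the claimed bound.

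I do not anticipate a genuine obstacle here — this is a short, self-contained ODE comparison argument. The only point requiring minor care is the regularity: $u_0, u_1$ are $C^1$ on $[A,B]$ (being Riccati solutions with continuous $K$) and $u_0 > 0$ there, so all quotients and logarithms are well-defined and differentiable, and the fundamental theorem of calculus applies without fuss. A small alternative that avoids logarithms entirely: compute $\left(\frac{u_1}{u_0}\right)' = \frac{u_1' u_0 - u_1 u_0'}{u_0^2}$ directly and show it is $\le \frac{u_1}{u_0}(u_0 - u_1)$, then use Grönwall-type integration; but the logarithmic-derivative route above is cleanest. I would present the logarithmic-derivative computation as the body of the proof.
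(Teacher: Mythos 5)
Your argument is correct; the paper cites Lemma 3.7 of Gerber--Wilkinson without reproducing a proof, so there is no ``paper approach'' to compare against. The heart of the proof is the identity
$$
\bigl(\log(u_1/u_0)\bigr)' \;=\; -(u_1-u_0)\left(1+\frac{-K}{u_0u_1}\right),
$$
which is right, and with $K\le 0$, $u_0u_1>0$, $u_1-u_0\ge 0$ it gives $(\log(u_1/u_0))'\le -(u_1-u_0)$; integrating over $[A,B]$ and discarding the nonnegative term $\log\bigl(u_1(B)/u_0(B)\bigr)$ yields exactly the stated inequality.

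One caveat on the write-up: your intermediate lines contain two compensating sign slips. Expanding $u_1'/u_1-u_0'/u_0$ via $u_i'/u_i=-u_i-K/u_i$ gives $(u_0-u_1)+K(1/u_0-1/u_1)$, not $(u_0-u_1)+K(1/u_1-1/u_0)$ as you wrote; and when you then factored out $-y$ you silently replaced $K$ by $-K$, which cancels the first error. If you had carried the first sign through consistently you would have obtained $-y\bigl(1+K/(u_0u_1)\bigr)$, from which $(\log\rho)'\le -y$ does \emph{not} follow when $K<0$. The cleanest fix is to compute $(u_1/u_0)'=(u_1'u_0-u_0'u_1)/u_0^2$ directly, substitute $u_i'=-u_i^2-K$, and simplify to $(\log(u_1/u_0))'=-(u_1-u_0)+K(u_1-u_0)/(u_0u_1)$; the rest of your argument then goes through unchanged.
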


\begin{proof}[Proof of part {\rm (ii)} of Theorem \ref{t.II}] 
We wish to show that
$|k_+(v_0)-k_+(v_1)|\leq C d(v_0,v_1)^{\alpha}$ 
for all $v_0,v_1\in T^1S$. As in \cite{Gerber-Wilkinson}, we divide the proof into five steps.

\medskip
\noindent
{\sc Step 1.} It is enough to show that $|k_+(v_0)-k_+(v_1)|\leq C d(v_0,v_1)^{\alpha}$ 
for $v_0,v_1$ with the same basepoint. Indeed, given $v_0,v_1$ with basepoints $p_0,p_1$,
let $v_1'\in T_{p_1}^1S$ be the vector spanning a geodesic negatively asymptotic to the geodesic spanned by $v_0$.
By part (i), we have $|k_+(v_1')-k_+(v_0)|\leq Cd(p_0,p_1)$. 
Since $S$ has nonpositive curvature, Busemann functions are (uniformly) $C^2$ and so 
$d(v_1',v_1)\leq Cd(v_0,v_1)$. 
Hence, if $|k_+(v_1')-k_+(v_1)|\leq Cd(v_1',v_1)^{\alpha}$ then 
$$|k_+(v_0)-k_+(v_1)|\leq Cd(p_0,p_1)+Cd(v_0,v_1)^{\alpha}\leq 2Cd(v_0,v_1)^{\alpha}.$$ 

\medskip
\noindent
{\sc Step 2.}
Given $p\in S$ and $v_0, v_1\in T_p^1S$, let $\omega$ be the angle between $v_0$ and $v_1$.
We can assume that $|\omega|<\omega_0$ for $\omega_0$ small.
Let $v_r\in T_p^1S$, $0\leq r\leq 1$, be the continuous family of unit vectors making angle $r\omega$ with $v_0$,
and let $\gamma_r$ be the variation of geodesics with $\gamma_r(0)=p$ and $\gamma_r'(0)=-v_r$. Define 
$$
T:=\max\{T_0:\text{ the curve }r\in[0,1]\mapsto\gamma_r(t) \textrm{ has length}\leq \sqrt{\omega}
\textrm{ for all } 0\leq t\leq T_0\},
$$ 
and consider the scalar function $j_r(t)$ associated to the perperdicular Jacobi field generated by the variation of geodesics $\gamma_r$. By definition, $j_r(0)=0$, $j_r'(0)=\omega$, and 
$$\int_0^1 j_r(T)\,dr = \sqrt{\omega}.$$ 
Comparing $j_r$ with the solution of the Jacobi equation in zero curvature (cf.~Lemma \ref{l.Lemma31}(vi)), we also have $j_r(T)\geq\omega T$. 
Therefore $T\leq 1/\sqrt{\omega}$. Similarly, comparing with the case of constant curvature $K_{\min}=\inf K$,
we obtain that $j_r(T)\leq \frac{\omega}{\sqrt{-K_{\min}}}\sinh(\sqrt{-K_{\min}}T)$. If 
$\omega_0$ is small enough, then $T>1$. Now, as in \cite[pp. 55]{Gerber-Wilkinson}, 
applying Lemma~\ref{l.Lemma35} and Lemma~\ref{l.Lemma36} we obtain 
\begin{eqnarray*}
|k_+(v_0)-k_+(v_1)|&\leq& C_8\sqrt{\omega} + C_9\left(\exp\left[-\int_1^T u_0(t) dt\right]\right)^{\beta}
\left(\exp\left[-\int_1^T u_1(t) dt\right]\right)^{\beta}
\end{eqnarray*} 
for constants $\beta,C_8,C_9>0$.

\medskip
\noindent
{\sc Step 3.}
We estimate $\exp\left[\int_1^T w_0(t) dt\right]$, where $w_0=j_0'/j_0$
satisfies $w_0'+w_0^2+K\circ\gamma_0=0$ with $w_0(0)=\infty$ and $w_0(t)>0$ for $t>0$. 
Proceeding as in \cite[pp. 56--57]{Gerber-Wilkinson}, there is a constant $C_{10}>0$ 
such that
$$\exp\left[-\int_1^T w_0(t) dt\right]\leq C_{10}\sqrt{\omega}.$$

\medskip
\noindent
{\sc Step 4.}
Assume that $k_+(v_0)\neq 0$. Since $w_0(1)>u_0(1)$, Lemma \ref{l.Lemma37} implies that 
$$
\frac{\exp\left[\int_1^T w_0(t)\, dt\right]}{\exp\left[\int_1^T u_0(t)\, dt\right]}\leq\frac{w_0(1)}{u_0(1)}\cdot
$$ 
By Lemma \ref{l.Lemma31}(v), we have $u_0(1)\geq \tfrac{u_0(0)}{u_0(0)+1}$ and so
$\tfrac{w_0(1)}{u_0(1)}\leq \tfrac{w_0(1)(u_0(0)+1)}{u_0(0)}=\tfrac{w_0(1)(u_0(0)+1)}{k_+(v_0)}$.
Since $K$ is bounded from below, $u_0(0)$ and $w_0(1)$ are bounded from above
and so there is a constant $C_{11}>0$ such that 
$$\exp\left[-\int_1^T u_0(t)\, dt\right]\leq \frac{C_{11}}{k_+(v_0)}\exp\left[-\int_1^T w_0(t)\, dt\right].$$

\medskip
\noindent
{\sc Step 5.}
By Steps 3 and 4, if $k_+(v_i)\neq 0$ then
$$
\exp\left(-\int_1^T u_i(t)\, dt\right)^{\beta}\leq \frac{C_{12}}{k_+(v_i)^{\beta}}\omega^{\beta/2}.
$$ 
for some $C_{12}>0$.
By Step 2, 
$$|k_+(v_0)-k_+(v_1)|\leq C_8\sqrt{\omega} + C_9C_{12}\min\{k_+(v_0), k_+(v_1)\}^{-\beta} \omega^{\beta/2}.$$ 
Recalling that $\omega:=\textrm{dist}(v_0,v_1)$, we conclude the proof, since:
\begin{enumerate}[$\circ$]
\item if $\min\{k_+(v_0), k_+(v_1)\}\leq\omega^{1/4}$, then $|k_+(v_0)-k_+(v_1)|\leq k_+(v_0)+k_+(v_1)\leq 2\omega^{1/4}$; 
\item if $\min\{k_+(v_0), k_+(v_1)\}>\omega^{1/4}$, then $|k_+(v_0)-k_+(v_1)|\leq C_8\sqrt{\omega} + C_9C_{12}\omega^{\beta/4}$.
\end{enumerate} 
\end{proof}

\bibliographystyle{alpha}
\bibliography{bibliography}{}

\end{document}